\providecommand{\U}[1]{\protect\rule{.1in}{.1in}}
\newtheorem{theorem}{Theorem}
\newtheorem{corollary}[theorem]{Corollary}
\newtheorem{definition}[theorem]{Definition}
\newtheorem{example}[theorem]{Example}
\newtheorem{lemma}[theorem]{Lemma}
\newtheorem{proposition}[theorem]{Proposition}
\newtheorem{remark}[theorem]{Remark}
\newenvironment{proof}[1][Proof]{\noindent\textbf{#1.} }{\ \rule{0.5em}{0.5em}}
\definecolor{AnthonyComments}{HTML}{00BFFF}
\begin{document}
\sloppy
\captionsetup[figure]{labelfont={bf},name={Fig.},labelsep=none}

\title{Extension of the Bessmertny\u{\i} Realization Theorem for Rational Functions
of Several Complex Variables}
\author{Anthony Stefan\footnote{Email: astefan2015@my.fit.edu}\; and Aaron Welters\footnote{Email: awelters@fit.edu}\\Florida Institute of Technology\\Department of Mathematical Sciences\\Melbourne, FL, USA}
\date{\today}
\maketitle

\begin{abstract}
We prove a realization theorem for rational functions of several complex
variables which extends the main theorem of M. Bessmertny\u{\i},
\textquotedblleft On realizations of rational matrix functions of several
complex variables," in Vol. 134 of \textit{Oper. Theory Adv. Appl.}, pp.
157-185, Birkh\"{a}user Verlag, Basel, 2002. In contrast to Bessmertny\u{\i}'s
approach of solving large systems of linear equations, we use an operator theoretical approach based on the theory of Schur complements. This leads to a simpler and more \textquotedblleft natural" construction to solving the realization problem as we need only apply elementary algebraic operations to Schur complements such as sums, products, inverses, and compositions. A novelty of our approach is the use of Kronecker product as opposed to the matrix product in the realization problem. As such our synthetic approach leads to a solution of the realization problem that has potential for further extensions and applications within multidimensional systems theory especially for those linear models associated with electric circuits, networks, and composites.
\end{abstract}

\section{Introduction}

As part of M. Bessmertny\u{\i}'s 1982 Ph.\ D.\ thesis (in Russian) \cite{82MB}, he proved that every rational matrix-valued function of several variables could be written as the Schur complement of a linear matrix pencil (i.e., a \textit{Bessmertny\u{\i} long resolvent representation}). As mentioned in \cite{04KV}, this theorem of his (a.k.a, the \textit{Bessmertny\u{\i} realizability theorem}), was unknown to Western readers until parts of it were translated into English beginning in 2002 with \cite{02MB}. Given the potential applications of this important theorem to realization problems in multivariate systems theory, electric network theory, and the theory of composites, we want to consider this theorem, its proof and its extensions, from a different viewpoint and method of approach than that of M. Bessmertny\u{\i}. In fact, we were very inspired and motivated by the abstract theory of composites approach to a similar realization problem developed by G.\ Milton in \cite{16GM7}, as well as the approach of R.\ Duffin in \cite{55RD} on synthesis problems in electrical network theory that can be solved using elementary algebraic operations.

In this paper, we are interested in giving an alternative construction [in comparison to the approach of M. Bessmertny\u{\i} \cite{02MB} by solving large systems of linear equations (see Subsec. \ref{SubsecRelevantWork} for details) or that which can be achieved using the methods in D. Alpay and C. Dubi \cite{03AD} based on Gleason's problem] that solves the following version of the \textit{Bessmertny\u{\i} realization problem}: Given a rational $\mathbb{C}^{k\times k}$-valued matrix function $f(z)=f(z_1,\ldots, z_n)$ of $n$ complex variables $z_1,\ldots, z_n$ [$z=(z_1,\ldots, z_n)$], find a linear matrix pencil 
\begin{align}\label{TheLinearPencil}
A(z)=A_0+z_1A_1+\cdots+z_nA_n
\end{align}
such that $f(z)$ is representable as the Schur complement 
\begin{align}\label{TheBessmLongResolvRepr}
 f(z)=A_{11}(z)-A_{12}(z)A_{22}(z)^{-1}A_{21}(z)
\end{align}
of a $2\times 2$ block matrix form $A(z)=[A_{ij}(z)]_{i,j=1,2}$ with respect to its $(2,2)$-block $A_{22}(z)$ [denoted by $A(z)/A_{22}(z)$]. If this is possible, we say that $f(z)$ is \textit{realizable} (or can be \textit{realized}) and has a \textit{realization}.

The main theorem of M.\ Bessmertny\u{\i} in \cite[Theorem 1.1]{02MB} solves this realization problem, and his construction of the linear matrix pencil $A(z)=A_0+z_1A_1+\cdots+z_nA_n$ from $f(z)$ involves solving large systems of constrained linear equations in such a way that $A(z)$ inherits certain real, symmetric, or homogeneity properties from $f(z)$. 

The main theorem of our paper, namely, Theorem \ref{ThmBessmRealiz}, also solves this realization problem, but our construction of the linear matrix pencil $A(z)=A_0+z_1A_1+\cdots+z_nA_n$ from $f(z)$ uses a theory of Schur complement/realization algebra and operations that we develop in Section \ref{SecSchurComplAlgebraAndOps}. In fact, we approach and prove our Bessmertny\u{\i}
Realization Theorem (i.e., Theorem \ref{ThmBessmRealiz}) in a systematic way using the following steps (i)-(v) to realize $f(z)=\frac{P(z)}{q(z)}$, where $P(z)$ is a matrix polynomial and $q(z)$ is a scalar polynomial not identically equal to $0$: \begin{itemize}
    \item[(i)] The degree-1 scalar monomials $z_j$ ($j=1,\ldots,n$) are realizable.
    \item[(ii)] A (constant) scalar multiple of a realizable function is realizable.
    \item[(iii)] Sums of realizable rational matrix functions (of fixed square size) are realizable.
    \item[(iv)] Kronecker products of realizable functions are realizable (based on realizability of simple products). Thus, monomials of arbitrary degree are realizable and hence matrix polynomials are realizable.
    \item[(v)] If $Q$ is a matrix polynomial with $\det Q(z)\not\equiv 0$, then $Q(z)^{-1}$ is realizable. Using this and (iv), we get a realization of $$f(z)=\frac{P(z)}{q(z)}=[q(z)]^{-1}\otimes P(z).$$
\end{itemize}
In addition, our realization method via steps (i)-(v) (using the Kronecker product instead of matrix product, although matrix product is likely a viable approach) has the added advantage that it naturally preserves symmetries and as such we need only make minor adjustments to our proof to construct symmetric realizations. In particular, our approach allows us to extend M.\ Bessmertny\u{\i}'s theorem \cite[Theorem 1.1]{02MB} with statements (d) and (e) in our Theorem \ref{ThmBessmRealiz} in which we can construct a Hermitian matrix pencil $A(z)=A_0+z_1A_1+\cdots+z_nA_n$ (i.e., all the matrices $A_j$ are Hermitian matrices) if $f(z)$ is ``Hermitian" [i.e., has the functional property $f(\overline {z})^*=f(z)$] or a matrix pencil with any combination of the symmetries (real, symmetric, Hermitian, or degree-one homogeneous) if $f(z)$ also has the same combination of symmetries.

Another merit of our paper is the thorough catalog of Schur complement/realization algebra and operations in Sec. \ref{SecSchurComplAlgebraAndOps}. We show that for all elementary algebraic-functional operations (including ones we did not need for solving the realization problem) when applied to Schur complements (such as linear combinations, products, inversion, and composition) is equal to another Schur complement of a block matrix and we give explicit formulas to compute it. It is expected that these results will find application in multidimensional systems theory such as in electrical network theory and the theory of composites as well as for other realization problems involving the Schur complement. In fact, the proof of the Bessmertny\u{\i} realization theorem can be considered a good example of how the results of Sec. \ref{SecSchurComplAlgebraAndOps} could be used in applications (for more on the applications see \cite{21AS}).

Another instance in which Sec. \ref{SecSchurComplAlgebraAndOps} and other methods in this paper could be useful is in extensions of the Bessmernty\u{\i} realization theorem to realizations of rational matrix functions having additional symmetries. For example, in electric network theory an important class of functions known as multivariate reactance functions that have the functional symmetries $-f(-z)^T=f(z)$ and $\overline{f(\overline{z})}=f(z)$ \cite{74BN}. Realizations of these types of functions along with some other symmetries that arise naturally in the theory of composites and electric network theory, will be considered in future work.

The main motivation and long-term goal of these results and extensions is to try to make some progress toward solving several open problems on realizability from the theory of composites (for the relevant mathematical theory of composites and the many open problems in it, see \cite{16YG, 02GM, 16GM}) as well as in electrical network theory (see, for instance, \cite{11JB} and references within). For these open problems, the class of functions which are of interest are those $f(z)$ that are rational positive-real functions of $n$-variables and are also homogeneous degree-one functions. For such functions, the main question we are interested in answering is whether or not there exists a homogeneous linear matrix pencil \begin{align}\label{HomoLinPen}A(z)=z_1A_1+\cdots+z_nA_n\end{align} that also has these functional properties and gives a realization of $f(z)$ (the class of such functions that have such a realization are known as the rational \textit{Bessmertny\u{\i} class of functions}, see \cite{11JB, 04KV}). For $n=1,2$, it is known that such a realization is possible, but it is still an open problem for $n\geq 3$ (see \cite{11JB, 04KV}). Of particular importance in the theory of composites (see \cite{16GM10, 87GM1, 87GM2}, \cite[Chap. 29]{02GM}, and \cite{16GM7}) are such functions $f(z)$ that also satisfy the normalization condition $f(1,\ldots, 1)=I_k$ ($I_k$ is the $k\times k$ identity matrix) and the associated subclass of the Bessmertny\u{\i} class of functions (the \textit{Milton class of functions}) that can be realized with the homogeneous linear matrix pencil (\ref{HomoLinPen}) also satisfying the normalization condition, i.e., $A_1+\cdots+A_n=I_m$. For more information on these open problems see \cite[Chap. 7]{21AS}. 

The rest of the paper will proceed as follows. In the remainder of this section we will give a review of relevant work. Then, in Sec.\ \ref{SecPreliminaries} we establish the notation, conventions, and preliminary results used in the paper. In Sec.\ \ref{SecMainResultBessmRealiz} we state and prove the main theorem of this paper, namely, the Bessmernty\u{\i} Realization Theorem (Theorem \ref{ThmBessmRealiz}). In Section \ref{SecExamplesOfBessRealizThm} we give several examples that show how to work out the realization of some rational functions using our methods. In Sec.\ \ref{SecSchurComplAlgebraAndOps} we develop a theory for the elementary algebraic-functional operations on Schur complements that occur in Bessmertny\u{\i} realization problems. This includes linear combinations, products (including Kronecker products), matrix inversion, and composition of Schur complements as well as several others. Finally, we conclude with Section \ref{SecTransformsForAlternativeRealizations} on additional transformations of matrices associated with the Schur complement and how they can be used to give alternative realization theorems for rational matrix functions in conjunction with the Bessmernty\u{\i} Realization Theorem (Theorem \ref{ThmBessmRealiz}). The main focus there is on the \textit{principal pivot transform} (PPT) which is an important transformation in the context of network synthesis problems (see, for instance, \cite{65DHM, 66DHM, 00MT}) and may also play an important role in the theory of composites.

\subsection{Relevant work}\label{SubsecRelevantWork}
The realizability theory that we are interested in is commutative (as opposed to noncommutative) multivariate rational matrix-valued functions (especially with symmetries) over the complex field. An analogy that one could draw from our work to the single-variable setting is that of descriptor form realizations of rational functions as transfer functions of descriptor systems which is in contrast to the standard state space realizations usually associated with R. Kalman (i.e., Kalman-type realizations). We will briefly elaborate on this below and the relevance of the previous work to ours in the single variable as well as in the multivariate setting.

The state space realizability theory for rational matrix
functions of one variable which are analytic at infinity (i.e., proper functions) is well developed. In this setting, any such rational function $f(z)$ of a single complex variable $z$ has the Kalman-type realization
\begin{align}\label{KalmanTypeRealization}
    f(z)=D+C(zI-A)^{-1}B, 
\end{align}
for some complex matrices $A,B,C,D$ where $A$ is a square matrix and $I$ is the identity matrix the same size as $A$. The size of the matrix $A$ is called the ``dimension of the state space" (or ``dimension of the realization") and a ``minimal realization" of $f(z)$ is any realization of the form \eqref{KalmanTypeRealization} with smallest possible dimension of the state space. In this context, using the following linear matrix pencil in $2\times 2$ block matrix form
\begin{align}
    A(z)=\left[\begin{array}{c;{2pt/2pt}c}
    A_{11}(z) & A_{12}(z)  \\ \hdashline 
    A_{21}(z) & A_{22}(z)
    \end{array} \right]=\left[\begin{array}{c;{2pt/2pt}c}
    D & C  \\ \hdashline 
    B & A-zI
    \end{array} \right]=\left[\begin{array}{c;{2pt/2pt}c}
    D & C  \\ \hdashline 
    B & A
    \end{array} \right]+z\left[\begin{array}{c;{2pt/2pt}c}
    0 & 0  \\ \hdashline 
    0 & -I
    \end{array} \right],
\end{align}
the function $f(z)$ is the Schur complement of $A(z)=[A_{ij}(z)]_{i,j=1,2}$ with respect to the $(2,2)$-block $A_{22}(z)=A-zI$, that is,
\begin{align}
    f(z)=D+C(zI-A)^{-1}B=A(z)/A_{22}(z).
\end{align}
In particular, this is just a very special case of a Bessmertny\u{\i} realization of $f(z)$ in one complex variable $z$ whenever $f(z)$ is also a square matrix-valued rational function.

One of the main developers of the state space realization theory was R. Kalman based on a major result he proved in 1963 \cite{63RK} (and elaborated on in his 1965 paper \cite{65RK}) that bridged linear control theory and the concepts of controllability and observability with minimal realizability and the construction of minimal realizations of a given rational function from nonminimal realizations of it. He also proved the important `state space similarity theorem' which describes how two minimal realizations of the same rational function are related by similarity (see \cite[Theorem 8]{63RK}, \cite[Proposition 2]{65RK}, \cite[Theorem 3.1]{79BGK}, and \cite[Theorem 7.7]{08BGKR}). Shortly thereafter, in 1965 \cite{65RK} he showed how these concepts were related to the notion of the McMillan degree of a rational matrix-valued function (also called the McMillan-Duffin-Hazony degree as it was, according to R. Kalman \cite{65RK}, first introduced by B. McMillan in 1952 \cite{52BM} and further studied by R. Duffin and D. Hazony in 1963 \cite{63DH}, but credit for an equivalent definition of degree and its usefulness in network realization theory actually seems to belong to B. Tellegen based on his 1948 paper, see \cite{48BT,68AN}). In 1966 \cite{66HK}, R. Kalman and his Ph.D. student B. Ho showed how this was connected to the theory of Hankel matrices associated with the power series expansion of a rational function about infinity (related to Pad\'{e} approximation theory, continued fractions, Markov parameters, and the theory of moments, see \cite{86BB}) with an elegant realization algorithm to construct a minimal realization from this theory.

There are a couple relevant things we want to point out in this regard. First, R. Kalman was motivated by some special cases considered by E. Gilbert in \cite{63EG} and this paper already has the formulas for sum and matrix products of Schur complements \{cf. \cite[Fig. 2, Eq. (11), Theorem 3]{63EG} and \cite[Fig. 3, Eq. (12), Theorem 4]{63EG}, respectively\} that we have listed (Propositions \ref{PropSumOfSchurComps} and \ref{PropMatrixMultipliationOfTwoSchurComplements}) which were derived for the realization of the sum and product of transfer functions based on the analogy of the parallel and cascade connections of electrical networks. Another clear instance of the formulas for product and inverses of Schur complements can be found in \cite[pp. 6-8]{79BGK} (see also \cite[Secs. 2.1-2.3]{08BGKR}), which play an important role in minimal factorization problems for matrices and operators using the `state space method.' The method itself was motivated by theory of operator nodes (or colligations) and characteristic functions of linear operators based on the pioneering work of M. S. Liv\v{s}ic starting in the middle of the 1940s (see \cite{72JH}, \cite{76JH}, and \cite[Sec. 5]{01VK} for more on this as well as \cite{73ML}). Second, using that method, D. Alpay and I. Gohberg \cite{88AG} and D. Alpay, J. Ball, I. Gohberg, and L. Rodman \cite{90ABGR}, \cite{92ABGR}, \cite{94aABGR}, \cite{94bABGR} have very effectively studied the realization problem for proper rational functions with symmetries in state space form with emphasis on the minimal realization part of the theory.

In order to treat the realization problem for non-proper rational matrix functions in the single variable case, descriptor representations were introduced in the 1970's. In this setting, any rational function $f(z)$ of a single complex variable $z$ has the descriptor realization
\begin{align}\label{DescriptorRealization}
    f(z)=D+C(zE-A)^{-1}B, 
\end{align}
for some complex matrices $A,B,C,D$ where $A$ and $E$ are square matrices of the same size. According to \cite{89JS}, it was D. Luenberger in 1977-1978 (\cite{77DL} and \cite{78DL}) who was the first to make an extensive study of this representation in systems theory. In contrast to the standard state space representation, i.e., the Kalman-type realization (\ref{KalmanTypeRealization}), the descriptor form (\ref{DescriptorRealization}) is capable of representing systems having a non-proper transfer function, i.e., $f(z)$ need not be analytic at infinity (also called `descriptor systems' or `singular systems').

Thus, as the descriptor form is a universal form for representing any rational matrix function of one variable, the analogy of this with the Bessmertny\u{\i} long resolvent representation in one variable, i.e., (\ref{TheLinearPencil}) and (\ref{TheBessmLongResolvRepr}), is more clear since it is a universal form for any commutative multivariate rational square matrix-valued function. This analogy is further justified since, in this context, the descriptor form (\ref{DescriptorRealization}) is again a special case of a Bessmertny\u{\i} realization in the square matrix case of one variable. Indeed, in this context using the following linear matrix pencil in $2\times 2$ block matrix form
\begin{align}
    A(z)=\left[\begin{array}{c;{2pt/2pt}c}
    A_{11}(z) & A_{12}(z)  \\ \hdashline 
    A_{21}(z) & A_{22}(z)
    \end{array} \right]=\left[\begin{array}{c;{2pt/2pt}c}
    D & C  \\ \hdashline
    B & A-zE
    \end{array} \right]=\left[\begin{array}{c;{2pt/2pt}c}
    D & C  \\ \hdashline 
    B & A
    \end{array} \right]+z\left[\begin{array}{c;{2pt/2pt}c}
    0 & 0  \\ \hdashline 
    0 & -E
    \end{array} \right],
\end{align}
the function $f(z)$ is the Schur complement of $A(z)=[A_{ij}(z)]_{i,j=1,2}$ with respect to the $(2,2)$-block $A_{22}(z)=A-zE$, that is,
\begin{align}
    f(z)=D+C(zE-A)^{-1}B=A(z)/A_{22}(z).
\end{align}

Of course, there are a multitude of different forms of realizations for rational matrix-valued functions in the one variable setting (see \cite{89JS} for a survey), but the Kalman-type and descriptor form realizations give a clear idea of the most basic and prominent relevant results.

In the $n$-variable setting (with $n\geq 2$), starting in the 1970s, models of multidimensional systems having transfer functions equal to $n$-variable rational matrix functions were introduced and research began on converse realization theorems \cite{72GR, 73GR, 75RR, 76FM, 77KLMK, 78FM, 78ES} (see also \cite{05BGM} and \cite{18HMS}). It became clear that the noncommutative and commutative cases are quite distinct regarding techniques, theorems, and open problems. We will briefly elaborate on this below and the relevance of the previous work to ours in the multivariate setting.

In the noncommutative setting, for a $n$-D rational matrix-valued function regular at zero there are analogous results from the classical $1$-D case of the relation between observability and controllability to minimal realizations along with a corresponding state space similarity theorem and construction of minimal realizations from nonminimal relations. The main reason for this analogy stems from the relation between noncommutative formal power series representations of such rational functions at zero, minimal realizations, and their associated Hankel matrices/operators (\cite{74MF}, \cite{08CR}, \cite{10BR}), for instance, the rank of the Hankel matrix is the minimal possible dimension of a realization in analogy to the $1$-D case. In fact, an application of this was a realization theorem in 1978 by E. Fornasini and G. Marchesini \cite[Proposition 1]{78FM} which used in the proof the noncommutative realization theory of M. Fliess from \cite{74MF} to get a commutative realization in the $2$-D case, called a `Fornasini-Marchesini realization' for a commutative `Fornasini-Marchesini system' and is a multivariate analogy of a Kalman-type realization. They also showed that the models previously investigated, i.e., the Givone-Roesser model \cite{72GR, 73GR, 75RR}, and the Kung-L\'{e}vy-Morf-Kailath model \cite{77KLMK} could be embedded in their state space model, the Fornasini-Marchesini system \cite[(1)]{78FM}. 

The study of these systems and their generalization to `structured noncommutative linear systems,' was developed extensively in 2005 by J. Ball, G. Groenewald, and T. Malakorn \cite{05BGM} and includes results on their standard system-theoretical properties that are analogous to $1$-D Kalman state space realization theory (e.g., the operations of cascade/parallel connection and inverses, controllability, observability, Kalman decomposition, state space similarity theorem, minimal state space realizations, Hankel operators, realization theory). Also in 2005, D. Alpay and D. Kalyuzhny\u{\i}-Verbovetzki\u{\i} \cite{05AK} treat the realization problem with symmetries for noncommutative rational formal power series in the $n$-D noncommutative Givone-Roesser realization form, which is an extension of work of D. Alpay and I. Gohberg \cite{88AG} in the single variable case as discussed above. Without going in to precise details and focusing on matrices instead of operators, a relevant result is that all these structured systems have transfer functions which are rational functions in noncommutative variables (indeterminates) $z=(z_1,\ldots, z_n)$ which are regular at zero and have the form
\begin{align}\label{NoncommRegAt0RationalRepr}
    f(z)=D+C(I-Z(z)A)^{-1}Z(z)B,
\end{align}
for complex matrices $A,B,C,D$, where $A$ is a matrix, $I$ is an identity matrix, and $Z(z)$ is a linear pencil in $z$ of the form
\begin{align}\label{NoncommRegAt0RationalReprLinearPencil}
    Z(z)=Z_1z_1+\cdots+Z_nz_n,
\end{align}
for certain square matrices $Z_j$ having entries in $\{0,1\}$ only \{see \cite[Eq. (1.7) and Sec. 3]{05BGM}\}. The converse of this result \cite[Corollary 12.4]{05BGM} essentially says that every noncommutative $n$-variable rational matrix function $f(z)$ which is regular at zero can be represented in the form (\ref{NoncommRegAt0RationalRepr}) for some structured noncommutative linear system. In this context, using the following linear matrix pencil in $2\times 2$ block matrix form
\begin{align}
    A(z)&=\left[\begin{array}{c;{2pt/2pt}c}
    A_{11}(z) & A_{12}(z)  \\ \hdashline 
    A_{21}(z) & A_{22}(z)
    \end{array} \right]=\left[\begin{array}{c;{2pt/2pt}c}
    D & C  \\ \hdashline 
    Z(z)B & Z(z)A-I
    \end{array} \right]\\
    &=A_0+A_1z_1+\cdots+A_nz_n,\\
    A_0&=\left[\begin{array}{c;{2pt/2pt}c}
    D & C  \\ \hdashline 
    0 & -I
    \end{array} \right],\;\;A_l=\left[\begin{array}{c;{2pt/2pt}c}
    0 & 0  \\ \hdashline 
    Z_lB & Z_lA
    \end{array} \right],\;\;\text{for }l=1,\ldots,n,
\end{align}
the function $f(z)$ is the Schur complement of $A(z)=[A_{ij}(z)]_{i,j=1,2}$ with respect to the $(2,2)$-block $A_{22}(z)=Z(z)A-I$, that is,
\begin{align}\label{NoncommSchurCompRealization}
    f(z)=D+C(I-Z(z)A)^{-1}Z(z)B=A(z)/A_{22}(z).
\end{align}
In particular, restricting to $n$ complex variables $z=(z_1,\ldots,z_n)$, this is just a very special case of a Bessmertny\u{\i} realization of $f(z)$ whenever $f(z)$ is also a square matrix-valued rational function. For instance, the well-known Fornasini-Marchesini and Givone-Roesser realizations (see \cite{78FM, 72GR, 73GR, 75RR, 05BGM}) are of this form, see Table \ref{BessmerntyiSepcialCasesTable} for more on these realizations (where $I_{\mathcal{H}}$ can be a different sized identity matrix then $I$).

One relevant issue to note here is that the $n$-D structured realizations (\ref{NoncommRegAt0RationalRepr}) are similar to the $1$-D Kalman-type realizations (\ref{KalmanTypeRealization}) in that the form of the realization depends on the choice of a point of regularity (e.g., the point zero in the former and infinity in the latter). This is also true of the realization results in D. Alpay and C. Dubi \cite[Theorem 1.1]{03AD}, they require regularity at zero for the realization of a rational matrix function of several complex variables (and they show that their form of realization is a special case of a Bessmertny\u{\i} realization \cite[p. 226]{03AD}). As such, this does not cover all possible rational matrix functions unless one adjusts the form of the realization. In contrast, the Bessmertny\u{\i} realization uniformly treats all rational square matrix functions with the same form of realization regardless of the regularity at a given point.

In addition, the Bessmertny\u{\i} realization (\ref{TheLinearPencil}) and (\ref{TheBessmLongResolvRepr}) is a more suitable form of realization for certain models of interest (cf., \cite[Chap. 3]{21AS}) especially when the desired form of the linear matrix pencil is homogeneous, i.e., of the form (\ref{HomoLinPen}). For example, when the rational matrix function is modeled as an impedance matrix of an electrical network associated with a finite linear graph/structure (see \cite{82MB}, \cite{05MB}, \cite[Chap. 3, Secs. 3.1-3.3]{21AS}) or is modeled as an effective tensor in the theory of composites (see \cite{02GM}, \cite{16GM2, 16GM7, 16GM10}, \cite[Chap. 3, Sec. 3.4]{21AS}). In fact, the latter model was the primary motivator for our paper, whereas the former model motivated M. Bessmertny\u{\i} to introduce such realizations in this 1982 Ph.D. thesis (\cite{82MB}, see also \cite{05MB}) in order to consider the multivariate analogy of the single variable analysis of some earlier work by his Ph.D. advisor V. P. Potapov with A. V. Efimov \cite{73EP} on realization theory in electric circuit theory (in which itself was based on results of M. S. Liv\v{s}ic, see \cite[p. 75]{73EP} and \cite{73ML}).

One of the main points of this section is to discuss how we treat the realization problem for multivariate rational matrix functions with symmetries using a `synthetic' approach, as opposed to other approaches such as that of Bessmertny\u{\i} who treats the problem by solving systems of equations \cite{82MB}, \cite{02MB}. We will briefly describe Bessmertny\u{\i}'s procedure, contrast it with ours, and then compare it to relevant work that uses a different synthetic approach.

Given a rational $k\times k$ matrix-valued function $f(z)$ of $n$-complex variables $z=(z_1,\ldots,z_n)$, Bessmertny\u{\i} \cite[Sec. 1]{02MB} would seek a linear matrix pencil in $2\times 2$ block matrix form $A(z)=[A_{ij}(z)]_{i,j=1,2}=A_0+z_1A_1+\cdots+z_nA_n$ with $\det A_{22} \not \equiv 0$ that solves the linear system of equations
\begin{align}\left[\begin{array}{c;{2pt/2pt}c}\label{BessLinearEqs}
    A_{11}(z) & A_{12}(z)  \\ \hdashline
     A_{21}(z) & A_{22}(z)
\end{array}\right]\left[\begin{array}{c}
    I_{k} \\ \hdashline
\Phi(z)
\end{array}\right] = \left[\begin{array}{c} 
     f(z)  \\  \hdashline
     0
\end{array}\right],\end{align}
from which it would follow that $f(z)=A(z)/A_{22}(z)$, i.e., $f(z)$ has a Bessmertny\u{\i} realization. Essentially, Bessmertny\u{\i} solves the realization problem by showing that one can solve these system of equations (\ref{BessLinearEqs}) in the following order of increasing complexity: ratios of monomials \cite[Lemma 1.1, Corollary 1.1]{05MB}, ratios of scalar polynomials \cite[Lemma 1.4]{05MB}, and ratios of the form $\frac{P(z)}{q(z)}$, where $q(z)$ is a scalar polynomial and $P(z)$ is a matrix polynomial \cite[Theorem 1.1, see proof in Sec. 1.6]{05MB}. Moreover, he shows that the matrix pencils $A(z)$ that arise in his procedure for realizing a rational matrix function $f(z)$ with symmetries can also have desired symmetries (as mentioned above in the introduction).

In contrast, our synthetic approach is based on the idea that rational functions are built up from monomials as the elementary building blocks using a finite number of operations, namely, the binary operations of addition, scalar multiplication, and products, and the unary operation of inversion. As such, we need only consider how the Schur complement (as a function on $2\times 2$ block matrices) interacts with those building blocks and operations to produce another Schur complement which preserves symmetries (see Section \ref{SecSchurComplAlgebraAndOps}). By doing so, we are able to solve the realization problem with symmetries following the steps (i)-(v) outlined in the introduction. In particular, in step (iv) our use of \textit{Kronecker products} as the binary operation for products instead of matrix products is unconventional. Our motivation for using this algebraic approach with the Kronecker product is mainly based on the following two points. First, it is more natural in realization problems for the effective tensor in the theory of composites (see \cite{87GM1, 87GM2}, \cite[Chap. 29]{02GM}, and \cite[Chap. 7]{16GM7}. Second, as briefly mentioned in the introduction above, the Kronecker product $\otimes$ is a binary operation that naturally preserves symmetry compared to the matrix product. Indeed, since for any two complex matrices $A,B$ we have  $\overline{(A \otimes B)} = \overline{A} \otimes \overline{B}$, $(A \otimes B)^T = A^T \otimes B^T$, and $(A \otimes B)^* = A^* \otimes B^*$ so that if, for example, $A,B$ are symmetric, i.e., $A^T=A$ and $B^T=B$, then their Kronecker product is symmetric, i.e., $(A \otimes B)^T = A\otimes B$, but in general their matrix product $AB$ (when their sizes are compatible for their product to be defined) is not symmetric if the matrices $A$ and $B$ do not commute, i.e., $(AB)^T=BA\not=AB$.




Now in comparison, the realization problem with symmetries can be solved synthetically in a different way. The conventional approach (going at least as far back as the work of D. Alpay and I. Gohberg \cite{88AG} in the single variable case with an emphasis on minimal realizations, an approach that was also used in \cite{90ABGR}, \cite{92ABGR}, \cite{94bABGR}, \cite{94aABGR} and, for the multivariate free noncommutative power-series case, in \cite{05AK}) is to first solve the realization problem using \textit{matrix products} to get a realization, not necessarily with the desired symmetries, for a given rational matrix function $f(z)$ having symmetries and then apply the symmetry operation to realize it (possibly after some additional manipulations) into the desired realization form with symmetries \{e.g., the symmetry operation applied to a function $f(z)$ satisfying the Hermitian symmetry $f(z)=f(\overline{z})^*$ would be $\frac{1}{2}[f(z)+f(z)^*]$\}, for instance, see \cite[Theorem 4.2 and Theorem 4.9]{18HMS} and their proofs. In particular, this is the approach taken in 2018 by J. Helton, T. Mai, and R. Speicher \cite{18HMS} in treating the realization problem for a given rational matrix function $f(z)$ of the noncommutative variables $z=(z_1,\ldots,z_n)$, which is not necessarily regular at zero, in the formal linear representation form
\begin{align}\label{FormalLinearRepr}
   f(z) = -uQ(z)^{-1}v,
\end{align} where $u$ and $v$ are complex matrices and $Q(z)$ is a linear matrix pencil of the form 
\begin{align}\label{FormalLinearReprQPencil}
    Q(z)=Q^{(0)}+Q^{(1)}z_1+\cdots+Q^{(n)}z_n,
\end{align} 
with $Q^{(j)}$ square complex matrices of the same size for $j=0,\ldots, n$. Their synthetic approach (using sums, matrix product, and inverses, see \cite[Algorithm 4.3 and 4.11]{18HMS}), proves that every such function $f(z)$ has a formal linear representation form \cite[Theorem 4.2 and 4.12]{18HMS} and in the case of the Hermitian symmetry, has a self-adjoint formal linear representation form \cite[Theorem 4.9 and 4.14]{18HMS} in which in (\ref{FormalLinearRepr}) and (\ref{FormalLinearReprQPencil}), $u=v^*$ and $Q^{(j)}={Q^{(j)}}^*$, i.e., are Hermitian matrices for each $j=0,\ldots,n$. In this context, using the following linear matrix pencil in $2\times 2$ block matrix form \begin{align}
    A(z)&=\left[\begin{array}{c;{2pt/2pt}c}
    A_{11}(z) & A_{12}(z)  \\ \hdashline 
    A_{21}(z) & A_{22}(z)
    \end{array} \right]=\left[\begin{array}{c;{2pt/2pt}c}
    0 & u  \\ \hdashline 
    v & Q(z)
    \end{array} \right] =A_0+A_1z_1+\cdots+A_nz_n,\\ A_0&=\left[\begin{array}{c;{2pt/2pt}c}
    0 & u  \\ \hdashline 
    v & Q^{(0)}
    \end{array} \right],\;\;A_l=\left[\begin{array}{c;{2pt/2pt}c}
    0 & 0  \\ \hdashline 
    0 & Q^{(j)}
    \end{array} \right],\;\;\text{for }l=1,\ldots,n,
\end{align} 
the function $f(z)$ in (\ref{FormalLinearRepr}) is the Schur complement of $A(z)=[A_{ij}(z)]_{i,j=1,2}$ with respect to the $(2,2)$-block $A_{22}(z)=Q(z)$, i.e., \begin{align}
   f(z) = -uQ(z)^{-1}v = A(z)/A_{22}(z).
\end{align}
In particular, restricting to $n$ complex variables $z=(z_1,\ldots,z_n)$, this is just a special case of a Bessmertny\u{\i} realization of $f(z)$ whenever $f(z)$ is also a square matrix-valued rational function.

It should be pointed out here that the existence of a formal linear representation (\ref{FormalLinearRepr}) and (\ref{FormalLinearReprQPencil}) for any \textit{scalar} noncommutative rational function is well known, see \cite[pp. 4 and 5]{18HMS}, \cite[p. 614]{17KPV}, \cite{94CR}, \cite{99CR}, and the references within. And in this context, the application of the synthetic approach for realization using the operations of sums, matrix products, and inverses also appears, for instance, see \cite[p. 312, Corollary 1.3]{99CR}, and a good minimal realization theory exists, see \cite[Theorem 1.4, Corollary 1.6, and Theorem 1.7]{99CR}. 

Although we have discussed several special cases of the Bessmertny\u{\i} realization (i.e., Kalman-type to formal linear realizations), there are a few others worth mentioning, which are summarized in Table \ref{BessmerntyiSepcialCasesTable}. The columns in this table are organized in the following order: name of the realization and its regularity, the form of that realization, the structure of the partitioned $2\times 2$ block matrix whose Schur complement with respect to the $(2,2)$-block is that form, the linear pencil equal to that block matrix, and reference(s) to that realization. The rows are collected into three groups. The first group is the Bessmertny\u{\i} realization (the $1$st row), whereas the second and third group are the single- and multi-variable special cases, respectively. One thing to point out is the realization that breaks the pattern in the $2$nd column, that is not obviously a Schur complement, is the butterfly realization (in the last row). This is due to the fact that the butterfly realization is actually the following sum of two Schur complements \begin{align}
    \left.\left[\begin{array}{c; {2pt/2pt} c }
r(z) & \Lambda(z) \\\hdashline[2pt/2pt]
\Lambda(z)^T & A(z)-J
\end{array}\right]
\right/
\begin{bmatrix}
A(z)-J
\end{bmatrix} +  \left.\left[\begin{array}{c; {2pt/2pt} c }
0 & \ell(z) \\\hdashline[2pt/2pt]
\ell(z)^T & -I
\end{array}\right]
\right/
\begin{bmatrix}
-I
\end{bmatrix},
\end{align}
which is, by Proposition \ref{PropSumOfSchurComps}, the Schur complement of the block matrix in the $3$rd column.

In conclusion, our synthetic approach to the Bessmertny\u{\i} realization problem which naturally incorporate symmetries by using the Kronecker product appears to be new in realizability theory for multivariate rational matrix functions. Likely, it can be generalized and used in the commutative and noncommutative setting for rational matrix functions of any size. Furthermore, our results in Section \ref{SecSchurComplAlgebraAndOps} may be useful in other ways besides using it to prove the Bessmertny\u{\i} realization theorem (Theorem \ref{ThmBessmRealiz}). For instance, in \cite{21SW} (see also \cite{21AS}) the authors used the results in Section \ref{SecSchurComplAlgebraAndOps} to give a short and elementary proof of the symmetric determinantal representation for multivariate polynomials over the reals and, more generally, over an arbitrary field with characteristic $\not =2$. There is one other potential application worth mentioning (besides that which we already mentioned on realizability problems in the theory of composites and the Bessmertny\u{\i} class of functions). One major problem in realization theory for multivariate rational matrix functions (in the nonregular case) is that of constructing minimal or reduced dimensional realizations, see \cite[pp. 28 and 31]{18HMS}, \cite[p. 1478]{05BGM}, \cite{14MD}, \cite{08NB}, and \cite{01KG}, for instance. In particular, there are no general minimal realization constructions for Bessmertny\u{\i} realizations nor are there general bounds for the dimension of realization in current realization algorithms. Our methods in this paper along with the results in Sec. \ref{SecSchurComplAlgebraAndOps} may help in regard to this problem.
\begin{table}
\vspace{-3cm}
\hspace{-3.34cm}
\begin{tabular}{|>{\centering\arraybackslash}m{1in}| >{\centering\arraybackslash}m{1.8in}|>{\centering\arraybackslash}m{1.8in} |>{\centering\arraybackslash}m{1.8in} |>{\centering\arraybackslash}m{0.25in}|}
\hline
Name & Form & Block Matrix & Linear Pencil & Refs. \\ 
\hline
\hline
\shortstack{\\\textbf{\textit{Bessmertny\u{\i}}}\\ \textbf{\textit{realization}} \\ \scriptsize{(\textit{\textbf{Universal}})} \\ \scriptsize{$z=(z_1,\ldots, z_n)$} }  & $A_{11}(z)-A_{12}(z)A_{22}^{-1}(z)A_{21}(z)$ & \shortstack{\makecell[c]{$\left[\begin{array}{c;{2pt/2pt}c}A_{11}(z)&  A_{12}(z)\\ \hdashline A_{21}(z) & A_{22}(z)\end{array} \right]$\\ } } & $A_{0}+z_1A_{1}+\cdots + z_nA_{n}$ & \shortstack{\cite{82MB}\\\cite{02MB}\\ \cite{21AS}} \\ 
\hline
\hline
 \shortstack{\\\textit{Kalman-type} \\\scriptsize{(\textit{Regular at $z_1=\infty$})}} & $D+C(z_1I-A)^{-1}B$ & \makecell[c]{$\left[\begin{array}{c;{2pt/2pt}c}D&  C\\ \hdashline B & A-z_1I\end{array} \right]$} & $\begin{bmatrix} D & C\\B & A \end{bmatrix} + z_1\begin{bmatrix} 0 & 0\\0 & -I \end{bmatrix}$ & \shortstack{\\\cite{65RK}\\\cite{63RK}\\\cite{08BGKR}}
\\ \hline 
 \shortstack{\\ \textit{Descriptor} \\ \textit{realization} \\
 \scriptsize{(\textit{Nonregular at}}\\ \scriptsize{\textit{$z_1=\infty$ allowed})}} & $D+C(z_1E-A)^{-1}B$ & \shortstack{\\\makecell[c]{$\left[\begin{array}{c;{2pt/2pt}c}D& C \\ \hdashline B & A-z_1E\end{array} \right]$}\\ \textit{``regular:"} $\det E\not= 0$, \\ \textit{``non-regular:"} \\ $\det E= 0$ \textit{allowed}} & $\begin{bmatrix} D & C\\B & A \end{bmatrix} + z_1\begin{bmatrix} 0 & 0\\0 & -E \end{bmatrix}$ & \shortstack{\\ \cite{77DL} \\ \cite{78DL}}
\\ \hline \hline
 \shortstack{\textit{Fornasini-} \\ \textit{Marchesini} \\ \textit{realization} \\ (\scriptsize{\textit{Regular at $z=0$}})} & \shortstack{\\ \makecell[c]{$D+C(I-Z(z)A)^{-1}Z(z)B$} \\$A=\begin{bmatrix}
 A_i
 \end{bmatrix}_{i=1,\ldots,n}$\\ $B=\begin{bmatrix}
 B_i
 \end{bmatrix}_{i=1,\ldots,n}$} & \shortstack{\\ \makecell[c]{$\left[\begin{array}{c;{2pt/2pt}c}D& C \\ \hdashline Z(z)B & Z(z)A-I \end{array} \right]$} \\ $Z(z)=\begin{bmatrix}z_1I_{\mathcal{H}}& \cdots & z_nI_{\mathcal{H}}\end{bmatrix}$} & \shortstack{ \makecell[c]{$\begin{bmatrix} D & C\\0 & -I \end{bmatrix} + \sum\limits_{j=1}^n z_j\begin{bmatrix} 0 & 0 \\ B_j & A_j \end{bmatrix}$}} & \shortstack{\\\cite{05BGM} \\\cite{78FM}}
\\  \hline 
 \shortstack{\textit{Givone-} \\ \textit{Roesser} \\ \textit{realization} \\ (\scriptsize{\textit{Regular at $z=0$}})} & \shortstack{ \makecell[c]{$D+C(I-Z(z)A)^{-1}Z(z)B$} \\$A=\begin{bmatrix}A_{ij}\end{bmatrix}_{i,j=1,\ldots,n}$\\$B=\begin{bmatrix}B_{ij}\end{bmatrix}_{i,j=1,\ldots,n}$ } & \shortstack{\\ \makecell[c]{$\left[\begin{array}{c;{2pt/2pt}c}D& C \\ \hdashline Z(z)B & Z(z)A-I \end{array} \right]$} \\ \makecell[c]{$Z(z)=\begin{bmatrix}
 z_1I_{\mathcal{H}} && \\& \ddots &\\&&z_nI_{\mathcal{H}}
 \end{bmatrix}$}} & \shortstack{\\\makecell[c]{\vspace{0.2cm}$\begin{bmatrix} D & C\\0 & -I \end{bmatrix} + \sum\limits_{j=1}^n z_j\begin{bmatrix} 0 & 0\\ B_j & A_j \end{bmatrix}$}} & \shortstack{\\\cite{05BGM}\\ \cite{72GR}\\ \cite{73GR}\\\cite{75RR}}
\\  \hline 
\shortstack{\\\textit{Formal} \\ \textit{linear} \\ \textit{realization} \\  \scriptsize{(\textit{Universal})}} & \shortstack{$-uQ(z)^{-1}v$\\$Q(z)=Q^{(0)}+\sum\limits_{j=1}^n z_jQ^{(j)}$ } &  \makecell[c]{$\left[\begin{array}{c;{2pt/2pt}c}0&  u\\ \hdashline v & Q(z) \end{array} \right]$} & \shortstack{\\ $\begin{bmatrix} 0 & u\\v & Q^{(0)} \end{bmatrix} +\sum\limits_{j=1}^n z_j\begin{bmatrix} 0 & 0\\0 & Q^{(j)} \end{bmatrix}$} & \shortstack{\\\cite{18HMS}\\\cite{17KPV}\\ \cite{94CR}\\ \cite{99CR}}
\\\hline 
\shortstack{\textit{Recognizable}\\ {\textit{realization}}  \\\scriptsize{(\textit{Regular at $z=0$})}} & \shortstack{\\$C(I-A(z))^{-1}B$ \\ $A(z)=\sum\limits_{j=1}^n z_jA_j$} & \makecell[c]{$\left[\begin{array}{c;{2pt/2pt}c}0& C \\ \hdashline B & A(z)-I \end{array} \right]$} & \shortstack{\\$\begin{bmatrix} 0 & C\\ B & -I \end{bmatrix} +\sum\limits_{j=1}^n z_j\begin{bmatrix} 0 & 0\\0 & A_j \end{bmatrix}$} &  \shortstack{\cite{05BGM}\\\cite{10BR}}
\\\hline 
 \shortstack{\textit{Descriptor} \\ \textit{realization} \\  \scriptsize{(\textit{Nonregular at}}\\ \scriptsize{\textit{$z=0$ allowed})} } & \shortstack{$D+C(E-A(z))^{-1}B$ \\ $A(z)=\sum\limits_{j=1}^n z_jA_j$} & \shortstack{\\\makecell[c]{$\left[\begin{array}{c;{2pt/2pt}c}D& C \\ \hdashline B & A(z)-E \end{array} \right]$}\\ \textit{``regular:"} $\det E\not= 0$, \\ \textit{``non-regular:"} \\ $\det E= 0$ \textit{allowed}} &  \shortstack{$\begin{bmatrix} D & C\\ B & -E \end{bmatrix} + \sum\limits_{j=1}^n z_j\begin{bmatrix} 0 & 0\\0 & A_j \end{bmatrix}$} &\cite{18HMS} \\\hline 
 \shortstack{\textit{Realization}\\ \textit{centered at $0$} \\ \\\scriptsize{(\textit{Regular at $z=0$})}}& \shortstack{\\$D+C(I- A(z))^{-1}(B(z))$ \\ $A(z)= \sum\limits_{j=1}^n z_jA_j$\\ $B(z)=\sum\limits_{j=1}^n z_jB_j$}& \makecell[c]{$\left[\begin{array}{c;{2pt/2pt}c} D & C \\\hdashline B(z)   & A(z) - I \end{array} \right]$} & \shortstack{\\$\begin{bmatrix} D & C\\ 0 & -I \end{bmatrix} + \sum\limits_{j=1}^n z_j\begin{bmatrix} 0 & 0\\B_j & A_j \end{bmatrix}$} &\cite{05BGM} \\\hline 
 \shortstack{\textit{Butterfly} \\ \textit{realization} \\\scriptsize{(\textit{Regular at $z=0$})}} & \makecell[c]{\\\shortstack{$r(z)+\ell(z)\ell(z)^T$\\$\phantom{A}+\Lambda(z)(J- A(z))^{-1}\Lambda(z)^T$\\$r(z)=r_0+r_1z_1$\\$\ell(z)= \sum\limits_{j=1}^n z_j\ell_j$\\ $\Lambda(z)=\Lambda_0+\sum\limits_{j=1}^nz_j\Lambda_j$\\ $J^2=I$, $J^T=J$\\$A(z)= \sum\limits_{j=1}^n z_jA_j$, $A_j=A_j^T$}} &
 \makecell[c]{\shortstack{$\left[\begin{array}{c;{2pt/2pt}cc} r(z) & \Lambda(z) & \ell(z) \\ \hdashline \Lambda(z)^T & A(z)-J & 0 \\ \ell(z)^T & 0  & -I\end{array} \right]$ \\ \textit{``monic:"} $J=I$,\\\textit{``pure:"} $\Lambda_0 =0$}} & \makecell[c]{$
     \begin{bmatrix} r_0 & \Lambda_0 & 0\\ \Lambda_0^T & -J & 0 \\ 0&0&-I\end{bmatrix}$ \\ $+ z_1\begin{bmatrix} r_1 & \Lambda_1 & \ell_1\\ \Lambda_1^T & A_1 &0  \\ \ell_1 &0 &0\end{bmatrix} $\\ $+ \sum\limits_{j=2}^n z_j\begin{bmatrix} 0 & \Lambda_j & \ell_j\\  \Lambda_j^T & A_j&0\\\ell_j&0&0 \end{bmatrix}$} & \shortstack{\cite{06HMV}}
\\\hline
\end{tabular}\caption{Special cases of Bessmertny\u{\i} realizations.}\label{BessmerntyiSepcialCasesTable}
\end{table} 
\section{Preliminaries}\label{SecPreliminaries}

Let $\mathbb{C}$ denote the field of complex numbers, the set of $n$-tuples of complex numbers by $\mathbb{C}^n$, where $z = (z_1, \ldots, z_n)$ denotes a point in $\mathbb{C}^n$, and $\mathbb{C}^{m\times n}$ denotes the set of all $m\times n$ matrices with entries in $\mathbb{C}$. Complex conjugate of a complex number $c$ will be denoted as $\overline{c}$. The transpose and conjugate transpose of a matrix $A$ will be denoted by $A^T$ and $A^*$ (i.e., $A^*=\overline{A}^T$), respectively, and the inverse of an invertible matrix $A$ will be denoted by $A^{-1}$. The symbol $\det A$ will denote the determinant of a square matrix $A$. The $m\times m$ identity matrix and the $m\times m$ zero matrix will be denoted by $I_m$ and $0_m$, respectively.

We will denote any matrix $A$ that is partitioned in $2\times2$ block matrix form as 
\[A = [A_{ij}]_{i,j=1,2} = \begin{bmatrix}
A_{11} &A_{12} \\ A_{21} &A_{22}
\end{bmatrix}=\left[\begin{array}{c; {2pt/2pt} c }
A_{11} & A_{12} \\\hdashline[2pt/2pt]
A_{21} & A_{22}
\end{array}\right],\] 
where the matrix $A_{ij}$ is called the $(i,j)$-block of $A$. The direct sum $A\oplus B$ of two matrices $A$ and $B$ is defined to be the $2\times2$ block matrix
\[
A\oplus B=%
\begin{bmatrix}
A & 0\\
0 & B
\end{bmatrix}
.
\]
The Schur complement of a matrix $A=[A_{ij}]_{i,j=1,2}$ with respect to $A_{22}$ [i.e., with respect to its $(2,2)$-block $A_{22}$], will be denoted as $A/A_{22}$ and defined by 
\[ A /A_{22} = A_{11} - A_{12} A_{22}^{-1} A_{21},\]
when $A_{22}^{-1}$ exists.

Some key elementary properties of the Schur complement, under the assumption that $A_{22}^{-1}$ exists, are:
\begin{align}
\left(\lambda A \right) / \left(\lambda A \right)_{22} &= \lambda \left( A/A_{22}\right),\label{ElemPropSchurComplementHomogeneousDegree1}\\
\overline{\left( A/A_{22} \right)} &= \left( \overline{A} \right) / \left( \overline{A} \right)_{22},\label{ElemPropSchurComplementConjugationPreserving}
\\ \left( A/A_{22} \right)^{T} &= \left( A^{T} \right) / \left( A^{T} \right)_{22},\label{ElemPropSchurComplementTransposePreserving}
\\ \left( A/A_{22} \right)^{\ast} &= \left( A^{\ast} \right) / \left( A^{\ast} \right)_{22},\label{ElemPropSchurComplementConjugationTransposePreserving}
\end{align}
for every $\lambda\in \mathbb{C}\setminus\{0\}$.

The Kronecker product of the two matrices $A = [a_{ij}]^{m,n}_{i,j = 1}$ $\in \mathbb{C}^{m\times n}$ and $B \in \mathbb{C}^{p\times q}$ is the matrix  $A \otimes B\in \mathbb{C}^{mp\times qn}$ defined as \[A \otimes B = [a_{ij}B]^{m,n}_{i,j = 1} \in \mathbb{C}^{mp\times nq}.\]

A linear matrix pencil is a $\mathbb{C}^{m\times m}$-valued matrix function of the form \[A(z) = A_0 + z_1A_1 + \cdots + z_nA_n,\] where $A_0, A_1, \ldots, A_n\in \mathbb{C}^{m\times m}$. If, in addition, the matrices $A_{k}$ are all $2\times 2$ block matrices partitioned conformally, then we can partition the matrix function $A(z)$ conformally as a $2\times 2$ block matrix and denote this by
\[
A(z) =[A_{ij}(z)]_{i,j=1,2}= \begin{bmatrix}
A_{11}(z) & A_{12}(z)\\
A_{21}(z) & A_{22}(z)
\end{bmatrix}=\left[\begin{array}{c; {2pt/2pt} c }
A_{11}(z) & A_{12}(z) \\\hdashline[2pt/2pt]
A_{21}(z) & A_{22}(z)
\end{array}\right],
\]
in other words, this block structure of $A(z)$ is independent of the variable $z$. In this case, we may write the Schur complement
\begin{align}
    A(z)/A_{22}(z)=A_{11}(z)-A_{12}(z)A_{22}(z)^{-1}A_{21}(z),
\end{align}
whenever $A_{22}(z)$ is an invertible matrix, and treat it as rational matrix-valued function of $z$. For any rational $\mathbb{C}^{m\times m}$-valued function $A(z)$, we will write $\det A(z)\not \equiv 0$ whenever $\det A(z)$ is not identically equal to $0$ as a rational function. 

For simplicity sake, in the statement of Bessmertny\u{\i}
Realization Theorem (Theorem \ref{ThmBessmRealiz}) below, we will abuse this notation (just as M. Bessmertny\u{\i} does, see \cite[p. 170, last para.\ in Sec.\ 1]{02MB}) and treat a linear matrix pencil $A(z)=A_{11}(z)=A_0 + z_1A_1 + \cdots + z_nA_n$ as a degenerate case of Schur complement $A(z)=A(z)/A_{22}(z)$, i.e., the matrix $A_{22}(z)$ is a $0\times 0$ matrix, in which case we can ignore the statement $\det A_{22}(z)\not \equiv 0$. But throughout the rest of the paper, we will not abuse this notation of the Schur complement in order to avoid confusion and because the resulting statements we want to prove often are quite different in their proofs in the degenerate case vs. the non-degenerate case (as can be seen in Sec.\ \ref{SecSchurComplAlgebraAndOps}).

\section{Extension of the Bessmertny\u{\i}
Realization Theorem\label{SecMainResultBessmRealiz}}

The following is our extension of the main theorem of M.\ Bessmertny\u{\i} \cite[Theorem 1.1]{02MB}. The proof of statements (a)-(c) are originally due to M.\ Bessmertny\u{\i} \cite{82MB, 02MB}. The statements (d) and (e) are new and extends his results (our modest contributions). Our constructive proof of the theorem below uses the results in Section \ref{SecSchurComplAlgebraAndOps} (see Fig. \ref{fig-FlowDiagPrfThmBessmRealiz}) and in general the approach we take may be of independent interest in the areas of linear algebra, operator theory, and multidimensional systems theory. To illustrate our approach of this theorem, we give two examples of realizations in Section \ref{SecExamplesOfBessRealizThm}. Before we state our main theorem though, we want to make a remark regarding statement (a).

\begin{remark}\label{RemSomeConfusionRegardingStatementAOfBessmertnyi}
It appears that there is an error or at least some confusion that needs to be cleared up in regard to one of the statements in the main theorem of M.\ Bessmertny\u{\i} \cite[Theorem 1.1.c)]{02MB} which seems to have propagated in the literature (see \cite[p. 256]{04KV}). Namely, if a rational $\mathbb{C}^{k\times k}$-valued matrix function $f(z_1,\ldots, z_n)$ of $n$-variables satisfies $f(\lambda z)=\lambda f(z)$, i.e., is a homogeneous degree one function, which can be represented as a Schur complement $f(z)=A(z)/A_{22}(z)$ of a linear matrix pencil $A(z)=[A_{ij}(z)]_{i,j=1,2}=A_0+z_1A_1+\cdots+z_nA_n,$ where $\det A_{22}(z)\not\equiv 0$, then it need not be the case that $A_0=0$. To see this consider the following simple example:
\begin{align}
f(z)=[z_1]=\left.\left[\begin{array}{c; {2pt/2pt} c }
z_1 & 0 \\\hdashline[2pt/2pt]
0 & 1
\end{array}\right]
\right/
\begin{bmatrix}
1
\end{bmatrix}=A(z)/A_{22}(z),
\end{align}
where
\begin{gather}
A(z)=A_0+z_1A_1=\left[\begin{array}{c; {2pt/2pt} c }
A_{11}(z) & A_{12}(z) \\\hdashline[2pt/2pt]
A_{21}(z) & A_{22}(z)
\end{array}\right]=\left[\begin{array}{c; {2pt/2pt} c }
z_1 & 0 \\\hdashline[2pt/2pt]
0 & 1
\end{array}\right],\\
A_0=\left[\begin{array}{c; {2pt/2pt} c }
0 & 0 \\\hdashline[2pt/2pt]
0 & 1
\end{array}\right],\;\;A_1=\left[\begin{array}{c; {2pt/2pt} c }
1 & 0 \\\hdashline[2pt/2pt]
0 & 0
\end{array}\right].
\end{gather}
This example and our statement below in Theorem \ref{ThmBessmRealiz}.(a) below should now help to clear up any confusion regarding \cite[Theorem 1.1.c)]{02MB}.
\end{remark}

\begin{theorem}[Bessmertny\u{\i} Realization Theorem]\label{ThmBessmRealiz}
Every rational $\mathbb{C}^{k\times k}$-valued matrix function $f(z_1,\ldots, z_n)$ of $n$-variables can be represented as a Schur complement $f(z)=A(z)/A_{22}(z)$ of a linear matrix pencil $A(z)=[A_{ij}(z)]_{i,j=1,2}=A_0+z_1A_1+\cdots+z_nA_n,$ where $\det A_{22}(z)\not\equiv 0.$ Moreover, the following functional relations are true:
\begin{itemize}
\item[(a)] $f(\lambda z)=\lambda f(z)$ (i.e., $f$ is a homogeneous degree-one function) if and only if one can choose $A_0=0$.
\item[(b)] $f(z)=\overline {f(\overline{z})}$ if and only if one can choose $A_j=\overline{A_j}$, for all $j=0,\ldots n$.
\item[(c)] $f(z)=f(z)^T$ if and only if one can choose $A_j=A_j^T$, for all $j=0,\ldots n$.
\item[(d)] $f(z)=f(\overline{z})^*$ if and only if one can choose $A_j=A_j^*$, for all $j=0,\ldots n$.
\item[(e)] $f$ satisfies any combination of the (a)-(d) if and only if one can choose the $A_j$ to have simultaneously the associated properties.
\end{itemize}
\end{theorem}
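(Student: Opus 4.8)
The plan is to give a constructive proof following the steps (i)--(v) listed in the introduction, deferring the algebra of Schur complements of linear pencils --- that this class is closed under linear combinations, Kronecker products, and inversion, with explicit formulas --- to Section~\ref{SecSchurComplAlgebraAndOps}, and then to bootstrap the homogeneous case (a) from the non-homogeneous one by a dehomogenization device. First I would dispose of the easy implications. The ``if'' direction of (a) is immediate: if $A_0=0$ then $A(\lambda z)=\lambda A(z)$, so by \eqref{ElemPropSchurComplementHomogeneousDegree1}, $f(\lambda z)=A(\lambda z)/A_{22}(\lambda z)=(\lambda A(z))/(\lambda A(z))_{22}=\lambda\bigl(A(z)/A_{22}(z)\bigr)=\lambda f(z)$. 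Likewise the necessity directions of (b), (c), (d) are immediate from \eqref{ElemPropSchurComplementConjugationPreserving}, \eqref{ElemPropSchurComplementTransposePreserving}, \eqref{ElemPropSchurComplementConjugationTransposePreserving}: e.g.\ a pencil with $A_j=A_j^{*}$ for all $j$ satisfies $A(\overline z)^{*}=A(z)$ with the same block structure, so $f(\overline z)^{*}=\bigl(A(\overline z)/A_{22}(\overline z)\bigr)^{*}=A(\overline z)^{*}/\bigl(A(\overline z)^{*}\bigr)_{22}=A(z)/A_{22}(z)=f(z)$; the necessity direction of (e) follows by combining these. So the real content is the existence of a realization together with the sufficiency directions of (a)--(e), i.e.\ producing pencils with prescribed symmetries.

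For existence and for the sufficiency directions of (b)--(d), and of (e) restricted to combinations not involving (a), I would write $f(z)=P(z)/q(z)$ with $P$ a matrix polynomial and $q\not\equiv0$ a scalar polynomial, first arranging (by multiplying numerator and denominator by a suitable polynomial and then symmetrizing the numerator) that $P$ and $q$ carry whatever combination of symmetries $f$ has: $P=P^{T}$ when $f=f^{T}$; $P,q$ with real coefficients when $f(z)=\overline{f(\overline z)}$; $P$ with Hermitian coefficient matrices and $q$ real when $f(z)=f(\overline z)^{*}$; all simultaneously when several hold. Then I would run the chain (i)--(v): each $z_j$ is the degenerate Schur complement of the $1\times1$ pencil $[z_j]$; scalar multiples and sums of realizable square functions are realizable by the linear-combination formula of Section~\ref{SecSchurComplAlgebraAndOps}; Kronecker products of realizable functions are realizable by the Kronecker-product formula there, so every scalar monomial $z^{\alpha}$ and hence (taking Kronecker products with constant matrices and summing) every matrix polynomial is realizable; and if $Q$ is a matrix polynomial with $\det Q\not\equiv0$, then writing $Q=B(z)/B_{22}(z)$ for a linear pencil $B$ with $\det B_{22}\not\equiv0$, the linear pencil obtained by bordering $B(z)$ with suitable constant blocks built from $\pm I$ or $\pm iI$ (possibly after replacing $B(z)$ by $-B(z)$) has Schur complement with respect to its $(2,2)$-block $B(z)$ equal to $Q(z)^{-1}$, its $(2,2)$-block having nonvanishing determinant since $\det B(z)=\det B_{22}(z)\det Q(z)\not\equiv0$; hence $Q(z)^{-1}$ is realizable, and therefore $f(z)=P(z)/q(z)=[q(z)]^{-1}\otimes P(z)$ is realizable. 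The point throughout is to choose, at each application of these three formulas, the variant that preserves real/symmetric/Hermitian structure of the input pencils all at once --- for the Kronecker product this follows from $(A\otimes B)^{T}=A^{T}\otimes B^{T}$ and $(A\otimes B)^{*}=A^{*}\otimes B^{*}$, and for inversion from the freedom in the bordering blocks and in the sign of $B(z)$.

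For the sufficiency direction of (a) I would dehomogenize. If $f$ is rational and homogeneous of degree one, its numerator and denominator may be taken homogeneous, so the denominator does not vanish identically on $z_n=1$, and $h(w_1,\dots,w_{n-1}):=f(w_1,\dots,w_{n-1},1)$ is a well-defined rational matrix function of $n-1$ variables with $f(z)=z_n\,h(z_1/z_n,\dots,z_{n-1}/z_n)$ by homogeneity. Moreover $h$ inherits any of the functional symmetries $\overline{h(\overline w)}=h(w)$, $h(w)^{T}=h(w)$, $h(\overline w)^{*}=h(w)$ that $f$ has, since $\overline 1=1$. By the non-homogeneous case applied to $h$ in $n-1$ variables with those symmetries, there is a linear pencil $B(w)=B_0+w_1B_1+\cdots+w_{n-1}B_{n-1}$ with $w$-independent block structure, $\det B_{22}(w)\not\equiv0$, and $B_0,\dots,B_{n-1}$ real/symmetric/Hermitian as appropriate, such that $h(w)=B(w)/B_{22}(w)$. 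Promoting the constant term into the $z_n$-slot, set
\[
\widetilde B(z):=z_nB_0+z_1B_1+\cdots+z_{n-1}B_{n-1},
\]
a homogeneous linear pencil ($A_0=0$) with the same block structure, so $\widetilde B_{ij}(z)=z_nB_{ij}(z_1/z_n,\dots,z_{n-1}/z_n)$ blockwise. Then
\[
\widetilde B(z)/\widetilde B_{22}(z)=z_n\bigl(B(z_1/z_n,\dots,z_{n-1}/z_n)/B_{22}(z_1/z_n,\dots,z_{n-1}/z_n)\bigr)=z_n\,h(z_1/z_n,\dots,z_{n-1}/z_n)=f(z),
\]
and likewise $\det\widetilde B_{22}(z)\not\equiv0$, while $\widetilde B(z)$ is real/symmetric/Hermitian whenever $B(w)$ is, having the same coefficient matrices $B_0,\dots,B_{n-1}$ merely re-indexed. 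This proves the sufficiency direction of (a); combining this dehomogenization step with the symmetry-preserving realization of $h$ from the previous paragraph gives the sufficiency direction of (e) in full generality.

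The main obstacle, and where essentially all the work lies, is Section~\ref{SecSchurComplAlgebraAndOps}: proving closure of the class of Schur complements of linear pencils under linear combinations, (Kronecker and ordinary) products, and inversion, with explicit pencils, and --- crucially for (b)--(e) --- that these constructions can be organized to preserve real, symmetric, and Hermitian structure simultaneously. Inversion is the most delicate: the bordering that turns $B(z)/B_{22}(z)$ into $Q(z)^{-1}$ introduces constant blocks whose sign and phase, together with a possible replacement of $B(z)$ by $-B(z)$, must be tuned to land in the Hermitian or symmetric class while still yielding $+Q(z)^{-1}$ rather than $-Q(z)^{-1}$. Within the proof of the theorem itself, the subtle point is the homogeneous case (a): the direct route $f=[q]^{-1}\otimes P$, as organized in (i)--(v), visibly produces a pencil with nonzero constant term whenever $q$ has positive degree --- it builds in a realization of $[q]^{-1}$, which is homogeneous of negative degree and, by the easy direction of (a), admits no pencil realization with $A_0=0$ --- even though $f$ itself does admit one; this mismatch is exactly why the separate dehomogenization argument is needed. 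Everything else is a careful but routine verification that each operation in the chain (i)--(v), and the dehomogenization, transports the prescribed combination of symmetries.
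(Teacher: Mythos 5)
Your route is essentially the paper's own: the same chain (i)--(v) applied to a symmetrized factorization $f=\tfrac{1}{q}P$ for the existence part and for (b)--(d), and the same dehomogenization device for (a) and for (e) when (a) is present (the paper freezes $z_1=1$ rather than $z_n=1$, which is immaterial), with the pencil-level closure results deferred to Section~\ref{SecSchurComplAlgebraAndOps}. Two small remarks: the paper's inversion step needs none of the sign/phase tuning you anticipate --- the bordering of Proposition~\ref{PropInvOfASchurCompl} (equivalently Lemma~\ref{LemInvIsASchurCompl}), with blocks $0$, $I_k$ and $-A$, preserves real, symmetric and Hermitian structure as it stands and yields $+(A/A_{22})^{-1}$; and your observation about why $[q]^{-1}\otimes P$ cannot directly give an $A_0=0$ pencil, so that a separate dehomogenization is unavoidable for (a), is correct and consistent with the paper.

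There is, however, one genuine gap. Your final step realizes $f=[q]^{-1}\otimes P$ by appealing to ``Kronecker products of realizable functions are realizable,'' but the deferred result (Proposition~\ref{PropRealizOfKroneckerProdOfRealiz}, built on Proposition~\ref{PropKroneckerProdOfTwoSchurComplements} and Lemma~\ref{LemKroneckerProdSchurComplWithAMatrix}) carries the hypotheses $\det A(z)/A_{22}(z)\not\equiv 0$ and $\det B(w)/B_{22}(w)\not\equiv 0$, and its proof genuinely uses invertibility of both factors. For $[q]^{-1}$ this is harmless, but $\det P(z)\equiv 0$ is entirely possible (any rank-deficient matrix polynomial, e.g.\ $P=p(z)E_{11}$ with $k\geq 2$), and then your step (v) as written does not go through; no unconditional Kronecker-closure statement is available in Section~\ref{SecSchurComplAlgebraAndOps} to cite. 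The paper closes exactly this case by a perturbation: choose $z_0$ with $P(z_0)\neq 0$ (real $z_0$ in the real/Hermitian cases) and a nonzero real $\lambda_0$ not an eigenvalue of $P(z_0)$, split $P=P_1+P_2$ with $P_1=P+\lambda_0I_k-P(z_0)$ and $P_2=P(z_0)-\lambda_0I_k$, so that $\det P_j\not\equiv 0$ for $j=1,2$ and the symmetries are preserved, realize $\tfrac{1}{q}P_1$ and $\tfrac{1}{q}P_2$ by the invertible case, and add the two realizations via Lemma~\ref{LemSumSchurComplPlusMatrix} and Proposition~\ref{PropSumOfSchurComps}. (Alternatively one can realize each nonzero scalar $P_{ij}/q=[q]^{-1}\otimes[P_{ij}]$ and then tensor with the constant matrices $E_{ii}$, $E_{ij}+E_{ji}$, $i(E_{ij}-E_{ji})$ using Proposition~\ref{PropScalarProdOfASchurCompl}, which needs no invertibility of the constant factor.) Without some such device your argument fails precisely on the degenerate matrix polynomials; with it, your proposal coincides with the paper's proof.
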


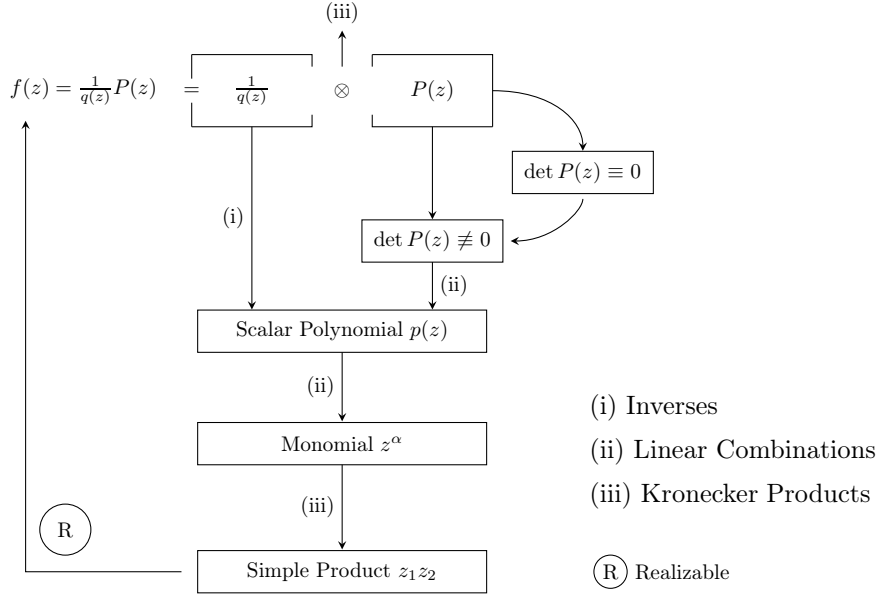
\begin{figure}[h]
\begin{center}
\begin{tikzpicture}[scale=0.8, transform shape,
blockone/.style ={rectangle, draw, text width=6em,align=center, minimum height=2em}%
,
blocktwo/.style ={rectangle, draw, text width=13em,align=center, minimum height=2em}%
,
R/.style ={draw, circle, inner sep=5pt},
Rk/.style ={draw, circle, inner sep=2pt}
]
\path(0,0) node(x) {$=$}
(-1.8,0) node[](fz) {$f(z) = \frac{1}{q(z)}P(z)$}
(1,0) node[very thick](q/z) {$\frac{1}{q(z)}$}
(2.5,0) node[](kp) {$\otimes$}
(4,0) node[](Pz) {$P(z)$};
\draw(2.5,1) coordinate (iii) node[above] {(iii)};
\draw[->,>=stealth] (2.5,.4) -- (2.5,1);
\draw(0.0,0.2) coordinate (1);
\draw(0.0,0.6) coordinate (2);
\draw(2,0.4) coordinate (3);
\draw(1) -- (2);
\draw(2) -| (3);
\draw(0.0,-0.2) coordinate (1);
\draw(0.0,-0.6) coordinate (2);
\draw(2,-0.4) coordinate (3);
\draw(1) -- (2);
\draw(2) -| (3);
\draw(3,0.4) coordinate (4);
\draw(3,0.6) coordinate (5);
\draw(5,-0.6) coordinate (6);
\draw(3,-0.4) coordinate (7);
\draw(4) -- (5);
\draw(5) -| (6);
\draw(6) -| (7);
\draw(1,-0.6) coordinate (8);
\draw(1,-3.64) coordinate (9);
\draw[->,>=stealth] (8) -- node[left] {(i)} (9);
\draw(6.5,-1.36) node[blockone] (detzero) {$\det P(z) \equiv0$};
\draw(4,-2.5) node[blockone] (detnzero) {$\det P(z) \not\equiv0$};
\draw[->,>=stealth] (5,0) .. controls (6,0) and (6.5,-0.5) .. (6.5,-1);
\draw
[->,>=stealth] (6.5,-1.8) .. controls (6.5,-2) and (6,-2.5) .. (5.3,-2.5);
\draw(4,-.6) coordinate (10);
\draw(4,-2.14) coordinate (11);
\draw[->,>=stealth] (10) -- (11);
\draw(4,-2.85) coordinate (12);
\draw(4,-3.64) coordinate (13);
\draw[->,>=stealth] (12) --  node[right] {(ii)} (13);
\draw(2.5,-4) node[blocktwo] (sp) {Scalar Polynomial $p(z)$};
\draw(2.5,-4.35) coordinate (14);
\draw(2.5,-5.5) coordinate (15);
\draw[->,>=stealth] (14) -- node[left] {(ii)} (15);
\draw(2.5,-5.87) node[blocktwo] (m) {Monomial $z^{\alpha}$};
\draw(2.5,-6.22) coordinate (16);
\draw(2.5,-7.63) coordinate (17);
\draw[->,>=stealth] (16) -- node[left] {(iii)} (17);
\draw(2.5,-8) node[blocktwo] (z1z2) {Simple Product $z_1z_2$};

\draw(-2.76,-.5) coordinate (18);
\draw(-.17,-8) coordinate (19);
\draw[->,>=stealth] (19) -| (18);
\draw(-2.1,-7.3) node[R] (R) {R};
\matrix[right] at (6,-6) {
\node[label=right:(i) Inverses] {}; \\
\node[label=right:(ii) Linear Combinations] {}; \\
\node[label=right:(iii) Kronecker Products] {}; \\
};
\draw(6.96,-8.0) node[Rk] [label=right: Realizable] {R};
\end{tikzpicture}
\caption{\;\;Flow diagram for the proof of the Bessmertny\u{\i}
Realization Theorem. \qquad\enspace\enspace\qquad\qquad}\label
{fig-FlowDiagPrfThmBessmRealiz}
\end{center}
\end{figure}

\begin{proof}\label{ProofMainResultBessmRealiz}
The theorem will be proved in a series of steps that reduce the complexity of the problem into simpler realization problems, as is illustrated in the flow diagram of Figure \ref{fig-FlowDiagPrfThmBessmRealiz}. Let $f(z)=f(z_1,\ldots, z_n)$ be a rational $\mathbb{C}^{k\times k}$-valued matrix function of $n$ complex variables $z_1,\ldots,z_n$. Then there exists a nonzero scalar polynomial $q(z)$ and a polynomial $\mathbb{C}^{k\times k}$-valued matrix function $P(z)$ of these $n$-variables such that \[
f\left(z\right)  =\frac{1}{q\left(z\right)} P\left(z\right).
\] Using the Kronecker product $\otimes$ [and treating $q(z)$ as a  polynomial $\mathbb{C}^{1\times 1}$-valued matrix function], we can rewrite this as \[
f\left(z\right)  =\frac{1}{q\left(z\right)}\otimes P\left(z\right).
\] Before we can proceed further, there are two cases we must consider corresponding to whether $\det P\left(  z\right)  \not \equiv 0$ or not.

First, consider the case that $\det P\left(  z\right)  \not \equiv 0.$ By Proposition \ref{PropRealizOfKroneckerProdOfRealiz} (as well as Lemmas \ref{LemRealizOfKroneckerProdsPart1} and \ref{LemRealizOfKroneckerProdsPart2}), we can realize $\frac
{1}{q\left(z\right)}\otimes P\left(z\right)$ if both $\frac
{1}{q\left(z\right)}$ and $P\left(z\right)$ are realizable. By Proposition \ref{PropInvOfASchurCompl}, we can realize $\frac
{1}{q\left(z\right)}$ if $q\left(z\right)$ is realizable.

Second, consider the case $\det P\left(  z\right) \equiv 0$. As the theorem is obviously true if $ P\left(z\right) \equiv 0$, we may assume $P\left(  z\right)  \not \equiv 0$. Then there exists $z_0\in\mathbb{C}^n$ such that $P\left(  z_{0}\right)\not=0$. Fix a nonzero real number $\lambda_{0}$ that is not an eigenvalue of $P\left(  z_{0}\right)$ and consider the two matrix polynomials $P_1\left(z\right)
=P\left(  z\right)  +\lambda_{0}I_{k}-P\left(  z_{0}\right)  $ and $P_2\left(z\right)
=P\left(  z_{0}\right)-\lambda_{0}I_{k}  $. They both satisfy $\det P_j\left(  z\right)  \not \equiv 0,$ for $j=1,2$ and $f(z)=\frac{1}{q\left(  z\right)  }P\left(  z\right)=\frac{1}{q\left(  z\right)  }P_1\left(  z\right)+\frac{1}{q\left(  z\right)  }P_2\left(  z\right)$. Hence, by Lemma \ref{LemSumSchurComplPlusMatrix} and Proposition \ref{PropSumOfSchurComps}, $f(z)$ is realizable if both $\frac{1}{q\left(  z\right)  }P_1\left(  z\right)$ and $\frac{1}{q\left(  z\right)  }P_2\left(  z\right)$ are realizable. Thus, we are back to the first case again.

From considering both of the cases above, it now becomes clear that we just need to be able to realize any arbitrary matrix polynomial $P\left(z\right)$ and scalar polynomial $q\left(z\right)$. We will begin by investigating the realizability of the former and show it reduces to the realizability of the latter.

Suppose $P(z)$ is a polynomial $\mathbb{C}^{k\times k}$-valued matrix function of the $n$ complex variables $z_1,\ldots,z_n$. Then, there exists scalar polynomial
functions $P_{ij}\left(  z\right)  $, for $i$, $j=1,\ldots,k$ such that%
\[
P\left(  z\right)  =\left[  P_{ij}\left(  z\right)  \right]  _{i,j=1}^{k}=
{\textstyle\sum\nolimits_{i=1}^{k}}
{\textstyle\sum\nolimits_{j=1}^{k}}
P_{ij}\left(  z\right)  E_{ij},
\]
where $E_{ij}$, for $i$, $j=1,\ldots,k$, are the standard basis vectors for $
\mathbb{C}^{k\times k}$ (i.e., $E_{ij}$ is the $k\times k$ matrix whose entry in $i$th row, $j$th column is $1$ and the remaining entries are all $0$). 

Therefore, by Lemma \ref{LemSumSchurComplPlusMatrix}, Proposition \ref{PropSumOfSchurComps}, and Proposition \ref{PropScalarProdOfASchurCompl}, $P\left(
z\right)$ is realizable if each scalar polynomial functions
$P_{ij}\left(  z\right)  $, for $i$, $j=1,\ldots,k$ are realizable.
Thus, we have reduced our problem to realizing an arbitrary scalar polynomial $p\left(z\right)$.

Suppose that $p\left( z\right)$ is an arbitrary scalar polynomial of the $n$ complex variables $z_1,\ldots,z_n$ [e.g., $q(z)$ or one of the $P_{ij}(z)$ above]. Then $p\left(z\right)$ can be written uniquely as a linear combination of monomials,
\[
p\left(  z\right)  =
{\textstyle\sum\nolimits_{j=0}^{m}}
a_{j}z^{\alpha_{j}},
\] where $a_j$ are scalars and $z^{\alpha_{j}}$ are monomials, for $j=0,\ldots,m$. Hence, by Lemma \ref{LemScalarMultiSchurCompl}, Lemma \ref{LemSumSchurComplPlusMatrix}, and Proposition \ref{PropSumOfSchurComps}, it follows that $p\left(z\right)$ is realizable if each monomial
$z^{\alpha_{j}}$ is realizable. Thus, we have reduced our problem
to realizing an arbitrary monomial $z^{\alpha}$.

Suppose that $z^{\alpha}$ is a monomial. Then it is realizable by Proposition \ref{PropRealizMonomials}. Let us explain the reason why. The monomial can be written uniquely as products of powers of the independent variables, $z^{\alpha}=\prod_{j=1}^{n}z_j^{\alpha_j}$. As this can be written as the Kronecker product $\prod_{j=1}^{n}z_j^{\alpha_j}=z_1^{\alpha_1}\otimes\cdots\otimes z_n^{\alpha_n}$, then by Proposition \ref{PropRealizOfKroneckerProdOfRealiz} we can realize the monomial $z^{\alpha}$ if we can realize the product $w_{1}w_{2}$ of two independent complex variables $w_1$ and $w_2$, which we can by Lemma \ref{LemRealizProdTwoIndepVar}.

Therefore, we have proven that the rational $\mathbb{C}^{k\times k}$-valued matrix function $f(z)=f(z_1,\ldots, z_n)$ of the $n$ complex variables $z_1,\ldots,z_n$ is realizable.

In the second part of this theorem, we will prove statements a)-e) are true for the rational function $f(z)$. We will achieve this by modifying the proof of the first part of the theorem above, when and where necessary, for each statements (a)-(e). 

First, we will prove statement (a). Suppose that $f(\lambda z)=\lambda f(z)$, i.e., $f(z)$ is also a homogeneous function of degree one. Then
\[
f(z)=z_1f\left(\frac{z}{z_1}\right)=z_1f\left(1,\frac{z_2}{z_1},\ldots,\frac{z_n}{z_1}\right).
\] 
Hence, 
\[
g(w)=f(1,w_2,\ldots,w_{n}),\;\; w=(w_2,\ldots,w_n),
\]
satisfies the hypotheses of the first part of the theorem, and has a realization
\[
g(w)=B(w)/B_{22}(w),\;\;B(w)=A_1+w_2A_2+\cdots+w_nA_n,
\]
implying $f(z)$ has the realization
\[
f(z)=A(z)/A_{22}(z),\;\;A(z)=z_1A_1+z_2A_2+\cdots+z_nA_n,
\]
since
\[
A(z)=z_1A\left(\frac{z}{z_1}\right)=z_1B\left(\frac{z_2}{z_1},\ldots,\frac{z_n}{z_1}\right)
\]
and, by property (\ref{ElemPropSchurComplementHomogeneousDegree1}),
\begin{gather*}
f(z)=z_1f\left(\frac{z}{z_1}\right)=z_1g\left(\frac{z_2}{z_1},\ldots,\frac{z_n}{z_1}\right)\\
=z_1\left[B\left(\frac{z_2}{z_1},\ldots,\frac{z_n}{z_1}\right)/B_{22}\left(\frac{z_2}{z_1},\ldots,\frac{z_n}{z_1}\right)\right]\\
=\left[z_1B\left(\frac{z_2}{z_1},\ldots,\frac{z_n}{z_1}\right)\right]/\left[z_1B_{22}\left(\frac{z_2}{z_1},\ldots,\frac{z_n}{z_1}\right)\right]\\
=\left[z_1A\left(\frac{z}{z_1}\right)\right]/\left[z_1A_{22}\left(\frac{z}{z_1}\right)\right]\\
=A(z)/A_{22}(z).
\end{gather*}
Conversely, suppose that $f(z)$ has a realization \[
f(z)=A(z)/A_{22}(z),\;\;A(z)=z_1A_1+z_2A_2+\cdots+z_nA_n.
\]
Then, since
\[
A(\lambda z)=\lambda A(z),
\] 
it follows by property (\ref{ElemPropSchurComplementHomogeneousDegree1}) that
\[
f(\lambda z)=A(\lambda z)/A_{22}(\lambda z)=\left[\lambda A(z)\right]/\left[\lambda A_{22}(z)\right]=\lambda\left[A(z)/A_{22}(z)\right]=\lambda f(z).
\]
Therefore, statement (a) is true.

Next, we will prove statement (b). Suppose that $f(z)=\overline {f(\overline{z})}$, [i.e., $f(z)$ is a $k\times k$ matrix whose entries are real rational scalar functions of $z$]. Then in the proof above in which we constructed a realization for $f(z)$ from the factorization $f(z)=\frac{1}{q(z)}P(z),$ we may assume that the nonzero scalar polynomial $q(z)$ is a real polynomial and the polynomial $\mathbb{C}^{k\times k}$-valued matrix function $P(z)$ is a real matrix polynomial [i.e.,  $P(z)$ is a $k\times k$ matrix whose entries are real polynomial scalar functions]. In this case, the proof of the realization of such an input $f(z)=\frac{1}{q(z)}P(z)$, as shown by the flow diagram in Fig.\ \ref{fig-FlowDiagPrfThmBessmRealiz}, would output a realization of $f(z)$ with real matrices (i.e., a Bessmertny\u{\i} realization in which the matrices in the linear matrix pencil are all real matrices). The converse of statement (b) is obviously true by property (\ref{ElemPropSchurComplementConjugationPreserving}). Therefore, we have proven statement (b).

Next, we will prove statement (c). Suppose that $f(z)=f(z)^T$. Then in the proof above, in which we constructed a realization for $f(z)$ from the factorization $f(z)=\frac{1}{q(z)}P(z),$ we may assume that the polynomial $\mathbb{C}^{k\times k}$-valued matrix function $P(z)$ satisfies $P(z)=P(z)^T$. In this case, the proof of the realization of such an input $f(z)=\frac{1}{q(z)}P(z)$, as shown by the flow diagram in Fig.\ \ref{fig-FlowDiagPrfThmBessmRealiz}, would output a symmetric realization of $f(z)$ (i.e., a Bessmertny\u{\i} realization in which the matrices in the linear matrix pencil are all symmetric matrices) provided we can prove that $P(z)$ has a symmetric realization. To prove this, we need only make one slight modification of our proof using the fact that since $P(z)=P(z)^T$ then $P_{ji}(z)=P_{ij}(z)$ for all $i,j=1,\ldots, k$ and $P_{ii}(z)E_{ii}$ and $P_{ij}(z)(E_{ij}+E_{ji})$ are symmetric for all $i,j=1,\ldots, k$ so that by Lemma \ref{LemScalarMultiSchurCompl}, Lemma \ref{LemSumSchurComplPlusMatrix}, Proposition \ref{PropSumOfSchurComps}, Proposition \ref{PropScalarProdOfASchurCompl}, and Proposition \ref{PropRealizMonomials} they have symmetric realizations which implies by Lemma \ref{LemSumSchurComplPlusMatrix} and Proposition \ref{PropSumOfSchurComps} that their sum
\begin{align*}
{\textstyle\sum\nolimits_{i=1}^{k}}P_{ii}\left(z\right)E_{ii}+{\textstyle\sum\nolimits_{1\leq i<j\leq k}}P_{ij}\left(z\right)\left(E_{ij}+E_{ji}\right) & = {\textstyle\sum\nolimits_{i=1}^{k}}
    {\textstyle\sum\nolimits_{j=1}^{k}}
    P_{ij}\left(z\right)  E_{ij} \\
    & = P\left(z\right),
\end{align*}
has a symmetric realization. The converse of statement (c) is obviously true by property (\ref{ElemPropSchurComplementTransposePreserving}). Therefore, we have proven statement (c).

Next, we will prove statement (d). Suppose that $f(z)=f(\overline{z})^*$. Then in the proof above, in which we constructed a realization for $f(z)$ from the factorization $f(z)=\frac{1}{q(z)}P(z),$ we may assume that the nonzero scalar polynomial $q(z)$ is a real polynomial [i.e., $q(z)=\overline{q(\overline{z})}$] and the polynomial $\mathbb{C}^{k\times k}$-valued matrix function $P(z)$ satisfies $P\left(  z\right)  =P\left(  \overline{z}\right)  ^{\ast}$. In this case, the proof of the realization of such an input $f(z)=\frac{1}{q(z)}P(z)$, as shown by the flow diagram in Fig.\ \ref{fig-FlowDiagPrfThmBessmRealiz}, would output a Hermitian realization of $f(z)$ (i.e., a Bessmertny\u{\i} realization in which the matrices in the linear matrix pencil are all Hermitian matrices) provided we can prove that $P(z)$ has a Hermitian realization. To prove this, we need only make one slight modification to our proof of part (c). We separate $P(z)$ into its symmetric $Q_s(z)$ and skew-symmetric $Q_a(z)$ parts, i.e.,
\begin{align*}
P\left(z\right)   &=
{\textstyle\sum\nolimits_{i=1}^{k}}
{\textstyle\sum\nolimits_{j=1}^{k}}
P_{ij}\left(z\right)  E_{ij}\\
&= Q_{s}\left(z\right)  + Q_{a}\left(  z\right),
\end{align*}
where
\begin{align*}
Q_{s}\left(  z\right)  & =
{\textstyle\sum\nolimits_{i=1}^{k}}
P_{ii}\left(  z\right)  E_{ii}+
{\textstyle\sum\nolimits_{1\leq i<j\leq k}}
\left[  \frac{P_{ij}\left(  z\right)  +\overline{P_{ij}\left(  \overline
{z}\right)  }}{2}\right]  \left(  E_{ij}+E_{ji}\right)  ,\\
Q_{a}\left(  z\right)  
& =
{\textstyle\sum\nolimits_{1\leq i<j\leq k}}
\left[  \frac{P_{ij}\left(  z\right)  -\overline{P_{ij}\left(  \overline
{z}\right)  }}{2}\right]  \left(  E_{ij}-E_{ji}\right) \\
&  =
{\textstyle\sum\nolimits_{1\leq i<j\leq k}}
\left[  \frac{P_{ij}\left(  z\right)  -\overline{P_{ij}\left(  \overline
{z}\right)  }}{2i}\right]  \left[  i\left(  E_{ij}-E_{ji}\right)  \right].
\end{align*}
Notice that for all $i,j=1,\ldots,k,$ the scalar polynomials 
\begin{align*}
\frac{P_{ij}\left(  z\right)  +\overline{P_{ij}\left(  \overline
{z}\right)  }}{2},\;\;\frac{P_{ij}\left(  z\right)  -\overline{P_{ij}\left(  \overline
{z}\right)  }}{2i}
\end{align*}
are all real polynomials, the matrices 
\begin{align*}
E_{ij}+E_{ji}
\end{align*}
are all real and symmetric (hence Hermitian), and the matrices
\begin{align*}
i\left(  E_{ij}-E_{ji}\right)
\end{align*}
are all Hermitian. Thus, it follows by Proposition \ref{PropScalarProdOfASchurCompl} that for any real scalar polynomial $p(z)$, if $B$ is real and symmetric then $p(z)B$ has a real symmetric realization (i.e., a Bessmertny\u{\i} realization in which each matrix in the linear matrix pencil is a real and symmetric matrix) and, if instead $B$ is a Hermitian matrix then $p(z)B$ has a Hermitian realization. From these facts and Lemma \ref{LemSumSchurComplPlusMatrix} and Proposition \ref{PropSumOfSchurComps} on realizations of sums, it follows that $Q_s(z)$ has a real symmetric realization (which is a Hermitian realization) and $Q_a(z)$ has a Hermitian realization, and thus, Lemma \ref{LemSumSchurComplPlusMatrix} and Proposition \ref{PropSumOfSchurComps} implies their sum $Q_s(z)+Q_a(z)=P(z)$ has a Hermitian realization. The converse of statement (d) is obviously true by property (\ref{ElemPropSchurComplementConjugationTransposePreserving}). This proves statement (d).

Finally, we will prove statement (e). Suppose $f(z)$ has any combination of two of the functional properties in (b), (c), or (d). Then $f(z)$ must satisfy $f(z)=f(z)^T$ and $f\left(  z\right)  =f\left(  \overline{z}\right)  ^{\ast}$ and hence we can proceed as in the proof of (d), in which case this we can assume that the nonzero scalar polynomial $q(z)$ is a real polynomial and $P\left(  \overline{z}\right)  ^{\ast}=P(z)^T=P(z)=Q_s(z)+Q_a(z)$ implying $Q_a(z)$ is the zero matrix and hence $P(z)=Q_s(z)$ has a real symmetric realization from which we conclude that in the proof of the realization of such an input $f(z)=\frac{1}{q(z)}P(z)$, as shown by the flow diagram in Fig.\ \ref{fig-FlowDiagPrfThmBessmRealiz}, would output a real symmetric realization of $f(z)$ which is automatically also a Hermitian realization.  Now suppose that $f(z)$ has any combination of functional properties in (a)-(d). To complete the proof of statement (e), we need only prove the statement now in the case one of these functional properties is (a) [which we do by slightly modifying the proof of statement (a)]. By our proof of (a), it follows that the function $g(w)=f(1,w_2,\ldots,w_n)$ inherits the same combination of functional properties (b)-(d) that $f(z)$ has. From our proof of statements (b)-(d) and the first part of our proof of (e) above, it follows that $g(w)$ has a real realization if (b) is true, a symmetric realization if (c) is true, a Hermitian realization if (d) is true, and a real symmetric realization if it has any combination of two of the functional properties in (b), (c), or (d). From this and the proof of statement (a) using such a realization for $g(w)$ as the choice of the linear matrix pencil $B(w)=A_1+w_2A_1+\cdots+ w_nA_n$ in the proof of (a), it follows that $f(z)$ can be realized with the linear matrix pencil $A(z)= z_1A_1+z_2A_1+\cdots+ z_nA_n$ which has the desired properties. The converse of statement (e) is obviously true by the elementary properties (\ref{ElemPropSchurComplementHomogeneousDegree1})-(\ref{ElemPropSchurComplementConjugationTransposePreserving}) of Schur complements. This proves statement (e) and completes the proof of the theorem.
\end{proof}

\begin{remark}\label{RemAltApproach}
Before we move on to examples of our approach to the Bessmertny\u{\i} realization theorem, we want to point out another application using our results in Sec. \ref{SecSchurComplAlgebraAndOps} that gives an alternative, more conventional approach to the proof of the first part of Theorem \ref{ThmBessmRealiz}, i.e., without symmetry considerations, using matrix products instead of Kronecker products (which the reviewer kindly outlined). To do this, we would do the following steps [in contrast to our steps (i)-(v) in the introduction] to give a Bessmertny\u{\i} realization of a $k\times k$ rational matrix function $f(z)$:
\begin{itemize}
    \item[(i)] The degree-1 scalar monomials $z_j\; (j = 1,\ldots ,n)$ are realizable (treated as linear $1\times 1$ matrix pencils).
    \item[(ii)] A scalar multiple of a realizable function is realizable (by Lemma \ref{LemScalarMultiSchurCompl}).
    \item[(iii)] Sums of realizable rational matrix functions (of fixed square size) are realizable (by Proposition \ref{PropSumOfSchurComps}).
    \item[(iv)] Matrix products of realizable functions are realizable. Hence, using the above steps (i)-(iv), scalar polynomials are realizable. Next, show the product $q(z)I$ is realizable (note this is not a matrix product), where $q(z)$ is a scalar polynomial and $I$ is any size identity matrix (here we would realize it using Proposition \ref{PropScalarProdOfASchurCompl} since $q(z)I=[q(z)]\otimes I$ and the scalar polynomial $q(z)$ is realizable). Then it follows from this and previous steps (i)-(iv) that matrix polynomials are realizable.
    \item[(v)] If $q(z)$ is a scalar polynomial not identically equal to zero, then $q(z)^{-1}$ is realizable. More generally, if $Q(z)$ is a matrix polynomial with $\det Q(z)\not \equiv 0$, then $Q(z)^{-1}$ is realizable (by Proposition \ref{PropInvOfASchurCompl}).
\item[(vi)] Write $f(z)= P(z)(q(z)I_k)^{-1}$, where $P(z)$ is a $k\times k$ polynomial matrix function and $q(z)$ is a scalar polynomial function not identically zero. Use the realization of the matrix polynomials $q(z)I_k$ and $P(z)$ [by (iv)] and then the realization of $[q(z)I_k]^{-1}$ [by (v)] to get a realization for their matrix product
$f(z)= P(z)(q(z)I_k)^{-1}$ by (iv).
\end{itemize}

Our results in Sec. \ref{SecSchurComplAlgebraAndOps} has been developed to treat all the steps for the alternate proof above with the exception of step (iv) on the matrix product of realizable functions is realizable. The only problem here is that Proposition \ref{PropMatrixMultipliationOfTwoSchurComplements} (Matrix multiplication of two Schur complements) needs to be further developed since the product of linear matrix pencils of the same size is in general a (multivariate) quadratic matrix pencil.

We can overcome this problem using our results of Sec. \ref{SecSchurComplAlgebraAndOps} by proceeding in a similar manner as we did for Kronecker products in Subsection \ref{SecOnKroneckerProductsOfLinearMatrixPencils} (cf. Figure \ref{FigFlowDiagramProofKroneckerProdRealiz}): Use Lemma \ref{LemRealizOfSquares} and Lemma \ref{LemRealizProdTwoIndepVar} together with Proposition \ref{PropScalarProdOfASchurCompl} [to realize simple products $x^2B$ and $xyB$ with $x,y$ two independent complex variables and $B$ any square complex matrix; alternatively, once you realize $x^2I$ and $xyI$ with $I$ the identity matrix of the same size as $B$ then we can realize the matrix products $x^2B=(x^2I)B$ and $xyB=(xyI)B$ by Proposition \ref{PropMatrixMultipliationOfTwoSchurComplements} since were not treating symmetries in this remark] followed by Lemma \ref{LemSumSchurComplPlusMatrix} and Proposition \ref{PropSumOfSchurComps} (to realize any quadratic linear matrix pencil), and Proposition \ref{PropComposSchurComplem} (to realize the Schur complement of a quadratic matrix pencil) and then this together with Proposition \ref{PropMatrixMultipliationOfTwoSchurComplements} we prove the desired result -- matrix products of realizable functions is realizable.
\end{remark}

\subsection{Examples}\label{SecExamplesOfBessRealizThm}

\begin{example}\label{Example1st}
To illustrate our approach of the Bessmertny\u{\i} Realization Theorem in the case in which the hypotheses of statements (b), (c), and (e) apply, we will work out the realization of the following rational $\mathbb{C}^{1\times 1}$-valued function of $2$-variables
\begin{align*}
f(z)=\begin{bmatrix}
\frac{z_2}{z_1}
\end{bmatrix}.
\end{align*}
As a first step, we write this in the form of a Kronecker product of matrices
$$f(z)=\frac{1}{q(z)}P(z)=\frac{1}{q(z)}\otimes P(z)=[q(z)]^{-1}\otimes P(z),$$ 
where
$$q(z)=z_1,\;\;P(z)=\begin{bmatrix}
z_2
\end{bmatrix}.$$
Next, we have $\det P(z)=z_2\not \equiv 0$ and $P(z)$ is already in the desired realized form. The next step is to realize $q(z)$, but in this case its already in the desired realized form, so we can realize its inverse,
\begin{eqnarray*}
\frac{1}{q\left( z\right) } =\left[ q\left( z\right) \right] ^{-1}=\left[z_1\right] ^{-1}
=
\left.
\left[\begin{array}{c; {2pt/2pt} c }
0 & 1 \\\hdashline[2pt/2pt]
1 & -z_1 
\end{array}\right]
\right/
\begin{bmatrix}
-z_1
\end{bmatrix}.
\end{eqnarray*}
Finally, we complete this part of the example by realizing the Kronecker product of realizations 
\begin{align*}
f(z)&=[q(z)]^{-1}\otimes P(z)=\left(\left.
\left[\begin{array}{c; {2pt/2pt} c }
0 & 1 \\\hdashline[2pt/2pt]
1 & -z_1
\end{array}\right]
\right/
\begin{bmatrix}
-z_1
\end{bmatrix}\right)
\otimes
[z_2]\\
&=A(z)/A_{22}(z),
\end{align*}
in which $$A(z)=A_0+z_1A_1+z_2A_2$$ is a linear matrix pencil such that the matrices $A_{j}\in \mathbb{C}^{m\times m}$, for some positive integer $m$ (in this example we will have $m=4$), are real and symmetric for $j=0,1,2$. To compute this pencil, we follow Lemma \ref{LemRealizOfKroneckerProdsPart2} and its proof. First, by Lemma \ref{LemKroneckerProdSchurComplWithAMatrix},
\begin{align*}
\left(\left.
\left[\begin{array}{c; {2pt/2pt} c }
0 & 1 \\\hdashline[2pt/2pt]
1 & -z_1
\end{array}\right]
\right/
\begin{bmatrix}
-z_1
\end{bmatrix}\right)
\otimes
[z_2]=
\left.
\left[\begin{array}{c; {2pt/2pt} c }
[0]\otimes [z_2] & [1]\otimes [z_2] \\\hdashline[2pt/2pt]
[1]\otimes [z_2] & [-z_1]\otimes [z_2]
\end{array}\right]
\right/
([-z_1]\otimes [z_2]).
\end{align*}
Second, we compute
\begin{gather*}
\left[\begin{array}{c; {2pt/2pt} c }
[0]\otimes [z_2] & [1]\otimes [z_2] \\\hdashline[2pt/2pt]
[1]\otimes [z_2] & [-z_1]\otimes [z_2]
\end{array}\right]=\left[\begin{array}{c; {2pt/2pt} c }
0 & z_2 \\\hdashline[2pt/2pt]
z_2 & -z_1z_2
\end{array}\right]\\
=
z_2\left[\begin{array}{c; {2pt/2pt} c }
0 & 1 \\\hdashline[2pt/2pt]
1 & 0
\end{array}\right]
+(z_1z_2)\left[\begin{array}{c; {2pt/2pt} c }
0 & 0 \\\hdashline[2pt/2pt]
0 & -1
\end{array}\right].
\end{gather*}
Third, by Lemma \ref{LemShortedMatricesAreSchurCompl} and Lemma \ref{LemRealizProdTwoIndepVar},
\begin{gather*}
(z_1z_2)\left[\begin{array}{c; {2pt/2pt} c }
0 & 0 \\\hdashline[2pt/2pt]
0 & -1
\end{array}\right]
=
\left[\begin{array}{c; {2pt/2pt} c }
[0] & [0] \\\hdashline[2pt/2pt]
[0] & [-z_1z_2]
\end{array}\right]
\\
=\left.
\left[\begin{array}{c c; {2pt/2pt} c c}
0 & 0 & 0 & 0 \\
0 & 0 & -\frac{1}{4}(z_1+z_2) & \frac{1}{4}(z_1-z_2)\\ \hdashline[2pt/2pt]
0 & -\frac{1}{4}(z_1+z_2) & \frac{1}{4} & 0\\
0 & \frac{1}{4}(z_1-z_2) & 0 & -\frac{1}{4}
\end{array}\right]
\right/
\begin{bmatrix}
\frac{1}{4} & 0 \\
0 & -\frac{1}{4}
\end{bmatrix}
\end{gather*}
Fourth, by Lemma \ref{LemSumSchurComplPlusMatrix},
\begin{gather*}
z_2\left[\begin{array}{c; {2pt/2pt} c }
0 & 1 \\\hdashline[2pt/2pt]
1 & 0
\end{array}\right]
+(z_1z_2)\left[\begin{array}{c; {2pt/2pt} c }
0 & 0 \\\hdashline[2pt/2pt]
0 & -1
\end{array}\right]\\
=
\left.
\left[\begin{array}{c c; {2pt/2pt} c c}
0 & z_2 & 0 & 0 \\
z_2 & 0 & -\frac{1}{4}(z_1+z_2) & \frac{1}{4}(z_1-z_2)\\ \hdashline[2pt/2pt]
0 & -\frac{1}{4}(z_1+z_2) & \frac{1}{4} & 0\\
0 & \frac{1}{4}(z_1-z_2) & 0 & -\frac{1}{4}
\end{array}\right]
\right/
\begin{bmatrix}
\frac{1}{4} & 0 \\
0 & -\frac{1}{4}
\end{bmatrix}.
\end{gather*}
Finally, we compute
\begin{gather*}
A(z)=A_0+z_1A_1+z_2A_2
=
\left[\begin{array}{c; {2pt/2pt} c c c}
0 & z_2 & 0 & 0 \\\hdashline[2pt/2pt]
z_2 & 0 & -\frac{1}{4}(z_1+z_2) & \frac{1}{4}(z_1-z_2)\\ 
0 & -\frac{1}{4}(z_1+z_2) & \frac{1}{4} & 0\\
0 & \frac{1}{4}(z_1-z_2) & 0 & -\frac{1}{4}
\end{array}\right],\\
A_0=\left[\begin{array}{c; {2pt/2pt} c c c}
0 & 0 & 0 & 0 \\\hdashline[2pt/2pt]
0 & 0 & 0 & 0\\ 
0 & 0 & \frac{1}{4} & 0\\
0 & 0 & 0 & -\frac{1}{4}
\end{array}\right],\;\;
A_1=\left[\begin{array}{c; {2pt/2pt} c c c}
0 & 0 & 0 & 0 \\\hdashline[2pt/2pt]
0 & 0 & -\frac{1}{4} & \frac{1}{4}\\ 
0 & -\frac{1}{4} & 0 & 0\\
0 & \frac{1}{4} & 0 & 0
\end{array}\right],\\
A_2=\left[\begin{array}{c; {2pt/2pt} c c c}
0 & 1 & 0 & 0 \\\hdashline[2pt/2pt]
1 & 0 & -\frac{1}{4} & -\frac{1}{4}\\ 
0 & -\frac{1}{4} & 0 & 0\\
0 & -\frac{1}{4} & 0 & 0
\end{array}\right].
\end{gather*}
\end{example}
\begin{example}\label{Example2nd}
To illustrate our approach to the Bessmertny\u{\i} Realization Theorem in the case in which the hypotheses of statements (a)-(c) and (e) apply, we will work out the realization of the following rational $\mathbb{C}^{1\times 1}$-valued function of $3$-variables
\begin{align*}
f(z)=\begin{bmatrix}
\frac{z_2z_3}{z_1}
\end{bmatrix}.
\end{align*}
As the function $f(z)$ is homogeneous degree one [i.e., $f(\lambda z)=\lambda f(z)$] then following the proof of part (a) we start by realizing the function:
\begin{align*}
g(w)=f(1,w_2,w_3)=\begin{bmatrix}
w_2w_3
\end{bmatrix}.
\end{align*}
This has the realization
\begin{gather*}
g(w)=B(w)/B_{22}(w),\\
B(w)=A_1+w_2A_2+w_3A_3=
\left[\begin{array}{c; {2pt/2pt} c c}
0 & \frac{1}{4}(w_2+w_3) & -\frac{1}{4}(w_2 - w_3) \vspace{0.1cm} \\ \hdashline[2pt/2pt] 
\frac{1}{4}(w_2+w_3) & -\frac{1}{4} & 0 \\ 
-\frac{1}{4}(w_2 - w_3) & 0 & \frac{1}{4}
\end{array}\right],\\
A_1=
\left[\begin{array}{c; {2pt/2pt} c c}
0 & 0 & 0\vspace{0.1cm} \\ \hdashline[2pt/2pt] 
0 & -\frac{1}{4} & 0 \\ 
0 & 0 & \frac{1}{4}
\end{array}\right],\;\;
A_2=
\left[\begin{array}{c; {2pt/2pt} c c}
0 & \frac{1}{4} & -\frac{1}{4} \vspace{0.1cm} \\ \hdashline[2pt/2pt] 
\frac{1}{4} & 0 & 0 \\ 
-\frac{1}{4} & 0 & 0
\end{array}\right],\;\;
A_3=
\left[\begin{array}{c; {2pt/2pt} c c}
0 & \frac{1}{4} & \frac{1}{4} \vspace{0.1cm} \\ \hdashline[2pt/2pt] 
\frac{1}{4} & 0 & 0 \\ 
\frac{1}{4} & 0 & 0
\end{array}\right].
\end{gather*}
Finally, since $f(z)=z_1g(\frac{z_2}{z_1},\frac{z_3}{z_1})$, we get the realization of $f(z)$ as
\begin{gather*}
f(z)=\begin{bmatrix}
\frac{z_2z_3}{z_1}
\end{bmatrix}=A(z)/A_{22}(z),\\
A(z)=z_1A_1+z_2A_2+z_3A_3=
\left[\begin{array}{c; {2pt/2pt} c c}
0 & \frac{1}{4}(z_2+z_3) & -\frac{1}{4}(z_2 - z_3) \vspace{0.1cm} \\ \hdashline[2pt/2pt] 
\frac{1}{4}(z_2+z_3) & -\frac{1}{4}z_1 & 0 \\ 
-\frac{1}{4}(z_2 - z_3) & 0 & \frac{1}{4}z_1
\end{array}\right].
\end{gather*}
\end{example}

\section{Schur Complements: Algebra and Operations}\label{SecSchurComplAlgebraAndOps}

The goal of this section is to show that elementary operations (whether algebraic like addition, functional like composition, or transformal like the principal pivot transform) when applied to Schur complements of block matrices will be equal to another Schur complement of a block matrix and we provide explicit formulas to compute the resulting block matrix (e.g., for sums of Schur complements, $A/A_{22}+B/B_{22}$, it is equal to a Schur complement $C/C_{22}$ and the formula for $C=[C_{ij}]_{i,j=1,2}$ is given in Proposition \ref{PropSumOfSchurComps}). And after this, use these results to give certain elementary realizations involving linear matrix pencils. 

The main objective in this regard is to prove a Schur complement formula exists and that the resulting block matrix produced inherits the desired functional symmetries. Below is a representative list of our Schur complement/realization algebra:

\begin{itemize}
           \item Scalar multiplication of a Schur complements (Lemma \ref{LemScalarMultiSchurCompl}): $$\lambda (A/A_{22});$$
           \item Sums of a Schur complements (Proposition \ref{PropSumOfSchurComps}): $$A/A_{22}+B/B_{22};$$
            \item Shorted matrices are Schur complements (Lemma \ref{LemShortedMatricesAreSchurCompl}): $$A/A_{22} \oplus 0_l;$$
           \item Direct sum of Schur complements (Proposition \ref{PropDirectSumSchurCompl}): $$A/A_{22}\oplus B/B_{22};$$
           \item Matrix multiplication of two Schur complements (Proposition \ref{PropMatrixMultipliationOfTwoSchurComplements}): $$(A/A_{22})(B/B_{22});$$
           \item Matrix product with a Schur complement (Proposition \ref{PropMatrixMultOfSchurCompl}): $$B(A/A_{22})C;$$
           \item Inverse of Schur complement (Proposition \ref{PropInvOfASchurCompl}): $$(A/A_{22})^{-1};$$
           \item Kronecker product of two Schur complements (Proposition \ref{PropKroneckerProdOfTwoSchurComplements}): $$(A/A_{22})\otimes (B/B_{22});$$
           \item Compositions of Schur complements (Proposition \ref{PropComposSchurComplem}): $$(A/A_{22})/(A/A_{22})_{22};$$
           \item Realization of a simple products (Lemma \ref{LemRealizProdTwoIndepVar}): $$z_1z_2;$$
           \item Kronecker products of realizations (Proposition \ref{PropRealizOfKroneckerProdOfRealiz}): $$A(z)/A_{22}(z)\otimes B(w)/B_{22}(w);$$
           \item Realizability of a monomial (Proposition \ref{PropRealizMonomials}): $$z^{\alpha};$$
            \item Principal pivot transform as a Schur complement (Proposition \ref{PropPPTIsASchurComplement}): $$\operatorname{ppt}_2(A).$$
\end{itemize}

Furthermore, in our approach to elementary operations and realizations, we provided the most basic building blocks for producing more complicated ones. For example, using Proposition \ref{PropSumOfSchurComps} and Lemma \ref{LemShortedMatricesAreSchurCompl} to prove Proposition \ref{PropDirectSumSchurCompl}. Or using Lemma \ref{LemInvIsASchurCompl} together with Proposition \ref{PropMatrixMultOfSchurCompl} to prove Proposition \ref{PropInvOfASchurCompl}. Another example of this is using Lemma \ref{LemKroneckerProdSchurComplWithAMatrix} to prove Corollary \ref{CorLemKroneckerProdSchurComplWithAMatrix} and then to use these together with Proposition \ref{PropComposSchurComplem} to prove Proposition \ref{PropKroneckerProdOfTwoSchurComplements}. Yet another example of this is using Lemma \ref{LemScalarMultiSchurCompl}, Proposition \ref{PropSumOfSchurComps}, and Lemma \ref{LemRealizOfSquares} to prove Lemma \ref{LemRealizProdTwoIndepVar}. This is especially evident in our proof of Proposition \ref{PropRealizOfKroneckerProdOfRealiz} which uses Lemma \ref{LemRealizOfKroneckerProdsPart1}, Lemma \ref{LemRealizOfKroneckerProdsPart2}, and other basic results above (see Figure \ref{FigFlowDiagramProofKroneckerProdRealiz}). This building block approach illustrates how one can attack problems by using our Schur complement algebra and operations in a more ``natural," algorithmic, and potentially computational way.

Moreover, we feel confident that the results in this section and our approach to them will find applications to other areas and problems (especially regarding realization and synthesis) in multidimensional systems theory especially for those linear models associated with electric circuits, networks, and composites. And because of this, we have also included a result on products of Schur complements, (i.e., $A/A_{22} B/B_{22}$), in Proposition \ref{PropMatrixMultipliationOfTwoSchurComplements} and, in Section \ref{SecTransformsForAlternativeRealizations}, results on using the other Schur complement $A/A_{11}$ instead of $A/A_{22}$ as well as the two associated principal pivot transforms, $\operatorname{ppt}_1(A)$ and $\operatorname{ppt}_2(A)$, respectively, in this context of realizability.

\subsection{Sums and scalar multiplication}

This first lemma belongs to the set of results relating to linear combinations involving Schur complements.\ And although elementary, it should give the reader a feel for the style of statements and proofs that we give in the remaining part of this paper which become progressively more difficult.
\begin{lemma}[Scalar multiplication of a Schur complement]\label{LemScalarMultiSchurCompl}
If $A\in
\mathbb{C}
^{m\times m}$ is a $2\times2$ block matrix
\[
A=
\begin{bmatrix}
A_{11} & A_{12}\\
A_{21} & A_{22}
\end{bmatrix},
\]
and $\lambda\in \mathbb{C}\setminus\{0\}$, then
\begin{align}
B/B_{22} = \lambda (A/A_{22}),
\end{align}
where $B\in\mathbb{C}^{m\times m}$ is the $2\times 2$ block matrix
\begin{gather}
B=\begin{bmatrix}
B_{11} & B_{12}\\
B_{21} & B_{22}
\end{bmatrix}=\begin{bmatrix}
\lambda A_{11} & \lambda A_{12}\\
\lambda A_{21} & \lambda A_{22}
\end{bmatrix}=\lambda A,
\end{gather}
and $B_{22}=\lambda A_{22}$ is invertible. Moreover, if $\lambda$ is real and the matrix $A$ is real, symmetric, Hermitian, or real and symmetric then the matrix $B$ is real, symmetric, Hermitian, or real and symmetric, respectively.
\end{lemma}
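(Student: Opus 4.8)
The plan is to reduce the identity to the homogeneity property (\ref{ElemPropSchurComplementHomogeneousDegree1}) already recorded in the preliminaries, supplemented by a short check of the four symmetry cases. First I would make explicit the hypothesis hidden in the notation $A/A_{22}$, namely that $A_{22}$ is invertible. Since $\lambda \neq 0$, the matrix $B_{22} = \lambda A_{22}$ is then invertible with $(\lambda A_{22})^{-1} = \lambda^{-1} A_{22}^{-1}$, which establishes the invertibility claim. I would also note that $B = \lambda A$ is partitioned conformally with $A$ (multiplication by a nonzero scalar changes no block sizes), so the Schur complement $B/B_{22}$ is well-defined in the asserted $2\times 2$ block form.

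Next I would carry out the one-line computation using the Schur complement formula,
\[
B/B_{22} = B_{11} - B_{12} B_{22}^{-1} B_{21} = \lambda A_{11} - (\lambda A_{12})(\lambda^{-1} A_{22}^{-1})(\lambda A_{21}) = \lambda\left(A_{11} - A_{12} A_{22}^{-1} A_{21}\right) = \lambda (A/A_{22}),
\]
where the three factors of $\lambda$ collapse to one. Equivalently, this is precisely property (\ref{ElemPropSchurComplementHomogeneousDegree1}) applied to $A$, so one could also simply cite that identity.

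For the symmetry assertions, assume in addition that $\lambda$ is real. Since $B = \lambda A$, I would dispatch each case by elementary matrix algebra: $\overline{B} = \overline{\lambda}\,\overline{A} = \lambda \overline{A}$, $B^{T} = \lambda A^{T}$, and $B^{*} = \overline{\lambda}\, A^{*} = \lambda A^{*}$; hence if $A$ is real, symmetric, Hermitian, or real and symmetric, then so is $B$, respectively, the last case being the conjunction of the first two. I do not anticipate any genuine obstacle in this lemma; the only points requiring care are stating the invertibility hypothesis on $A_{22}$ and observing that the Hermitian and real cases use $\lambda \in \mathbb{R}$ whereas the symmetric case holds for any $\lambda \neq 0$. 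The lemma is phrased this way chiefly so that the scaling identity together with its symmetry behavior can be invoked cleanly in the proof of Theorem \ref{ThmBessmRealiz}.
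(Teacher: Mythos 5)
Your proposal is correct and follows essentially the same route as the paper: the identity is verified by the same direct one-line Schur complement computation (which is exactly property (\ref{ElemPropSchurComplementHomogeneousDegree1})), and the symmetry and invertibility statements are handled by the same elementary observations that the paper leaves as obvious. Your explicit remark that the Hermitian and real cases need $\lambda\in\mathbb{R}$ while the symmetric case holds for any nonzero $\lambda$ is a small but accurate refinement of the paper's phrasing.
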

\begin{proof}
From the block matrix equality
\[
\begin{bmatrix}
(\lambda A)_{11} & (\lambda A)_{12}\\
(\lambda A)_{21} & (\lambda A)_{22}
\end{bmatrix} = \lambda A 
=
\begin{bmatrix}
\lambda (A_{11}) & \lambda (A_{12})\\
\lambda (A_{21}) & \lambda (A_{22})
\end{bmatrix},
\] it follows that
\begin{align*}
 \lambda (A/A_{22}) &= \lambda (A_{11} - A_{12}A_{22}^{-1}A_{21})\\
 &=\lambda (A_{11}) - \lambda (A_{12})[\lambda (A_{22})]^{-1}\lambda (A_{21})\\ &= (\lambda A)_{11} - (\lambda A)_{12}[(\lambda A)_{22}]^{-1}(\lambda A)_{21}\\ &=(\lambda A)/(\lambda A)_{22}.
\end{align*}
The remaining part of the proof is obvious. This proves the lemma.
\end{proof}

\begin{lemma}[Sum of a Schur complement with a matrix]\label{LemSumSchurComplPlusMatrix}
If $A\in
\mathbb{C}
^{m\times m}$ is a $2\times2$ block matrix
\[
A=
\begin{bmatrix}
A_{11} & A_{12}\\
A_{21} & A_{22}
\end{bmatrix},
\]
such that $A_{22}$ is invertible and $A/A_{22}\in
\mathbb{C}
^{k\times k}$ then, for any matrix $B\in
\mathbb{C}
^{k\times k}$,
\begin{align}\label{LemSumSchurComplPlusMatrixABMatrixFormula}
C/C_{22}=A/A_{22}+B,
\end{align}
where $C\in
\mathbb{C}
^{m \times m}$ is the $2\times2$ block
matrix
\begin{equation}
C=\begin{bmatrix}
C_{11} & C_{12}\\
C_{21} & C_{22}
\end{bmatrix}=
\begin{bmatrix}
A_{11}+B & A_{12}\\
A_{21} & A_{22}\\
\end{bmatrix},\label{LemSumSchurComplPlusMatrixCMatrixFormula}
\end{equation}
and $C_{22}=A_{22}$ is invertible. Moreover, if both matrices $A$ and $B$ are real, symmetric, Hermitian, or real and symmetric then the matrix $C$ is real, symmetric, Hermitian, or real and symmetric, respectively.
\end{lemma}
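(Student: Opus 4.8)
The plan is to establish the Schur-complement identity by a one-line direct computation from the definition, after first checking that the right-hand object $C/C_{22}$ is well defined; the symmetry claims will then follow block by block.

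First I would note that the hypothesis $A/A_{22}\in\mathbb{C}^{k\times k}$ forces $A_{11}\in\mathbb{C}^{k\times k}$, so for $B\in\mathbb{C}^{k\times k}$ the sum $A_{11}+B$ makes sense and has size $k\times k$. Therefore the matrix $C$ of \eqref{LemSumSchurComplPlusMatrixCMatrixFormula} is a $2\times2$ block matrix partitioned conformally with $A$, with $C_{11}=A_{11}+B$, $C_{12}=A_{12}$, $C_{21}=A_{21}$, and $C_{22}=A_{22}$; since $A_{22}$ is invertible by hypothesis, $C_{22}$ is invertible and the Schur complement $C/C_{22}$ is defined. Then, directly from the definition of the Schur complement,
\begin{align*}
C/C_{22} &= C_{11}-C_{12}C_{22}^{-1}C_{21} = (A_{11}+B)-A_{12}A_{22}^{-1}A_{21}\\
&= \left(A_{11}-A_{12}A_{22}^{-1}A_{21}\right)+B = (A/A_{22})+B,
\end{align*}
which is \eqref{LemSumSchurComplPlusMatrixABMatrixFormula}.

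For the ``moreover'' part I would treat each symmetry type in turn. If $A$ and $B$ are both real, then all of $A_{11},A_{12},A_{21},A_{22},B$ are real, hence so are all blocks of $C$, so $C$ is real. If $A$ and $B$ are both symmetric, then $A^{T}=A$ gives $A_{11}^{T}=A_{11}$, $A_{22}^{T}=A_{22}$, and $A_{21}^{T}=A_{12}$, and $B^{T}=B$; consequently $C_{11}^{T}=(A_{11}+B)^{T}=A_{11}+B=C_{11}$, $C_{22}^{T}=C_{22}$, and $C_{21}^{T}=A_{21}^{T}=A_{12}=C_{12}$, so $C^{T}=C$. The Hermitian case is identical, replacing $(\cdot)^{T}$ by $(\cdot)^{\ast}$ and using $A_{21}^{\ast}=A_{12}$ together with $B^{\ast}=B$. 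The ``real and symmetric'' case follows by combining the real case with the symmetric case.

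The computation involves no genuine obstacle; the only points requiring a little care are (i) verifying the conformal block structure of $C$ so that $C/C_{22}$ is even meaningful, and (ii) in the symmetry part, invoking the cross relation $A_{21}=A_{12}^{T}$ (resp. $A_{21}=A_{12}^{\ast}$) implied by $A^{T}=A$ (resp. $A^{\ast}=A$) rather than arguing about $A_{12}$ and $A_{21}$ in isolation.
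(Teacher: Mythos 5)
Your proposal is correct and follows essentially the same route as the paper: a direct computation of $C/C_{22}=C_{11}-C_{12}C_{22}^{-1}C_{21}=(A_{11}+B)-A_{12}A_{22}^{-1}A_{21}=A/A_{22}+B$, with the symmetry claims read off blockwise from the formula for $C$. The only difference is that you spell out the conformal-partition check and the symmetry cases explicitly, which the paper leaves as "follows immediately."
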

\begin{proof}
The proof is a straightforward calculation using block matrix techniques, to prove the formula  (\ref{LemSumSchurComplPlusMatrixABMatrixFormula}), we compute
\begin{align*}
A/A_{22}+B &=A_{11}+B-A_{12}A_{22}^{-1}A_{21}\\
&= C_{11}-C_{12}C_{22}^{-1}C_{21}\\
&=C/C_{22}.
\end{align*}
The remaining part of the proof follows immediately now from the formula (\ref{LemSumSchurComplPlusMatrixCMatrixFormula}) in terms of the matrices $A$ and $B$. This completes the proof.
\end{proof}

The following proposition is well known (see, for instance, \cite[Fig. 2, Eq. (11), Theorem 3]{63EG} and \cite[p. 1502, Theorem 4.2]{05BGM}), where it is often used in realizing the sum of transfer functions based on the analogy of the parallel connection of electrical networks.

\begin{proposition}[Sum of two Schur complements]\label{PropSumOfSchurComps}If $A\in
\mathbb{C}
^{m\times m}$ and $B\in
\mathbb{C}
^{n\times n}$ are $2\times2$ block matrices
\[
A=
\begin{bmatrix}
A_{11} & A_{12}\\
A_{21} & A_{22}
\end{bmatrix}
,\text{ }B=
\begin{bmatrix}
B_{11} & B_{12}\\
B_{21} & B_{22}
\end{bmatrix}
\]
such that $A_{22}\in
\mathbb{C}
^{p\times p}$, $B_{22}\in
\mathbb{C}
^{q\times q}$ are invertible and $A/A_{22}$, $B/B_{22}\in
\mathbb{C}
^{k\times k}$ then
\begin{align}\label{SumOfSchurComplementsAandB}
C/C_{22}=A/A_{22}+B/B_{22},
\end{align}
where $C\in
\mathbb{C}
^{\left(  k+p+q\right)  \times\left(  k+p+q\right)  }$ is the $3\times3$ block
matrix with the following block partitioned structure $C=[C_{ij}]_{i,j=1,2}$:
\begin{equation}\label{SumOfSchurCompsCMatrix}
C=
\left[\begin{array}{c;{2pt/2pt} c c}
C_{11} & C_{12} \\ \hdashline[2pt/2pt]
C_{21} & C_{22}
\end{array}\right]
=
\left[\begin{array}{c;{2pt/2pt} c c}
A_{11}+B_{11} & A_{12} & B_{12} \\ \hdashline[2pt/2pt]
A_{21} & A_{22} & 0\\
B_{21} & 0 & B_{22}
\end{array}\right],
\end{equation}
and
\begin{equation}\label{SumOfSchurCompsC22Matrix}
C_{22}=
\begin{bmatrix}
A_{22} & 0\\
0 & B_{22}
\end{bmatrix}
\end{equation}
is invertible. Moreover, if both matrices $A$ and $B$ are real, symmetric, Hermitian, or real and symmetric then the matrix $C$ is real, symmetric, Hermitian, or real and symmetric, respectively.
\end{proposition}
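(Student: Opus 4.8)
The plan is to verify $(\ref{SumOfSchurComplementsAandB})$ by a direct block-matrix computation, viewing the matrix $C$ in $(\ref{SumOfSchurCompsCMatrix})$ as a $2\times 2$ block matrix with $(1,1)$-block $C_{11}=A_{11}+B_{11}\in\mathbb{C}^{k\times k}$, with $(2,2)$-block the block-diagonal matrix $C_{22}=A_{22}\oplus B_{22}\in\mathbb{C}^{(p+q)\times(p+q)}$ from $(\ref{SumOfSchurCompsC22Matrix})$, and with off-diagonal blocks $C_{12}=[\,A_{12}\ \ B_{12}\,]$ and $C_{21}=\left[\begin{smallmatrix}A_{21}\\ B_{21}\end{smallmatrix}\right]$. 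First I would record that $C_{22}$ is invertible: being block diagonal with invertible diagonal blocks $A_{22}$ and $B_{22}$, it satisfies $\det C_{22}=\det A_{22}\cdot\det B_{22}\neq 0$ and $C_{22}^{-1}=A_{22}^{-1}\oplus B_{22}^{-1}$. This both justifies forming the Schur complement $C/C_{22}$ and makes the cross terms drop out of the product $C_{12}C_{22}^{-1}C_{21}$.

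Next I would carry out the computation. Since $C_{22}^{-1}$ is block diagonal,
\[
C_{12}C_{22}^{-1}C_{21}=[\,A_{12}\ \ B_{12}\,]\begin{bmatrix}A_{22}^{-1}&0\\0&B_{22}^{-1}\end{bmatrix}\begin{bmatrix}A_{21}\\ B_{21}\end{bmatrix}=A_{12}A_{22}^{-1}A_{21}+B_{12}B_{22}^{-1}B_{21},
\]
which is a well-defined $k\times k$ matrix because the sizes $A_{12}\in\mathbb{C}^{k\times p}$, $A_{21}\in\mathbb{C}^{p\times k}$, $B_{12}\in\mathbb{C}^{k\times q}$, $B_{21}\in\mathbb{C}^{q\times k}$ are forced by $A/A_{22},B/B_{22}\in\mathbb{C}^{k\times k}$. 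Hence
\[
C/C_{22}=C_{11}-C_{12}C_{22}^{-1}C_{21}=(A_{11}-A_{12}A_{22}^{-1}A_{21})+(B_{11}-B_{12}B_{22}^{-1}B_{21})=A/A_{22}+B/B_{22},
\]
which is $(\ref{SumOfSchurComplementsAandB})$. This is exactly the ``two genuine Schur complements'' analogue of Lemma \ref{LemSumSchurComplPlusMatrix}, and one could alternatively deduce it from that lemma together with a further partition, but the direct route above is cleanest.

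Finally, for the symmetry assertions I would argue straight from the explicit formula $(\ref{SumOfSchurCompsCMatrix})$. Every entry of $C$ is an entry of $A$, an entry of $B$, or $0$, so if $A$ and $B$ are both real then $C$ is real. For the transpose, $C^{T}$ has the same $2\times 2$ block shape with $(1,1)$-block $A_{11}^{T}+B_{11}^{T}$, $(2,2)$-block $A_{22}^{T}\oplus B_{22}^{T}$, and $(1,2)$-block $[\,A_{21}^{T}\ \ B_{21}^{T}\,]$; thus if $A^{T}=A$ and $B^{T}=B$ (so that $A_{11}^{T}=A_{11}$, $A_{22}^{T}=A_{22}$, $A_{21}^{T}=A_{12}$, and likewise for $B$), then $C^{T}=C$. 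Replacing $T$ by $*$ throughout and using that $\overline{C}$ is assembled from $\overline{A}$ and $\overline{B}$ in the same pattern gives the Hermitian case verbatim, and the real-and-symmetric case follows by combining the reality and transpose arguments. I do not expect a genuine obstacle here; the only thing requiring care is the bookkeeping of the block sizes $k$, $p$, $q$ so that $C_{12}C_{22}^{-1}C_{21}$ is defined and lands in $\mathbb{C}^{k\times k}$, and keeping the block-transpose/conjugate-transpose argument honest when $A$ and $B$ have different overall sizes $m\neq n$.
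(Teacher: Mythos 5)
Your proposal is correct and follows essentially the same route as the paper's proof: a direct block-matrix computation using $C_{22}^{-1}=A_{22}^{-1}\oplus B_{22}^{-1}$ to show $C/C_{22}=A/A_{22}+B/B_{22}$, with the symmetry claims read off immediately from the explicit formula (\ref{SumOfSchurCompsCMatrix}). Your explicit bookkeeping of the transpose/conjugate cases is just a spelled-out version of what the paper leaves as ``the remaining part follows immediately.''
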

\begin{proof}
The proof is a straightforward calculation using block matrix techniques.
First, since $A\in
\mathbb{C}
^{m\times m}$, $B\in
\mathbb{C}
^{n\times n}$, $A_{22}\in
\mathbb{C}
^{p\times p}$, $B_{22}\in
\mathbb{C}
^{q\times q}$, $A/A_{22}$, $B/B_{22}\in
\mathbb{C}
^{k\times k}$ with $A_{22}$ and $B_{22}$ invertible then the $3\times 3$ block matrix $C$ defined in (\ref{SumOfSchurCompsCMatrix}) belongs to $\mathbb{C}
^{\left(  k+p+q\right)  \times\left(  k+p+q\right)  }$. Second, with its partitioned block structure $C=[C_{i,j}]_{i,j=1,2}$,
its $(2,2)$-block $C_{22}$ in (\ref{SumOfSchurCompsC22Matrix}), belongs to $\mathbb{C}^{\left(  p+q\right)  \times\left(  p+q\right)  }$ and is invertible with the inverse
\[
C_{22}^{-1}=
\begin{bmatrix}
A_{22}^{-1} & 0\\
0 & B_{22}^{-1}
\end{bmatrix}.
\]
Therefore, to prove the formula  (\ref{SumOfSchurComplementsAandB}) we compute
\begin{align*}
    A/A_{22}+B/B_{22}&=A_{11}-A_{12}A_{22}^{-1}A_{21}+B_{11}-B_{12}B_{22}^{-1}B_{21}\\&= A_{11}+B_{11}-\left(  A_{12}A_{22}^{-1}A_{21}+B_{12}B_{22}^{-1}%
B_{21}\right) \\
&=A_{11}+B_{11}-
\begin{bmatrix}
A_{12}A_{22}^{-1} & B_{12}B_{22}^{-1}
\end{bmatrix}
\begin{bmatrix}
A_{21}\\
B_{21}
\end{bmatrix}
\\ &=A_{11}+B_{11}-
\begin{bmatrix}
A_{12} & B_{12}
\end{bmatrix}
\begin{bmatrix}
A_{22}^{-1} & 0\\
0 & B_{22}^{-1}
\end{bmatrix}
\begin{bmatrix}
A_{21}\\
B_{21}
\end{bmatrix}
\\&=A_{11}+B_{11}-
\begin{bmatrix}
A_{12} & B_{12}
\end{bmatrix}
\begin{bmatrix}
A_{22} & 0\\
0 & B_{22}
\end{bmatrix}
^{-1}
\begin{bmatrix}
A_{21}\\
B_{21}
\end{bmatrix}
\\&=C_{11}-C_{12}C_{22}^{-1}C_{21} \\&=C/C_{22}.
\end{align*}
The remaining part of the proof follows immediately now from the formula (\ref{SumOfSchurCompsCMatrix}) in terms of the matrices $A$ and $B$. This completes the proof.
\end{proof}

The next lemma is interesting in its own right due to the importance of shorted matrices and operators both in electrical network theory and operator theory, see \cite{71WA, 74AT, 05TA, 15ACM, 47MK, 10MBM, 76NA, 14EP}. Furthermore, it is also an intermediate step in proving Proposition \ref{PropDirectSumSchurCompl} using Proposition \ref{PropSumOfSchurComps}.
\begin{lemma}[Shorted matrices are Schur complements]\label{LemShortedMatricesAreSchurCompl}
If $A\in
\mathbb{C}
^{m\times m}$ and $B\in
\mathbb{C}
^{n\times n}$ are $2\times2$ block matrices
\[
A=
\begin{bmatrix}
A_{11} & A_{12}\\
A_{21} & A_{22}
\end{bmatrix}
,\text{ }B=
\begin{bmatrix}
B_{11} & B_{12}\\
B_{21} & B_{22}
\end{bmatrix}
\]
such that $A_{22}\in
\mathbb{C}
^{p\times p}$, $B_{22}\in
\mathbb{C}
^{q\times q}$ are invertible and $A/A_{22}\in
\mathbb{C}
^{k\times k}$, $B/B_{22}\in
\mathbb{C}
^{l\times l}$ then the direct sums $A/A_{22} \oplus 0_l, 0_k \oplus B/B_{22}\in\mathbb{C}^{\left(  k+l\right)  \times\left(  k+l\right)}$ are Schur complements
\begin{align}\label{ShortedMatricesC}
C/C_{22}&=A/A_{22} \oplus 0_l=
\begin{bmatrix}
A/A_{22} & 0\\
0 & 0_l
\end{bmatrix},\\
\text{ }D/D_{22}&=0_k \oplus B/B_{22}=
\begin{bmatrix}\label{ShortedMatricesD}
0_k & 0\\
0 & B/B_{22}
\end{bmatrix},
\end{align}
where $C\in
\mathbb{C}
^{\left(  k+l+p\right)  \times\left(  k+l+p\right)  }, D\in
\mathbb{C}
^{\left(  k+l+q\right)  \times\left(  k+l+q\right)  }$ are $3\times3$ block matrices with the following block partitioned structure $C=[C_{ij}]_{i,j=1,2}, D=[D_{ij}]_{i,j=1,2}$: 
\begin{align} 
C & =
\left[\begin{array}{c;{2pt/2pt} c c}
C_{11} & C_{12} \\ \hdashline[2pt/2pt]
C_{21} & C_{22}
\end{array}\right]
=
\left[\begin{array}{c c; {2pt/2pt} c}
A_{11} & 0 & A_{12} \\
0 & 0_l & 0 \\ \hdashline[2pt/2pt]
A_{21} & 0 & A_{22}
\end{array}\right],\label{LemShortedMatricesAreSchurComplMatrixC}\\
D & =
\left[\begin{array}{c;{2pt/2pt} c c}
D_{11} & D_{12} \\ \hdashline[2pt/2pt]
D_{21} & D_{22}
\end{array}\right]
=
\left[\begin{array}{c c; {2pt/2pt} c}
0_k & 0 & 0 \\ 
0 & B_{11} & B_{12} \\ \hdashline[2pt/2pt]
0 & B_{21} & B_{22}
\end{array}\right],\label{LemShortedMatricesAreSchurComplMatrixD}
\end{align}
and $C_{22}=A_{22}$, $D_{22}=B_{22}$ are invertible. Moreover, if the matrix $A$ (the matrix $B$) is real, symmetric, Hermitian, or real and symmetric then the matrix $C$ (the matrix $D$) is real, symmetric, Hermitian, or real and symmetric, respectively.
\end{lemma}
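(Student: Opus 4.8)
The plan is to prove both Schur-complement identities by a direct block-matrix computation, viewing $C$ as a $2\times 2$ block matrix via the partition already displayed in (\ref{LemShortedMatricesAreSchurComplMatrixC}) and $D$ via the partition in (\ref{LemShortedMatricesAreSchurComplMatrixD}), and then reading off the symmetry assertions straight from the explicit forms of $C$ and $D$. Nothing beyond the definition of the Schur complement is needed; in particular I would be careful to avoid Proposition \ref{PropDirectSumSchurCompl} (and even Proposition \ref{PropSumOfSchurComps}), since this lemma is meant to feed into those results and a self-contained computation keeps the dependency order clean.

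First I would fix the induced $2\times 2$ partition $C=[C_{ij}]_{i,j=1,2}$ with $(2,2)$-block $C_{22}=A_{22}$, so that
\[
C_{11}=\begin{bmatrix} A_{11} & 0 \\ 0 & 0_l \end{bmatrix},\qquad C_{12}=\begin{bmatrix} A_{12} \\ 0 \end{bmatrix},\qquad C_{21}=\begin{bmatrix} A_{21} & 0 \end{bmatrix}.
\]
Since $A_{22}\in\mathbb{C}^{p\times p}$ is invertible, $C_{22}=A_{22}$ is invertible, and a size count gives $C_{11}\in\mathbb{C}^{(k+l)\times(k+l)}$ and hence $C\in\mathbb{C}^{(k+l+p)\times(k+l+p)}$ with exactly the advertised block sizes. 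The computation is then the single line
\[
C/C_{22}=C_{11}-C_{12}C_{22}^{-1}C_{21}=\begin{bmatrix} A_{11}-A_{12}A_{22}^{-1}A_{21} & 0 \\ 0 & 0_l \end{bmatrix}=\begin{bmatrix} A/A_{22} & 0 \\ 0 & 0_l \end{bmatrix},
\]
which is $A/A_{22}\oplus 0_l$, proving (\ref{ShortedMatricesC}). The identity (\ref{ShortedMatricesD}) for $D$ is obtained in exactly the same way, now with $D_{22}=B_{22}$, $D_{12}=\begin{bmatrix} 0 \\ B_{12} \end{bmatrix}$, and $D_{21}=\begin{bmatrix} 0 & B_{21} \end{bmatrix}$.

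For the symmetry statements I would observe that $C$ is obtained from the matrix $A\oplus 0_l$ by one and the same permutation applied to its rows and to its columns (a permutation depending only on the block sizes $k,l,p$, namely the one reordering coordinate blocks $(k,p,l)$ into $(k,l,p)$); consequently $\overline{C}$, $C^{T}$, and $C^{*}$ are obtained from $\overline{A\oplus 0_l}$, $(A\oplus 0_l)^{T}$, and $(A\oplus 0_l)^{*}$ by that same pair of permutations, so each of the four properties real, symmetric, Hermitian, and real-and-symmetric passes from $A$ (hence from $A\oplus 0_l$) to $C$, and likewise from $B$ to $D$. I do not expect a genuine obstacle: the lemma is pure bookkeeping, and the only points that need care are tracking the zero-pattern of the products $C_{12}C_{22}^{-1}C_{21}$ and $D_{12}D_{22}^{-1}D_{21}$ and stating the permutation-conjugation description of $C$ and $D$ precisely rather than merely asserting it.
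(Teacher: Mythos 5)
Your proposal is correct and is essentially the paper's own proof: a direct block-matrix computation of $C/C_{22}=C_{11}-C_{12}C_{22}^{-1}C_{21}$ (and likewise for $D$) using exactly the partitions displayed in the lemma, with the symmetry claims read off from the explicit forms of $C$ and $D$. Your permutation-conjugation justification of the symmetry inheritance (writing $C$ as a simultaneous row/column block permutation of $A\oplus 0_l$) is merely a slightly more explicit version of the paper's ``follows immediately from the formulas'' remark, not a different route.
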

\begin{proof}
The proof is again a straightforward calculation using block matrix techniques. First, from the definition of $C=[C_{ij}]_{i,j=1,2}$ in (\ref{LemShortedMatricesAreSchurComplMatrixC}) and starting from the right hand side of (\ref{ShortedMatricesC}) we compute,
\begin{align*}
    \begin{bmatrix}
A/A_{22} & 0\\
0 & 0
\end{bmatrix}&=\begin{bmatrix}
A_{11}-A_{12}A_{22}^{-1}A_{21} & 0\\
0 & 0
\end{bmatrix}\\
&=\begin{bmatrix}
A_{11} & 0\\
0 & 0
\end{bmatrix}
-
\begin{bmatrix}
A_{12}A_{22}^{-1}A_{21} & 0\\
0 & 0
\end{bmatrix}\\&= \begin{bmatrix}
A_{11} & 0\\
0 & 0
\end{bmatrix}
-
\begin{bmatrix}
A_{12}A_{22}^{-1}\\
0
\end{bmatrix}
\begin{bmatrix}
A_{21} & 0
\end{bmatrix}\\&=\begin{bmatrix}
A_{11} & 0\\
0 & 0
\end{bmatrix}
-
\begin{bmatrix}
A_{12}\\
0
\end{bmatrix}
A_{22}^{-1}
\begin{bmatrix}
A_{21} & 0
\end{bmatrix}\\&=C_{11}-C_{12}C_{22}^{-1}C_{21}\\&=C/C_{22}.
\end{align*}
Similarly, from the definition of $D=[D_{ij}]_{i,j=1,2}$ in (\ref{LemShortedMatricesAreSchurComplMatrixD}) and starting from the right hand side of (\ref{ShortedMatricesD}) we compute,
\begin{align*}
    \begin{bmatrix}
0 & 0\\
0 & B/B_{22}
\end{bmatrix}&=\begin{bmatrix}
0 & 0\\
0 & B_{11}-B_{12}B_{22}^{-1}B_{11}
\end{bmatrix}\\&=\begin{bmatrix}
0 & 0\\
0 & B_{11}
\end{bmatrix}
-
\begin{bmatrix}
0 & 0\\
0 & B_{12}B_{22}^{-1}B_{11}
\end{bmatrix}\\&=\begin{bmatrix}
0 & 0\\
0 & B_{11}
\end{bmatrix}
-
\begin{bmatrix}
0\\
B_{12}B_{22}^{-1}
\end{bmatrix}
\begin{bmatrix}
0 & B_{21}
\end{bmatrix}\\&=\begin{bmatrix}
0 & 0\\
0 & B_{11}
\end{bmatrix}
-
\begin{bmatrix}
0\\
B_{12}
\end{bmatrix}
B_{22}^{-1}
\begin{bmatrix}
0 & B_{21}
\end{bmatrix}\\&=D_{11}-D_{12}D_{22}^{-1}D_{21}\\&=D/D_{22}.
\end{align*}
The remaining part of the proof follows immediately now from the Schur complement of $C$ and $D$ in formulas (\ref{LemShortedMatricesAreSchurComplMatrixC}) and (\ref{LemShortedMatricesAreSchurComplMatrixD}) in terms of the matrices $A$ and $B$, respectively. This completes the proof.
\end{proof}

\begin{proposition}[Direct sum of Schur complements]\label{PropDirectSumSchurCompl}
If $A\in
\mathbb{C}
^{m\times m}$ and $B\in
\mathbb{C}
^{n\times n}$ are $2\times2$ block matrices
\[
A=
\begin{bmatrix}
A_{11} & A_{12}\\
A_{21} & A_{22}
\end{bmatrix}
,\text{ }B=
\begin{bmatrix}
B_{11} & B_{12}\\
B_{21} & B_{22}
\end{bmatrix}
\]
such that $A_{22}\in
\mathbb{C}
^{p\times p}$, $B_{22}\in
\mathbb{C}
^{q\times q}$ are invertible and $A/A_{22}\in
\mathbb{C}
^{k\times k}$, $B/B_{22}\in
\mathbb{C}
^{l\times l}$ then
\begin{align}
C/C_{22}=A/A_{22}\oplus B/B_{22}=
\begin{bmatrix}
A/A_{22} & 0\\
0 & B/B_{22}
\end{bmatrix}
\end{align}
where $C\in
\mathbb{C}
^{\left(  k+l+p+q\right)  \times\left(  k+l+p+q\right)  }$ is the $2\times2$
block matrix
\begin{align}
C=
\left[\begin{array}{c;{2pt/2pt} c c}
C_{11} & C_{12} \\ \hdashline[2pt/2pt]
C_{21} & C_{22}
\end{array}\right]
=
\left[\begin{array}{c;{2pt/2pt} c c}
A_{11}\oplus B_{11} & A_{12}\oplus B_{12} \\ \hdashline[2pt/2pt]
A_{21}\oplus B_{21}  & A_{22}\oplus B_{22}
\end{array}\right]
=
\left[\begin{array}{c c; {2pt/2pt} c c}
A_{11} & 0 & A_{12} & 0 \\ 
0 & B_{11} & 0 & B_{12} \\ \hdashline[2pt/2pt]
A_{21} & 0 & A_{22} & 0\\
0 & B_{21} & 0 & B_{22}
\end{array}\right]\label{PropDirectSumSchurComplCMatrix}
\end{align}
and $C_{22}=A_{22}\oplus B_{22}$ is invertible. Moreover, if both matrices $A$ and $B$ are real, symmetric, Hermitian, or real and symmetric then the matrix $C$ is real, symmetric, Hermitian, or real and symmetric, respectively.
\end{proposition}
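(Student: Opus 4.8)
The plan is to realize the direct sum as a sum of two ``shorted'' Schur complements and then to invoke the sum formula, so that the statement drops out of Lemma~\ref{LemShortedMatricesAreSchurCompl} and Proposition~\ref{PropSumOfSchurComps} with essentially no fresh computation. First I would record the elementary identity in $\mathbb{C}^{(k+l)\times(k+l)}$
\[
A/A_{22}\oplus B/B_{22}=\bigl(A/A_{22}\oplus 0_l\bigr)+\bigl(0_k\oplus B/B_{22}\bigr).
\]
By Lemma~\ref{LemShortedMatricesAreSchurCompl} the first summand equals $C'/C'_{22}$ and the second equals $D'/D'_{22}$, where $C'$ and $D'$ are the explicit $3\times 3$ block matrices displayed there, each with a $(1,1)$-block of size $k+l$ and with $(2,2)$-blocks $C'_{22}=A_{22}$ and $D'_{22}=B_{22}$, respectively.

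Next I would apply Proposition~\ref{PropSumOfSchurComps} to the pair $C',D'$ (in the roles of ``$A$'' and ``$B$'' there, so that the ``$k$'' of that proposition is $k+l$, while ``$p$'' and ``$q$'' remain $p$ and $q$). The proposition returns a Schur complement of a $(k+l)+p+q$ block matrix whose $(1,1)$-block is $C'_{11}+D'_{11}=(A_{11}\oplus 0_l)+(0_k\oplus B_{11})=A_{11}\oplus B_{11}$, whose $(2,2)$-block is $A_{22}\oplus B_{22}$, and whose off-diagonal blocks are assembled from $A_{12},A_{21},B_{12},B_{21}$ padded by zeros exactly as in the displayed $C$. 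The one point that warrants care --- rather than being a routine calculation --- is to observe that, because the shorted blocks carry the order ``$k$ then $l$'' in the $(1,1)$-slot and ``$p$ then $q$'' in the $(2,2)$-slot, this $(k+l)+p+q$ block matrix coincides \emph{verbatim}, with no intervening permutation similarity, with the $4\times 4$ block matrix $C$ in the statement when the latter is read as a $2\times 2$ block matrix with $(1,1)$-block of size $k+l$ and $(2,2)$-block of size $p+q$; and its $(2,2)$-block $C_{22}=A_{22}\oplus B_{22}$ is invertible with inverse $A_{22}^{-1}\oplus B_{22}^{-1}$. This gives $C/C_{22}=A/A_{22}\oplus B/B_{22}$.

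For the symmetry and reality clause, no extra work is needed: if $A$ enjoys one of the four listed properties then so does $C'$ by Lemma~\ref{LemShortedMatricesAreSchurCompl}, likewise $D'$ inherits the property from $B$, and then Proposition~\ref{PropSumOfSchurComps} passes the common property to $C$. Equivalently, one can simply inspect the explicit form of $C$: applying conjugation, transposition, or conjugate transposition to $C$ amounts to applying the same operation blockwise to $A$ and $B$ while fixing the zero blocks, whose placement is symmetric under transposition, so each property shared by $A$ and $B$ is visibly inherited by $C$. I would also note in passing the even shorter self-contained argument --- since $(A_{22}\oplus B_{22})^{-1}=A_{22}^{-1}\oplus B_{22}^{-1}$ and direct sums multiply blockwise, one has $C/C_{22}=(A_{11}\oplus B_{11})-(A_{12}\oplus B_{12})(A_{22}^{-1}\oplus B_{22}^{-1})(A_{21}\oplus B_{21})=(A/A_{22})\oplus(B/B_{22})$ --- but I would keep the building-block proof as the main line, since it exemplifies the compositional methodology of Section~\ref{SecSchurComplAlgebraAndOps}. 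The expected main obstacle is thus purely organizational: confirming that the re-grouping performed inside Proposition~\ref{PropSumOfSchurComps} lands precisely on the displayed $C$.
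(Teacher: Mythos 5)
Your proposal is correct and follows essentially the same route as the paper: decompose $A/A_{22}\oplus B/B_{22}$ as $(A/A_{22}\oplus 0_l)+(0_k\oplus B/B_{22})$, realize each summand via Lemma \ref{LemShortedMatricesAreSchurCompl}, and combine them with Proposition \ref{PropSumOfSchurComps}, checking that the assembled block matrix is exactly the displayed $C$ with $C_{22}=A_{22}\oplus B_{22}$. The symmetry clause and the optional direct computation you mention are likewise consistent with the paper's argument.
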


\begin{proof}
The proof, as we shall see, follows immediately from Lemma \ref{LemShortedMatricesAreSchurCompl} using
Proposition \ref{PropSumOfSchurComps}. From the formulas
\[
A/A_{22}\oplus B/B_{22}=
\begin{bmatrix}
A/A_{22} & 0\\
0 & B/B_{22}
\end{bmatrix}
=
A/A_{22}\oplus 0_l + 0_k\oplus B/B_{22},
\]
and, by Lemma \ref{LemShortedMatricesAreSchurCompl},
\begin{align*}
A/A_{22}\oplus 0_l & = \left[\begin{array}{c c; {2pt/2pt} c}
A_{11} & 0 & A_{12} \\
0 & 0_l & 0 \\ \hdashline[2pt/2pt]
A_{21} & 0 & A_{22}
\end{array}\right]/A_{22},\\
0_k\oplus B/B_{22} & = \left[\begin{array}{c c; {2pt/2pt} c}
0_k & 0 & 0 \\ 
0 & B_{11} & B_{12} \\ \hdashline[2pt/2pt]
0 & B_{21} & B_{22}
\end{array}\right]/B_{22},
\end{align*}
it follows immediately from Proposition \ref{PropSumOfSchurComps} that
\begin{equation*}
C/C_{22}=A/A_{22}\oplus 0_l + 0_k\oplus B/B_{22},  
\end{equation*}
where $C$ is given by the formula (\ref{PropDirectSumSchurComplCMatrix}). The remaining part of the proof follows immediately now from the formula (\ref{PropDirectSumSchurComplCMatrix}) in terms of the matrices $A$ and $B$. This completes the proof.
\end{proof} 
\subsection{Matrix products and inverses}

\begin{remark}
To make the section as complete as possible in its treatment of the ``natural" algebra operation involving Schur complements, we include the next proposition on matrix multiplication of two Schur complements, i.e., $A/A_{22}B/B_{22}$. But it is an operation that we do not use at all in the Bessmertny\u{\i} realization theorem as it turns out that matrix multiplication it is not as ``natural" as the Kronencker product $\otimes$ of matrices is for solving the realization problem with symmetries as explained in Subsection \ref{SubsecRelevantWork}.
\end{remark}

The following proposition is well known (see, for instance, \cite[Fig. 3, Eq. (12), Theorem 4]{63EG}, \cite[p. 6]{79BGK}, \cite[Sec. 2.3]{08BGKR}, and \cite[pp. 1500, 1501, Theorem 4.1]{05BGM}), where it is often used in realizing the product of transfer functions based on the analogy of the cascade connection of electrical networks.
\begin{proposition}[Matrix multiplication of two Schur complements]\label{PropMatrixMultipliationOfTwoSchurComplements}
If $A\in
\mathbb{C}
^{m\times m}$ and $B\in
\mathbb{C}
^{n\times n}$ are $2\times2$ block matrices
\[
A=
\begin{bmatrix}
A_{11} & A_{12}\\
A_{21} & A_{22}
\end{bmatrix},\;
B=
\begin{bmatrix}
B_{11} & B_{12}\\
B_{21} & B_{22}
\end{bmatrix}
\]
such that the matrices $A_{22}$ and $B_{22}$ are invertible and $A/A_{22},B/B_{22}\in\mathbb{C}^{k\times k}$ then 
\begin{equation}\label{MatrixMultiplicationOfTwoSchurComplementsInProp}
C/C_{22}= A/A_{22}B/B_{22},
\end{equation}
where $C\in \mathbb{C}^{(m+n-k)\times (m+n-k)}$ is the $3\times 3$ block matrix with the following block partitioned structure $C=[C_{ij}]_{i,j=1,2}$:
\begin{align} 
C & =
\left[\begin{array}{c;{2pt/2pt} c c}
C_{11} & C_{12} \\ \hdashline[2pt/2pt]
C_{21} & C_{22}
\end{array}\right]
=
\left[\begin{array}{c; {2pt/2pt} c c}
A_{11}B_{11} & A_{12} & A_{11}B_{12} \\\hdashline[2pt/2pt]
A_{21}B_{11} & A_{22} & A_{21}B_{12} \\ 
B_{21} & 0 & B_{22}
\end{array}\right],\label{PropMatrixMultipliationOfTwoSchurComplementsCMatrix}
\end{align}
where the matrix $C_{22}$ is invertible with
\begin{equation}
C_{22}^{-1} = 
\begin{bmatrix}
A_{22} & A_{21}B_{12} \\ 
 0 & B_{22}
\end{bmatrix}^{-1}
=
\begin{bmatrix}
A_{22}^{-1} & -A_{22}^{-1}A_{11}B_{12}B_{22}^{-1}\\
0 & B_{22}^{-1}
\end{bmatrix}.\label{PropMatrixMultipliationOfTwoSchurComplementsInvFormulaC22}
\end{equation}
\end{proposition}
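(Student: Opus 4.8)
The plan is to verify the stated identity by a direct block‑matrix computation, in the same style as the proofs of Proposition \ref{PropSumOfSchurComps} and Lemma \ref{LemShortedMatricesAreSchurCompl}. First I would fix the dimension bookkeeping. Writing $A_{22}\in\mathbb{C}^{p\times p}$ and $B_{22}\in\mathbb{C}^{q\times q}$, the hypotheses $A\in\mathbb{C}^{m\times m}$, $B\in\mathbb{C}^{n\times n}$, $A/A_{22},B/B_{22}\in\mathbb{C}^{k\times k}$ force $A_{11},B_{11}\in\mathbb{C}^{k\times k}$, $A_{12}\in\mathbb{C}^{k\times p}$, $A_{21}\in\mathbb{C}^{p\times k}$, $B_{12}\in\mathbb{C}^{k\times q}$, $B_{21}\in\mathbb{C}^{q\times k}$, and $m=k+p$, $n=k+q$. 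This makes every product appearing in $C$ in \eqref{PropMatrixMultipliationOfTwoSchurComplementsCMatrix} well defined and shows that $C$ is $(k+p+q)\times(k+p+q)=(m+n-k)\times(m+n-k)$ with $(2,2)$‑block $C_{22}$ of size $(p+q)\times(p+q)$.

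Second, I would handle $C_{22}$. Since $C_{22}=\left[\begin{smallmatrix}A_{22} & A_{21}B_{12}\\ 0 & B_{22}\end{smallmatrix}\right]$ is block upper triangular with invertible diagonal blocks $A_{22}$ and $B_{22}$, it is invertible (indeed $\det C_{22}=\det A_{22}\det B_{22}\neq 0$), and the standard block‑triangular inversion formula gives
\[
C_{22}^{-1}=\begin{bmatrix}A_{22}^{-1} & -A_{22}^{-1}A_{21}B_{12}B_{22}^{-1}\\ 0 & B_{22}^{-1}\end{bmatrix},
\]
which is the content of \eqref{PropMatrixMultipliationOfTwoSchurComplementsInvFormulaC22} (the $(1,2)$‑block being $-A_{22}^{-1}A_{21}B_{12}B_{22}^{-1}$); this is equally checked by multiplying the displayed matrix against $C_{22}$ on either side.

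Third, the heart of the argument is to expand both sides and match them term by term. Distributing the product of the two Schur complements,
\[
(A/A_{22})(B/B_{22})=A_{11}B_{11}-A_{11}B_{12}B_{22}^{-1}B_{21}-A_{12}A_{22}^{-1}A_{21}B_{11}+A_{12}A_{22}^{-1}A_{21}B_{12}B_{22}^{-1}B_{21}.
\]
On the other side, using $C_{11}=A_{11}B_{11}$, $C_{12}=\begin{bmatrix}A_{12} & A_{11}B_{12}\end{bmatrix}$, $C_{21}=\begin{bmatrix}A_{21}B_{11}\\ B_{21}\end{bmatrix}$ from \eqref{PropMatrixMultipliationOfTwoSchurComplementsCMatrix} together with the inverse just found, I would first compute $C_{12}C_{22}^{-1}=\begin{bmatrix}A_{12}A_{22}^{-1} & A_{11}B_{12}B_{22}^{-1}-A_{12}A_{22}^{-1}A_{21}B_{12}B_{22}^{-1}\end{bmatrix}$ and then $C_{12}C_{22}^{-1}C_{21}=A_{12}A_{22}^{-1}A_{21}B_{11}+A_{11}B_{12}B_{22}^{-1}B_{21}-A_{12}A_{22}^{-1}A_{21}B_{12}B_{22}^{-1}B_{21}$. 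Subtracting this from $C_{11}$ reproduces exactly the four‑term expansion of $(A/A_{22})(B/B_{22})$, which establishes $C/C_{22}=A/A_{22}\,B/B_{22}$, i.e.\ \eqref{MatrixMultiplicationOfTwoSchurComplementsInProp}.

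I do not expect a genuine obstacle: the proposition is essentially bookkeeping, as with the other identities in this subsection. The two spots demanding care are (i) preserving the order of the non‑commuting factors throughout the expansion of the product and of $C_{12}C_{22}^{-1}C_{21}$, and (ii) getting the off‑diagonal block of $C_{22}^{-1}$ correct, since it is the single term coupling the ``$A$‑side'' with the ``$B$‑side'' and is where a sign or an index slip would most naturally occur. Note also that, in contrast to Lemma \ref{LemScalarMultiSchurCompl}, Lemma \ref{LemSumSchurComplPlusMatrix}, and Proposition \ref{PropSumOfSchurComps}, there is no ``real/symmetric/Hermitian'' clause to verify here, since the matrix product of two symmetric matrices is generally not symmetric --- which is precisely why the Kronecker product, rather than the matrix product, is the operation of choice in the proof of Theorem \ref{ThmBessmRealiz}.
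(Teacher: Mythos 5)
Your proof is correct and takes essentially the same route as the paper's: a direct block-matrix computation in which the four-term expansion of $(A/A_{22})(B/B_{22})$ is matched with $C_{11}-C_{12}C_{22}^{-1}C_{21}$ via the block upper-triangular inverse of $C_{22}$ (the paper simply runs the computation in the opposite direction, regrouping the expanded product into the factored form). Note also that your $(1,2)$-block $-A_{22}^{-1}A_{21}B_{12}B_{22}^{-1}$ is the correct one: the $A_{11}$ appearing in the displayed formula \eqref{PropMatrixMultipliationOfTwoSchurComplementsInvFormulaC22} (and propagated inside the paper's computation) is a typo, as multiplying your matrix against $C_{22}=\left[\begin{smallmatrix}A_{22} & A_{21}B_{12}\\ 0 & B_{22}\end{smallmatrix}\right]$ confirms, and only with $A_{21}$ do the dimensions and the cancellation work in general.
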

\begin{proof}
The proof is just a straightforward application of block matrix multiplication. First of all, by the hypotheses the matrix products in the statement of the proposition are well-defined and its easy to verify that $C\in \mathbb{C}^{(m+n-k)\times (m+n-k)}$ as well as the inverse formula (\ref{PropMatrixMultipliationOfTwoSchurComplementsInvFormulaC22}) for $C_{22}^{-1}$ is correct. Second, to prove the formula  (\ref{MatrixMultiplicationOfTwoSchurComplementsInProp}), we compute
\begin{align*}
A/A_{22}B/B_{22}&=(A_{11}-A_{12}A_{22}^{-1}A_{21})(B_{11}-B_{12}B_{22}^{-1}B_{21})\\&=(A_{11}-A_{12}A_{22}^{-1}A_{21})B_{11}-(A_{11}-A_{12}A_{22}^{-1}A_{21})B_{12}B_{22}^{-1}B_{21}\\&=A_{11}B_{11}-A_{12}A_{22}^{-1}A_{21}B_{11}+A_{12}A_{22}^{-1}A_{11}B_{12}B_{22}^{-1}B_{21}-A_{11}B_{12}B_{22}^{-1}B_{21}\\&=A_{11}B_{11}-\{
A_{12}A_{22}^{-1}A_{21}B_{11} + [A_{12}(-A_{22}^{-1}A_{11}B_{12}B_{22}^{-1})+A_{11}B_{12}B_{22}^{-1}]B_{21} 
\}\\&=A_{11}B_{11}-\begin{bmatrix}
A_{12}A_{22}^{-1} & A_{12}(-A_{22}^{-1}A_{11}B_{12}B_{22}^{-1})+A_{11}B_{12}B_{22}^{-1}
\end{bmatrix}
\begin{bmatrix}A_{21}B_{11} \\
B_{21} \end{bmatrix}\\&=A_{11}B_{11}-\begin{bmatrix}
A_{12} & A_{11}B_{12}\end{bmatrix}\begin{bmatrix}
A_{22}^{-1} & -A_{22}^{-1}A_{11}B_{12}B_{22}^{-1}\\
0 & B_{22}^{-1}\end{bmatrix}\begin{bmatrix}
A_{21}B_{11} \\B_{21} \end{bmatrix}\\&= C_{11}-C_{12}C_{22}^{-1}C_{21}\\&=C/C_{22}.
\end{align*}
This completes the proof.
\end{proof}

\begin{proposition}[Matrix multiplication of a Schur complement]\label{PropMatrixMultOfSchurCompl}
If $A\in
\mathbb{C}
^{m\times m}$ is a $2\times2$ block matrix
\[
A=
\begin{bmatrix}
A_{11} & A_{12}\\
A_{21} & A_{22}
\end{bmatrix}
\]
such that $A_{22}\in\mathbb{C}^{p\times p}$ is invertible and $A/A_{22}\in\mathbb{C}^{k\times k}$ then, for any matrices $B\in \mathbb{C}^{l\times k}$ and $C\in \mathbb{C}^{k\times l}$,
\begin{align}\label{SchurComplementSandwich}
D/D_{22}=B(A/A_{22})C,
\end{align}
where $D\in \mathbb{C}^{(l+p)\times (l+p)}$ is the $2\times 2$ block matrix
\begin{align}
D=\begin{bmatrix}
D_{11} & D_{12}\\
D_{21} & D_{22}
\end{bmatrix}= \begin{bmatrix}
BA_{11}C & BA_{12}\\
A_{21}C & A_{22}
\end{bmatrix}=\begin{bmatrix}
B & 0\\
0 & I_p
\end{bmatrix}\begin{bmatrix}
A_{11} & A_{12}\\
A_{21} & A_{22}
\end{bmatrix}\begin{bmatrix}
C & 0\\
0 & I_p
\end{bmatrix}\label{PropMatrixMultOfSchurComplCMatrix}
\end{align}
and $D_{22}=A_{22}$ is invertible. Moreover, the following statements are true:
\begin{itemize}
    \item[(a)] If $A,B,C$ are real matrices then $D$ is a real matrix.
    \item[(b)] If $A$ is a symmetric matrix and $C=B^{T}$ then $D$ is a symmetric matrix.
    \item[(c)] If the hypotheses of (a) and (b) are true then $D$ is a real symmetric matrix.
    \item[(d)] If $A$ is a Hermitian matrix and $C=B^*$ then $D$ is a Hermitian matrix.
\end{itemize}
\end{proposition}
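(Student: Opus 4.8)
The plan is to prove the Schur complement identity \eqref{SchurComplementSandwich} by a direct block-matrix computation from the definition, and then to read off all four symmetry statements (a)--(d) from the factored form of $D$ exhibited on the right-hand side of \eqref{PropMatrixMultOfSchurComplCMatrix}.

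First I would record the dimension bookkeeping. Since $A_{22}\in\mathbb{C}^{p\times p}$ is invertible and $A/A_{22}\in\mathbb{C}^{k\times k}$, the blocks of $A$ have sizes $A_{11}\in\mathbb{C}^{k\times k}$, $A_{12}\in\mathbb{C}^{k\times p}$, $A_{21}\in\mathbb{C}^{p\times k}$, so $m=k+p$. With $B\in\mathbb{C}^{l\times k}$ and $C\in\mathbb{C}^{k\times l}$, the products $BA_{11}C\in\mathbb{C}^{l\times l}$, $BA_{12}\in\mathbb{C}^{l\times p}$, $A_{21}C\in\mathbb{C}^{p\times l}$, and $A_{22}\in\mathbb{C}^{p\times p}$ are all defined and assemble into the $2\times 2$ block matrix $D\in\mathbb{C}^{(l+p)\times(l+p)}$ of \eqref{PropMatrixMultOfSchurComplCMatrix}, with $D_{22}=A_{22}$ invertible. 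Next I would verify the middle factorization $D=\operatorname{diag}(B,I_p)\,A\,\operatorname{diag}(C,I_p)$ by one line of block multiplication, and then compute the Schur complement directly:
\[
D/D_{22}=D_{11}-D_{12}D_{22}^{-1}D_{21}=BA_{11}C-(BA_{12})A_{22}^{-1}(A_{21}C)=B\bigl(A_{11}-A_{12}A_{22}^{-1}A_{21}\bigr)C=B(A/A_{22})C .
\]
This is the entire content of \eqref{SchurComplementSandwich}.

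For the symmetry statements I would work exclusively with the factored form $D=\operatorname{diag}(B,I_p)\,A\,\operatorname{diag}(C,I_p)$, using that $I_p^T=I_p^*=\overline{I_p}=I_p$. For (a), a product of real matrices is real. For (b), transposing gives $D^T=\operatorname{diag}(C^T,I_p)\,A^T\,\operatorname{diag}(B^T,I_p)$; substituting $A^T=A$ and $C=B^T$ (so $C^T=B$) yields $D^T=\operatorname{diag}(B,I_p)\,A\,\operatorname{diag}(C,I_p)=D$. Statement (c) is the conjunction of (a) and (b). For (d), conjugate-transposing gives $D^*=\operatorname{diag}(C^*,I_p)\,A^*\,\operatorname{diag}(B^*,I_p)$; substituting $A^*=A$ and $C=B^*$ (so $C^*=B$) yields $D^*=D$.

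The argument has essentially no obstacle: it is a routine block-matrix verification together with bookkeeping of sizes. The only point requiring a little care is to isolate the factored form $D=\operatorname{diag}(B,I_p)\,A\,\operatorname{diag}(C,I_p)$ early, since it is precisely this factorization (combined with the trivial behavior of $I_p$ under the three involutions) that makes statements (a)--(d) immediate rather than requiring separate block computations in each case.
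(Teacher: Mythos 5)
Your proposal is correct and follows essentially the same route as the paper: a direct block-matrix computation of $D/D_{22}=B(A/A_{22})C$ from the definition of $D$, with the symmetry statements (a)--(d) read off from the factorization $D=\operatorname{diag}(B,I_p)\,A\,\operatorname{diag}(C,I_p)$ in \eqref{PropMatrixMultOfSchurComplCMatrix}. You merely spell out the dimension bookkeeping and the transpose/conjugate-transpose verifications a bit more explicitly than the paper, which leaves them as immediate consequences of that formula.
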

\begin{proof}
By block multiplication the result follows immediately from the definition of $D$ in (\ref{PropMatrixMultOfSchurComplCMatrix}), a straightforward computation yields
\begin{align*}
    B(A/A_{22})C&=B(A_{11}-A_{12}A_{22}^{-1}A_{21})C\\&=BA_{11}C-BA_{12}A_{22}^{-1}A_{21}C\\&=D_{11}-D_{12}D_{22}^{-1}D_{21}\\&=D/D_{22}.
\end{align*}
The remaining part of the proof follows immediately now from the formula (\ref{PropMatrixMultOfSchurComplCMatrix}) in terms of the matrices $A,B$ and $C$.
\end{proof}

The following lemma is well known (see, for instance, \cite[pp. 19, 20, Theorem 1.2]{05FZ}).
\begin{lemma}[Inverse is a Schur complement]\label{LemInvIsASchurCompl}
If $A\in
\mathbb{C}
^{m\times m}$ is a $2\times2$ block matrix
\begin{equation}
A=
\begin{bmatrix}
A_{11} & A_{12}\\
A_{21} & A_{22}
\end{bmatrix},\label{BlockFormOfAForFactorization}
\end{equation}
such that $A_{22}\in \mathbb{C}^{p\times p}$ is invertible then
\begin{equation}
A=\begin{bmatrix}
I_{m-p} & A_{12}A_{22}^{-1}\\
0 & I_p
\end{bmatrix}
\begin{bmatrix}
A/A_{22} & 0\\
0 & A_{22}
\end{bmatrix}
\begin{bmatrix}
I_{m-p} & 0\\
A_{22}^{-1}A_{21} & I_p
\end{bmatrix}.\label{BlockFactorizationUsingSchurComplement}
\end{equation}
Furthermore, $A/A_{22}$ is invertible and only if $A$ is invertible, in which case
\begin{equation}
A^{-1}=
\begin{bmatrix}
(A/A_{22})^{-1} & -(A/A_{22})^{-1}A_{12}A_{22}^{-1}\\
-A_{22}^{-1}A_{21}(A/A_{22})^{-1} & A_{22}^{-1}+A_{22}^{-1}A_{21}(A/A_{22})^{-1}A_{12}A_{22}^{-1}
\end{bmatrix}\label{InvBlockFactorizationUsingSchurComplement}
\end{equation}
and
\begin{equation}
(A/A_{22})^{-1}=(A^{-1})_{11}.\label{InvSchurComplIsCornerBlock}
\end{equation}
Moreover, 
\begin{equation}
B/B_{22} = A^{-1}, \label{InvIsASchurComplFormula}
\end{equation}
where $B\in \mathbb{C}^{2m\times 2m}$ is the $2\times 2$ block matrix
\begin{equation}
B=
\begin{bmatrix}
0_m & I_m\\
I_m & -A
\end{bmatrix}\label{InvIsSchurComplBMatrixBlockForm}
\end{equation}
and $B_{22}=-A$ is invertible. In addition, if the matrix $A$ is real, symmetric, Hermitian, or real and symmetric then the matrix $B$ is real, symmetric, Hermitian, or real and symmetric, respectively.
\end{lemma}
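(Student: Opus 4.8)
The plan is to derive every claim in the lemma from the single block factorization (\ref{BlockFactorizationUsingSchurComplement}). So I would begin by verifying that identity directly: abbreviating the three factors on its right-hand side as $U = \begin{bmatrix} I_{m-p} & A_{12}A_{22}^{-1} \\ 0 & I_p \end{bmatrix}$, $D = \begin{bmatrix} A/A_{22} & 0 \\ 0 & A_{22} \end{bmatrix}$, and $L = \begin{bmatrix} I_{m-p} & 0 \\ A_{22}^{-1}A_{21} & I_p \end{bmatrix}$, one computes $UD = \begin{bmatrix} A/A_{22} & A_{12} \\ 0 & A_{22} \end{bmatrix}$ and then $(UD)L = \begin{bmatrix} A/A_{22}+A_{12}A_{22}^{-1}A_{21} & A_{12} \\ A_{21} & A_{22} \end{bmatrix}$, whose $(1,1)$-block is exactly $A_{11}$ by the definition of the Schur complement; hence $UDL = A$. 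This is the only genuine computation in the lemma and it is short.

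With (\ref{BlockFactorizationUsingSchurComplement}) established, the remaining statements are bookkeeping. The unit block-triangular factors $U$ and $L$ are invertible regardless of anything, with $U^{-1}$ and $L^{-1}$ obtained by negating the lone off-diagonal block; consequently $A$ is invertible if and only if $D$ is, and since $A_{22}$ is invertible by hypothesis, $D$ is invertible if and only if $A/A_{22}$ is — this is the asserted equivalence. To obtain (\ref{InvBlockFactorizationUsingSchurComplement}) I would then write $A^{-1} = L^{-1}D^{-1}U^{-1}$ with $D^{-1} = \begin{bmatrix} (A/A_{22})^{-1} & 0 \\ 0 & A_{22}^{-1} \end{bmatrix}$ and multiply the three matrices out; reading off the $(1,1)$-block of the product gives $(A^{-1})_{11} = (A/A_{22})^{-1}$, which is (\ref{InvSchurComplIsCornerBlock}).

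For the Schur-complement representation (\ref{InvIsASchurComplFormula}), observe that under the standing assumption (still inside the ``in which case'' clause) that $A$, equivalently $A/A_{22}$, is invertible, the matrix $B$ in (\ref{InvIsSchurComplBMatrixBlockForm}) has $(2,2)$-block $B_{22} = -A$, which is invertible, so the Schur complement is defined and a one-line computation gives $B/B_{22} = B_{11} - B_{12}B_{22}^{-1}B_{21} = 0_m - I_m(-A)^{-1}I_m = A^{-1}$. Finally, the symmetry claims follow immediately from the explicit shape of $B$: conjugation, transposition, and conjugate-transposition fix $0_m$ and $I_m$ and act on $B$ only by replacing $A$ with $\overline{A}$, $A^{T}$, or $A^{*}$, respectively, so $B$ inherits from $A$ each of the properties real, symmetric, Hermitian, or real-and-symmetric.

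The only place where care is needed is the expansion of $L^{-1}D^{-1}U^{-1}$ in the second step, where an off-diagonal sign or a left/right placement of $A_{22}^{-1}$ is easy to bungle; there is no conceptual obstacle, and the result is classical (cf. \cite[pp. 19, 20, Theorem 1.2]{05FZ}). As an alternative one could skip the factorization and instead verify (\ref{InvBlockFactorizationUsingSchurComplement}) by checking $A\cdot A^{-1} = I_m$ block by block, but the factorization route is preferable because it simultaneously yields the invertibility equivalence and the corner-block identity (\ref{InvSchurComplIsCornerBlock}).
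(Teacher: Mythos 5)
Your proposal is correct and follows essentially the same route as the paper: verify the block factorization $A=UDL$, use the unit-triangular factors to get the invertibility equivalence and the formula $A^{-1}=L^{-1}D^{-1}U^{-1}$ (hence $(A^{-1})_{11}=(A/A_{22})^{-1}$), then compute $B/B_{22}=-I_m(-A)^{-1}I_m=A^{-1}$ and read the symmetry claims off the explicit form of $B$. Your explicit remark that the $B/B_{22}$ claim lives inside the ``in which case'' clause (so that $B_{22}=-A$ is indeed invertible) is a small point of care the paper leaves implicit, but otherwise the two arguments coincide.
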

\begin{proof}
First, if $A\in
\mathbb{C}
^{m\times m}$ is a $2\times2$ block matrix of the form (\ref{BlockFormOfAForFactorization}) and $A_{22}\in \mathbb{C}^{p\times p}$ is invertible, then the factorization of $A$ in (\ref{BlockFactorizationUsingSchurComplement}) follows immediately from block multiplication as does the inverse formulas
\begin{align*}
\begin{bmatrix}
I_{m-p} & A_{12}A_{22}^{-1}\\
0 & I_p
\end{bmatrix}^{-1}
&=
\begin{bmatrix}
I_{m-p} & -A_{12}A_{22}^{-1}\\
0 & I_p
\end{bmatrix},\\
\begin{bmatrix}
I_{m-p} & 0\\
A_{22}^{-1}A_{21} & I_p
\end{bmatrix}^{-1}
&=
\begin{bmatrix}
I_{m-p} & 0\\
-A_{22}^{-1}A_{21} & I_p
\end{bmatrix}.
\end{align*}
It now follows from this and the factorization (\ref{BlockFactorizationUsingSchurComplement}) that, $A/A_{22}$ is invertible if and only if $A$ is invertible, in which case we have the factorization
\[
A^{-1}=
\begin{bmatrix}
I_{m-p} & 0\\
-A_{22}^{-1}A_{21} & I_p
\end{bmatrix}
\begin{bmatrix}
(A/A_{22})^{-1} & 0\\
0 & A_{22}^{-1}
\end{bmatrix}
\begin{bmatrix}
I_{m-p} & -A_{12}A_{22}^{-1}\\
0 & I_p
\end{bmatrix}.
\]
Hence, by block multiplication it follows immediately that the equality in (\ref{InvBlockFactorizationUsingSchurComplement}) is true which implies the equality (\ref{InvSchurComplIsCornerBlock}) is also true. Finally, for the matrix $B\in \mathbb{C}^{2m\times 2m}$ defined in (\ref{InvIsASchurComplFormula}) we have $B_{22}=-A$ is invertible so that
\begin{align*}
B/B_{22} &= B_{11}-B_{12}B_{22}^{-1}B_{21}\\
&=I_mA^{-1}I_m = A^{-1}.
\end{align*}
The remaining part of the proof follows immediately now from the formula (\ref{InvIsASchurComplFormula}) in terms of the matrix $A$. This completes the proof.
\end{proof}

\begin{proposition}[Inverse of a Schur complement]\label{PropInvOfASchurCompl}
If $A\in
\mathbb{C}
^{m\times m}$ is a $2\times2$ block matrix
\[
A=
\begin{bmatrix}
A_{11} & A_{12}\\
A_{21} & A_{22},
\end{bmatrix}
\]
such that $A_{22}$ is invertible and $A/A_{22}\in\mathbb{C}^{k\times k}$ is invertible then
\begin{align}
C/C_{22}=(A/A_{22})^{-1}
\end{align}
where $C\in \mathbb{C}^{(k+m)\times (k+m)}$ is the $3\times3$ block matrix with the following block partitioned structure $C=[C_{ij}]_{i,j=1,2}$: 
\begin{align} 
C & =
\left[\begin{array}{c;{2pt/2pt} c c}
C_{11} & C_{12} \\ \hdashline[2pt/2pt]
C_{21} & C_{22}
\end{array}\right]
=
\left[\begin{array}{c; {2pt/2pt} c c}
0_k & I_k & 0 \\\hdashline[2pt/2pt]
I_k & -A_{11} & -A_{12} \\ 
0 & -A_{21} & -A_{22}
\end{array}\right]\label{PropInvOfASchurComplCMatrix}
\end{align}
with $C_{22}=-A$ invertible. Moreover, if the matrix $A$ is real, symmetric, Hermitian, or real and symmetric then the matrix $B$ is real, symmetric, Hermitian, or real and symmetric, respectively.
\end{proposition}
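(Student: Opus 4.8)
The plan is to obtain this proposition as a short consequence of Lemma \ref{LemInvIsASchurCompl} and Proposition \ref{PropMatrixMultOfSchurCompl}, without carrying out any direct block-inversion computation. The starting point is that, by Lemma \ref{LemInvIsASchurCompl}, the two hypotheses --- $A_{22}$ invertible and $A/A_{22}$ invertible --- already force $A$ itself to be invertible, and that same lemma hands us two facts to combine: the corner-block identity $(A/A_{22})^{-1} = (A^{-1})_{11}$ from \eqref{InvSchurComplIsCornerBlock}, and the realization $A^{-1} = B/B_{22}$ from \eqref{InvIsASchurComplFormula}, where $B = \left[\begin{smallmatrix} 0_m & I_m \\ I_m & -A \end{smallmatrix}\right]$ and $B_{22} = -A$ is invertible (precisely because $A$ is).

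First I would rewrite the $(1,1)$-block extraction as a two-sided matrix product, $(A^{-1})_{11} = \left[\begin{smallmatrix} I_k & 0\end{smallmatrix}\right] A^{-1} \left[\begin{smallmatrix} I_k \\ 0\end{smallmatrix}\right]$, where the zero blocks have sizes $k\times(m-k)$ and $(m-k)\times k$. Then I would apply Proposition \ref{PropMatrixMultOfSchurCompl} to the Schur complement $A^{-1} = B/B_{22}$ with left ``sandwich'' matrix $\left[\begin{smallmatrix} I_k & 0\end{smallmatrix}\right] \in \mathbb{C}^{k\times m}$ and right ``sandwich'' matrix $\left[\begin{smallmatrix} I_k \\ 0\end{smallmatrix}\right] \in \mathbb{C}^{m\times k}$. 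Since $B_{11} = 0_m$, $B_{12} = B_{21} = I_m$, and $B_{22} = -A$, multiplying out the block matrix $D$ given by formula \eqref{PropMatrixMultOfSchurComplCMatrix} collapses it to exactly the matrix $C$ of \eqref{PropInvOfASchurComplCMatrix}, with $D_{22} = B_{22} = -A = C_{22}$; in particular the sizes match, $D \in \mathbb{C}^{(k+m)\times(k+m)}$. Proposition \ref{PropMatrixMultOfSchurCompl} then yields $C/C_{22} = D/D_{22} = \left[\begin{smallmatrix} I_k & 0\end{smallmatrix}\right](B/B_{22})\left[\begin{smallmatrix} I_k \\ 0\end{smallmatrix}\right] = (A^{-1})_{11} = (A/A_{22})^{-1}$, which is the assertion.

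For the symmetry claims (where ``the matrix $B$'' in the statement should read ``the matrix $C$''), the observation that makes everything immediate is that the right sandwich matrix is the transpose of the left one and, both being real, also its conjugate transpose: $\left[\begin{smallmatrix} I_k \\ 0\end{smallmatrix}\right] = \left[\begin{smallmatrix} I_k & 0\end{smallmatrix}\right]^{T} = \left[\begin{smallmatrix} I_k & 0\end{smallmatrix}\right]^{*}$. So I would invoke the last sentence of Lemma \ref{LemInvIsASchurCompl} --- that $B$ is real (resp.\ symmetric, Hermitian, real and symmetric) whenever $A$ is --- and then read off the corresponding conclusion for $C = D$ from parts (a), (b), (d), and (c) of Proposition \ref{PropMatrixMultOfSchurCompl}, respectively.

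I do not expect a genuine obstacle: the proposition is essentially a repackaging of two earlier results. The one point requiring care is the index and size bookkeeping when specializing Proposition \ref{PropMatrixMultOfSchurCompl} --- one must keep in mind that the matrix being sandwiched is the $2m \times 2m$ block matrix $B$ formed from $A$ (not $A$ itself), that its inner dimension is $m$, and that the sandwich maps are the coordinate injection and projection between $\mathbb{C}^{k}$ and $\mathbb{C}^{m}$ --- so that the $D$ produced by that proposition is literally the claimed $C$, not merely similar to it. As a self-contained alternative one could instead verify $C_{11} - C_{12}C_{22}^{-1}C_{21} = (A/A_{22})^{-1}$ directly, inverting $C_{22} = -A$ by means of the factorization \eqref{InvBlockFactorizationUsingSchurComplement}, but the route through Proposition \ref{PropMatrixMultOfSchurCompl} is shorter and delivers the symmetry claims for free.
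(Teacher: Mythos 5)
Your proposal is correct and follows essentially the same route as the paper's own proof: invoke Lemma \ref{LemInvIsASchurCompl} to write $(A/A_{22})^{-1}=(A^{-1})_{11}=\begin{bmatrix} I_k & 0\end{bmatrix}(B/B_{22})\begin{bmatrix} I_k \\ 0\end{bmatrix}$ with $B$ as in \eqref{InvIsASchurComplFormula}, then apply Proposition \ref{PropMatrixMultOfSchurCompl} to obtain exactly the matrix $C$ of \eqref{PropInvOfASchurComplCMatrix}, with the symmetry claims read off from that formula. Your observation that ``the matrix $B$'' in the final sentence of the statement should read ``the matrix $C$'' is also correct.
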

\begin{proof}
By Lemma \ref{LemInvIsASchurCompl} and, in particular, formulas (\ref{InvSchurComplIsCornerBlock})-(\ref{InvIsSchurComplBMatrixBlockForm}) we have
\[
(A/A_{22})^{-1}=(A^{-1})_{11}=\begin{bmatrix}
I_k & 0
\end{bmatrix}
A^{-1}
\begin{bmatrix}
I_k \\ 0
\end{bmatrix}
=
\begin{bmatrix}
I_k & 0
\end{bmatrix}
B/B_{22}
\begin{bmatrix}
I_k \\ 0
\end{bmatrix},
\]
where the matrix $B\in \mathbb{C}^{2m\times 2m}$ is defined in terms of $A$ in (\ref{InvIsASchurComplFormula}). Hence, by Proposition \ref{PropMatrixMultOfSchurCompl}
\[
C/C_{22} = \begin{bmatrix}
I_k & 0
\end{bmatrix}
B/B_{22}
\begin{bmatrix}
I_k \\ 0
\end{bmatrix},
\]
where $C=[C_{ij}]_{i,j=1,2}\in \mathbb{C}^{(k+m)\times (k+m)}$ is given by 
\begin{align*}
C_{11}&=\begin{bmatrix}
I_k & 0
\end{bmatrix}
B_{11}
\begin{bmatrix}
I_k \\ 0
\end{bmatrix}
=0_k,\;\;
C_{12}=\begin{bmatrix}
I_k & 0
\end{bmatrix}
B_{12} =\begin{bmatrix}
I_k & 0
\end{bmatrix}\\
&=
B_{21}\begin{bmatrix}
I_k \\ 0
\end{bmatrix}
=\begin{bmatrix}
I_k \\ 0
\end{bmatrix},\;\;
C_{22}=-A=
\begin{bmatrix}
-A_{11} & -A_{12} \\ 
-A_{21} & -A_{22}
\end{bmatrix},
\end{align*}
which yields the formula (\ref{PropInvOfASchurComplCMatrix}) for $C$. The remaining part of the proof follows immediately now from the formula (\ref{PropInvOfASchurComplCMatrix}) in terms of the matrix $A$. This completes the proof.
\end{proof}

\subsubsection{Kronecker products}

The results in this section, on Kronecker products of matrices when one or more of the matrices is a Schur complement, are by far the most technical part of the paper. The following are two major reasons for this. 

First, the technique of finding Schur complement representation for $A/A_{22}\otimes B$ requires that $B$ is invertible by Lemma \ref{LemKroneckerProdSchurComplWithAMatrix}. And from this simple result though we are able to ``easily" build up on it to find Schur complement representations for $A\otimes B/B_{22}$ (Corollary \ref{CorLemKroneckerProdSchurComplWithAMatrix}), but requires $A$ to be invertible, and $A/A_{22}\otimes B/B_{22}$ (Proposition \ref{PropKroneckerProdOfTwoSchurComplements}), but it requires $A/A_{22}$ and $B/B_{22}$  to be invertible. Again, these invertibility requirements are just due to the invertibility hypothesis in Lemma \ref{LemKroneckerProdSchurComplWithAMatrix}. In this paper, there are two ways we treat the case when the invertibility hypotheses are not true. The most general way to do it is to proceed in a similar manner as in our proof of the Bessmertny\u{\i} Realization Theorem (Theorem \ref{ThmBessmRealiz}) (more specifically, that part of the proof corresponding in the flow diagram in Fig. \ref{fig-FlowDiagPrfThmBessmRealiz} to the case $\det P(z)\equiv 0$). The other way to do it, albeit in a less general way, is to use Proposition \ref{PropScalarProdOfASchurCompl} which we find to be more ``natural" in the realization problem when it applies. 

The second reason that this section is more technical is due to the fact that in the proof of Proposition \ref{PropKroneckerProdOfTwoSchurComplements}, where we derive a Schur complement formula for the Kronecker product of two Schur complements, i.e., $A/A_{22}\otimes B/B_{22}$, we must use the result in Section \ref{SecCompositionOfSchurComplement} on composition of Schur complements (Proposition \ref{PropComposSchurComplem}). This result though is slightly more difficult to understand we feel then the rest of the results in this paper, and as such, may not be at first read easy to apply in practice.

Therefore, we give several examples below, namely, Example \ref{ExLemKroneckerProdSchurComplWithAMatrix}, Example \ref{ExCorLemKroneckerProdSchurComplWithAMatrix}, and Example \ref{ExPropKroneckerProductOfTwoSchurComplements} in order to illustrate the statement and proofs of Lemma \ref{LemKroneckerProdSchurComplWithAMatrix}, Corollary \ref{CorLemKroneckerProdSchurComplWithAMatrix}, and Proposition \ref{PropKroneckerProdOfTwoSchurComplements}, respectively.

We follow a similar procedure in Section \ref{SecOnKroneckerProductsOfLinearMatrixPencils} when we consider the Kronecker product of realizations.

\begin{lemma}[Kronecker product of a Schur complement with a matrix]\label{LemKroneckerProdSchurComplWithAMatrix}
If $B\in
\mathbb{C}
^{n\times n}$ and $A\in
\mathbb{C}
^{m\times m}$ is a $2\times2$ block matrix
\[
A=
\begin{bmatrix}
A_{11} & A_{12}\\
A_{21} & A_{22}
\end{bmatrix},
\]
then the Kronecker product of $A$ with $B$, 
\begin{equation}
C=A\otimes B\in\mathbb{C}^{mn\times mn},\label{LemKroneckerProdSchurComplWithAMatrixCBlockForm}
\end{equation}
has following $2\times 2$ block matrix form $C=[C_{i,j}]_{i.j=1,2}$:
\begin{equation}
 C =
\begin{bmatrix}
C_{11} & C_{12}\\
C_{21} & C_{22}
\end{bmatrix}
=
\begin{bmatrix}
A_{11}\otimes B & A_{12}\otimes B\\
A_{21}\otimes B & A_{22}\otimes B
\end{bmatrix}.\label{LemKroneckerProdSchurComplWithAMatrixBlockForm}
\end{equation}
Furthermore, if $A_{22}$ and $B$ are invertible then $C_{22}=A_{22}\otimes B$ is invertible and 
\begin{equation}
C/C_{22}=A/A_{22}\otimes B.
\end{equation}
Moreover, if both matrices $A$ and $B$ are real, symmetric, Hermitian, or real and symmetric then the matrix $C$ is real, symmetric, Hermitian, or real and symmetric, respectively.
\end{lemma}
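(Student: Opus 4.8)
The plan is to reduce the statement to two standard facts about the Kronecker product: its compatibility with block partitions, and the mixed-product rule $(X\otimes Y)(Z\otimes W)=(XZ)\otimes(YW)$, valid whenever the matrix products $XZ$ and $YW$ are defined.

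First I would establish the block form $(\ref{LemKroneckerProdSchurComplWithAMatrixBlockForm})$. Write $A=[A_{ij}]_{i,j=1,2}$ with $A_{11}\in\mathbb{C}^{m_1\times m_1}$, $A_{22}\in\mathbb{C}^{m_2\times m_2}$, $m_1+m_2=m$, and partition $C=A\otimes B\in\mathbb{C}^{mn\times mn}$ conformally into blocks of sizes $m_in\times m_jn$. Since $A\otimes B=[a_{pq}B]_{p,q=1}^{m}$, the block obtained from the first $m_1n$ rows and first $m_1n$ columns is exactly $[a_{pq}B]_{p,q=1}^{m_1}=A_{11}\otimes B$, and likewise for the other three blocks; hence $C_{ij}=A_{ij}\otimes B$, and this block structure is independent of $B$.

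Next, assuming $A_{22}$ and $B$ invertible, I would note that $C_{22}=A_{22}\otimes B$ is invertible with $C_{22}^{-1}=A_{22}^{-1}\otimes B^{-1}$ (again by the mixed-product rule), and then compute
\[
C/C_{22}=C_{11}-C_{12}C_{22}^{-1}C_{21}=(A_{11}\otimes B)-(A_{12}\otimes B)(A_{22}^{-1}\otimes B^{-1})(A_{21}\otimes B).
\]
Applying the mixed-product rule twice collapses the second term: $(A_{12}\otimes B)(A_{22}^{-1}\otimes B^{-1})=(A_{12}A_{22}^{-1})\otimes I_n$, and right-multiplying by $A_{21}\otimes B$ gives $(A_{12}A_{22}^{-1}A_{21})\otimes B$. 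By bilinearity of $\otimes$ over matrix addition,
\[
C/C_{22}=(A_{11}-A_{12}A_{22}^{-1}A_{21})\otimes B=(A/A_{22})\otimes B,
\]
as claimed. The symmetry statement then follows from the identities $\overline{A\otimes B}=\overline{A}\otimes\overline{B}$, $(A\otimes B)^{T}=A^{T}\otimes B^{T}$, and $(A\otimes B)^{*}=A^{*}\otimes B^{*}$ recorded in Subsection~\ref{SubsecRelevantWork}: if $A$ and $B$ are both real (resp.\ symmetric, Hermitian, real and symmetric), then so is $C=A\otimes B$.

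I do not anticipate a genuine obstacle. The only point requiring care is the bookkeeping in the first step---verifying that the partition of $A\otimes B$ induced by that of $A$ really is $[A_{ij}\otimes B]$ and is independent of $B$---after which the Schur complement formula is a one-line application of the mixed-product rule and the bilinearity of the Kronecker product.
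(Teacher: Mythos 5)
Your proposal is correct and follows essentially the same route as the paper's proof: the block-partition identity $C_{ij}=A_{ij}\otimes B$, the mixed-product rule giving $(A_{22}\otimes B)^{-1}=A_{22}^{-1}\otimes B^{-1}$, bilinearity to collapse $(A_{12}\otimes B)(A_{22}^{-1}\otimes B^{-1})(A_{21}\otimes B)=(A_{12}A_{22}^{-1}A_{21})\otimes B$, and the identities $\overline{A\otimes B}=\overline{A}\otimes\overline{B}$, $(A\otimes B)^{T}=A^{T}\otimes B^{T}$, $(A\otimes B)^{*}=A^{*}\otimes B^{*}$ for the symmetry claims. The only cosmetic difference is that you start from $C/C_{22}$ and simplify, whereas the paper starts from $A/A_{22}\otimes B$ and expands; the content is identical.
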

\begin{proof}
The first part of the proof of this lemma, namely, that $C=A\otimes B\in\mathbb{C}^{mn\times mn}$ has the block form (\ref{LemKroneckerProdSchurComplWithAMatrixCBlockForm}), follows immediately from the definition of the Kronecker product $A\otimes B=[a_{ij}B]_{i,j=1,\ldots,m}$ of the matrices $A$ and $B$ and the $2\times 2$ block form of $A=[A_{ij}]_{i,j=1,2}$. Suppose now that $A_{22}$ and $B$ are invertible. Then it follows by elementary properties of Kronecker products that their Kronecker product $A_{22}\otimes B$ is invertible with
$\left(  A_{22}\otimes B\right)^{-1}=A_{22}^{-1}\otimes
B^{-1}$ and that
\begin{align*}
    A/A_{22}\otimes B&=\left(  A_{11}-A_{12}A_{22}^{-1}A_{21}\right)  \otimes
B\\&=A_{11}\otimes B-\left[  (A_{12}A_{22}^{-1}A_{21})\otimes
B\right] \\&=A_{11}\otimes B-\left(  A_{12}\otimes B\right)  \left(  A_{22}^{-1}\otimes B^{-1}\right)  \left(  A_{21}\otimes B\right) \\&=A_{11}\otimes
B-\left(  A_{12}\otimes B\right)  \left(  A_{22}\otimes B\right)  ^{-1}\left(
A_{21}\otimes B\right) \\&=C/C_{22}.
\end{align*}
The remaining part of the proof follows immediately now by elementary properties of Kronecker products, namely, that $\overline {A\otimes B}=\overline{A}\otimes \overline{B}$, $(A\otimes B)^T=A^T\otimes B^T$, and $(A\otimes B)^*=A^*\otimes B^*$. This completes the proof.
\end{proof}

\begin{example}\label{ExLemKroneckerProdSchurComplWithAMatrix}
To illustrate Lemma \ref{LemKroneckerProdSchurComplWithAMatrix} and the proof, consider the following example. Let
\begin{align*}
A=\left[\begin{array}{c;{2pt/2pt} c c}
A_{11} & A_{12} \\ \hdashline[2pt/2pt]
A_{21} & A_{22}
\end{array}\right]
=
\left[\begin{array}{c;{2pt/2pt} c c}
0 & 2 \\ \hdashline[2pt/2pt]
2 & 4
\end{array}\right],\;\;
B=\begin{bmatrix}
2 & 3\\
3 & 5
\end{bmatrix}.
\end{align*}
Then
\begin{align*}
C=A\otimes B =\begin{bmatrix}
0B & 2B\\
2B & 4B
\end{bmatrix}
=
\begin{bmatrix}
0 & 0 & 4 & 6\\
0 & 0 & 6 & 10\\
4 & 6 & 8 & 12\\
6 & 10 & 12 & 20
\end{bmatrix}
\end{align*}
and has the $2\times 2$ block matrix form
\begin{align*}
C=\left[\begin{array}{c;{2pt/2pt} c c}
C_{11} & C_{12} \\ \hdashline[2pt/2pt]
C_{21} & C_{22}
\end{array}\right]
=
\left[\begin{array}{c;{2pt/2pt} c c}
A_{11}\otimes B & A_{12}\otimes B \\ \hdashline[2pt/2pt]
A_{21}\otimes B & A_{22}\otimes B
\end{array}\right]
=
\left[\begin{array}{c c; {2pt/2pt} c c}
0 & 0 & 4 & 6\\
0 & 0 & 6 & 10\\ \hdashline[2pt/2pt]
4 & 6 & 8 & 12\\
6 & 10 & 12 & 20
\end{array}\right].
\end{align*}
Now $A_{22}=[4]$ and $B$ are invertible which implies, by Lemma \ref{LemKroneckerProdSchurComplWithAMatrix}, that $C_{22}=A_{22}\otimes B$ is invertible with $C_{22}^{-1}=A_{22}^{-1}\otimes B^{-1}$ and $C/C_{22}=A/A_{22}\otimes B$, which we can show in this example by the following direct calculations:
\begin{gather*}
A_{22}^{-1}=\begin{bmatrix}
\frac{1}{4}
\end{bmatrix},\;\;B^{-1}=\begin{bmatrix}
5 & -3\\
-3 & 2
\end{bmatrix},\;\;A_{22}^{-1}\otimes B^{-1}=\frac{1}{4}B^{-1}=\begin{bmatrix}
\frac{5}{4} & -\frac{3}{4}\\
-\frac{3}{4} & \frac{2}{4}
\end{bmatrix}=C_{22}^{-1},\\
A/A_{22}=[0]-[2][4]^{-1}[2]=\begin{bmatrix}
-1
\end{bmatrix},\\
A/A_{22}\otimes B = 
(-1)B
=
\begin{bmatrix}
-2 & -3\\
-3 & -5
\end{bmatrix},\\
C/C_{22}
=\begin{bmatrix}
0 & 0\\
0 & 0
\end{bmatrix}
-
\begin{bmatrix}
4 & 6\\
6 & 10
\end{bmatrix}
\begin{bmatrix}
8 & 12\\
12 & 20
\end{bmatrix}^{-1}
\begin{bmatrix}
4 & 6\\
6 & 10
\end{bmatrix}
=
\begin{bmatrix}
-2 & -3\\
-3 & -5
\end{bmatrix}.
\end{gather*}
\end{example}

The definition below comes from \cite[p.\ 259]{91HJ}.
\begin{definition}\label{DefCommutationMatrices}
For any positive integers $m,n$, the matrix $P(m,n)\in\mathbb{C}^{mn\times mn}$ is defined by
\begin{equation}
P(m,n) = \sum_{i=1}^m \sum_{j=1}^n E_{ij} \otimes E_{ij}^T=[E_{ij}^T]_{i,j=1}^{m,n},
\end{equation}
where $\{E_{ij}:i=1,\ldots, m, j=1,\ldots, n\}$ is the standard basis for $\mathbb{C}^{m\times n}$, i.e., each $E_{ij}\in \mathbb{C}^{m\times n}$ has entry $1$ in the $i$th row, $j$th column and all other entries are zero. The matrix $P(m,n)$ is called the commutation matrix (with respect to $m$ and $n$).
\end{definition}

The following lemma is proven in \cite[Corollary 4.3.10, p.\ 260]{91HJ}.
\begin{lemma}[Main properties of commutation matrices]\label{LemElemPropertiesCommMatrices}
Let positive integers $m,n,p,$ and $q$ be given and let $P(p,m)\in \mathbb{C}^{pm\times pm}$ and $P(n,q)\in \mathbb{C}^{nq\times nq}$ denote the commutation matrices (as defined in Def. \ref{DefCommutationMatrices}). Then $P(p,m)$ is a permutation matrix and $P(p,m)=\overline{P(p,m)}=P(m,p)^T=P(m,p)^{-1}$. Furthermore, for all $A\in \mathbb{C}^{m\times n}$ and $B\in \mathbb{C}^{p\times q}$,
\begin{equation}
B\otimes A = P(m,p)^{T}(A\otimes B)P(n,q).
\end{equation}
\end{lemma}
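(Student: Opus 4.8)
The plan is to prove the lemma directly from Definition~\ref{DefCommutationMatrices} (an alternative would be to simply invoke \cite[Corollary 4.3.10]{91HJ} after checking that the definition of $P(\cdot,\cdot)$ there agrees with ours). First I would show that $P(m,n)$ is a permutation matrix by reading off its block structure $[E_{ij}^{T}]_{i,j=1}^{m,n}$: the $(i,j)$-block is $E_{ij}^{T}\in\mathbb{C}^{n\times m}$, which has its unique nonzero entry, a $1$, in local row $j$ and local column $i$, hence in global row $(i-1)n+j$ and global column $(j-1)m+i$ of the $mn\times mn$ matrix. Since both $(i,j)\mapsto(i-1)n+j$ and $(i,j)\mapsto(j-1)m+i$ are bijections from $\{1,\ldots,m\}\times\{1,\ldots,n\}$ onto $\{1,\ldots,mn\}$, the matrix $P(m,n)$ has exactly one $1$ in each row and in each column and zeros elsewhere, so it is a permutation matrix. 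In particular all its entries lie in $\{0,1\}\subseteq\mathbb{R}$, so $\overline{P(m,n)}=P(m,n)$, and being a permutation matrix it is orthogonal, so $P(m,n)^{T}=P(m,n)^{-1}$; specializing to $(m,n)=(p,m)$ gives $\overline{P(p,m)}=P(p,m)$ and $P(p,m)^{T}=P(p,m)^{-1}$.

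Next I would identify the underlying permutation intrinsically. Fixing the standard identification $\mathbb{C}^{st}\cong\mathbb{C}^{s}\otimes\mathbb{C}^{t}$ in which $e_i\otimes e_j$ is the $((i-1)t+j)$-th standard basis vector, the computation in the previous paragraph says precisely that $P(m,n)$ maps $e_j\otimes e_i\in\mathbb{C}^{n}\otimes\mathbb{C}^{m}$ to $e_i\otimes e_j\in\mathbb{C}^{m}\otimes\mathbb{C}^{n}$, that is, by bilinearity, $P(m,n)(b\otimes a)=a\otimes b$ for all $a\in\mathbb{C}^{m}$, $b\in\mathbb{C}^{n}$. Composing, $P(m,p)P(p,m)$ sends $a\otimes c\mapsto c\otimes a\mapsto a\otimes c$ for $a\in\mathbb{C}^{m}$, $c\in\mathbb{C}^{p}$, so $P(m,p)P(p,m)=I_{mp}$; hence $P(m,p)^{-1}=P(p,m)$ (equivalently $P(p,m)^{-1}=P(m,p)$), and combining this with the orthogonality from the previous paragraph yields $P(m,p)^{T}=P(m,p)^{-1}=P(p,m)$. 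This establishes the full chain of equalities in the statement.

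Finally, for the conjugation formula I would use $P(m,p)^{T}=P(p,m)$ and trace a basis vector through $P(p,m)(A\otimes B)P(n,q)$: for $x\in\mathbb{C}^{n}$ and $y\in\mathbb{C}^{q}$, the factor $P(n,q)$ sends $y\otimes x\mapsto x\otimes y$, the mixed-product property of Kronecker products gives $(A\otimes B)(x\otimes y)=Ax\otimes By$, and $P(p,m)$ then sends this to $By\otimes Ax=(B\otimes A)(y\otimes x)$; since the two linear maps agree on a basis of $\mathbb{C}^{nq}$, they are equal, i.e. $B\otimes A=P(m,p)^{T}(A\otimes B)P(n,q)$. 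The one point that requires care — and the only real obstacle — is the bookkeeping: one must pin down, once and for all, the isomorphisms $\mathbb{C}^{st}\cong\mathbb{C}^{s}\otimes\mathbb{C}^{t}$ and verify that the block-indexing convention in $P(m,n)=[E_{ij}^{T}]_{i,j}$ is compatible with the Kronecker-product convention $A\otimes B=[a_{ij}B]_{i,j}$ used throughout the paper; once that is settled in the first step, the remaining steps are routine ``evaluate on a standard basis vector'' checks.
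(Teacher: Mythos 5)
Your proof is correct. Note, though, that the paper does not prove this lemma at all: it is stated with a pointer to \cite[Corollary 4.3.10, p.~260]{91HJ}, which is exactly the alternative you mention in your opening sentence. So your proposal takes a genuinely different (and more self-contained) route: you verify everything directly from Definition \ref{DefCommutationMatrices}. Your bookkeeping is right — the $(i,j)$-block $E_{ij}^T$ places its single $1$ in global row $(i-1)n+j$ and global column $(j-1)m+i$, both index maps are bijections, so $P(m,n)$ is a real permutation matrix; the intrinsic description $P(m,n)(b\otimes a)=a\otimes b$ then gives $P(m,p)P(p,m)=I_{mp}$, hence $P(m,p)^{-1}=P(p,m)$, which combined with orthogonality yields the full chain $P(p,m)=\overline{P(p,m)}=P(m,p)^T=P(m,p)^{-1}$; and the final identity follows by evaluating $P(p,m)(A\otimes B)P(n,q)$ on the basis vectors $y\otimes x$ via the mixed-product property. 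You are also right that the only delicate point is fixing the identification $\mathbb{C}^{st}\cong\mathbb{C}^s\otimes\mathbb{C}^t$ consistently with the convention $A\otimes B=[a_{ij}B]$, and you handle it. What the citation buys is brevity; what your argument buys is a self-contained verification that the paper's specific indexing convention for $P(m,n)$ really matches the one in \cite{91HJ}, which is a nontrivial point worth making explicit.
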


\begin{corollary}[Part 2 of Lemma \ref{LemKroneckerProdSchurComplWithAMatrix}]\label{CorLemKroneckerProdSchurComplWithAMatrix}
If $A\in
\mathbb{C}
^{m\times m}$ and $B\in
\mathbb{C}
^{n\times n}$ is a $2\times2$ block matrix
\[
B=
\begin{bmatrix}
B_{11} & B_{12}\\
B_{21} & B_{22}
\end{bmatrix},
\]
such that $A$ and $B_{22}$ are invertible with $B/B_{22}\in \mathbb{C}^{l\times l}$ then 
\begin{equation}
D/D_{22}=A\otimes B/B_{22},
\end{equation}
where 
\begin{equation}
D=Q^T(B\otimes A)Q,\; Q\in\mathbb{C}^{mn\times mn}\label{CorLemKroneckerProdSchurComplWithAMatrixDMatrix}
\end{equation}
have the following $2\times 2$ block matrix forms $Q=[Q_{i,j}]_{i.j=1,2}$, $D=[D_{i,j}]_{i.j=1,2}$:
\begin{align}
Q&=\begin{bmatrix}
Q_{11} & Q_{12}\\
Q_{21} & Q_{22}
\end{bmatrix}
=\begin{bmatrix}
P(l,m) & 0\\
0 & I_{mn-lm}
\end{bmatrix},\label{CorLemKroneckerProdSchurComplWithAMatrixQMatrixBlkForm}
\\
D &=
\begin{bmatrix}
D_{11} & D_{12}\\
D_{21} & D_{22}
\end{bmatrix}
=
\begin{bmatrix}
A\otimes B_{11} & P(l,m)^T(B_{12}\otimes A)\\
(B_{21}\otimes A)P(l,m) & B_{22}\otimes A
\end{bmatrix},\label{CorLemKroneckerProdSchurComplWithAMatrixDMatrixBlkForm}
\end{align}
in which $P(l,m)\in \mathbb{C}^{lm\times lm}$ is the commutation matrix (with respect to $l$ and $m$ as defined in Def.\ \ref{DefCommutationMatrices}) and $D_{22}=B_{22}\otimes A$ is invertible. Furthermore, $Q$ is a permutation matrix and $Q=\overline{Q}$, $Q^T=Q^{-1}$. Moreover, if both matrices $A$ and $B$ are real, symmetric, Hermitian, or real and symmetric then the matrix $D$ is real, symmetric, Hermitian, or real and symmetric, respectively.
\end{corollary}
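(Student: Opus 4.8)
The plan is to deduce this corollary from the already-established Lemma~\ref{LemKroneckerProdSchurComplWithAMatrix} (which computes $A/A_{22}\otimes B$) by using the commutation matrices of Lemma~\ref{LemElemPropertiesCommMatrices} to reverse the order of a Kronecker product, and then recording the effect of that reversal on the block structure via Proposition~\ref{PropMatrixMultOfSchurCompl}.

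First I would apply Lemma~\ref{LemKroneckerProdSchurComplWithAMatrix} with $B$ in the role of the $2\times2$ block matrix and $A$ in the role of the plain matrix; since $B_{22}$ and $A$ are invertible, this gives
\[
(B/B_{22})\otimes A=(B\otimes A)/(B_{22}\otimes A),
\]
where $B\otimes A$ carries the $2\times2$ block structure with blocks $B_{ij}\otimes A$ and its invertible $(2,2)$-block is $B_{22}\otimes A$. Next I would invoke Lemma~\ref{LemElemPropertiesCommMatrices} with the two plain matrices $A\in\mathbb{C}^{m\times m}$ and $B/B_{22}\in\mathbb{C}^{l\times l}$ to write
\[
A\otimes(B/B_{22})=P(l,m)^{T}\bigl[(B/B_{22})\otimes A\bigr]P(l,m)=P(l,m)^{T}\bigl[(B\otimes A)/(B_{22}\otimes A)\bigr]P(l,m).
\]
Because the size of the block $B_{22}\otimes A$ is $(n-l)m=mn-lm$, the matrix $Q=P(l,m)\oplus I_{mn-lm}$ is precisely of the form for which Proposition~\ref{PropMatrixMultOfSchurCompl} is designed: applying that proposition with left multiplier $P(l,m)^{T}$ and right multiplier $P(l,m)$ shows that $A\otimes(B/B_{22})=D/D_{22}$ with $D=Q^{T}(B\otimes A)Q$, and a direct block multiplication of $Q^{T}(B\otimes A)Q$ produces the stated formula $D=[D_{ij}]_{i,j=1,2}$. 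The only block not read off at once is the $(1,1)$-block $P(l,m)^{T}(B_{11}\otimes A)P(l,m)$, which a second use of Lemma~\ref{LemElemPropertiesCommMatrices} (this time with $B_{11}\in\mathbb{C}^{l\times l}$ and $A\in\mathbb{C}^{m\times m}$) identifies as $A\otimes B_{11}$; this also shows $D_{22}=B_{22}\otimes A$ is invertible as required.

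The remaining assertions are routine. $Q$ is a permutation matrix since $P(l,m)$ is one by Lemma~\ref{LemElemPropertiesCommMatrices} and a direct sum of permutation matrices is a permutation matrix; similarly $\overline{Q}=Q$ and $Q^{T}=Q^{-1}$ hold block-by-block from $\overline{P(l,m)}=P(l,m)$ and $P(l,m)^{T}=P(l,m)^{-1}$. For the symmetry properties, note that $B\otimes A$ inherits the real, symmetric, Hermitian, or real-and-symmetric property of $A$ and $B$ via $\overline{B\otimes A}=\overline B\otimes\overline A$, $(B\otimes A)^{T}=B^{T}\otimes A^{T}$, and $(B\otimes A)^{*}=B^{*}\otimes A^{*}$; then $D=Q^{T}(B\otimes A)Q$ keeps the same property because $Q$ is real and orthogonal, so $Q^{T}=Q^{*}=Q^{-1}$ and $\overline Q=Q$.

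I do not expect a genuine obstacle: the argument is bookkeeping built on earlier results. The one thing to watch is the index matching — one must check that the identity summand of $Q$ has size exactly that of $B_{22}\otimes A$ (so Proposition~\ref{PropMatrixMultOfSchurCompl} applies verbatim) and that the commutation identity in Lemma~\ref{LemElemPropertiesCommMatrices} is invoked with the indices arranged so as to turn $(B/B_{22})\otimes A$ into $A\otimes(B/B_{22})$ rather than its reverse. The worked Example~\ref{ExCorLemKroneckerProdSchurComplWithAMatrix} is the natural safeguard against such index or sign slips.
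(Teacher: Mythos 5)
Your proposal is correct and follows essentially the same route as the paper: apply Lemma \ref{LemKroneckerProdSchurComplWithAMatrix} to get $(B/B_{22})\otimes A=(B\otimes A)/(B_{22}\otimes A)$, flip the order with the commutation matrix of Lemma \ref{LemElemPropertiesCommMatrices}, absorb the conjugation by $P(l,m)$ into the block matrix via Proposition \ref{PropMatrixMultOfSchurCompl}, and identify the $(1,1)$-block as $A\otimes B_{11}$ by a second use of the commutation identity. The concluding remarks on $Q$ and on the inherited symmetries likewise match the paper's argument.
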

\begin{proof}
By the hypotheses, Lemma \ref{LemElemPropertiesCommMatrices}, and Lemma \ref{LemKroneckerProdSchurComplWithAMatrix} we have
\[
P(m,l)^T(A\otimes B/B_{22})P(m,l)=B/B_{22}\otimes A = C/C_{22},
\]
where $C=B\otimes A\in \mathbb{C}^{nm\times nm}$ has the $2\times 2$ block matrix form:
\begin{equation*}
C =
\begin{bmatrix}
C_{11} & C_{12}\\
C_{21} & C_{22}
\end{bmatrix}
=
\begin{bmatrix}
B_{11}\otimes A & B_{12}\otimes A\\
B_{21}\otimes A & B_{22}\otimes A
\end{bmatrix},
\end{equation*}
in which $C_{22}=B_{22}\otimes A$ is invertible. By Lemma \ref{LemElemPropertiesCommMatrices} and Proposition \ref{PropMatrixMultOfSchurCompl} it follows that
\begin{equation*}
A\otimes B/B_{22} = [P(m,l)^T]^{-1}C/C_{22}P(m,l)^{-1}=P(l,m)^TC/C_{22}P(l,m)=D/D_{22},
\end{equation*}
where $D=[D_{ij}]_{i,j=1,2}\in \mathbb{C}^{mn\times mn}$ is the $2\times 2$ block matrix defined by (\ref{CorLemKroneckerProdSchurComplWithAMatrixDMatrixBlkForm}), where we have used the fact that
\begin{equation}
P(l,m)^T(B_{11}\otimes A)P(l,m)=A\otimes B_{11},    
\end{equation}
which follows from Lemma \ref{LemElemPropertiesCommMatrices}, 
from which it follows immediately from block multiplication that $D=Q^T(B\otimes A)Q$, where $Q\in\mathbb{C}^{mn\times mn}$ is the matrix defined in (\ref{CorLemKroneckerProdSchurComplWithAMatrixQMatrixBlkForm}). The fact that $Q$ is a permutation matrix satisfying $Q=\overline{Q}$, $Q^T=Q^{-1}$ follows from its definition (\ref{CorLemKroneckerProdSchurComplWithAMatrixQMatrixBlkForm}) and the corresponding properties of $P(l,m)$ in Lemma \ref{LemElemPropertiesCommMatrices}. The remaining part of the proof follows immediately now formula for $D$ in (\ref{CorLemKroneckerProdSchurComplWithAMatrixDMatrixBlkForm}) and the properties of $Q$. This completes the proof.
\end{proof}

\begin{example}\label{ExCorLemKroneckerProdSchurComplWithAMatrix}
To illustrate Corollary \ref{CorLemKroneckerProdSchurComplWithAMatrix} and the proof, consider the following example. Let
\begin{align*}
A=
\begin{bmatrix}
0 & 2 \\
2 & 4
\end{bmatrix},\;\;
B=\left[\begin{array}{c;{2pt/2pt} c c}
B_{11} & B_{12} \\ \hdashline[2pt/2pt]
B_{21} & B_{22}
\end{array}\right]
=
\left[\begin{array}{c;{2pt/2pt} c c}
2 & 3 \\ \hdashline[2pt/2pt]
3 & 5
\end{array}\right].
\end{align*}
Then, for this example in the notation of Corollary \ref{CorLemKroneckerProdSchurComplWithAMatrix}, we have $m=n=2, l=1$ and
\begin{gather*}
P(1,2)=\begin{bmatrix}
E_{ij}^T
\end{bmatrix}_{i,j=1}^{1,2}
=\begin{bmatrix}
E_{11}^T & E_{12}^T
\end{bmatrix}
=\begin{bmatrix}
\begin{bmatrix}
1 & 0
\end{bmatrix}^T & \begin{bmatrix}
0 & 1
\end{bmatrix}^T
\end{bmatrix}
=\begin{bmatrix}
1 & 0\\
0 & 1
\end{bmatrix}
=
I_2,\\
Q=\begin{bmatrix}
P(1,2) & 0\\
0 & I_2
\end{bmatrix}
=
\begin{bmatrix}
I_2 & 0\\
0 & I_2
\end{bmatrix}
=
\begin{bmatrix}
1 & 0 & 0 & 0\\
0 & 1 & 0 & 0\\
0 & 0 & 1 & 0\\
0 & 0 & 0 & 1\\
\end{bmatrix}
=I_4,\\
D=Q^T(B\otimes A)Q
=B\otimes A
=
\begin{bmatrix}
2A & 3A\\
3A & 5A
\end{bmatrix}
=
\begin{bmatrix}
0 & 4 & 0 & 6\\
4 & 8 & 6 & 12\\
0 & 6 & 0 & 10\\
6 & 12 & 10 & 20\\
\end{bmatrix}
\end{gather*}
and $D$ has the $2\times 2$ block matrix form
\begin{gather*}
D=\left[\begin{array}{c;{2pt/2pt} c c}
D_{11} & D_{12} \\ \hdashline[2pt/2pt]
D_{21} & D_{22}
\end{array}\right]
=
\left[\begin{array}{c;{2pt/2pt} c c}
A\otimes B_{11} & P(1,2)^T(B_{12}\otimes A) \\ \hdashline[2pt/2pt]
(B_{21}\otimes A)P(1,2) & B_{22}\otimes A
\end{array}\right]
\\
=
\left[\begin{array}{c;{2pt/2pt} c c}
A\otimes B_{11} & B_{12}\otimes A \\ \hdashline[2pt/2pt]
B_{21}\otimes A & B_{22}\otimes A
\end{array}\right]
=
\left[\begin{array}{c c; {2pt/2pt} c c}
0 & 4 & 0 & 6\\
4 & 8 & 6 & 12\\ \hdashline[2pt/2pt]
0 & 6 & 0 & 10\\
6 & 12 & 10 & 20\\
\end{array}\right].
\end{gather*}
Now $A$ and $B_{22}=[5]$ are invertible which implies, by Corollary \ref{CorLemKroneckerProdSchurComplWithAMatrix}, that $D_{22}=B_{22}\otimes A$ is invertible with $D_{22}^{-1}=B_{22}^{-1}\otimes A^{-1}$ and $D/D_{22}=A\otimes B/B_{22}$, which we can show in this example by the following direct calculations:
\begin{gather*}
B_{22}^{-1}=\begin{bmatrix}
\frac{1}{5}
\end{bmatrix},\;\;A^{-1}=\begin{bmatrix}
-1 & \frac{1}{2}\\
\frac{1}{2} & 0
\end{bmatrix},\;\;B_{22}^{-1}\otimes A^{-1}=\frac{1}{5}A^{-1}=\begin{bmatrix}
-\frac{1}{5} & \frac{1}{10}\\
\frac{1}{10} & 0
\end{bmatrix}=D_{22}^{-1},\\
B/B_{22}=[2]-[3][5]^{-1}[3]=\begin{bmatrix}
\frac{1}{5}
\end{bmatrix},\\
A\otimes B/B_{22}
=
\begin{bmatrix}
0\begin{bmatrix}
\frac{1}{5}
\end{bmatrix} & 2\begin{bmatrix}
\frac{1}{5}
\end{bmatrix}   \vspace{0.1cm}\\
2\begin{bmatrix}
\frac{1}{5}
\end{bmatrix} & 4\begin{bmatrix}
\frac{1}{5}
\end{bmatrix}
\end{bmatrix}
=
\begin{bmatrix}
0 & \frac{2}{5}   \vspace{0.1cm}\\
\frac{2}{5} & \frac{4}{5}
\end{bmatrix}
,\\
D/D_{22}
=\begin{bmatrix}
0 & 4\\
4 & 8
\end{bmatrix}
-
\begin{bmatrix}
0 & 6\\
6 & 12
\end{bmatrix}
\begin{bmatrix}
0 & 10\\
10 & 20
\end{bmatrix}^{-1}
\begin{bmatrix}
0 & 6\\
6 & 12
\end{bmatrix}
=
\begin{bmatrix}
0 & \frac{2}{5}   \vspace{0.1cm}\\
\frac{2}{5} & \frac{4}{5}
\end{bmatrix}.
\end{gather*}
\end{example}

\begin{proposition}[Kronecker product of two Schur complements]\label{PropKroneckerProdOfTwoSchurComplements}
If $A\in
\mathbb{C}
^{m\times m}$ and $B\in
\mathbb{C}
^{n\times n}$ are $2\times2$ block matrices
\[
A=
\begin{bmatrix}
A_{11} & A_{12}\\
A_{21} & A_{22}
\end{bmatrix},\;
B=
\begin{bmatrix}
B_{11} & B_{12}\\
B_{21} & B_{22}
\end{bmatrix}
\]
such that the matrices $A_{22}$, $B_{22}$, $A/A_{22}$, and $B/B_{22}$ are invertible with $B/B_{22}\in \mathbb{C}^{l\times l}$ then 
\begin{equation}
M/M_{22} = A/A_{22}\otimes B/B_{22},
\end{equation}
where $M\in\mathbb{C}^{mn\times mn}$ is the invertible matrix
\begin{equation}
M = Q^{T} (B\otimes A) Q,\label{PropKroneckerProdOfTwoSchurComplementsMMatrix}
\end{equation}
$Q\in \mathbb{C}^{mn\times mn}$ is the permutation matrix defined by (\ref{CorLemKroneckerProdSchurComplWithAMatrixQMatrixBlkForm}), and $M=[M_{ij}]_{i,j=1,2}$ is the $2\times 2$ matrix with the block partitioned structure: 
\begin{align} 
M & =
\left[\begin{array}{c;{2pt/2pt} c c}
M_{11} & M_{12} \\ \hdashline[2pt/2pt]
M_{21} & M_{22}
\end{array}\right]
=
\left[\begin{array}{c; {2pt/2pt} c c}
D_{33} & D_{34} & D_{32} \\\hdashline[2pt/2pt]
D_{43} & D_{44} & D_{42} \\ 
D_{23} & D_{24} & D_{22}
\end{array}\right],
\end{align}
where
\begin{equation}
M_{22}
=
\begin{bmatrix}
D_{44} & D_{42}\\
D_{24} & D_{22}
\end{bmatrix},
\end{equation}
is invertible, and
\begin{align}
\left[\begin{array}{c;{2pt/2pt} c c}
D_{33} & D_{34} \\ \hdashline[2pt/2pt]
D_{43} & D_{44}
\end{array}\right]
=
\left[\begin{array}{c;{2pt/2pt} c c}
A_{11}\otimes B_{11} & A_{12}\otimes B_{11} \\ \hdashline[2pt/2pt]
A_{21}\otimes B_{11} & A_{22}\otimes B_{11}
\end{array}\right]
=A\otimes B_{11},
\end{align}
\begin{align}
D_{22} = B_{22}\otimes A,
\end{align}
\begin{align}
\begin{bmatrix}
D_{32}\\
D_{42}
\end{bmatrix}
=
P(l, m)^{T}(B_{12}\otimes A),
\end{align}
\begin{align}
\begin{bmatrix}
D_{23} & D_{24}
\end{bmatrix}
=
(B_{21}\otimes A)P(l,m),
\end{align}
in which $P(l,m)\in \mathbb{C}^{lm\times lm}$ is the commutation matrix (with respect to $l$ and $m$ as defined in Def.\ \ref{DefCommutationMatrices}). Moreover, if both matrices $A$ and $B$ are real, symmetric, Hermitian, or real and symmetric then the matrix $M$ is a real, symmetric, Hermitian, or real and symmetric, respectively.
\end{proposition}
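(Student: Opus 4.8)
The plan is to reduce the Kronecker product of two Schur complements to a \emph{composition} of Schur complements, so that Proposition~\ref{PropComposSchurComplem} together with the already-established results on a single Schur-complement-times-matrix do all the work. The starting point is the observation that $A/A_{22}\otimes B/B_{22}$ can be built in two stages: first form $(A/A_{22})\otimes B$, then recognize that $A/A_{22}\otimes B/B_{22}$ is obtained from this by taking a further Schur complement. Concretely, I would begin with Lemma~\ref{LemKroneckerProdSchurComplWithAMatrix} applied with the roles arranged so that $A/A_{22}\otimes B$ is realized as a Schur complement of a block matrix whose blocks are Kronecker products of the blocks of $A$ with $B$; but since we ultimately want $B/B_{22}$ and not $B$ inside the second factor, the cleaner route is to work with $B\otimes A$ and commutation matrices (as in Corollary~\ref{CorLemKroneckerProdSchurComplWithAMatrix}). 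So first I would write down $B\otimes A$ as an $n\times n$ block matrix of $A$-multiples, then, using the $2\times2$ block structure of $B$, repartition it conformally with $B$ to get a $2\times2$ block matrix whose $(2,2)$-block is $B_{22}\otimes A$.

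The key idea in the middle of the argument is that we must take the Schur complement of $B\otimes A$ \emph{twice}: once with respect to $B_{22}\otimes A$ to produce $B/B_{22}\otimes A$, and once ``inside'' to produce $A/A_{22}$ in place of $A$. The first reduction is exactly Corollary~\ref{CorLemKroneckerProdSchurComplWithAMatrix} (after conjugating by the permutation matrix $Q$ built from $P(l,m)$, which swaps $A\otimes B/B_{22}$ with $B/B_{22}\otimes A$). For the second, I would observe that the surviving $(1,1)$-block, after the first Schur complement, is $A\otimes B_{11}$ deformed by the correction term; more precisely, $B/B_{22}\otimes A = (B/B_{22})\otimes A$ and I want to further take the Schur complement that converts the embedded copy of $A$ into $A/A_{22}$. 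This is where Proposition~\ref{PropComposSchurComplem} enters: the composition ``$(M/M_{22})/(M/M_{22})_{22}$'' of Schur complements is again a Schur complement of an explicitly computable block matrix, so applying it to the block matrix $D=Q^T(B\otimes A)Q$ from Corollary~\ref{CorLemKroneckerProdSchurComplWithAMatrix} — now itself partitioned finer, into $3\times3$ or $4\times4$ blocks corresponding to the $(1,1),(1,2),(2,1),(2,2)$ blocks of $A$ sitting inside the $A$-factor and the $(1,1),(2,2)$ blocks of $B$ — yields the stated $M=[M_{ij}]_{i,j=1,2}$ with the indicated $D_{ij}$ entries. The bookkeeping of which $D_{ij}$ goes where (the permutation that brings the ``kept'' blocks $D_{33},D_{34},D_{43},D_{44}$ to the top-left and the ``eliminated'' blocks $D_{22},D_{24},D_{42}$ to the bottom-right) is precisely the content of the block formula in the statement.

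After the algebraic identity $M/M_{22}=A/A_{22}\otimes B/B_{22}$ is established this way, the invertibility claims follow for free: $B_{22}\otimes A$ is invertible because $B_{22}$ and (implicitly, via $A/A_{22}$ and $A_{22}$ being invertible) $A$ is invertible by Lemma~\ref{LemInvIsASchurCompl}; similarly $M_{22}$ is invertible because it is the $(2,2)$-block appearing in the composition, whose invertibility is part of Proposition~\ref{PropComposSchurComplem}'s conclusion, and $M$ itself is invertible since it is permutation-similar to $B\otimes A$ which is a Kronecker product of invertible matrices. Finally, the symmetry assertions (real, symmetric, Hermitian, real-and-symmetric) are inherited through every step: the Kronecker product preserves each of these properties because $\overline{B\otimes A}=\overline B\otimes\overline A$, $(B\otimes A)^T=B^T\otimes A^T$, $(B\otimes A)^*=B^*\otimes A^*$; conjugation by the \emph{real permutation} matrix $Q$ (which satisfies $Q=\overline Q$ and $Q^T=Q^{-1}$) preserves all four properties; and Proposition~\ref{PropComposSchurComplem} is stated to preserve them as well. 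So the symmetry part is a one-line invocation at the end.

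I expect the main obstacle to be purely organizational rather than conceptual: getting the index bookkeeping in the composition step exactly right, i.e., verifying that after applying Proposition~\ref{PropComposSchurComplem} to $D=Q^T(B\otimes A)Q$ the resulting block matrix has \emph{precisely} the block entries $D_{33},D_{34},D_{32}$; $D_{43},D_{44},D_{42}$; $D_{23},D_{24},D_{22}$ in the stated positions, with $M_{22}=\begin{bmatrix}D_{44}&D_{42}\\D_{24}&D_{22}\end{bmatrix}$. This requires carefully tracking a $2\times 2$-within-$2\times 2$ (hence effectively $4\times4$, or $3\times3$ after merging) block partition and the permutation that Proposition~\ref{PropComposSchurComplem} induces on it. A secondary, milder subtlety is making sure the hypotheses of Proposition~\ref{PropComposSchurComplem} are actually met at the point of application — that the intermediate Schur complement's $(2,2)$-block is invertible — which should follow from the standing assumptions that $A_{22}$, $B_{22}$, $A/A_{22}$, $B/B_{22}$ are all invertible, possibly combined with the quotient property of Schur complements. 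Everything else is routine block-matrix algebra and Kronecker-product identities of the kind already exercised in Lemma~\ref{LemKroneckerProdSchurComplWithAMatrix} and Corollary~\ref{CorLemKroneckerProdSchurComplWithAMatrix}, with the worked Examples~\ref{ExLemKroneckerProdSchurComplWithAMatrix} and~\ref{ExCorLemKroneckerProdSchurComplWithAMatrix} serving as sanity checks.
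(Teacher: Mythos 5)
Your proposal matches the paper's proof essentially step for step: Lemma \ref{LemKroneckerProdSchurComplWithAMatrix} to reduce $A/A_{22}\otimes B/B_{22}$ to a Schur complement of $A\otimes(B/B_{22})$, Corollary \ref{CorLemKroneckerProdSchurComplWithAMatrix} to write $A\otimes(B/B_{22})$ as $D/D_{22}$ with $D=Q^T(B\otimes A)Q$, and Proposition \ref{PropComposSchurComplem} to collapse the resulting composition $(D/D_{22})/(D/D_{22})_{44}$ into the stated $M/M_{22}$, with the same invertibility bookkeeping (including $A$ invertible via Lemma \ref{LemInvIsASchurCompl}). The only minor slip is attributing the symmetry preservation partly to Proposition \ref{PropComposSchurComplem}, which makes no such statement; it is also unnecessary, since the composition step merely repartitions $D$, so the symmetry claims follow directly from $M=Q^T(B\otimes A)Q$ and the properties of $Q$, exactly as in the paper.
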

\begin{proof}
As we shall see, the proof of this proposition will follow immediately from Lemma \ref{LemKroneckerProdSchurComplWithAMatrix} and Corollary \ref{CorLemKroneckerProdSchurComplWithAMatrix} on the Kronecker product of a Schur complement with a matrix, and Proposition \ref{PropComposSchurComplem} on the composition of Schur complements. 

Let $A=[A_{ij}]_{i,j=1,2}\in
\mathbb{C}
^{m\times m}$ and $B=[B_{ij}]_{i,j=1,2}\in
\mathbb{C}
^{n\times n}$ be $2\times 2$ block matrices such that the matrices $A_{22}$, $B_{22}$, $A/A_{22}$, and $B/B_{22}$ are invertible with $B/B_{22}\in \mathbb{C}^{l\times l}$. Then by Lemma \ref{LemKroneckerProdSchurComplWithAMatrix} we have 
\begin{equation}
C/C_{22} = A/A_{22}\otimes B/B_{22},
\end{equation}
where $C=A\otimes (B/B_{22})\in \mathbb{C}^{ml\times ml}$ with the $2\times 2$ block matrix form
\begin{equation}
 C =
\begin{bmatrix}
C_{11} & C_{12}\\
C_{21} & C_{22}
\end{bmatrix}
=
\begin{bmatrix}
A_{11}\otimes (B/B_{22}) & A_{12}\otimes (B/B_{22})\\
A_{21}\otimes (B/B_{22}) & A_{22}\otimes (B/B_{22})
\end{bmatrix}
=
A\otimes (B/B_{22})
\end{equation}
with $C_{22}=A_{22}\otimes (B/B_{22})$ invertible. By Corollary \ref{CorLemKroneckerProdSchurComplWithAMatrix} we have 
\begin{align}
D/D_{22} = A\otimes (B/B_{22})=C,
\end{align}
where $D=Q^T(B\otimes A)Q,\; Q \in \mathbb{C}^{mn\times mn}$ have the $2\times 2$ block matrix forms (\ref{CorLemKroneckerProdSchurComplWithAMatrixDMatrixBlkForm}) and (\ref{CorLemKroneckerProdSchurComplWithAMatrixQMatrixBlkForm}), respectively, and $D_{22}=B_{22}\otimes A$ is invertible. We now write this all in another way in order to make it clear how we will use Proposition \ref{PropComposSchurComplem}. First, we write
\begin{equation}
 \hspace{-0.25cm} D/D_{22}=
\begin{bmatrix}
(D/D_{22})_{33} & (D/D_{22})_{34}\\
(D/D_{22})_{43} & (D/D_{22})_{44}
\end{bmatrix}
=
\begin{bmatrix}
A_{11}\otimes (B/B_{22}) & A_{12}\otimes (B/B_{22})\\
A_{21}\otimes (B/B_{22}) & A_{22}\otimes (B/B_{22})\label{PropKroneckerProdOfTwoSchurComplsBlockFormSchurComplD}
\end{bmatrix}
\end{equation}
in which $(D/D_{22})_{44}=A_{22}\otimes (B/B_{22})$ is invertible. Now consider the $2\times 2$ block matrix form $D=[D_{ij}]_{i,j=1,2}$ in (\ref{CorLemKroneckerProdSchurComplWithAMatrixDMatrixBlkForm}). Then it follows that 
\begin{align}
D_{11}=A\otimes B_{11}=\begin{bmatrix}
A_{11}\otimes B_{11} & A_{12}\otimes B_{11} \\
A_{21}\otimes B_{11} & A_{22}\otimes B_{11}
\end{bmatrix}\label{PropKroneckerProdOfTwoSchurComplsBlockFormSchurComplD11}
\end{align}
(where the last equality follows from the first part of Lemma \ref{LemKroneckerProdSchurComplWithAMatrix}) which is conformal to the block structure of $D/D_{22}$ in (\ref{PropKroneckerProdOfTwoSchurComplsBlockFormSchurComplD}). Let us write now this $2\times 2$ block form of $D_{11}$ in (\ref{PropKroneckerProdOfTwoSchurComplsBlockFormSchurComplD11}) as 
\begin{equation}
D_{11}=\begin{bmatrix}
D_{33} & D_{34}\\
D_{43} & D_{44}
\end{bmatrix}.
\end{equation}
This yields a subpartitioning of the $2\times 2$ block matrix $D$ from (\ref{CorLemKroneckerProdSchurComplWithAMatrixDMatrixBlkForm}) into a $3\times 3$ block matrix as 
\begin{align} 
D & =
\left[\begin{array}{c;{2pt/2pt} c c}
D_{11} & D_{12} \\ \hdashline[2pt/2pt]
D_{21} & D_{22}
\end{array}\right]
=
\left[\begin{array}{c c; {2pt/2pt} c}
D_{33} & D_{34} & D_{32} \\
D_{43} & D_{44} & D_{42} \\ \hdashline[2pt/2pt]
D_{23} & D_{24} & D_{22}
\end{array}\right].
\end{align}
On the other hand, we can repartition the matrix $D$ in the following $2\times 2$ block partitioned structure $D=M=[M_{ij}]_{i,j=1,2}$: 
\begin{align} 
M & =
\left[\begin{array}{c;{2pt/2pt} c c}
M_{11} & M_{12} \\ \hdashline[2pt/2pt]
M_{21} & M_{22}
\end{array}\right]
=
\left[\begin{array}{c; {2pt/2pt} c c}
D_{33} & D_{34} & D_{32} \\\hdashline[2pt/2pt]
D_{43} & D_{44} & D_{42} \\ 
D_{23} & D_{24} & D_{22}
\end{array}\right],
\end{align}
where, in particular,
\begin{equation}
M_{22}
=
\begin{bmatrix}
D_{44} & D_{42}\\
D_{24} & D_{22}
\end{bmatrix}.
\end{equation}
Therefore, Proposition \ref{PropComposSchurComplem} applies here since $D_{22}$ is invertible as is $(D/D_{22})_{44}$, which together with the above implies
\begin{align}
M/M_{22}=(D/D_{22})/(D/D_{22})_{44}=C/C_{22}=A/A_{22}\otimes B/B_{22},
\end{align}
as desired. The remaining part of the proof follows immediately now formula for $M$ in (\ref{PropKroneckerProdOfTwoSchurComplementsMMatrix}) and the properties of $Q$.
\end{proof}

\begin{remark}\label{RemPMatrix}
In Corollary \ref{CorLemKroneckerProdSchurComplWithAMatrix} and Proposition \ref{PropKroneckerProdOfTwoSchurComplements}, we can write $D$ and $M$ in terms of $A\otimes B$ instead of $B\otimes A$ using the formula:
\begin{equation}
Q^T(B\otimes A)Q=P^T(A\otimes B)P,
\end{equation}
where $P$ is the permutation matrix with $P=\overline{P}$, $P^T=P^{-1}$ is given by
\begin{equation}
P=P(m,n)Q,\label{RemPMatrixDefInTermsOfQ}
\end{equation}
which follows from Lemma \ref{LemElemPropertiesCommMatrices} with the commutation matrix $P(m,n)$ since 
\begin{equation}
B\otimes A=P(m,n)^T(A\otimes B)P(m,n).
\end{equation}
\end{remark}

\begin{example}\label{ExPropKroneckerProductOfTwoSchurComplements}
To illustrate Proposition \ref{PropKroneckerProdOfTwoSchurComplements} and the proof, consider the following example which builds off the previous examples \ref{ExLemKroneckerProdSchurComplWithAMatrix} and \ref{ExCorLemKroneckerProdSchurComplWithAMatrix} (just as Proposition \ref{PropKroneckerProdOfTwoSchurComplements} and its proof builds off both Lemma \ref{LemKroneckerProdSchurComplWithAMatrix}, Corollary \ref{CorLemKroneckerProdSchurComplWithAMatrix}, and their proofs). Let
\begin{align*}
A=\left[\begin{array}{c;{2pt/2pt} c c}
A_{11} & A_{12} \\ \hdashline[2pt/2pt]
A_{21} & A_{22}
\end{array}\right]
=
\left[\begin{array}{c;{2pt/2pt} c c}
0 & 2 \\ \hdashline[2pt/2pt]
2 & 4
\end{array}\right],\;\;
B=\left[\begin{array}{c;{2pt/2pt} c c}
B_{11} & B_{12} \\ \hdashline[2pt/2pt]
B_{21} & B_{22}
\end{array}\right]
=
\left[\begin{array}{c;{2pt/2pt} c c}
2 & 3 \\ \hdashline[2pt/2pt]
3 & 5
\end{array}\right].
\end{align*}
Then, as calculated in Example \ref{ExCorLemKroneckerProdSchurComplWithAMatrix}, the matrix
$D=Q^T(B\otimes A)Q$ has the $2\times 2$ block matrix form
\begin{gather*}
D=\left[\begin{array}{c;{2pt/2pt} c c}
D_{11} & D_{12} \\ \hdashline[2pt/2pt]
D_{21} & D_{22}
\end{array}\right]
=
\left[\begin{array}{c;{2pt/2pt} c c}
A\otimes B_{11} & P(1,2)^T(B_{12}\otimes A) \\ \hdashline[2pt/2pt]
(B_{21}\otimes A)P(1,2) & B_{22}\otimes A
\end{array}\right]
\\
=
\left[\begin{array}{c c; {2pt/2pt} c c}
0 & 4 & 0 & 6\\
4 & 8 & 6 & 12\\ \hdashline[2pt/2pt]
0 & 6 & 0 & 10\\
6 & 12 & 10 & 20\\
\end{array}\right].
\end{gather*}
Now $A_{22}=[4]$, $B_{22}=[5]$, $A/A_{22}=[-1]$, and $B/B_{22}=\begin{bmatrix}
\frac{1}{5}
\end{bmatrix}$ are invertible, which implies by Proposition \ref{PropKroneckerProdOfTwoSchurComplements} that $M=D$ is an invertible matrix and it has a $2\times 2$ block form $M=[M_{ij}]_{i,j=1,2}$ such that $M_{22}$ is invertible and $M/M_{22}=A/A_{22}\otimes B/B_{22}$. According to this Proposition \ref{PropKroneckerProdOfTwoSchurComplements} and its proof, we form this block structure $M=[M_{ij}]_{i,j=1,2}$ in the following manner: We first partition the matrix $D_{11}=A\otimes B_{11}$ into a $2\times 2$ block matrix $D_{11}=[D_{ij}]_{i,j=3,4}$ as
\begin{gather*}
D_{11}=\left[\begin{array}{c;{2pt/2pt} c c}
D_{33} & D_{34} \\ \hdashline[2pt/2pt]
D_{44} & D_{44}
\end{array}\right]
=
\left[\begin{array}{c;{2pt/2pt} c c}
A_{11}\otimes B_{11} & A_{12}\otimes B_{11} \\ \hdashline[2pt/2pt]
A_{21}\otimes B_{11} & A_{22}\otimes B_{11}
\end{array}\right]
=A\otimes B_{11}\\
=
\left[\begin{array}{c;{2pt/2pt} c c}
0 & 4 \\ \hdashline[2pt/2pt]
4 & 8
\end{array}\right].
\end{gather*}
This yields a subpartitioning of the $2\times 2$ block matrix $D$ into a $3\times 3$ block matrix as
\begin{gather*}
D =
\left[\begin{array}{c;{2pt/2pt} c c}
D_{11} & D_{12} \\ \hdashline[2pt/2pt]
D_{21} & D_{22}
\end{array}\right]
=
\left[\begin{array}{c c; {2pt/2pt} c}
D_{33} & D_{34} & D_{32} \\
D_{43} & D_{44} & D_{42} \\ \hdashline[2pt/2pt]
D_{23} & D_{24} & D_{22}
\end{array}\right]
\\
=
\left[\begin{array}{c; {2pt/2pt} c}
\begin{matrix} [0] & [4]\\ [4] & [8] \end{matrix} & \begin{matrix} \begin{bmatrix} 0 & 6 \end{bmatrix} \\ \begin{bmatrix} 6 & 12 \end{bmatrix} \end{matrix}   \vspace{0.1cm}\\ \hdashline[2pt/2pt]
\begin{matrix}
\begin{bmatrix}  0 \\ 6 \end{bmatrix} & \begin{bmatrix}  6 \\ 12 \end{bmatrix}
\end{matrix}
&
\begin{bmatrix}  
0 & 10\\
10 & 20
\end{bmatrix}
\end{array}\right]
\end{gather*}
Next, we repartition $D=M=[M_{ij}]_{i,j=1,2}$ as
\begin{gather*}
M=\left[\begin{array}{c;{2pt/2pt} c c}
M_{11} & M_{12} \\ \hdashline[2pt/2pt]
M_{21} & M_{22}
\end{array}\right]
=
\left[\begin{array}{c; {2pt/2pt} c c}
D_{33} & D_{34} & D_{32} \\ \hdashline[2pt/2pt]
D_{43} & D_{44} & D_{42} \\ 
D_{23} & D_{24} & D_{22}
\end{array}\right]
=
\left[\begin{array}{c; {2pt/2pt} c}
\begin{bmatrix}
0
\end{bmatrix} &
\begin{matrix}
\begin{bmatrix}
4
\end{bmatrix} &
\begin{bmatrix}
0 & 6
\end{bmatrix}
\end{matrix}   \vspace{0.1cm}\\ \hdashline[2pt/2pt]
\begin{matrix}
\begin{bmatrix}
4
\end{bmatrix}\\
\begin{bmatrix}
0\\
6
\end{bmatrix}
\end{matrix} &
\begin{matrix}
\begin{bmatrix}
8
\end{bmatrix} & \begin{bmatrix}
6 & 12
\end{bmatrix} \\
\begin{bmatrix}
6 \\
12
\end{bmatrix} & \begin{bmatrix}
0 & 10\\
10 & 12
\end{bmatrix}
\end{matrix}
\end{array}\right]\\
=
\left[\begin{array}{c; {2pt/2pt} c c c}
0 & 4 & 0 & 6\\ \hdashline[2pt/2pt]
4 & 8 & 6 & 12\\ 
0 & 6 & 0 & 10\\
6 & 12 & 10 & 20
\end{array}\right] = D.
\end{gather*}
Then by direct calculation we find that
\begin{gather*}
M/M_{22}=M_{11}-M_{12}M_{22}^{-1}M_{21}=\begin{bmatrix}
0
\end{bmatrix}
-
\begin{bmatrix}
4 & 0 & 6
\end{bmatrix}
\begin{bmatrix}
8 & 6 & 12\\ 
6 & 0 & 10\\
12 & 10 & 20
\end{bmatrix}^{-1}
\begin{bmatrix}
4\\
0\\
6
\end{bmatrix}
=
\begin{bmatrix}
-\frac{1}{5}
\end{bmatrix},\\
A/A_{22}\otimes B/B_{22}=\begin{bmatrix}
-1
\end{bmatrix}
\otimes 
\begin{bmatrix}
\frac{1}{5}
\end{bmatrix}
=
\begin{bmatrix}
-\frac{1}{5}
\end{bmatrix}.
\end{gather*}
We will now conclude this example by considering Remark \ref{RemPMatrix}. From this remark we know that
\begin{gather*}
D=M=Q^T(B\otimes A)Q=P^T(A\otimes B)P,
\end{gather*}
where in this example we have $m=n=2$, $\{E_{ij}:i=1,2,j=1,2\}$ is the standard basis for $C^{2\times 2}$ (as defined in Def.\ \ref{DefCommutationMatrices}), and
\begin{gather*}
P=P(m,n)Q=P(2,2)I_4=P(2,2)=[E_{ij}^T]_{i,j=1}^{2,2}
=
\left[\begin{array}{c;{2pt/2pt} c c}
E_{11}^T & E_{12}^T \\ \hdashline[2pt/2pt]
E_{21}^T & E_{22}^T
\end{array}\right]\\
=
\left[\begin{array}{c c; {2pt/2pt} c c}
1 & 0 & 0 & 0\\
0 & 0 & 1 & 0\\ \hdashline[2pt/2pt]
0 & 1 & 0 & 0\\
0 & 0 & 0 & 1
\end{array}\right].
\end{gather*}
By a direct calculation we find that
\begin{gather*}
P^T(A\otimes B)P=\begin{bmatrix}
1 & 0 & 0 & 0\\
0 & 0 & 1 & 0\\
0 & 1 & 0 & 0\\
0 & 0 & 0 & 1
\end{bmatrix}
\begin{bmatrix}
0 & 0 & 4 & 6\\
0 & 0 & 6 & 10\\
4 & 6 & 8 & 12\\
6 & 10 & 12 & 20
\end{bmatrix}
\begin{bmatrix}
1 & 0 & 0 & 0\\
0 & 0 & 1 & 0\\
0 & 1 & 0 & 0\\
0 & 0 & 0 & 1
\end{bmatrix}\\
=
\begin{bmatrix}
0 & 4 & 0 & 6\\
4 & 8 & 6 & 12\\ 
0 & 6 & 0 & 10\\
6 & 12 & 10 & 20
\end{bmatrix} = Q^T(B\otimes A)Q = D = M.
\end{gather*}.
\end{example}

\begin{lemma}[Matrix factorizations into block canonical forms]\label{LemMatrFactorizBlockForms}
If $B\in
\mathbb{C}
^{n\times n}$ then
\begin{equation}
B=EDF
\end{equation}
for some invertible matrices $E,F\in \mathbb{C}
^{n\times n}$ and block matrix $D\in \mathbb{C}
^{n\times n}$ with one of the following $2\times 2$ block matrix forms:
\begin{align}
D&=
\begin{bmatrix}
D_{11} & 0\\
0 & 0
\end{bmatrix},\;D_{11}=I_r,\; r=\operatorname{rank}(B)\label{LemMatrFactorizBlockForms1stForm};\\
D&=
\begin{bmatrix}
D_{11} & 0 \\
0 & 0
\end{bmatrix},\;D_{11}=\begin{bmatrix}
I_{i_+} & 0 \\
0 & -I_{i_-}
\end{bmatrix},\;i_+,i_-\in \mathbb{N}\cup \{0\},\;i_++i_-=\operatorname{rank}(B)\label{LemMatrFactorizBlockForms2ndForm}.
\end{align}
Moreover, the following statements are true:
\begin{itemize}
    \item[(a)] If $B$ is a real matrix then both matrices $E,F$ can be chosen to be real and the matrix $D$ can also be chosen to have the form (\ref{LemMatrFactorizBlockForms1stForm}). 
    \item[(b)] If $B$ is a symmetric matrix then the matrix $E$ can be chosen so that $E=F^T$ and the matrix $D$ can also be chosen to have the form (\ref{LemMatrFactorizBlockForms1stForm}). 
    \item[(c)] If the hypotheses of (a) and (b) are true then both matrices $E,F$ can chosen to be real and to satisfy $E=F^T$ and the matrix $D$ can also be chosen to have the form (\ref{LemMatrFactorizBlockForms2ndForm}). 
    \item[(d)] If $B$ is a Hermitian matrix then the matrix $E$ can be chosen so that $E=F^*$ and the matrix $D$ can also be chosen to have the form (\ref{LemMatrFactorizBlockForms2ndForm}).
\end{itemize}
\end{lemma}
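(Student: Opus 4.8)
The plan is to read off all of the statements from standard normal-form results: the rank (Gauss--Smith) normal form for the main assertion and for (a), and diagonalization by congruence for the symmetric and Hermitian cases (b)--(d). None of the four cases requires new computation; the content is in matching each symmetry hypothesis to the correct normalization theorem and then in massaging the resulting equivalence into the specific two-sided form $B=EDF$ with $D$ of the stated block shape.

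First, for the general statement I would invoke the rank normal form over $\mathbb{C}$: doing row and column reductions (each elementary row/column operation being left/right multiplication by an invertible elementary matrix), there exist invertible $E_0,F_0\in\mathbb{C}^{n\times n}$ with $E_0BF_0=\begin{bmatrix}I_r&0\\0&0\end{bmatrix}$, where $r=\operatorname{rank}(B)$; setting $E=E_0^{-1}$, $F=F_0^{-1}$, and $D=\begin{bmatrix}I_r&0\\0&0\end{bmatrix}$ gives $B=EDF$ with $D$ of the form \eqref{LemMatrFactorizBlockForms1stForm}. Statement (a) follows because, when $B$ is real, the very same row/column operations can be carried out over $\mathbb{R}$, so $E_0,F_0$ (hence $E,F$) are real while $D$ is unchanged.

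For (b), I would use that a symmetric bilinear form over $\mathbb{C}$ of rank $r$ is congruent to $I_r\oplus 0$ --- here the existence of complex square roots is exactly what lets one rescale each nonzero pivot to $1$ --- so there is an invertible $G\in\mathbb{C}^{n\times n}$ with $G^{T}BG=D:=\begin{bmatrix}I_r&0\\0&0\end{bmatrix}$, whence $B=(G^{-1})^{T}DG^{-1}$; taking $F=G^{-1}$ and $E=F^{T}$ yields the claim with $D$ of the form \eqref{LemMatrFactorizBlockForms1stForm}. For (c), a real symmetric $B$ cannot in general be normalized past its signature over $\mathbb{R}$, so by Sylvester's law of inertia there is a real invertible $G$ with $G^{T}BG=D$, $D=\begin{bmatrix}I_{i_+}&0&0\\0&-I_{i_-}&0\\0&0&0\end{bmatrix}$, $i_++i_-=\operatorname{rank}(B)$; then $F=G^{-1}$ and $E=F^{T}$ are real and invertible and $D$ has the form \eqref{LemMatrFactorizBlockForms2ndForm}. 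For (d), a Hermitian $B$ is $*$-congruent, by the Hermitian form of Sylvester's law of inertia, to $\begin{bmatrix}I_{i_+}&0&0\\0&-I_{i_-}&0\\0&0&0\end{bmatrix}$, so there is an invertible $G$ with $G^{*}BG=D$ of the form \eqref{LemMatrFactorizBlockForms2ndForm}, and then $B=(G^{-1})^{*}DG^{-1}$ gives $F=G^{-1}$, $E=F^{*}$.

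The main point to be careful about --- more a bookkeeping matter than a genuine difficulty --- is to land \emph{exactly} in the stated block shapes $D_{11}\oplus 0$ with $D_{11}$ of size $\operatorname{rank}(B)$, and to keep straight \emph{which} normalization is available in each setting: over $\mathbb{C}$ with the plain transpose one can always scale signs away, so form \eqref{LemMatrFactorizBlockForms1stForm} is attainable in (a) and (b), whereas over $\mathbb{R}$ with the transpose, or over $\mathbb{C}$ with the conjugate transpose, the signature is a true invariant and only form \eqref{LemMatrFactorizBlockForms2ndForm} is available in general (cases (c) and (d)). One should also note that the degenerate cases $B=0$ or $B$ of full rank are covered uniformly by permitting $D_{11}$ to be a $0\times 0$ or an $n\times n$ matrix, in which case the corresponding zero block is absent.
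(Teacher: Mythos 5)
Your proposal is correct and follows essentially the same route as the paper: rank normal form for the general statement and (a), congruence of a complex symmetric matrix of rank $r$ to $I_r\oplus 0$ for (b), and Sylvester's law of inertia (real symmetric and Hermitian versions) for (c) and (d), which is exactly the set of standard results the paper cites. Your explicit bookkeeping of $E=F^T$ (resp. $E=F^*$) via $B=(G^{-1})^T D G^{-1}$ is just a spelled-out version of the paper's appeal to those same normalization theorems.
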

\begin{proof}
Let $B\in
\mathbb{C}
^{n\times n}$. First, by elementary results in linear algebra on rank, we know that there exists invertible matrices $E,F\in
\mathbb{C}
^{n\times n}$
such that $B=EDF$, where $D\in
\mathbb{C}
^{n\times n}$ has the block form  (\ref{LemMatrFactorizBlockForms1stForm}) such that these matrices are all real if $B$ is
real. On the other hand, if $B$ is symmetric then we take
$E=F^{T}$ in this case (see \cite[Corollary
4.4.4.(c), p. 263 in Sec. 4.4: Unitary congruence and complex symmetric
matrices]{13HJ}, \cite[Sec. 3.0: Introduction and historical remarks in Chap. 3: Singular value inequalities]{91HJ}, \cite[Theorem 3.9 and comments in Sec. 3.4]{14GPP}). If $B$ is real symmetric or Hermitian, then we instead have a
factorization $C=EDF$ with $E,F\in
\mathbb{C}
^{n\times n}$, $E=F^T$ and $F$
real, if $B$ is real symmetric or $E=F^{\ast}$ if $B$ is Hermitian, and $D\in
\mathbb{C}
^{n\times n}$ has the block form (\ref{LemMatrFactorizBlockForms2ndForm}), where $i_+$ and $i_-$ are the number of positive and negative eigenvalues of
$B$, respectively (where its possible to have $i_+=0$ or $i_-=0$ or no zero on the main diagonal in the case $i_++i_-=n$). This completes the proof.
\end{proof}

\begin{proposition}[Scalar product of a Schur complement]\label{PropScalarProdOfASchurCompl}
If $B\in
\mathbb{C}
^{n\times n}$ and $A\in
\mathbb{C}
^{m\times m}$ is a $2\times2$ block matrix
\[
A=
\begin{bmatrix}
A_{11} & A_{12}\\
A_{21} & A_{22}
\end{bmatrix},
\]
such that $A_{22}$ is invertible and $A/A_{22}\in \mathbb{C}^{1\times 1}$
then there exists a $2\times 2$ block matrix $C=[C_{ij}]_{i,j=1,2}$ with $C_{22}$ invertible such that
\begin{equation}
C/C_{22}=A/A_{22}\otimes B=\det(A/A_{22}) B.
\end{equation}
Moreover, if both matrices $A$ and $B$ are real, symmetric, Hermitian, or real and symmetric then the matrix $C$ is a real, symmetric, Hermitian, or real and symmetric, respectively. In addition, if $r=\operatorname{rank}(B)$ and using the factorization of $B$ in Lemma \ref{LemMatrFactorizBlockForms}, i.e., $B=EDF$, where $E,F\in\mathbb{C}^{n\times n}$ are invertible, $D=D_{11}\oplus 0_{n-r}$, and $D_{11}\in \mathbb{C}^{r\times r}$ is invertible with the form (\ref{LemMatrFactorizBlockForms1stForm}) or (\ref{LemMatrFactorizBlockForms2ndForm}), then we can take the matrix $C\in \mathbb{C}^{(mr+n-r)\times (mr+n-r)}$ to be
\begin{align}
C  &=
\left[\begin{array}{c;{2pt/2pt} c c}
C_{11} & C_{12} \\ \hdashline[2pt/2pt]
C_{21} & C_{22}
\end{array}\right] \\
&=
\left[\begin{array}{c;{2pt/2pt} c c}
E & 0\\ \hdashline[2pt/2pt]
0 & I_{(m-1)r}
\end{array}\right]
\left[\begin{array}{c l; {2pt/2pt} c}
A_{11}\otimes D_{11} & 0 & A_{12}\otimes D_{11} \\
0 & 0_{n-r} & 0 \\ \hdashline[2pt/2pt]
A_{21}\otimes D_{11} & 0 & A_{22}\otimes D_{11}
\end{array}\right]
\left[\begin{array}{c;{2pt/2pt} c c}
F & 0\\ \hdashline[2pt/2pt]
0 & I_{(m-1)r}
\end{array}\right].
\end{align}
\end{proposition}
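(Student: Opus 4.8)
The plan is to reduce to the already-established Kronecker-product lemma by first making the second factor invertible through a rank factorization, then repairing the resulting size deficit with a shorted-matrix embedding, and finally conjugating by invertible matrices to undo the factorization. Two easy observations drive the whole argument: since $A/A_{22}\in\mathbb{C}^{1\times 1}$, the block $A_{11}$ is itself $1\times 1$, and the Kronecker product $A/A_{22}\otimes B$ is literally the scalar multiple $(A/A_{22})B=\det(A/A_{22})B$, which already proves the second displayed equality of the proposition for free.

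Next I would use Lemma \ref{LemMatrFactorizBlockForms} to write $B=EDF$ with $E,F\in\mathbb{C}^{n\times n}$ invertible and $D=D_{11}\oplus 0_{n-r}$, $r=\operatorname{rank}(B)$, where $D_{11}\in\mathbb{C}^{r\times r}$ is invertible of the form (\ref{LemMatrFactorizBlockForms1stForm}) or (\ref{LemMatrFactorizBlockForms2ndForm}); in the presence of symmetries I would invoke the corresponding parts (a)--(d) of that lemma so that $E,F$ are real, or $E=F^T$, or $E=F^*$, as needed, and $D_{11}$ is real/symmetric/Hermitian accordingly. Because $A/A_{22}$ is a scalar, $A/A_{22}\otimes B=E\bigl(A/A_{22}\otimes D\bigr)F$ and $A/A_{22}\otimes D=\bigl(A/A_{22}\otimes D_{11}\bigr)\oplus 0_{n-r}$.

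Then I would apply Lemma \ref{LemKroneckerProdSchurComplWithAMatrix} to the invertible matrix $D_{11}$: it gives $A/A_{22}\otimes D_{11}=G/G_{22}$ with $G=A\otimes D_{11}=[A_{ij}\otimes D_{11}]_{i,j=1,2}$ and $G_{22}=A_{22}\otimes D_{11}$ invertible. Applying Lemma \ref{LemShortedMatricesAreSchurCompl} to $G$ realizes $(G/G_{22})\oplus 0_{n-r}$ as a Schur complement $H/H_{22}$ with $H_{22}=G_{22}$ and $H$ equal to the $3\times 3$ block matrix obtained by inserting the zero block $0_{n-r}$, which is exactly the middle factor in the claimed formula for $C$. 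Finally, Proposition \ref{PropMatrixMultOfSchurCompl} with left multiplier $\operatorname{diag}(E,I_{(m-1)r})$ and right multiplier $\operatorname{diag}(F,I_{(m-1)r})$ yields $C/C_{22}=E(H/H_{22})F=A/A_{22}\otimes B$ with $C_{22}=H_{22}=A_{22}\otimes D_{11}$ invertible and $C=\operatorname{diag}(E,I)\,H\,\operatorname{diag}(F,I)$, which is precisely the stated formula; a dimension count ($r+(n-r)+(m-1)r=mr+n-r$) confirms the size.

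The remaining point is the symmetry bookkeeping, which I expect to be the main (though routine) obstacle. Here I would chain three facts: Kronecker products preserve each of real/symmetric/Hermitian/real-symmetric, so from $A$ and $D_{11}$ having the property (the latter guaranteed by the choice made in Lemma \ref{LemMatrFactorizBlockForms}) the matrix $G=A\otimes D_{11}$ has it; the shorted-matrix construction in Lemma \ref{LemShortedMatricesAreSchurCompl} passes it on to $H$; and in the real case $C=\operatorname{diag}(E,I)H\operatorname{diag}(F,I)$ is real since $E,F$ are, while in the symmetric, Hermitian, and real-symmetric cases the choice $E=F^T$ (resp.\ $E=F^*$) makes $\operatorname{diag}(E,I)=\operatorname{diag}(F,I)^T$ (resp.\ ${}^*$), so that $C$ is a congruence of the symmetric/Hermitian matrix $H$ and hence symmetric/Hermitian by Proposition \ref{PropMatrixMultOfSchurCompl}(b)--(d). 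The only care needed is to select form (\ref{LemMatrFactorizBlockForms2ndForm}) for $D_{11}$ exactly in the Hermitian and real-symmetric cases, so that $D_{11}$ is genuinely Hermitian rather than merely symmetric, and form (\ref{LemMatrFactorizBlockForms1stForm}) otherwise.
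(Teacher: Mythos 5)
Your proposal is correct and follows essentially the same route as the paper's own proof: reduce via the factorization $B=EDF$ of Lemma \ref{LemMatrFactorizBlockForms}, realize $A/A_{22}\otimes D_{11}$ by Lemma \ref{LemKroneckerProdSchurComplWithAMatrix}, pad with $0_{n-r}$ via Lemma \ref{LemShortedMatricesAreSchurCompl}, and conjugate by $\operatorname{diag}(E,I_{(m-1)r})$, $\operatorname{diag}(F,I_{(m-1)r})$ using Proposition \ref{PropMatrixMultOfSchurCompl}. Your symmetry bookkeeping (choosing $E=F^T$ or $E=F^*$ and the form of $D_{11}$ case by case) simply makes explicit what the paper leaves to the concluding remark, so there is nothing to add.
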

\begin{proof}
Suppose $B\in
\mathbb{C}
^{n\times n}$ and $A\in
\mathbb{C}
^{m\times m}$ is a $2\times2$ block matrix $A=[A_{ij}]_{i,j=1,2}$ such that $A_{22}$ is invertible and $A/A_{22}\in \mathbb{C}^{1\times 1}$. Then, $\det (A/A_{22}) B$ [i.e., the scalar multiplication of the scalar $\det (A/A_{22})$ with the matrix $B$] is equal to the Kronecker product of the $1\times 1$ matrix $A/A_{22}$ with the matrix $B$, that is,
\begin{equation}
A/A_{22}=\begin{bmatrix}
\det (A/A_{22})
\end{bmatrix},\;\;A/A_{22}\otimes B=\det (A/A_{22}) B.
\end{equation}
Now, we would like to apply Lemma \ref{LemKroneckerProdSchurComplWithAMatrix} to this Kronecker product $A/A_{22}\otimes B$, but the hypothesis that $B$ is invertible need not be satisfied. On the other hand, by Lemma \ref{LemMatrFactorizBlockForms} we have the factorization $B=EDF$, where $E,F\in\mathbb{C}^{n\times n}$ are invertible matrices and $D=D_{11}\oplus 0_{n-r}$ is the matrix direct sum of an invertible matrix $D_{11}$ [where $D_{11}=I_r$ or $D_{11}=I_{i_+}\oplus (-I_{i-})$ are both $r\times r$ matrix and $r=\operatorname{rank}(B)$] and the $(n-r)\times (n-r)$ zero matrix $0_{n-r}$ (with no zero matrix present if $B$ is invertible) in which statements (a)-(d) in that lemma are true. Thus, we have
\begin{gather*}
A/A_{22}\otimes B=\det (A/A_{22})B=E\det (A/A_{22})DF=E[(\det (A/A_{22}) D_{11})\oplus 0_{n-r}]F\\
=E[(A/A_{22}\otimes D_{11})\oplus 0_{n-r}]F.
\end{gather*}
By Lemma \ref{LemKroneckerProdSchurComplWithAMatrix},
\begin{equation}
G/G_{22} = A/A_{22}\otimes D_{11},
\end{equation}
where
\begin{equation}
G =
\begin{bmatrix}
G_{11} & G_{12}\\
G_{21} & G_{22}
\end{bmatrix}
=
\begin{bmatrix}
A_{11}\otimes D_{11} & A_{12}\otimes D_{11}\\
A_{21}\otimes D_{11} & A_{22}\otimes D_{11}
\end{bmatrix}.
\end{equation}
By Lemma \ref{LemShortedMatricesAreSchurCompl},
\begin{equation}
H/H_{22}=G/G_{22}\oplus 0_{n-r},
\end{equation}
where
\begin{align} 
H & =
\left[\begin{array}{c;{2pt/2pt} c c}
H_{11} & H_{12} \\ \hdashline[2pt/2pt]
H_{21} & H_{22}
\end{array}\right]
=
\left[\begin{array}{c c; {2pt/2pt} c}
G_{11} & 0 & G_{12} \\
0 & 0_{n-r} & 0 \\ \hdashline[2pt/2pt]
G_{21} & 0 & G_{22}
\end{array}\right].
\end{align}
By Proposition \ref{PropMatrixMultOfSchurCompl},
\begin{equation}
C/C_{22}=E(H/H_{22})F 
\end{equation}
where
\begin{equation}
C = \begin{bmatrix}
EH_{11}F & EH_{12}\\
H_{21}F & H_{22}
\end{bmatrix}
=
\begin{bmatrix}
E & 0\\
0 & I_{(m-1)r}
\end{bmatrix}
\begin{bmatrix}
H_{11} & H_{12}\\
H_{21} & H_{22}
\end{bmatrix}
\begin{bmatrix}
F & 0\\
0 & I_{(m-1)r}
\end{bmatrix}.
\end{equation}
Therefore, putting this all together we have proven
\begin{align*}
\det(A/A_{22}) B&=A/A_{22}\otimes B\\
&=E[(A/A_{22}\otimes D_{11})\oplus 0_{n-r}]F\\
&=E(G/G_{22}\oplus 0_{n-r})F\\
&=E(H/H_{22})F=C/C_{22},
\end{align*}
and $C$ has the desired properties. This completes the proof.
\end{proof}

\paragraph{On Kronecker products of linear matrix pencils}\label{SecOnKroneckerProductsOfLinearMatrixPencils}

This section contains the main technical portion of the statements (and their proofs) needed in the proof of the Bessmertny\u{\i} Realization Theorem (Theorem \ref{ThmBessmRealiz}) that pertain to Kronecker products of linear matrix pencils and realizations.

The main result in this section is Proposition \ref{PropRealizOfKroneckerProdOfRealiz}. As the proof of it is rather technical (as it builds on many other basic building blocks in this section and the previous sections), we provide a flow diagram for the proof in Figure \ref{FigFlowDiagramProofKroneckerProdRealiz}.

\begin{lemma}[Realization of squares]\label{LemRealizOfSquares}
The $\mathbb{C}^{1\times 1}$-valued function $f(z_1)=\begin{bmatrix}
z_1^2
\end{bmatrix}$ of an independent variable $z_1$ has a Bessmertny\u{\i} realization, i.e.,
\begin{equation}
f(z_1)=[z_1^2]=A(z_1)/A_{22}(z_1),
\end{equation}
with linear matrix pencil
\begin{equation}
A(z_1)=A_0+z_1A_1=\begin{bmatrix}
A_{11}(z_1) & A_{12}(z_1)\\
A_{21}(z_1) & A_{22}(z_1)
\end{bmatrix},
\end{equation}
where
\begin{equation}
A_0=\begin{bmatrix}
0 & 0\\
0 & -1
\end{bmatrix},\;\;
A_1=\begin{bmatrix}
0 & 1\\
1 & 0
\end{bmatrix},
\end{equation}
and
\begin{align} 
\left[\begin{array}{c;{2pt/2pt} c}
A_{11}(z_1) & A_{12}(z_1) \\ \hdashline[2pt/2pt]
A_{21}(z_1) & A_{22}(z_1)
\end{array}\right]
=
\left[\begin{array}{c; {2pt/2pt} c}
0 & z_1 \\ \hdashline[2pt/2pt]
z_1 & -1
\end{array}\right]
\end{align}
such that
\begin{equation}
\det A_{22}(z_1)=-1\not \equiv 0.
\end{equation}
Moreover, the matrices $A_j$ are real and symmetric, i.e., $A_j=\overline{A_j}=A_j^{T}$ for $j=0,1$.
\end{lemma}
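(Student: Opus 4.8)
The plan is to verify directly that the explicitly given linear matrix pencil has Schur complement equal to $[z_1^2]$ and then to read off the elementary structural properties (linearity in $z_1$, the decomposition $A_0 + z_1 A_1$, invertibility of the $(2,2)$-block, and the real-symmetric property of the coefficient matrices). First I would observe that the matrix
\[
A(z_1) = \left[\begin{array}{c;{2pt/2pt} c}
0 & z_1 \\ \hdashline[2pt/2pt]
z_1 & -1
\end{array}\right]
\]
is already partitioned as a $2\times 2$ block matrix with $1\times 1$ blocks $A_{11}(z_1) = [0]$, $A_{12}(z_1) = A_{21}(z_1) = [z_1]$, and $A_{22}(z_1) = [-1]$, so that $A_{22}(z_1)$ is the constant invertible matrix $[-1]$ with $\det A_{22}(z_1) = -1 \not\equiv 0$, as required; in particular this block structure does not depend on $z_1$.

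Next I would compute the Schur complement with respect to the $(2,2)$-block using the definition $A/A_{22} = A_{11} - A_{12}A_{22}^{-1}A_{21}$:
\[
A(z_1)/A_{22}(z_1) = [0] - [z_1][-1]^{-1}[z_1] = [0] + [z_1][z_1] = [z_1^2],
\]
which establishes $f(z_1) = [z_1^2] = A(z_1)/A_{22}(z_1)$. Then I would record the linear-pencil decomposition by inspection: writing $A(z_1) = A_0 + z_1 A_1$ with
\[
A_0 = \begin{bmatrix} 0 & 0 \\ 0 & -1 \end{bmatrix}, \qquad A_1 = \begin{bmatrix} 0 & 1 \\ 1 & 0 \end{bmatrix},
\]
one checks entry-by-entry that $A_0 + z_1 A_1$ reproduces $A(z_1)$. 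Finally, since $A_0$ and $A_1$ have only real entries and are equal to their own transposes, we have $A_j = \overline{A_j} = A_j^T$ for $j = 0, 1$, so the pencil is real and symmetric.

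There is no genuine obstacle here: this lemma is a base case of the realization construction, and the pencil is handed to us explicitly, so the entire proof is a short verification. The only thing worth being careful about is bookkeeping — confirming that the partition into $1\times 1$ blocks is $z_1$-independent (so that the Schur complement is legitimately a rational matrix function, per the conventions in Section \ref{SecPreliminaries}), that $A_{22}(z_1) = [-1]$ is invertible as a constant matrix so no genericity caveat is needed beyond $\det A_{22}(z_1) \not\equiv 0$, and that the claimed symmetries hold for \emph{both} coefficient matrices simultaneously, which is immediate by inspection. I would write the proof as a single displayed computation of $A(z_1)/A_{22}(z_1)$ followed by one sentence each on the pencil decomposition, the invertibility of $A_{22}(z_1)$, and the real-symmetric property.
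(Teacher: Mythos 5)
Your proposal is correct and is essentially the same as the paper's proof, which likewise just computes $A(z_1)/A_{22}(z_1)=[0]-[z_1][-1]^{-1}[z_1]=[z_1^2]$ and reads off the pencil decomposition and the real-symmetric property of $A_0,A_1$ by inspection. No gaps; your extra remarks about the $z_1$-independence of the block partition and the constant invertible $(2,2)$-block are sound bookkeeping consistent with the paper's conventions.
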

\begin{proof}
The proof follows immediately from the definition of $A(z_1), A_0, A_1$ and the calculation
\begin{align*}
A(z_1)/A_{22}(z_1)=[0]-[z_1][-1]^{-1}[z_1]=-[z_1][-1][z_1]=-[-z_1^2]=[z_1^2].
\end{align*}
\end{proof}

\begin{lemma}[Realization of the product of two independent variables]\label{LemRealizProdTwoIndepVar}
The $\mathbb{C}^{1\times 1}$-valued function $f(z_1,z_2)=\begin{bmatrix}
z_1z_2
\end{bmatrix}$ of two independent variables $z_1$ and $z_2$ has a Bessmertny\u{\i} realization, i.e.,
\begin{equation}
f(z_1,z_2)=[z_1z_2]=A(z_1,z_2)/A_{22}(z_1,z_2),
\end{equation}
with linear matrix pencil
\begin{equation}
A(z_1,z_2)=A_0+z_1A_1+z_2A_2=\begin{bmatrix}
A_{11}(z_1,z_2) & A_{12}(z_1,z_2)\\
A_{21}(z_1,z_2) & A_{22}(z_1,z_2)
\end{bmatrix},
\end{equation}
where
\begin{equation}
A_0=\begin{bmatrix}
0 & 0 & 0\\
0 & -\frac{1}{4} & 0\\
0 & 0 & \frac{1}{4}
\end{bmatrix},\;
A_1=\begin{bmatrix}
0 & \frac{1}{4} & -\frac{1}{4}\\
\frac{1}{4} & 0 & 0\\
-\frac{1}{4} & 0 & 0
\end{bmatrix},\;
A_2=\begin{bmatrix}
0 & \frac{1}{4} & \frac{1}{4}\\
\frac{1}{4} & 0 & 0\\
\frac{1}{4} & 0 & 0
\end{bmatrix},
\end{equation}
and
\begin{align} 
\left[\begin{array}{c;{2pt/2pt} c c}
A_{11}(z_1,z_2) & A_{12}(z_1,z_2) \\ \hdashline[2pt/2pt]
A_{21}(z_1,z_2) & A_{22}(z_1,z_2)
\end{array}\right]
=
\left[\begin{array}{c; {2pt/2pt} c c}
0 & \frac{1}{4}(z_1+z_2) & -\frac{1}{4}(z_1-z_2) \\\hdashline[2pt/2pt]
\frac{1}{4}(z_1+z_2) & -\frac{1}{4} & 0 \\ 
-\frac{1}{4}(z_1-z_2) & 0 & \frac{1}{4}
\end{array}\right]
\end{align}
such that
\begin{equation}
\det A_{22}(z_1,z_2)=\frac{-1}{16}\not \equiv 0.
\end{equation}
Moreover, the matrices $A_j$ are real and symmetric, i.e., $A_j=\overline{A_j}=A_j^{T}$ for $j=0,1,2$.
\end{lemma}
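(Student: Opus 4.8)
The plan is to follow the synthetic route advertised in Section \ref{SecSchurComplAlgebraAndOps}: take the realization of a square furnished by Lemma \ref{LemRealizOfSquares}, rescale it by Lemma \ref{LemScalarMultiSchurCompl}, and glue two such rescaled realizations together by Proposition \ref{PropSumOfSchurComps}, all driven by the polarization identity
\[
z_1 z_2 = \tfrac14 (z_1+z_2)^2 - \tfrac14 (z_1-z_2)^2 .
\]
First I would note that, by exactly the one-line computation in the proof of Lemma \ref{LemRealizOfSquares}, for any linear form $\ell$ the linear matrix pencil $\begin{bmatrix} 0 & \ell \\ \ell & -1 \end{bmatrix}$ has $(2,2)$-block $[-1]$ (invertible) and Schur complement $[\ell^2]$. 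Applying this with $\ell = z_1+z_2$ and with $\ell = z_1-z_2$ gives two linear matrix pencils in $z_1,z_2$ realizing $[(z_1+z_2)^2]$ and $[(z_1-z_2)^2]$, both real and symmetric, and the block structure (a $1\times1$ $(1,1)$-block over a $1\times1$ $(2,2)$-block) is independent of $z$.

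Next I would apply Lemma \ref{LemScalarMultiSchurCompl} with $\lambda=\tfrac14$ to the first and $\lambda=-\tfrac14$ to the second, obtaining
\[
A=\begin{bmatrix} 0 & \tfrac14(z_1+z_2) \\ \tfrac14(z_1+z_2) & -\tfrac14 \end{bmatrix}, \qquad
B=\begin{bmatrix} 0 & -\tfrac14(z_1-z_2) \\ -\tfrac14(z_1-z_2) & \tfrac14 \end{bmatrix},
\]
with $A/A_{22}=[\tfrac14(z_1+z_2)^2]$, $B/B_{22}=[-\tfrac14(z_1-z_2)^2]$ (both $1\times1$), and $A_{22}=[-\tfrac14]$, $B_{22}=[\tfrac14]$ invertible; since $\lambda$ is real and the pencils from Lemma \ref{LemRealizOfSquares} are real and symmetric, the moreover clause of Lemma \ref{LemScalarMultiSchurCompl} makes $A$ and $B$ real and symmetric. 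Then Proposition \ref{PropSumOfSchurComps} (with $k=p=q=1$) yields $C/C_{22}=A/A_{22}+B/B_{22}=[z_1z_2]$ where $C\in\mathbb{C}^{3\times3}$ equals
\[
\begin{bmatrix} A_{11}+B_{11} & A_{12} & B_{12} \\ A_{21} & A_{22} & 0 \\ B_{21} & 0 & B_{22} \end{bmatrix}
=\begin{bmatrix} 0 & \tfrac14(z_1+z_2) & -\tfrac14(z_1-z_2) \\ \tfrac14(z_1+z_2) & -\tfrac14 & 0 \\ -\tfrac14(z_1-z_2) & 0 & \tfrac14 \end{bmatrix},
\]
with $C_{22}=\operatorname{diag}(-\tfrac14,\tfrac14)$, hence $\det C_{22}=-\tfrac1{16}\neq0$, and $C$ real and symmetric by the moreover clause of Proposition \ref{PropSumOfSchurComps}. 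Expanding $C=A_0+z_1A_1+z_2A_2$ then reproduces verbatim the three matrices in the statement, with the claimed real symmetry $A_j=\overline{A_j}=A_j^T$.

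I do not expect a genuine obstacle: once polarization is in place, the argument is pure bookkeeping — matching the outputs of the three cited results to the explicit pencil and checking invertibility of the $(2,2)$-block. The only point worth a sentence of care is that feeding a linear form into the single-variable pencil of Lemma \ref{LemRealizOfSquares} keeps the object a linear matrix pencil, so the construction stays within the class of Bessmertny\u{\i} realizations. Alternatively, one may skip the synthetic argument entirely and simply verify $A(z_1,z_2)/A_{22}(z_1,z_2)=[z_1z_2]$ by a direct Schur-complement computation with the displayed $3\times3$ pencil; that is the shortest honest proof of the formula, and the synthetic derivation above is included to exhibit how the results of Section \ref{SecSchurComplAlgebraAndOps} assemble it.
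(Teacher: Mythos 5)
Your proposal is correct and follows essentially the same route as the paper: the polarization identity $z_1z_2=\tfrac14(z_1+z_2)^2-\tfrac14(z_1-z_2)^2$, realization of the two squares via Lemma \ref{LemRealizOfSquares} with the linear forms $z_1\pm z_2$ substituted, rescaling by $\pm\tfrac14$ via Lemma \ref{LemScalarMultiSchurCompl}, and assembly via Proposition \ref{PropSumOfSchurComps}, producing exactly the displayed $3\times3$ pencil. No gaps; the bookkeeping and the symmetry/invertibility remarks match the paper's argument.
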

\begin{proof}
The statement follows by applying Lemma \ref{LemRealizOfSquares}, Lemma \ref{LemScalarMultiSchurCompl}, and Proposition \ref{PropSumOfSchurComps} in succession:
\begin{align*}
[z_1z_2]&=\left[\frac{1}{4}(z_1+z_2)^2-\frac{1}{4}(z_1-z_2)^2\right]=\frac{1}{4}\left[(z_1+z_2)^2\right]+\frac{-1}{4}\left[(z_1-z_2)^2\right]\\
&=\frac{1}{4}\left(\left.\left[\begin{array}{c; {2pt/2pt} c}
0 & z_1+z_2 \\ \hdashline[2pt/2pt]
z_1+z_2 & -1
\end{array}\right]
\right/
\begin{bmatrix}
 -1
\end{bmatrix}\right)\\
&+\frac{-1}{4}\left(\left.\left[\begin{array}{c; {2pt/2pt} c}
0 & z_1-z_2 \\ \hdashline[2pt/2pt]
z_1-z_2 & -1
\end{array}\right]
\right/
\begin{bmatrix}
 -1
\end{bmatrix}\right)
\\
&=\left.\left[\begin{array}{c; {2pt/2pt} c}
0 & \frac{1}{4}(z_1+z_2) \\ \hdashline[2pt/2pt]
\frac{1}{4}(z_1+z_2) & -\frac{1}{4}
\end{array}\right]
\right/
\begin{bmatrix}
 -\frac{1}{4}
\end{bmatrix}\\
&+\left.\left[\begin{array}{c; {2pt/2pt} c}
0 & -\frac{1}{4}(z_1-z_2) \\ \hdashline[2pt/2pt]
-\frac{1}{4}(z_1-z_2) & \frac{1}{4}
\end{array}\right]
\right/
\begin{bmatrix}
 \frac{1}{4}
\end{bmatrix}\\
&=
\left.\left[\begin{array}{c; {2pt/2pt} c c}
0 & \frac{1}{4}(z_1+z_2) & -\frac{1}{4}(z_1-z_2) \\\hdashline[2pt/2pt]
\frac{1}{4}(z_1+z_2) & -\frac{1}{4} & 0 \\ 
-\frac{1}{4}(z_1-z_2) & 0 & \frac{1}{4}
\end{array}\right]
\right/
\begin{bmatrix}
-\frac{1}{4} & 0 \\ 
0 & \frac{1}{4}
\end{bmatrix}.
\end{align*}
The proof follows immediately from this representation.
\end{proof}

\begin{lemma}[Realization of Kronecker products: Part I]\label{LemRealizOfKroneckerProdsPart1}
If $A(z)$ and $B(w)$ are two linear matrix pencils
\begin{align}
A\left(  z\right)   &  = A_0+\sum_{i=1}^{s}
z_{i}A_{i},\;\;B\left(  w\right)  = B_0 + \sum_{j=1}^{t}
w_{j}B_{j}
\end{align}
where $A_{i}\in
\mathbb{C}^{m\times m}$ (for $i=0,\ldots, s$) and $B_{j}\in
\mathbb{C}^{n\times n}$ (for $j=0,\ldots, t$) then there exists a linear matrix pencil $C(z,w)$ in $2\times 2$ block form
\begin{align}\label{LemRealizOfKroneckerProdsPart1CPencil}
C\left(  z,w\right)   &  = C_0+\sum_{i=1}^{s}
z_{i}C_{i}+\sum_{j=1}^{t}
w_{j}C_{s+j}
=
\begin{bmatrix}
C_{11}(z,w) & C_{12}(z,w)\\
C_{21}(z,w) & C_{22}(z,w)
\end{bmatrix},
\end{align}
with $\det C_{22}(z,w) \not \equiv 0$, such that
\begin{align}
C(z,w)/C_{22}(z,w) = A(z)\otimes B(w).
\end{align}
Moreover, the following statements are true:
\begin{itemize}
\item[(a)] If all the matrices $A_i$ and $B_j$ (for $i=0,\ldots, s$ and $j=0,\ldots, t$) are real then one can choose all the matrices $C_k$ (for $k=0,\ldots, s+t$) to be real.
\item[(b)] If all the matrices $A_i$ and $B_j$ (for $i=0,\ldots, s$ and $j=0,\ldots, t$) are symmetric then one can choose all the matrices $C_k$ (for $k=0,\ldots, s+t$) to be symmetric.
\item[(c)] If all the matrices $A_i$ and $B_j$ (for $i=0,\ldots, s$ and $j=0,\ldots, t$) are Hermitian then one can choose all the matrices $C_k$ (for $k=0,\ldots, s+t$) to be Hermitian.
\item[(d)] If any combination of the (a)-(d) hypotheses are true then the matrices $C_k$ can be chosen to satisfy the same combination of conclusions.
\end{itemize}
\end{lemma}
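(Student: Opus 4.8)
The plan is to reduce the Kronecker product of two linear pencils to a sum of realizations of elementary bilinear terms, each handled by results already established above. First I would multiply out
\begin{align*}
A(z)\otimes B(w) &= \Big(A_0+\sum_{i=1}^s z_iA_i\Big)\otimes\Big(B_0+\sum_{j=1}^t w_jB_j\Big)\\
&= L(z,w) + \sum_{i=1}^s\sum_{j=1}^t z_iw_j\,(A_i\otimes B_j),\qquad\text{where}\\
L(z,w) &= A_0\otimes B_0 + \sum_{i=1}^s z_i(A_i\otimes B_0) + \sum_{j=1}^t w_j(A_0\otimes B_j),
\end{align*}
so that the problem splits into realizing the linear matrix pencil $L(z,w)$ together with the $st$ bilinear terms $z_iw_j\,(A_i\otimes B_j)$.

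For each bilinear term I would apply Lemma \ref{LemRealizProdTwoIndepVar} to realize $[z_iw_j]$ as the Schur complement of a linear pencil whose $(2,2)$-block is a constant invertible matrix, and then feed this into Proposition \ref{PropScalarProdOfASchurCompl} with the matrix there taken to be $A_i\otimes B_j$; since $[z_iw_j]\in\mathbb{C}^{1\times1}$ and that $(2,2)$-block is invertible, this yields a linear matrix pencil $C^{(i,j)}$, in conformal $2\times2$ block form, whose $(2,2)$-block $C^{(i,j)}_{22}$ is a nonzero constant matrix and which satisfies $C^{(i,j)}/C^{(i,j)}_{22} = [z_iw_j]\otimes(A_i\otimes B_j) = z_iw_j\,(A_i\otimes B_j)$. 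The linear/constant piece $L(z,w)$ is itself a linear pencil, which I would present trivially as the Schur complement of $\begin{bmatrix} L(z,w) & 0 \\ 0 & 1 \end{bmatrix}$ with respect to its $(2,2)$-block $[1]$. Finally, iterating Proposition \ref{PropSumOfSchurComps} over these $st+1$ Schur complements produces a linear pencil $C(z,w)$ of the form \eqref{LemRealizOfKroneckerProdsPart1CPencil} with $C(z,w)/C_{22}(z,w) = A(z)\otimes B(w)$; here $C_{22}(z,w)$ is block-diagonal with the constant invertible blocks $C^{(i,j)}_{22}$ and $[1]$, so $\det C_{22}(z,w)$ is a nonzero constant, hence $\det C_{22}(z,w)\not\equiv 0$.

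For the symmetry statements (a)--(d), the key facts are $\overline{X\otimes Y}=\overline X\otimes\overline Y$, $(X\otimes Y)^T=X^T\otimes Y^T$, and $(X\otimes Y)^*=X^*\otimes Y^*$, so that $A_i\otimes B_j$ — and likewise $A_0\otimes B_0$, $A_i\otimes B_0$, $A_0\otimes B_j$, hence every coefficient matrix of $L(z,w)$ and the block matrix $\begin{bmatrix} L(z,w) & 0\\ 0 & 1\end{bmatrix}$ — is real, symmetric, Hermitian, or real-and-symmetric whenever all the $A_i$ and $B_j$ are; moreover the pencil realizing $[z_iw_j]$ furnished by Lemma \ref{LemRealizProdTwoIndepVar} already has real \emph{and} symmetric (hence in particular Hermitian) coefficient matrices; and both Proposition \ref{PropScalarProdOfASchurCompl} and Proposition \ref{PropSumOfSchurComps} preserve each of these four properties. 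Since any combination of the properties real, symmetric, and Hermitian collapses to the real-and-symmetric case, which is also covered, statement (d) follows as well.

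I expect the work here to be bookkeeping rather than a genuine obstacle: the one point requiring care is that the ``linearization'' step — replacing each genuinely bilinear monomial $z_iw_j$ by the linear pencil of Lemma \ref{LemRealizProdTwoIndepVar} — really does yield a \emph{linear} pencil $C(z,w)$ whose $2\times2$ block partition is independent of $(z,w)$ and whose $(2,2)$-block satisfies $\det C_{22}(z,w)\not\equiv0$. Both hold because each ingredient pencil has a fixed block partition and a constant invertible $(2,2)$-block, and the construction in Proposition \ref{PropSumOfSchurComps} merely assembles these block matrices with no coupling introduced into the $(2,2)$-block, so no $z$- or $w$-dependence (let alone higher-degree dependence) appears there.
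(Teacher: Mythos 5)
Your proposal is correct and follows essentially the same route as the paper: expand $A(z)\otimes B(w)$ into the linear pencil $L(z,w)$ plus the bilinear terms $z_iw_j(A_i\otimes B_j)$, realize each bilinear term via Lemma \ref{LemRealizProdTwoIndepVar} and Proposition \ref{PropScalarProdOfASchurCompl}, and assemble with Proposition \ref{PropSumOfSchurComps}, with the symmetry claims following from the Kronecker-product identities. The only (harmless) difference is that you absorb $L(z,w)$ by padding it into a trivial Schur complement before summing, whereas the paper adds it directly to the $(1,1)$-block using Lemma \ref{LemSumSchurComplPlusMatrix}.
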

\begin{proof}
It follows from Lemma \ref{LemKroneckerProdSchurComplWithAMatrix} and linearity properties of the Kronecker product $\otimes$ that
\begin{gather}
M(z,w)=A(z)\otimes B(w)\\
= A_0\otimes B_0+\sum_{i=1}^sz_i(A_i\otimes B_0)+\sum_{j=1}^tw_j(A_0\otimes B_j)+\sum_{j=1}^t\sum_{i=1}^s
\left(  z_{i}w_{j}\right) \left(  A_{i}\otimes B_{j}\right).
\end{gather}
The first part of the sum for $M(z,w)$, i.e., $A_0\otimes B_0+\sum_{i=1}^sz_i(A_i\otimes B_0)+\sum_{j=1}^tw_j(A_0\otimes B_j)$ is a linear matrix pencil and is already realized. The second part of the sum, i.e., $\sum_{j=1}^t\sum_{i=1}^s
\left(  z_{i}w_{j}\right) \left(  A_{i}\otimes B_{j}\right)$ is realizable by Proposition \ref{PropSumOfSchurComps}, Proposition \ref{PropScalarProdOfASchurCompl}, and Lemma \ref{LemRealizProdTwoIndepVar}. Hence, the sum of these two parts, which is $M(z,w)$, is realizable by Lemma \ref{LemSumSchurComplPlusMatrix}. This proves that $C(z,w)/C_{22}(z,w)=M(z,w)=A(z)\otimes B(w)$ for some linear matrix pencil $C(z,w)$ in the form (\ref{LemRealizOfKroneckerProdsPart1CPencil}) with $\det C_{22}(z,w)\not\equiv 0$. This completes the first part of the proof. The rest of the proof of statements (a)-(d) follow immediately from these results and the elementary properties of the Kronecker product $\otimes$, namely, that $\overline {A_i\otimes B_j}=\overline{A_i}\otimes \overline{B_j}$, $(A_i\otimes B_j)^T=A_i^T\otimes B_j^T$, and $(A_i\otimes B_j)^*=A_i^*\otimes B_j^*$. This completes the proof.
\end{proof}

\begin{lemma}[Realization of Kronecker products: Part II]\label{LemRealizOfKroneckerProdsPart2}
If $B(w)$ is a linear matrix pencil
\begin{align}
B\left(  w\right)  = B_0 + \sum_{j=1}^{t}
w_{j}B_{j}
\end{align}
and $A(z)$ is a linear matrix pencil in $2\times 2$ block form
\begin{align}
A\left(  z\right)   &  = A_0+\sum_{i=1}^{s}
z_{i}A_{i}
=
\begin{bmatrix}
A_{11}(z) & A_{12}(z)\\
A_{21}(z) & A_{22}(z)
\end{bmatrix}
\end{align}
where $A_{i}\in
\mathbb{C}^{m\times m}$ (for $i=0,\ldots, s$) and $B_{j}\in
\mathbb{C}^{n\times n}$ (for $j=0,\ldots, t$) such that $\det A_{22}\left(z\right) \not \equiv 0$ and $\det B\left(  w\right)  \not \equiv 0$, then there exists a linear matrix pencil $D(z,w)$ in $2\times 2$ block form
\begin{align}\label{LemRealizOfKroneckerProdsPart2DPencil}
D\left(  z,w\right)   &  = D_0+\sum_{i=1}^{s}
z_{i}D_{i}+\sum_{j=1}^{t}
w_{j}D_{s+j}
=
\begin{bmatrix}
D_{11}(z) & D_{12}(z)\\
D_{21}(z) & D_{22}(z)
\end{bmatrix},
\end{align}
with $\det D_{22}(z,w) \not \equiv 0$, such that
\begin{align}
D(z,w)/D_{22}(z,w) = A(z)/A_{22}(z)\otimes B(w).
\end{align}
Moreover, the following statements are true:
\begin{itemize}
\item[(a)] If all the matrices $A_i$ and $B_j$ (for $i=0,\ldots, s$ and $j=0,\ldots, t$) are real then one can choose all the matrices $D_k$ (for $k=0,\ldots, s+t$) to be real.
\item[(b)] If all the matrices $A_i$ and $B_j$ (for $i=0,\ldots, s$ and $j=0,\ldots, t$) are symmetric then one can choose all the matrices $D_k$ (for $k=0,\ldots, s+t$) to be symmetric.
\item[(c)] If all the matrices $A_i$ and $B_j$ (for $i=0,\ldots, s$ and $j=0,\ldots, t$) are Hermitian then one can choose all the matrices $D_k$ (for $k=0,\ldots, s+t$) to be Hermitian.
\item[(d)] If any combination of the (a)-(d) hypotheses are true then the matrices $D_k$ can be chosen to satisfy the same combination of conclusions.
\end{itemize}
\end{lemma}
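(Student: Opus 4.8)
The plan is to mirror the proof of Proposition~\ref{PropKroneckerProdOfTwoSchurComplements}, with the Schur complement $B/B_{22}$ there replaced by the pencil $B(w)$ (which is invertible as a rational matrix function since $\det B(w)\not\equiv 0$), and with the constant Kronecker product replaced by the quadratic matrix pencil $A(z)\otimes B(w)$, whose realizability is already furnished by Lemma~\ref{LemRealizOfKroneckerProdsPart1}. The one essential simplification relative to Proposition~\ref{PropKroneckerProdOfTwoSchurComplements} is that we do \emph{not} need $A(z)/A_{22}(z)$ to be invertible here, because $B(w)$ enters as a pencil rather than as a Schur complement, so the argument runs through Lemma~\ref{LemKroneckerProdSchurComplWithAMatrix} (which requires only $A_{22}$ and $B$ invertible) rather than through Proposition~\ref{PropKroneckerProdOfTwoSchurComplements}.

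First I would partition the quadratic matrix pencil $M(z,w):=A(z)\otimes B(w)\in\mathbb{C}^{mn\times mn}$ conformally with the $2\times 2$ block structure of $A(z)$, obtaining
\[
M(z,w)=\begin{bmatrix}A_{11}(z)\otimes B(w) & A_{12}(z)\otimes B(w)\\ A_{21}(z)\otimes B(w) & A_{22}(z)\otimes B(w)\end{bmatrix},
\]
so that $M_{22}(z,w)=A_{22}(z)\otimes B(w)$ and $\det M_{22}(z,w)=[\det A_{22}(z)]^{\,n}\,[\det B(w)]^{\,k}\not\equiv 0$ (where $A_{22}(z)$ is $k\times k$ and $B(w)$ is $n\times n$). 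Using $(A_{22}(z)\otimes B(w))^{-1}=A_{22}(z)^{-1}\otimes B(w)^{-1}$ together with the mixed-product property of $\otimes$, exactly as in the proof of Lemma~\ref{LemKroneckerProdSchurComplWithAMatrix}, one computes
\[
M(z,w)/M_{22}(z,w)=\big(A_{11}(z)-A_{12}(z)A_{22}(z)^{-1}A_{21}(z)\big)\otimes B(w)=A(z)/A_{22}(z)\otimes B(w).
\]
Thus it only remains to realize $M(z,w)/M_{22}(z,w)$ by a linear pencil.

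Since $M(z,w)=A(z)\otimes B(w)$ is the Kronecker product of two linear pencils, Lemma~\ref{LemRealizOfKroneckerProdsPart1} produces a linear matrix pencil $C(z,w)$ in $2\times 2$ block form with $\det C_{22}(z,w)\not\equiv 0$ and $C(z,w)/C_{22}(z,w)=M(z,w)$, whose coefficient matrices can be chosen real, symmetric, Hermitian, or real and symmetric whenever the coefficient matrices of $A$ and $B$ share that property. Next I would subpartition the $(1,1)$-block $C_{11}(z,w)$, which has size $mn\times mn$ (the size of $M$), conformally with the $2\times 2$ block structure of $M$, viewing $C$ as a $3\times 3$ block matrix, and then repartition it as a new $2\times 2$ block matrix $D(z,w)$ exactly as in Proposition~\ref{PropComposSchurComplem}; since $C$ is a linear pencil in $(z,w)$, so is $D$, and it has the required form $D_0+\sum_i z_iD_i+\sum_j w_jD_{s+j}$. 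The two regularity hypotheses of Proposition~\ref{PropComposSchurComplem} hold: $C_{22}(z,w)$ is invertible as a rational function, and the $(2,2)$-block of $C/C_{22}=M$ is $M_{22}(z,w)=A_{22}(z)\otimes B(w)$, which is invertible as a rational function. Hence Proposition~\ref{PropComposSchurComplem} yields $\det D_{22}(z,w)\not\equiv 0$ and
\[
D(z,w)/D_{22}(z,w)=(C/C_{22})\big/(C/C_{22})_{22}=M(z,w)/M_{22}(z,w)=A(z)/A_{22}(z)\otimes B(w),
\]
which is the desired realization. Statements (a)--(d) then follow because each of the three ingredients --- the Kronecker product preserves the real, symmetric, Hermitian, and real-symmetric classes, Lemma~\ref{LemRealizOfKroneckerProdsPart1} preserves them, and Proposition~\ref{PropComposSchurComplem} preserves them --- passes the relevant combination of symmetries from the matrices of $A$ and $B$ through to the matrices of $D$.

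The step I expect to require the most care is the block-structure bookkeeping in the composition: one must take the subpartition of $C_{11}(z,w)$ with exactly the block sizes inherited from $A(z)$, so that the $(2,2)$-block of $C/C_{22}$ is literally $A_{22}(z)\otimes B(w)$ and not some other submatrix of $M$, and one must make sure the invertibility hypotheses of Proposition~\ref{PropComposSchurComplem} are read in the rational-function sense and are genuinely satisfied here. Everything else is a routine transcription of the argument for Proposition~\ref{PropKroneckerProdOfTwoSchurComplements}, together with the observation that the repartitioned pencil $D$ remains linear in $(z,w)$ because $C$ is linear.
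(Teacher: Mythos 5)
Your proposal follows essentially the same route as the paper's proof: realize $M(z,w)=A(z)\otimes B(w)$ via Lemma \ref{LemKroneckerProdSchurComplWithAMatrix} and Lemma \ref{LemRealizOfKroneckerProdsPart1}, then compose Schur complements via Proposition \ref{PropComposSchurComplem} (taking $D=C$ with the repartitioned block structure), with the symmetry statements inherited from Lemma \ref{LemRealizOfKroneckerProdsPart1}. Your additional bookkeeping about the conformal subpartition of $C_{11}$ and the rational-function reading of the invertibility hypotheses is correct and consistent with how the paper implicitly handles these points.
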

\begin{proof}
By Lemma \ref{LemKroneckerProdSchurComplWithAMatrix} we know that
\begin{align}
M(z,w)/M_{22}(z,w)=A(z)/A_{22}(z)\otimes B(w),
\end{align}
where
\begin{align}
M(z,w)&=\begin{bmatrix}
M_{11}(z,w) & M_{12}(z,w)\\
M_{21}(z,w) & M_{22}(z,w)
\end{bmatrix}\\
&=
\begin{bmatrix}
A_{11}(z)\otimes B(w) & A_{12}(z)\otimes B(w)\\
A_{21}(z)\otimes B(w) & A_{22}(z)\otimes B(w)
\end{bmatrix}
=A(z)\otimes B(w)
\end{align}
 and $\det M_{22}(z,w)\not \equiv 0.$ By Lemma \ref{LemRealizOfKroneckerProdsPart1} we know that there exists a linear matrix pencil $C(z,w)$ in the $2\times 2$ block form (\ref{LemRealizOfKroneckerProdsPart1CPencil}) with $\det C_{22}(z,w)\not \equiv 0$ such that
 \begin{align}
C(z,w)/C_{22}(z,w)=A(z)\otimes B(w)=M(z,w).
 \end{align}
It now follows from this that
\begin{align}
[C(z,w)/C_{22}(z,w)]/[C(z,w)/C_{22}(z,w)]_{22}&=M(z,w)/M_{22}(z,w)\\
&=A(z)/A_{22}(z)\otimes B(w),
\end{align}
and thus by Proposition \ref{PropComposSchurComplem} the statement is proven since by this proposition we can take $D(z,w)=C(z,w)$ [although with a possibly different $2\times 2$ block form described in that proposition in which $\det D_{22}(z,w)\not \equiv 0$]. This proves the first part of the lemma and statements (a)-(d) of this lemma follow immediately from this representation of $D(z,w)$ and Lemma \ref{LemRealizOfKroneckerProdsPart1}.
\end{proof}

\begin{example}
We will now work out a concrete example to illustrate our approach to the realization problem which uses Lemma \ref{LemRealizOfKroneckerProdsPart2} and its proof. Consider the rational $\mathbb{C}^{1\times 1}$-valued function of the two independent variables $z_1,w_1$ defined by
\begin{align*}
f(z_1,w_1)=\left[ \frac{1}{3+3z_{1}}\left( 9+55w_{1}\right) \right].
\end{align*}
Then this can be written in terms of a Kronecker product as
\begin{align*}
\left[ \frac{1}{3+3z_{1}}\left( 9+55w_{1}\right) \right]=[3+3z_{1}]^{-1}\otimes [9+55w_{1}]=A(z)/A_{22}(z)\otimes B(w),
\end{align*}
where
\begin{align*}
A(z)&=A_0+z_1A_1=
\begin{bmatrix}
A_{11}(z) & A_{12}(z)\\
A_{21}(z) & A_{22}(z)\\
\end{bmatrix}
=
\left[\begin{array}{c; {2pt/2pt} c}
0 & 1 \\\hdashline[2pt/2pt]
1 & -(3+3z_{1})
\end{array}\right],\\
A_0&=\begin{bmatrix}
0 & 1\\
1 & -3
\end{bmatrix},\;\;
A_1=\begin{bmatrix}
0 & 0\\
0 & -3
\end{bmatrix},\\
B(w)&=B_0+w_1B_1=[9+55w_{1}],\;\;B_0=[9],\;\;B_1=[55].
\end{align*}
By Lemma \ref{LemKroneckerProdSchurComplWithAMatrix} we know that
\begin{align*}
M(z,w)/M_{22}(z,w)=A(z)/A_{22}(z)\otimes B(w),
\end{align*}
where 
\begin{align*}
M(z,w)&=\begin{bmatrix}
M_{11}(z,w) & M_{12}(z,w)\\
M_{21}(z,w) & M_{22}(z,w)
\end{bmatrix}
=
\begin{bmatrix}
A_{11}(z)\otimes B(w) & A_{12}(z)\otimes B(w)\\
A_{21}(z)\otimes B(w) & A_{22}(z)\otimes B(w)
\end{bmatrix}\\
&=A(z)\otimes B(w),\\
M_{11}(z,w)&=A_{11}(z)\otimes B(w)=[0],\\
M_{12}(z,w)&=A_{12}(z)\otimes B(w)=[9+55w_1]=A_{21}(z)\otimes B(w)=M_{21}(z,w),\\
M_{22}(z,w)&=A_{22}(z)\otimes B(w)=[-(3+3z_{1})(9+55w_1)]
\end{align*}
 and $$\det M_{22}(z,w)=\det[A_{22}(z)\otimes B(w)]=-(3+3z_{1})(9+55w_1) \not \equiv 0.$$
By Lemma \ref{LemRealizOfKroneckerProdsPart1} we know that there exists a linear matrix pencil $C(z,w)$ in the $2\times 2$ block form (\ref{LemRealizOfKroneckerProdsPart1CPencil}) with $\det C_{22}(z,w)\not \equiv 0$ such that
 \begin{align}
C(z,w)/C_{22}(z,w)=A(z)\otimes B(w)=M(z,w).
 \end{align}
Let us now calculate this $C(z,w)$ using the method described in the proof of Lemma \ref{LemRealizOfKroneckerProdsPart1}. First, we have
\begin{align*}
M(z,w)&=A(z)\otimes B(w)\\
&= A_0\otimes B_0+z_1(A_1\otimes B_0)+w_1(A_0\otimes B_1)+
\left(  z_{1}w_{1}\right) \left(  A_{1}\otimes B_{1}\right),\\
A_0\otimes B_0&=\begin{bmatrix}
0 & 1\\
1 & -3
\end{bmatrix}
\otimes
[9]=\begin{bmatrix}
0 & 9\\
9 & -27
\end{bmatrix},\\
A_1\otimes B_0&=\begin{bmatrix}
0 & 0\\
0 & -3
\end{bmatrix}
\otimes
[9]=\begin{bmatrix}
0 & 0\\
0 & -27
\end{bmatrix},\\
A_0\otimes B_1&=\begin{bmatrix}
0 & 1\\
1 & -3
\end{bmatrix}
\otimes
[55]=\begin{bmatrix}
0 & 55\\
55 & -165
\end{bmatrix},\\
A_1\otimes B_1&=\begin{bmatrix}
0 & 0\\
0 & -3
\end{bmatrix}
\otimes
[55]=\begin{bmatrix}
0 & 0\\
0 & -165
\end{bmatrix}.
\end{align*}
The first part of the sum for $M(z,w)$, i.e., 
\begin{gather*}
A_0\otimes B_0+z_1(A_1\otimes B_0)+w_1(A_0\otimes B_1)\\
=\begin{bmatrix}
0 & 9\\
9 & -27
\end{bmatrix}+z_1\begin{bmatrix}
0 & 0\\
0 & -27
\end{bmatrix}+w_1\begin{bmatrix}
0 & 55\\
55 & -165
\end{bmatrix}\\
=\begin{bmatrix}
0 & 9+55w_1\\
9+55w_1 & -27-27z_1-165w_1
\end{bmatrix},
\end{gather*}
is a linear matrix pencil and is already realized. The second part of the sum, i.e., $\left(  z_{1}w_{1}\right) \left(  A_{1}\otimes B_{1}\right)$, is realizable by Lemma \ref{LemRealizProdTwoIndepVar} and Proposition \ref{PropScalarProdOfASchurCompl} (in fact, for this example Lemma \ref{LemShortedMatricesAreSchurCompl} could be used instead of the latter proposition to speed up the calculation), which we can calculate as
\begin{gather*}
\left(  z_{1}w_{1}\right) \left(  A_{1}\otimes B_{1}\right)=\left(  z_{1}w_{1}\right) \left(  A_{1}\otimes B_{1}\right)=\left(  z_{1}w_{1}\right)\begin{bmatrix}
0 & 0\\
0 & -165
\end{bmatrix}\\
=\left(\left.\left[\begin{array}{c; {2pt/2pt} c c}
0 & \frac{1}{4}(z_1+w_1) & -\frac{1}{4}(z_1-w_1) \\\hdashline[2pt/2pt]
\frac{1}{4}(z_1+w_1) & -\frac{1}{4} & 0 \\ 
-\frac{1}{4}(z_1-w_1) & 0 & \frac{1}{4}
\end{array}\right]
\right/
\begin{bmatrix}
-\frac{1}{4} & 0 \\ 
0 & \frac{1}{4}
\end{bmatrix}\right)\otimes \begin{bmatrix}
0 & 0\\
0 & -165
\end{bmatrix}\\
=
\left.\left[\begin{array}{c c; {2pt/2pt} c c}
0 & 0 & 0 & 0 \\ 
0 & 0 & \frac{-165}{4}\left( z_{1}+w_{1}\right)  & \frac{165}{4}\left(
z_{1}-w_{1}\right)  \\ \hdashline[2pt/2pt]
0 & \frac{-165}{4}\left( z_{1}+w_{1}\right)  & \frac{165}{4} & 0 \\ 
0 & \frac{165}{4}\left( z_{1}-w_{1}\right)  & 0 & \frac{-165}{4}%
\end{array}\right]
\right/ 
\begin{bmatrix}
\frac{165}{4} & 0 \\ 
0 & \frac{-165}{4}%
\end{bmatrix}.
\end{gather*}
Hence, the sum of these two parts, which is $M(z,w)$, is realizable by Lemma \ref{LemSumSchurComplPlusMatrix}, which we can calculate as
\begin{gather*}
C(z,w)/C_{22}(z,w)=A(z)\otimes B(w)=M(z,w),\\
C(z,w)=\begin{bmatrix}
C_{11}(z,w) & C_{12}(z,w)\\
C_{21}(z,w) & C_{22}(z,w)
\end{bmatrix}\\
=\left[\begin{array}{c c; {2pt/2pt} c c}
0 & 9+55w_1 & 0 & 0 \\ 
9+55w_1 & -27-27z_1-165w_1 & \frac{-165}{4}\left( z_{1}+w_{1}\right)  & \frac{165}{4}\left(
z_{1}-w_{1}\right)  \\ \hdashline[2pt/2pt]
0 & \frac{-165}{4}\left( z_{1}+w_{1}\right)  & \frac{165}{4} & 0 \\ 
0 & \frac{165}{4}\left( z_{1}-w_{1}\right)  & 0 & \frac{-165}{4}%
\end{array}\right],\\
\det C_{22}(z,w)=\left(\frac{165}{4}\right)\left(\frac{-165}{4}\right)\not \equiv 0.
\end{gather*}
It now follows that we have
\begin{align*}
[C(z,w)/C_{22}(z,w)]/[C(z,w)/C_{22}(z,w)]_{22}=A(z)/A_{22}(z)\otimes B(w)
\end{align*}
and, by Proposition \ref{PropComposSchurComplem},
\begin{align*}
D(z,w)/D_{22}(z,w)=[C(z,w)/C_{22}(z,w)]/[C(z,w)/C_{22}(z,w)]_{22},
\end{align*}
where
\begin{align*}
D(z,w)=C(z,w),
\end{align*}
with the $2\times 2$ block form
\begin{align*}
D(z,w)&=\left[\begin{array}{c;{2pt/2pt} c}
D_{11}(z_1,w_1) & D_{12}(z_1,w_1) \\ \hdashline[2pt/2pt]
D_{21}(z_1,w_1) & D_{22}(z_1,w_1)
\end{array}\right]\\
&=
\left[\begin{array}{c; {2pt/2pt} c c c}
0 & 9+55w_1 & 0 & 0 \\ \hdashline[2pt/2pt]
9+55w_1 & -27-27z_1-165w_1 & \frac{-165}{4}\left( z_{1}+w_{1}\right)  & \frac{165}{4}\left(
z_{1}-w_{1}\right)  \\ 
0 & \frac{-165}{4}\left( z_{1}+w_{1}\right)  & \frac{165}{4} & 0 \\ 
0 & \frac{165}{4}\left( z_{1}-w_{1}\right)  & 0 & \frac{-165}{4}%
\end{array}\right]
\end{align*}
and
\begin{align*}
\det D_{22}(z_1,w_1)=\left(\frac{165}{4}\right)^2(3+3z_1)(9+55w_1)\not \equiv 0.
\end{align*}
Therefore, $f(z,w)=\left[ \frac{1}{3+3z_{1}}\left( 9+55w_{1}\right) \right]=D(z,w)/D_{22}(z,w)$ has the desired Bessmertny\u{\i} realization with the linear matrix pencil $D(z,w)$ given by
\begin{align*}
D(z,w)&=D_0+z_1D_1+w_1D_2,\\
D_0&= \begin{bmatrix}
0 & 9 & 0 & 0 \\
9 & -27 & 0  & 0  \\ 
0 & 0  & \frac{165}{4} & 0 \\ 
0 & 0  & 0 & \frac{-165}{4}
\end{bmatrix},\\
D_1&=\begin{bmatrix}
0 & 0 & 0 & 0 \\
0 & -27 & \frac{-165}{4}  & \frac{165}{4}  \\ 
0 & \frac{-165}{4}  & 0 & 0 \\ 
0 & \frac{165}{4}  & 0 & 0
\end{bmatrix},\\
D_2&=\begin{bmatrix}
0 & 0 & 0 & 0 \\
0 & -165 & \frac{-165}{4}  & \frac{-165}{4}  \\ 
0 & \frac{-165}{4}  & 0 & 0 \\ 
0 & \frac{-165}{4}  & 0 & 0
\end{bmatrix},
\end{align*}
in which $D_0,D_1,D_2$ are all real and symmetric matrices.
\end{example}

\begin{remark}
Lemma \ref{LemRealizOfKroneckerProdsPart2} can also be extended to the realization $A(z)\otimes B(w)/B_{22}(w)$ in a similar fashion with a slight nuance. First we have $A(z)\otimes B(w)/B_{22}(w)=P^T(B(w)/B_{22}(w)\otimes A(z))P$, with permutation matrix $P$ independent of the variables $z$ and $w$. Then the desired result follows from Lemma \ref{LemRealizOfKroneckerProdsPart2} together with Proposition \ref{PropMatrixMultOfSchurCompl}. This is similar to Corollary \ref{CorLemKroneckerProdSchurComplWithAMatrix} with Remark \ref{RemPMatrix}.
\end{remark}

\begin{proposition}[Realization of the Kronecker product of realizations]\label{PropRealizOfKroneckerProdOfRealiz}
If $A(z)$ and $B(w)$ are two linear matrix pencils in $2\times 2$ block form
\begin{align*}
A\left(  z\right)   &  = A_0+\sum_{i=1}^{s}
z_{i}A_{i}
=
\begin{bmatrix}
A_{11}(z) & A_{12}(z)\\
A_{21}(z) & A_{22}(z)
\end{bmatrix},\\
B\left(  w\right)   &  = B_0 + \sum_{j=1}^{t}
w_{j}B_{j}
=
\begin{bmatrix}
B_{11}(z) & B_{12}(z)\\
B_{21}(z) & B_{22}(z)
\end{bmatrix},
\end{align*}
where $A_{i}\in
\mathbb{C}^{m\times m}$ (for $i=0,\ldots, s$), $B_{j}\in
\mathbb{C}^{n\times n}$ (for $j=0,\ldots, t$) such that $\det A_{22}\left(z\right) \not \equiv 0$, $\det B_{22}\left(  w\right)  \not \equiv 0$, $\det A(z)/A_{22}(z) \not \equiv 0$, and $\det B(w)/B_{22}(w) \not \equiv 0$ then there exists a linear matrix pencil
\begin{align}
D\left(  z,w\right)   &  = D_0+\sum_{i=1}^{s}
z_{i}D_{i}+\sum_{j=1}^{t}
w_{j}D_{s+j}
=
\begin{bmatrix}
D_{11}(z) & D_{12}(z)\\
D_{21}(z) & D_{22}(z)
\end{bmatrix},
\end{align}
with $\det D_{22}(z,w) \not \equiv 0$, such that
\begin{align}
D(z,w)/D_{22}(z,w) = A(z)/A_{22}(z)\otimes B(w)/B_{22}(w).
\end{align}
Moreover, the following statements are true:
\begin{itemize} 
\item[(a)] If all the matrices $A_i$ and $B_j$ (for $i=0,\ldots, s$ and $j=0,\ldots, t$) are real then one can choose all the matrices $D_k$ (for $k=0,\ldots, s+t$) to be real.
\item[(b)] If all the matrices $A_i$ and $B_j$ (for $i=0,\ldots, s$ and $j=0,\ldots, t$) are symmetric then one can choose all the matrices $D_k$ (for $k=0,\ldots, s+t$) to be symmetric.
\item[(c)] If all the matrices $A_i$ and $B_j$ (for $i=0,\ldots, s$ and $j=0,\ldots, t$) are Hermitian then one can choose all the matrices $D_k$ (for $k=0,\ldots, s+t$) to be Hermitian.
\item[(d)] If any combination of the (a)-(d) hypotheses are true then the matrices $D_k$ can be chosen to satisfy the same combination of conclusions.
\end{itemize}
\end{proposition}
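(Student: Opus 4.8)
The plan is to peel off the two Schur complements one at a time: first realize $A(z)/A_{22}(z)\otimes B(w)$ as a Schur complement of a genuine linear pencil using the Kronecker-product machinery already set up in this section, and then ``close up'' the remaining $B_{22}$-side Schur complement by the composition-of-Schur-complements proposition. This is the linear-pencil analogue of the way Proposition \ref{PropKroneckerProdOfTwoSchurComplements} is deduced from Lemma \ref{LemKroneckerProdSchurComplWithAMatrix} and Proposition \ref{PropComposSchurComplem}; a flow diagram (Figure \ref{FigFlowDiagramProofKroneckerProdRealiz}) will organize these reductions.

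First I would do the determinant bookkeeping needed to meet the hypotheses downstream. From Schur's determinant identity $\det B(w)=\det B_{22}(w)\cdot\det(B(w)/B_{22}(w))$ together with $\det B_{22}(w)\not\equiv 0$ and $\det(B(w)/B_{22}(w))\not\equiv 0$ we get $\det B(w)\not\equiv 0$; from multiplicativity of the determinant under Kronecker products together with $\det(A(z)/A_{22}(z))\not\equiv 0$ and $\det B_{22}(w)\not\equiv 0$ we get $\det(A(z)/A_{22}(z)\otimes B_{22}(w))\not\equiv 0$. Then apply Lemma \ref{LemRealizOfKroneckerProdsPart2} to the pencils $A(z)$ (in $2\times 2$ block form, $\det A_{22}(z)\not\equiv 0$) and $B(w)$ (with $\det B(w)\not\equiv 0$): this produces a linear matrix pencil $C(z,w)=C_0+\sum_{i=1}^s z_iC_i+\sum_{j=1}^t w_jC_{s+j}$ in $2\times 2$ block form with $\det C_{22}(z,w)\not\equiv 0$ such that $C(z,w)/C_{22}(z,w)=A(z)/A_{22}(z)\otimes B(w)$, and, by the ``moreover'' clause of that lemma, the $C_k$ can be taken to inherit whichever of the real, symmetric, Hermitian properties the $A_i$ and $B_j$ share. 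Since $B(w)=[B_{ij}(w)]_{i,j=1,2}$ is a $2\times 2$ block pencil, the rational matrix function $C(z,w)/C_{22}(z,w)=A(z)/A_{22}(z)\otimes B(w)$ carries the conformal $2\times 2$ block partition $[A(z)/A_{22}(z)\otimes B_{ij}(w)]_{i,j=1,2}$, whose $(2,2)$ block $A(z)/A_{22}(z)\otimes B_{22}(w)$ is invertible as a rational matrix by the bookkeeping above; the same purely algebraic computation that proves Lemma \ref{LemKroneckerProdSchurComplWithAMatrix} (valid over the field of rational functions, via $(X\otimes B_{22})^{-1}=X^{-1}\otimes B_{22}^{-1}$ and the mixed-product rule for $\otimes$) shows that the Schur complement of $A(z)/A_{22}(z)\otimes B(w)$ with respect to this $(2,2)$ block equals $A(z)/A_{22}(z)\otimes(B(w)/B_{22}(w))$.

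Finally I would invoke Proposition \ref{PropComposSchurComplem} on compositions of Schur complements: since both $C_{22}(z,w)$ and $(C(z,w)/C_{22}(z,w))_{22}=A(z)/A_{22}(z)\otimes B_{22}(w)$ are invertible, the matrix $C(z,w)$ admits a coarser $2\times 2$ block repartition — write the resulting block matrix as $D(z,w):=C(z,w)$ with new $(2,2)$ block $D_{22}(z,w)$ — for which $\det D_{22}(z,w)\not\equiv 0$ and
\[
D(z,w)/D_{22}(z,w)=(C(z,w)/C_{22}(z,w))/(C(z,w)/C_{22}(z,w))_{22}=A(z)/A_{22}(z)\otimes(B(w)/B_{22}(w)).
\]
Because $D(z,w)$ is literally $C(z,w)$ with only the block labels changed, it is still the linear pencil $D_0+\sum_{i=1}^s z_iD_i+\sum_{j=1}^t w_jD_{s+j}$ with $D_k=C_k$, so statements (a)--(d) transfer verbatim from the corresponding conclusions of Lemma \ref{LemRealizOfKroneckerProdsPart2}, including the combined-symmetry case in (d), and the elementary Kronecker identities $\overline{A\otimes B}=\overline{A}\otimes\overline{B}$, $(A\otimes B)^T=A^T\otimes B^T$, $(A\otimes B)^*=A^*\otimes B^*$ handle the converse direction as usual.

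I expect the one genuinely fiddly point to be the block bookkeeping in the middle step: the $2\times 2$ partition handed out by Lemma \ref{LemRealizOfKroneckerProdsPart2} is the one inherited from $A(z)$, and one must reconcile it with the finer partition needed to recognize $(C/C_{22})_{22}$ as $A(z)/A_{22}(z)\otimes B_{22}(w)$ and to verify that Proposition \ref{PropComposSchurComplem} genuinely applies with all of its $\det(\cdot)\not\equiv 0$ hypotheses — i.e., that the repartition is conformal and that the composed Schur complement is exactly the claimed one. An alternative, mirror-image route would instead realize $A(z)\otimes B(w)/B_{22}(w)$ first, using the Remark following Lemma \ref{LemRealizOfKroneckerProdsPart2} (its permutation-matrix extension together with Proposition \ref{PropMatrixMultOfSchurCompl}), and then close up the $A_{22}(z)$-side Schur complement by Proposition \ref{PropComposSchurComplem}; either way the composition-of-Schur-complements step and the determinant bookkeeping are the crux, everything else being routine.
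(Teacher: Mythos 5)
Your overall strategy --- realize $A(z)/A_{22}(z)\otimes B(w)$ as the Schur complement of a linear pencil via Lemma \ref{LemRealizOfKroneckerProdsPart2} (your determinant bookkeeping $\det B(w)=\det B_{22}(w)\,\det\bigl(B(w)/B_{22}(w)\bigr)\not\equiv 0$ is correct and needed) and then close up the remaining Schur complement with Proposition \ref{PropComposSchurComplem} --- is sound, but the middle step as written contains a genuine error. In the standard Kronecker ordering it is the \emph{left} factor's block structure that passes through the product: $B\otimes X=[B_{ij}\otimes X]_{i,j=1,2}$, whereas $X\otimes B=[x_{pq}B]_{p,q}$ is only permutation-similar to $[X\otimes B_{ij}]_{i,j=1,2}$. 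So $C(z,w)/C_{22}(z,w)=A(z)/A_{22}(z)\otimes B(w)$ does not ``carry the conformal partition $[A(z)/A_{22}(z)\otimes B_{ij}(w)]_{i,j=1,2}$'': the rows and columns belonging to $B_{22}(w)$ are interleaved throughout $A(z)/A_{22}(z)\otimes B(w)$ rather than forming a trailing principal block, and Proposition \ref{PropComposSchurComplem} (which takes the Schur complement with respect to the literal bottom-right corner) cannot be applied to $C(z,w)$ by merely relabelling its blocks. This is precisely the issue the paper's commutation-matrix apparatus (Lemma \ref{LemElemPropertiesCommMatrices}, Corollary \ref{CorLemKroneckerProdSchurComplWithAMatrix}, Remark \ref{RemPMatrix}) exists to handle. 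The repair is to conjugate by the constant permutation matrix from Lemma \ref{LemElemPropertiesCommMatrices}, using Proposition \ref{PropMatrixMultOfSchurCompl} to stay in Schur-complement-of-a-linear-pencil form, then apply Proposition \ref{PropComposSchurComplem}, and undo the permutation at the end (again via Proposition \ref{PropMatrixMultOfSchurCompl}); the resulting pencil $D(z,w)$ is then a permutation conjugate of $C(z,w)$, not ``literally $C(z,w)$ with only the block labels changed.'' Statements (a)--(d) still transfer, but only because the conjugating matrices are real permutation matrices --- that has to be said explicitly.

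The clean way to run your argument is in fact the mirror-image route you sketch at the end: realize $A(z)\otimes B(w)/B_{22}(w)$ first (the Remark following Lemma \ref{LemRealizOfKroneckerProdsPart2} together with Proposition \ref{PropMatrixMultOfSchurCompl}), because the remaining Schur complement is then taken with respect to $A_{22}(z)\otimes B(w)/B_{22}(w)$, which \emph{is} the trailing block of the left-factor partition $[A_{ij}(z)\otimes B(w)/B_{22}(w)]_{i,j=1,2}$, so the algebra of Lemma \ref{LemKroneckerProdSchurComplWithAMatrix} over the rational-function field and Proposition \ref{PropComposSchurComplem} apply directly (here $\det A(z)=\det A_{22}(z)\,\det\bigl(A(z)/A_{22}(z)\bigr)\not\equiv 0$ supplies the invertibility the Remark needs). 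The paper itself takes a third, closely related organization: by Proposition \ref{PropKroneckerProdOfTwoSchurComplements} one has $A(z)/A_{22}(z)\otimes B(w)/B_{22}(w)=M(z,w)/M_{22}(z,w)$ with $M(z,w)=P^T[A(z)\otimes B(w)]P$ and $\det M_{22}(z,w)\not\equiv 0$, i.e.\ the permutation is absorbed once and for all at the matrix level, and then Lemma \ref{LemRealizOfKroneckerProdsPart1}, Proposition \ref{PropMatrixMultOfSchurCompl}, and Proposition \ref{PropComposSchurComplem} finish the realization exactly as in the proof of Lemma \ref{LemRealizOfKroneckerProdsPart2}. Your determinant bookkeeping and symmetry-transfer observations carry over unchanged to either correct variant.
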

\begin{figure}[H]
\caption{\;\;Flow  diagram  for  the  proof of the realization of the Kronecker product of realizations (see Proposition \ref{PropRealizOfKroneckerProdOfRealiz}).}\label{FigFlowDiagramProofKroneckerProdRealiz}
\begin{tikzpicture}
[scale=0.68, 
transform shape,
blockone/.style ={rectangle, draw, text width=15em,align=center, minimum height=1em},
blocktwo/.style ={rectangle, draw, text width=25em,align=center, minimum height=1em},
blockthree/.style ={rectangle, draw, text width=10em,align=center, minimum height=1em},
blockfour/.style ={rectangle, draw, text width=25em,align=center, minimum height=4.5em},
R/.style ={draw, circle, inner sep=5pt},
Rk/.style ={draw, circle, inner sep=2pt},
tpn/.style ={draw, circle, inner sep=6pt}
]
\draw (-4.5,-3.1) node[]
{$\boldsymbol{ A(z) \otimes B(w)}$};
\draw (-4.5, -3.8) node[] {
\scriptsize $\boldsymbol{ = A_0 \otimes B_0
+\sum\limits_{i=1}^sz_i(A_i\otimes B_0)+\sum\limits_{j=1}^tw_j(A_0 \otimes B_j)
+\sum\limits_{j=1}^t\sum\limits_{i=1}^s
\left(  z_{i}w_{j}\right) A_{i}\otimes B_{j}}$};
\draw (-9.7,-2.56) -| (0.61,-4.26);
\draw (0.61,-4.26) -| (-9.7,-2.56);

\draw (-4.5,-4.27) coordinate (14);
\draw (-4.5,-5.75) coordinate (15);
\draw[->,>=stealth] (14) -- node[left] {(ii)} (15);

\draw (-4.5,-6.3) node[blockone] (m) {Simple Matrix Product $\boldsymbol{z_1z_2C = [z_1z_2] \otimes C}$};

\draw (-4.5,-6.85) coordinate (16);
\draw (-4.5,-7.78) coordinate (17);
\draw[->,>=stealth] (16) -- node[left] {(iii.a)} (17);

\draw (-4.5,-8.1) node[blockthree] (z1z2) {Simple Product 
$\boldsymbol{z_1z_2}$};

\draw (6,-3.4) node[blockfour] (sp) {$\boldsymbol{A(z)/A_{22}(z) \otimes B(w)/B_{22}(w)}$};

\draw (6,-4.27) coordinate (14);
\draw (6,-5.68) coordinate (15);
\draw[->,>=stealth] (14) -- node[right] {(iii.b)} (15);

\draw (6,-6.3) node[blockone] (m) {$\boldsymbol{\begin{array}{c}M(z,w)/M_{22}(z,w),\\ \hspace{-0.4em} M(z,w)=P^T[A(z) \otimes B(w)]P\end{array}}$};

\draw (6,-6.88) coordinate (16);
\draw (6,-7.85) coordinate (17);
\draw[->,>=stealth] (16) -- node[right] {(iv)} node[Rk, left, xshift=-3] {$1$} (17);

\draw (6,-8.1) node[blockthree] (z1z2) {Matrix Pencil};
\draw (-0.7,-8.45) coordinate (20);
\draw (0.5,-8.45) coordinate (21);
\draw[->,>=stealth] (20) -- (21);
\draw (-2.485,-8.1) -| (-0.7,-8.45);
\draw (4,-8.1) -| (2.7,-8.45);
\draw (2.7,-8.45) coordinate (22);
\draw (1.6,-8.45) coordinate (23);
\draw[->,>=stealth] (22) -- (23);
\draw (1.05,-8.5) node[R] (R) {R};
\draw (1.05,-7.6) -- (1.05, -2);
\matrix [right] at (-8,-12) {
  \node [label=right:(ii) Linear Combinations ] {}; \\
  \node [label=right:(iii.a) Scalar Multiplication (scalar is a Schur complement)] {}; \\
  \node [label=right:(iii.b) Kronecker Product of Two Schur Complements] {}; \\
  \node [label=right:(iv) Composition of a Schur Complements] {}; \\
};
\draw (-4.6,-1.8) node[tpn] {\large{$1$}};
\draw (5.9,-1.8) node[tpn] {\large{$2$}};
\end{tikzpicture}
\end{figure}
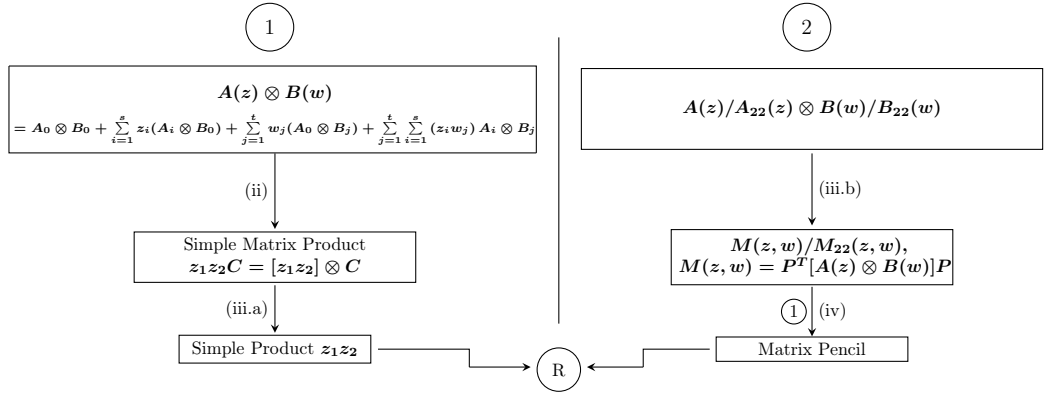

\begin{proof}
The proof of this statement is very similar to the proof of Lemma \ref{LemRealizOfKroneckerProdsPart2}, but is slightly more technical because of Proposition \ref{PropKroneckerProdOfTwoSchurComplements}. By the hypotheses, Proposition \ref{PropKroneckerProdOfTwoSchurComplements}, and the linearity properties of the Kronecker product $\otimes$, it follows that
\begin{align}
M(z,w)/M_{22}(z,w) = A(z)/A_{22}(z)\otimes B(w)/B_{22}(w),
\end{align}
where 
\begin{align}
M(z,w)&=Q^T[B(w)\otimes A(z)]Q=P^T[A(z)\otimes B(w)]P
\end{align}
in which $M=M(z,w)=[M_{ij}(z,w)]_{i,j=1,2}$ is the $2\times 2$ block matrix in terms of $A=A(z)=[A_{ij}(z)]_{i,j=1,2}$ and $B=B(w)=[B_{ij}(w)]_{i,j=1,2}$ in Proposition \ref{PropKroneckerProdOfTwoSchurComplements}, $Q\in\mathbb{C}^{mn\times mn}$ is the constant (independent of $z,w$) permutation matrix given by the formula (\ref{CorLemKroneckerProdSchurComplWithAMatrixQMatrixBlkForm}) satisfying $Q=\overline{Q}, Q^T=Q^{-1}$ (similarly for the permutation matrix $P$ defined in terms of $Q$ in (\ref{RemPMatrixDefInTermsOfQ}) discussed in Remark \ref{RemPMatrix}), and
\begin{equation}
\det M_{22}(z,w)\not \equiv 0.
\end{equation}
The proof of this proposition now follows immediately from this (in a similar manner as the proof of Lemma \ref{LemRealizOfKroneckerProdsPart2}) by Lemma \ref{LemRealizOfKroneckerProdsPart1}, Proposition \ref{PropMatrixMultOfSchurCompl}, and Proposition \ref{PropComposSchurComplem}. This completes the proof.
\end{proof}
\begin{proposition}[Realizability of Monomial $z^{\alpha}$]\label{PropRealizMonomials}
An arbitrary monomial $z^{\alpha}=z_{1}^{\alpha\left(  1\right)  }\cdots
z_{n}^{\alpha\left(  n\right)  }$, where $\alpha=\left(  \alpha\left(
1\right)  ,\ldots,\alpha\left(  n\right)  \right)  \in\left[
\mathbb{N}
\cup\left\{  0\right\}  \right]  ^{n}$, $n\in
\mathbb{N}
$, is Bessmertny\u{\i} realizable by a linear matrix pencil $A\left(  z\right)
=A_{0}+z_{1}A_{1}+\cdots+z_{n}A_{n}$ with matrices
\[
A_{j}=A_{j}^{T}=\overline{A_{j}},\text{ }j=0,\ldots,n\text{.}
\]
\end{proposition}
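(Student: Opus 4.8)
The plan is to prove the proposition in two stages. In the first I would show that every power $[w^{k}]$ of a single variable $w$ is realizable by a linear matrix pencil with real symmetric coefficients whose $(2,2)$-block has determinant $\not\equiv0$. In the second I would use the factorization
\[
z^{\alpha}=z_{1}^{\alpha(1)}\otimes\cdots\otimes z_{n}^{\alpha(n)},
\]
a Kronecker product of $1\times1$ rational functions in pairwise disjoint variables, and apply Proposition \ref{PropRealizOfKroneckerProdOfRealiz} one factor at a time. Since every lemma and proposition used below preserves the ``real and symmetric'' property (I deliberately avoid Proposition \ref{PropMatrixMultipliationOfTwoSchurComplements}, which does not), the symmetry conclusion $A_{j}=\overline{A_{j}}=A_{j}^{T}$ will follow automatically once the base cases have it.

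For the first stage I would induct on $k$. The cases $k=0$ (the $1\times1$ pencil $[1]$), $k=1$ (the pencil $[w]$ itself, or, when a genuine $2\times2$-block Schur complement is needed, the nondegenerate realization $[w]=\left.\left[\begin{smallmatrix}0&w\\w&-w\end{smallmatrix}\right]\right/(-w)$), and $k=2$ (Lemma \ref{LemRealizOfSquares}) are immediate and give real symmetric pencils with the required nonvanishing determinant. For $k\ge3$, assume inductively that $[w^{k-1}]=A(w)/A_{22}(w)$ with $A(w)=A_{0}+wA_{1}$ a real symmetric linear pencil, $\det A_{22}(w)\not\equiv0$, and (since $k-1\ge2$) a genuine $2\times2$ block partition. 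Applying Lemma \ref{LemKroneckerProdSchurComplWithAMatrix} with the $1\times1$ ``matrix'' $[w]$ (which satisfies $\det[w]=w\not\equiv0$) gives
\[
[w^{k}]=[w^{k-1}]\otimes[w]=C(w)/C_{22}(w),\qquad C(w)=A(w)\otimes[w]=wA_{0}+w^{2}A_{1},
\]
with $C_{22}(w)=wA_{22}(w)$, hence $\det C_{22}(w)\not\equiv0$; this $C(w)$ is real symmetric but \emph{quadratic}. It remains to lower $C(w)$ to a linear pencil: the term $w^{2}A_{1}=[w^{2}]\otimes A_{1}$ is realizable by a real symmetric linear pencil by combining Lemma \ref{LemRealizOfSquares} with Proposition \ref{PropScalarProdOfASchurCompl}, the term $wA_{0}$ is realized as the Schur complement $(wA_{0}\oplus I)/I=wA_{0}$ of a real symmetric linear pencil, and Proposition \ref{PropSumOfSchurComps} realizes their sum $C(w)$ as $\Sigma(w)/\Sigma_{22}(w)$ for a real symmetric linear pencil $\Sigma(w)$ with $\det\Sigma_{22}(w)\not\equiv0$. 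Finally, reading
\[
[w^{k}]=C(w)/C_{22}(w)=\bigl(\Sigma(w)/\Sigma_{22}(w)\bigr)/\bigl(\Sigma(w)/\Sigma_{22}(w)\bigr)_{22}
\]
as a nested Schur complement, Proposition \ref{PropComposSchurComplem} collapses it into $\widehat{\Sigma}(w)/\widehat{\Sigma}_{22}(w)$, where $\widehat{\Sigma}(w)$ is the still-linear, still-real-symmetric pencil $\Sigma(w)$ under a repartitioned block structure and $\det\widehat{\Sigma}_{22}(w)\not\equiv0$. This closes the induction.

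For the second stage, if no $\alpha(j)$ is positive then $z^{\alpha}=1=[1]$, and if exactly one is positive then $z^{\alpha}$ is a single-variable power, handled above. Otherwise I would build the Kronecker product over the positive exponents one factor at a time. By the first stage each factor $[z_{j}^{\alpha(j)}]$ has a real symmetric realization whose $(2,2)$-block has determinant $\not\equiv0$ and whose Schur complement $z_{j}^{\alpha(j)}$ has determinant $\not\equiv0$; since the factors use disjoint variables and the running partial product inherits these same two nonvanishing-determinant properties from the conclusion of Proposition \ref{PropRealizOfKroneckerProdOfRealiz}, that proposition applies at each step and ultimately yields a real symmetric linear pencil $A(z)=A_{0}+z_{1}A_{1}+\cdots+z_{n}A_{n}$ with $A(z)/A_{22}(z)=z^{\alpha}$ and $\det A_{22}(z)\not\equiv0$, which is the assertion.

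The step I expect to be the main obstacle is the $k\ge3$ case of the first stage; note that the sketch of Theorem \ref{ThmBessmRealiz} glosses over it. The tempting shortcut $z_{j}^{k}=z_{j}\otimes\cdots\otimes z_{j}$ followed by Proposition \ref{PropRealizOfKroneckerProdOfRealiz} fails, because that proposition genuinely requires the two pencils to involve disjoint variable sets and so does not apply to repeated powers of one variable; the detour through the quadratic pencil $C(w)=[w^{k-1}]\otimes[w]$, its linearization via Propositions \ref{PropScalarProdOfASchurCompl} and \ref{PropSumOfSchurComps}, and the collapse via the composition Proposition \ref{PropComposSchurComplem} is precisely what repairs it. A secondary point requiring care throughout is bookkeeping the hypotheses $\det A_{22}\not\equiv0$ and $\det(A/A_{22})\not\equiv0$ demanded by Propositions \ref{PropScalarProdOfASchurCompl}, \ref{PropComposSchurComplem}, and \ref{PropRealizOfKroneckerProdOfRealiz}; this is why one feeds the \emph{nondegenerate} realizations of $[w]$ and $[z_{j}]$, rather than the bare $1\times1$ pencils, into the Kronecker-product machinery.
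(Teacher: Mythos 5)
Your argument is correct, but it reaches Proposition~\ref{PropRealizMonomials} by a genuinely different route than the paper. The paper's proof never touches repeated powers directly: it realizes the product $w_1w_2$ of two independent variables (Lemma~\ref{LemRealizProdTwoIndepVar}), iterates Proposition~\ref{PropRealizOfKroneckerProdOfRealiz} to realize $w_1w_2\cdots w_{2m-1}w_{2m}$ for independent variables, and then obtains $z^{\alpha}$ by specializing variables --- identifying the appropriate $w_l$'s with the same $z_j$ and setting the leftover $w_l$'s equal to $1$. You instead stay entirely inside the Schur-complement calculus: you first prove by induction that each single-variable power $[w^k]$ has a real symmetric realization, tensoring the inductive realization with the $1\times 1$ factor $[w]$ via Lemma~\ref{LemKroneckerProdSchurComplWithAMatrix}, re-linearizing the resulting quadratic pencil with Lemma~\ref{LemRealizOfSquares}, Proposition~\ref{PropScalarProdOfASchurCompl} and Proposition~\ref{PropSumOfSchurComps}, and collapsing the nested Schur complement with Proposition~\ref{PropComposSchurComplem} (this is essentially the same maneuver the paper uses inside Lemmas~\ref{LemRealizOfKroneckerProdsPart1}--\ref{LemRealizOfKroneckerProdsPart2} and sketches in Remark~\ref{RemAltApproach}); only then do you invoke Proposition~\ref{PropRealizOfKroneckerProdOfRealiz}, and only across genuinely disjoint variables. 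Your diagnosis that $z_j\otimes z_j$ cannot simply be fed to Proposition~\ref{PropRealizOfKroneckerProdOfRealiz} is right, and it is exactly the point the terse wording in the proof of Theorem~\ref{ThmBessmRealiz} glosses over; the paper's actual fix is the substitution trick, not your induction. What each approach buys: the paper's specialization argument is much shorter, but it leaves to the reader the (easy but unstated) checks that identifying variables and setting some equal to $1$ keeps the pencil linear, keeps the coefficients real symmetric, and does not make $\det A_{22}$ vanish identically --- the last point needs at least a glance at the construction, since specializing variables can in principle annihilate a determinant. Your version carries the hypotheses $\det(\cdot)_{22}\not\equiv 0$, $\det(\cdot/(\cdot)_{22})\not\equiv 0$ and the real-symmetry through every step explicitly, at the price of a longer induction and larger intermediate pencils; your care in using the nondegenerate realizations of $[w]$ and $[z_j]$ so that the hypotheses of Propositions~\ref{PropScalarProdOfASchurCompl}, \ref{PropComposSchurComplem} and \ref{PropRealizOfKroneckerProdOfRealiz} are met is exactly the right bookkeeping.
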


\begin{proof}
From Lemma \ref{LemRealizProdTwoIndepVar}, the result is true for the product
$w_{1}w_{2}$ of two independent variables $w_1$ and $w_2$. By Proposition \ref{PropRealizOfKroneckerProdOfRealiz}, the result is true
for the product $w_{1}w_{2}\cdots w_{2m-1}w_{2m}$ for any $m\in
\mathbb{N}
$. Hence by taking $m$ large enough, changing variables to the $z_{j}$'s, and
possibly setting some of the $w_{l}$ equal to $1$, it follow that the
monomial $z^{\alpha}=z_{1}^{\alpha\left(  1\right)  }\cdots z_{n}%
^{\alpha\left(  n\right)  }$ has the desired Bessmertny\u{\i} realization. This
completes the proof.
\end{proof}

\subsection{Compositions}\label{SecCompositionOfSchurComplement}

This section is on compositions of Schur complements. For us, our result on compositions  (Proposition \ref{PropComposSchurComplem}) represents a fundamental result in Section \ref{SecSchurComplAlgebraAndOps} on algebra and operations on Schur complements and realizations which allow for producing from basic building blocks more complicated ones.

In order to understand the notion of compositions of Schur complements and the proposition that follows, we introduce first some notation. We also have provided a concrete example below (Example \ref{ExPropComposSchurComplem}) that uses the notation and applies Proposition \ref{PropComposSchurComplem}.

\begin{definition}[The Schur complement function]
Suppose $m,k$ are positive integers such that $m>k.$ Then the Schur complement function with respect to the pair $(m,k)$ is the function $f=f_{m,k}:D_{m,k}\rightarrow\mathbb{C}^{k\times k}$ defined by
\begin{equation*}
f(A)=f_{m,k}(A)=A/A_{22},   
\end{equation*} whose domain $D_{m,k}$ consists of all matrices $A\in\mathbb{C}^{m\times m}$ with a $2\times 2$ block matrix form
\[
A=\begin{bmatrix}
A_{11} & A_{12}\\
A_{21} & A_{22}
\end{bmatrix},
\]
such that $A_{22}\in\mathbb{C}^{(m-k)\times (m-k)}$ is invertible.
\end{definition}
In this subsection we are interested in the composition of Schur complement functions, that is, using the definition above, the composition $h=g\circ f$ of the function $g=f_{k,l}$ with $f=f_{m,k}$. More precisely, let $l$ be any positive integer such that $k>l$, then $g=f_{k,l}:D_{k,l}\rightarrow\mathbb{C}^{l\times l}$ is the function
\begin{equation*}
g(B)=f_{k,l}(B)=B/B_{44},   
\end{equation*}
whose domain $D_{k,l}$ consists of all matrices $B\in\mathbb{C}^{k\times k}$ with a $2\times 2$ block matrix form
\[
B=\begin{bmatrix}
B_{33} & B_{34}\\
B_{43} & B_{44}
\end{bmatrix},
\]
such that $B_{44}\in\mathbb{C}^{(k-l)\times (k-l)}$ is invertible. Therefore, the composition function $h=g\circ f:\{A\in D_{m,k}:f(A)\in D_{k,l}\}\rightarrow\mathbb{C}^{l\times l}$ is defined by
\begin{equation*}
h(A)=g(B),\; B=f(A),  
\end{equation*}
that is,
\begin{equation*}
h(A)=g( f (A) )=(A/A_{22})/(A/A_{22})_{44}. 
\end{equation*}

The main question we address in this subsection is whether or not $h(A)$ is a Schur complement, i.e., for each $A\in D_{m,k}$ with $f(A)\in D_{k,l}$, does there exist a $2\times 2$ block matrix $C=[C_{ij}]_{i,j=1,2}$ with $C_{22}$ invertible such that $h(A)=C/C_{22}$? The next proposition tells us that the answer is yes and gives a formula for this matrix $C$ in terms of $A$.

In order to state the next proposition and give a proof, we need to give some notation first. Begin by partitioning the $k\times k$ matrix $A_{11}=[A_{ij}]_{i,j=3,4}$ conformal to the block structure of the $k\times k$ matrix $A/A_{22}=B=[B_{ij}]_{i,j=3,4}$ so that
\begin{equation}
A_{11}=\begin{bmatrix}
A_{33} & A_{34}\\
A_{43} & A_{44}
\end{bmatrix}.\label{DefCMatrixA11PartitInPropComposSchurComplem}
\end{equation}
This yields a subpartitioning of the matrix $A$ into a $3\times 3$ block matrix as
\begin{align} 
A & =
\left[\begin{array}{c;{2pt/2pt} c c}
A_{11} & A_{12} \\ \hdashline[2pt/2pt]
A_{21} & A_{22}
\end{array}\right]
=
\left[\begin{array}{c c; {2pt/2pt} c}
A_{33} & A_{34} & A_{32} \\
A_{43} & A_{44} & A_{42} \\ \hdashline[2pt/2pt]
A_{23} & A_{24} & A_{22}
\end{array}\right].\label{DefCMatrixAPartitInPropComposSchurComplem}
\end{align}
On the other hand, we can repartition the matrix $A$ in the following $2\times 2$ block partitioned structure $A=C=[C_{ij}]_{i,j=1,2}$: 
\begin{align} 
A = C & =
\left[\begin{array}{c;{2pt/2pt} c c}
C_{11} & C_{12} \\ \hdashline[2pt/2pt]
C_{21} & C_{22}
\end{array}\right]
=
\left[\begin{array}{c; {2pt/2pt} c c}
A_{33} & A_{34} & A_{32} \\\hdashline[2pt/2pt]
A_{43} & A_{44} & A_{42} \\ 
A_{23} & A_{24} & A_{22}
\end{array}\right],\label{DefCMatrixCPartitInPropComposSchurComplem}
\end{align}
where, in particular,
\begin{equation}
C_{22}
=
\begin{bmatrix}
A_{44} & A_{42}\\
A_{24} & A_{22}
\end{bmatrix}.\label{DefCMatrixC22PartitInPropComposSchurComplem}
\end{equation}
Our question is then answered with the following proposition since it tells us that $C_{22}$ is invertible and 
\begin{equation*}
C/C_{22}=h(A)=g( f (A) ).
\end{equation*}

\begin{proposition}[Composition of Schur complements]\label{PropComposSchurComplem}
If $A\in
\mathbb{C}
^{m\times m}$ is a $2\times2$ block matrix
\[
A=
\begin{bmatrix}
A_{11} & A_{12}\\
A_{21} & A_{22}
\end{bmatrix},
\]
such that $A_{22}$ is invertible and suppose that $A/A_{22}\in
\mathbb{C}
^{k\times k}$ is also a $2\times2$ block matrix
\[
A/A_{22}=
\begin{bmatrix}
(A/A_{22})_{33} & (A/A_{22})_{34}\\
(A/A_{22})_{43} & (A/A_{22})_{44}
\end{bmatrix},
\]
such that $(A/A_{22})_{44}\in\mathbb{C}^{l\times l}$ is invertible then  
\begin{equation*}
C/C_{22}=(A/A_{22})/(A/A_{22})_{44},
\end{equation*}
where $A=C=[C_{ij}]_{i,j=1,2}$ is the $2\times 2$ block matrix in (\ref{DefCMatrixCPartitInPropComposSchurComplem}) [defined in terms of the subpartitioning of $A_{11}$ and $A$ in (\ref{DefCMatrixA11PartitInPropComposSchurComplem}) and (\ref{DefCMatrixAPartitInPropComposSchurComplem})] with $C_{22}$ [in (\ref{DefCMatrixC22PartitInPropComposSchurComplem})] invertible and
\begin{align*}
(A/A_{22})_{44}=C_{22}/A_{22}.
\end{align*}
\end{proposition}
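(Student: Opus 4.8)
The plan is to prove the identity by a direct block-matrix computation, reducing the "composition of Schur complements" to a single Schur complement by the standard quotient property $C/C_{22} = (C/A_{22})/(C_{22}/A_{22})$, which I will verify by hand in the relevant partitioning. First I would set up notation exactly as in the paragraph preceding the statement: partition $A_{11}$ conformally with the block structure of $A/A_{22}$ as in (\ref{DefCMatrixA11PartitInPropComposSchurComplem}), which refines $A$ into the $3\times 3$ block form (\ref{DefCMatrixAPartitInPropComposSchurComplem}), and then regroup to obtain the $2\times 2$ form $A = C = [C_{ij}]_{i,j=1,2}$ in (\ref{DefCMatrixCPartitInPropComposSchurComplem}), with $C_{11}=A_{33}$, $C_{12}=\begin{bmatrix}A_{34}&A_{32}\end{bmatrix}$, $C_{21}=\begin{bmatrix}A_{43}\\A_{23}\end{bmatrix}$, and $C_{22}$ the $2\times 2$ block matrix in (\ref{DefCMatrixC22PartitInPropComposSchurComplem}).

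Next I would establish the two invertibility claims in the stated order. The matrix $A_{22}$ is invertible by hypothesis, so the Schur complement $C_{22}/A_{22} = A_{44} - A_{42}A_{22}^{-1}A_{24}$ is defined; I claim this equals $(A/A_{22})_{44}$. Indeed, by the definition of $A/A_{22} = A_{11} - A_{12}A_{22}^{-1}A_{21}$ and the $3\times 3$ partitioning, the $(4,4)$-block of $A/A_{22}$ is precisely $A_{44} - A_{42}A_{22}^{-1}A_{24}$ (since $A_{12} = \begin{bmatrix}A_{32}\\A_{42}\end{bmatrix}$ and $A_{21} = \begin{bmatrix}A_{23}&A_{24}\end{bmatrix}$ after the subpartition). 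Thus $(A/A_{22})_{44} = C_{22}/A_{22}$, which is the last displayed equation of the proposition. Since $(A/A_{22})_{44}$ is invertible by hypothesis and $A_{22}$ is invertible, Lemma \ref{LemInvIsASchurCompl} (applied to the block matrix $C_{22}$ with its $(2,2)$-block $A_{22}$) shows $C_{22}$ is invertible, because a block matrix is invertible iff its Schur complement with respect to an invertible block is invertible.

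Then I would compute $C/C_{22}$ directly. Using the regrouped form, $C/C_{22} = A_{33} - \begin{bmatrix}A_{34}&A_{32}\end{bmatrix}\, C_{22}^{-1}\, \begin{bmatrix}A_{43}\\A_{23}\end{bmatrix}$. The key algebraic step is the block-inverse formula for $C_{22}^{-1}$ in terms of $A_{22}^{-1}$ and $(C_{22}/A_{22})^{-1} = (A/A_{22})_{44}^{-1}$ (formula (\ref{InvBlockFactorizationUsingSchurComplement}) from Lemma \ref{LemInvIsASchurCompl}). Substituting this and simplifying, the expression collapses: one gets $A_{33} - A_{32}A_{22}^{-1}A_{23}$ as the "first-level" contribution — which is exactly $(A/A_{22})_{33}$ — minus a term that, after collecting, equals $(A/A_{22})_{34}\,(A/A_{22})_{44}^{-1}\,(A/A_{22})_{43}$, where $(A/A_{22})_{34} = A_{34} - A_{32}A_{22}^{-1}A_{24}$ and $(A/A_{22})_{43} = A_{43} - A_{42}A_{22}^{-1}A_{23}$ are read off from the $3\times 3$ partition. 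Hence $C/C_{22} = (A/A_{22})_{33} - (A/A_{22})_{34}(A/A_{22})_{44}^{-1}(A/A_{22})_{43} = (A/A_{22})/(A/A_{22})_{44}$, as desired.

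The main obstacle is purely bookkeeping: carrying the $2\times 2$ block inverse $C_{22}^{-1}$ through the triple product $\begin{bmatrix}A_{34}&A_{32}\end{bmatrix}C_{22}^{-1}\begin{bmatrix}A_{43}\\A_{23}\end{bmatrix}$ and verifying that the cross terms reorganize cleanly into the claimed Schur-complement-of-a-Schur-complement form. This is the classical "quotient formula" for Schur complements, so the identity is guaranteed; the only care needed is to keep the index conventions $\{3,4\}$ versus $\{2\}$ straight and to invoke Lemma \ref{LemInvIsASchurCompl} for both the invertibility of $C_{22}$ and the explicit block inverse. No symmetry/reality addendum is claimed in this proposition, so the proof ends once the two invertibility statements and the displayed identity are verified.
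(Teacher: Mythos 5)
Your proposal is correct and follows essentially the same route as the paper's proof: identify $(A/A_{22})_{44}=C_{22}/A_{22}$ from the conformal $3\times 3$ repartition, invoke Lemma \ref{LemInvIsASchurCompl} for the invertibility of $C_{22}$ and its block inverse (\ref{InvBlockFactorizationUsingSchurComplement}), and then expand $C/C_{22}$ by block multiplication until it collapses to $(A/A_{22})_{33}-(A/A_{22})_{34}(A/A_{22})_{44}^{-1}(A/A_{22})_{43}$. The paper likewise frames this as the Crabtree--Haynsworth quotient formula verified by direct computation, so no substantive difference remains beyond carrying out the bookkeeping you describe.
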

\begin{proof}
This proposition is essentially just the well-known Crabtree-Haynsworth quotient formula for Schur complements \cite[Theorem 1.2 (Quotient Formula), p. 25]{05FZ}. First, using the $2\times 2$ block matrix form (\ref{DefCMatrixA11PartitInPropComposSchurComplem}) for $A_{11}$ and the $3\times 3$ block matrix form for $A$ in (\ref{DefCMatrixAPartitInPropComposSchurComplem}) we find that
\begin{align}
A/A_{22} &= A_{11}-A_{12}A_{22}^{-1}A_{21} \nonumber\\
&= \begin{bmatrix}
A_{33} & A_{34}\\
A_{43} & A_{44}
\end{bmatrix}
-
\begin{bmatrix}
A_{32}\\
A_{42}
\end{bmatrix}
A_{22}^{-1}
\begin{bmatrix}
A_{23} & A_{24}
\end{bmatrix}\nonumber
\\
&=
\begin{bmatrix}
A_{33}-A_{32}A_{22}^{-1}A_{23} & A_{34}-A_{32}A_{22}^{-1}A_{24}\\
A_{43}-A_{42}A_{22}^{-1}A_{23} & A_{44}-A_{42}A_{22}^{-1}A_{24}%
\end{bmatrix}.\label{ComposSchurComplemFormulasBlocksOfSchurCompl}
\end{align}
From this, it follows that
\begin{align*}
(A/A_{22})_{44}&=A_{44}-A_{42}A_{22}^{-1}A_{24}=C_{22}/A_{22},
\end{align*}
where $C_{22}$ is the $2\times 2$ block matrix in (\ref{DefCMatrixC22PartitInPropComposSchurComplem}). Now by hypotheses, both $A_{22}$ and $(A/A_{22})_{44}$ are invertible, and we just proved $C_{22}/A_{22}=(A/A_{22})_{44}$ so that by Lemma \ref{LemInvIsASchurCompl} it follows that $C_{22}$ is also invertible and using inverse formula in (\ref{InvBlockFactorizationUsingSchurComplement}) for $C_{22}^{-1}$ we have
\begin{align}
\hspace{-0.25cm} C_{22}^{-1} &= \begin{bmatrix}
A_{44} & A_{42}\\
A_{24} & A_{22}
\end{bmatrix}
^{-1}\nonumber\\
&= \begin{bmatrix}
((A/A_{22})_{44})^{-1} & -((A/A_{22})_{44})^{-1}A_{42}A_{22}^{-1}\\
-A_{22}^{-1}A_{24}((A/A_{22})_{44})^{-1} & A_{22}^{-1}+A_{22}^{-1}A_{24}((A/A_{22})_{44})^{-1}A_{42}A_{22}^{-1}
\end{bmatrix}.\label{InvBlockFormulaForC11Inv}
\end{align}
Thus, by block multiplication, it follows from the formulas  (\ref{ComposSchurComplemFormulasBlocksOfSchurCompl}) and (\ref{InvBlockFormulaForC11Inv}) that
\begin{gather*}
C/C_{22} = A_{33}-
\begin{bmatrix}
A_{34} & A_{32}
\end{bmatrix}
\begin{bmatrix}
A_{44} & A_{42}\\
A_{24} & A_{22}
\end{bmatrix}
^{-1}
\begin{bmatrix}
A_{43}\\
A_{23}
\end{bmatrix}\\
\hspace{-3em}=A_{33}-
\begin{bmatrix}
A_{34} & A_{32}
\end{bmatrix}
\begin{bmatrix}
((A/A_{22})_{44})^{-1} & -((A/A_{22})_{44})^{-1}A_{42}A_{22}^{-1}\\
-A_{22}^{-1}A_{24}((A/A_{22})_{44})^{-1} & A_{22}^{-1}+A_{22}^{-1}A_{24}((A/A_{22})_{44})^{-1}A_{42}A_{22}^{-1}
\end{bmatrix}
\begin{bmatrix}
A_{43}\\
A_{23}
\end{bmatrix}\\
=A_{33}-[A_{34}((A/A_{22})_{44})^{-1}-A_{32}A_{22}^{-1}A_{24}((A/A_{22})_{44})^{-1}]A_{43}\\
-[-A_{34}((A/A_{22})_{44})^{-1}A_{42}A_{22}^{-1}+A_{32}(A_{22}^{-1}+A_{22}^{-1}A_{24}((A/A_{22})_{44})^{-1}A_{42}A_{22}^{-1})]A_{23}
\\
=A_{33}-A_{34}((A/A_{22})_{44})^{-1}A_{43}+A_{32}A_{22}^{-1}A_{24}((A/A_{22})_{44})^{-1}A_{43}\\
+A_{34}((A/A_{22})_{44})^{-1}A_{42}A_{22}^{-1}A_{23}-A_{32}A_{22}^{-1}A_{23}-A_{32}A_{22}^{-1}A_{24}((A/A_{22})_{44})^{-1}A_{42}A_{22}^{-1}A_{23}
\\
=A_{33}-A_{32}A_{22}^{-1}A_{23}-(A_{34}-A_{32}A_{22}^{-1}A_{24})((A/A_{22})_{44})^{-1}A_{43}\\
+(A_{34}-A_{32}A_{22}^{-1}A_{24})((A/A_{22})_{44})^{-1}A_{42}A_{22}^{-1}A_{23}
\\
=(A_{33}-A_{32}A_{22}^{-1}A_{23})-(A_{34}-A_{32}A_{22}^{-1}A_{24})((A/A_{22})_{44})^{-1}(A_{43}-A_{42}A_{22}^{-1}A_{23})
\\
=(A/A_{22})_{33}-(A/A_{22})_{34}(A/A_{22})_{44}^{-1}(A/A_{22})_{43}\\
=(A/A_{22})/(A/A_{22})_{44}.
\end{gather*}
This completes the proof.
\end{proof}

\begin{example}\label{ExPropComposSchurComplem}
We will now work out a concrete example to to demonstrate the notation and Proposition \ref{PropComposSchurComplem} above. Consider the following $2\times 2$ block matrix $A=[A_{ij}]_{i,j=1,2}$ and its Schur complement $A/A_{22}$,
$$A =
\left[\begin{array}{c;{2pt/2pt} c c}
A_{11} & A_{12} \\ \hdashline[2pt/2pt]
A_{21} & A_{22}
\end{array}\right] =
\left[\begin{array}{c c;{2pt/2pt} c c}
4 & 3 & 1 & 1 \\
4 & 2 & 2 & 1\\  \hdashline[2pt/2pt]
1 & 1 & 1 & 1\\  
2 & 1 & 2 & 1\\
\end{array}\right],\;\;A/A_{22}=\begin{bmatrix}
3 & 2 \\
2 & 1
\end{bmatrix}.$$
Suppose now we interested in taking the Schur complement of $A / A_{22}$ with respect to its lower left corner block (i.e., the invertible matrix $[1]\in \mathbb{C}^{1\times 1}$). Then we block partition $A / A_{22}$ into the $2\times 2$ block matrix $A / A_{22}=[(A / A_{22})_{ij}]_{i,j=3,4}$ and compute the desired Schur complement $(A / A_{22})/(A / A_{22})_{44}$ as
\begin{gather*}
A / A_{22}
= \left[\begin{array}{c;{2pt/2pt}c}
(A / A_{22})_{33} & (A / A_{22})_{34} \\ \hdashline[2pt/2pt]
(A / A_{22})_{43} & (A / A_{22})_{44}
\end{array}\right] = \left[\begin{array}{c;{2pt/2pt}c}
3 & 2 \\ \hdashline[2pt/2pt]
2 & 1
\end{array}\right],\\
(A / A_{22})/(A / A_{22})_{44}=[3]-[2][1]^{-1}[2]=[-1].
\end{gather*}
According to Proposition \ref{PropComposSchurComplem},
\begin{align*}
C/C_{22}=(A / A_{22})/(A / A_{22})_{44},
\end{align*}
where $A=C=[C_{ij}]_{i,j=1,2}$ is the $2\times 2$ block matrix in (\ref{DefCMatrixCPartitInPropComposSchurComplem}) [defined in terms of the subpartitioning of $A_{11}$ and $A$ in (\ref{DefCMatrixA11PartitInPropComposSchurComplem}) and (\ref{DefCMatrixAPartitInPropComposSchurComplem})] with $C_{22}$ [in (\ref{DefCMatrixC22PartitInPropComposSchurComplem})] invertible and $(A/A_{22})_{44}=C_{22}/A_{22}.$
Lets work this all out explicitly now for this example. We begin by partitioning $A_{11}=[A_{ij}]_{i,j=3,4}$ conformal to the block structure of $A / A_{22}=[(A / A_{22})_{ij}]_{i,j=3,4}$ so that
\begin{align*}
A_{11}=\left[\begin{array}{c;{2pt/2pt}c}
A_{33} & A_{34} \\ \hdashline[2pt/2pt]
A_{43} & A_{44}
\end{array}\right]=\left[\begin{array}{c;{2pt/2pt}c}
4 & 3 \\ \hdashline[2pt/2pt]
4 & 2
\end{array}\right]
\end{align*}
This yields a subpartitioning of the matrix $A$ into the $3\times 3$ block matrix as
\begin{align*}
A=\left[\begin{array}{c;{2pt/2pt} c c}
A_{11} & A_{12} \\ \hdashline[2pt/2pt]
A_{21} & A_{22}
\end{array}\right]=\left[\begin{array}{c c; {2pt/2pt} c}
A_{33} & A_{34} & A_{32} \\
A_{43} & A_{44} & A_{42} \\ \hdashline[2pt/2pt]
A_{23} & A_{24} & A_{22}
\end{array}\right]
=
\left[\begin{array}{c c; {2pt/2pt} c c}
4 & 3 & 1 & 1 \\
4 & 2 & 2 & 1 \\ \hdashline[2pt/2pt]
1 & 1 & 1 & 1\\
2 & 1 & 2 & 1
\end{array}\right]
\end{align*}
and from this we repartition $A$ to get the $2\times 2$ block matrix
\begin{align*} 
A = C  =
\left[\begin{array}{c;{2pt/2pt} c c}
C_{11} & C_{12} \\ \hdashline[2pt/2pt]
C_{21} & C_{22}
\end{array}\right]
=
\left[\begin{array}{c; {2pt/2pt} c c}
A_{33} & A_{34} & A_{32} \\\hdashline[2pt/2pt]
A_{43} & A_{44} & A_{42} \\ 
A_{23} & A_{24} & A_{22}
\end{array}\right]=\left[\begin{array}{c;{2pt/2pt} c c c}
4 & 3 & 1 & 1\\ \hdashline[2pt/2pt]
4 & 2 & 2 & 1\\ 
1 & 1 & 1 & 1\\
2 & 1 & 2 & 1\\
\end{array}\right].
\end{align*}
We now verify that
\begin{align*}
C / C_{22}
&= [4] - \begin{bmatrix}
3 & 1 & 1
\end{bmatrix} \begin{bmatrix}
2 & 2 & 1 \\
1 & 1 & 1 \\
1 & 2 & 1
\end{bmatrix}^{-1} \begin{bmatrix}
4\\
1\\
2
\end{bmatrix}
= [4] - [5]\\
&= [-1]=(A / A_{22})/(A / A_{22})_{44},\\
C_{22}/A_{22}&=\left.\left[\begin{array}{c; {2pt/2pt} c c}
2 & 2 & 1 \\\hdashline[2pt/2pt]
1 & 1 & 1 \\ 
1 & 2 &1
\end{array}\right]
\right /
\begin{bmatrix}
1 & 1\\
2 & 1
\end{bmatrix}
=
[2]-\begin{bmatrix}
2 & 1
\end{bmatrix}
\begin{bmatrix}
1 & 1\\
2 & 1
\end{bmatrix}^{-1}
\begin{bmatrix}
1 \\
1 
\end{bmatrix}
=[2]-[1]
\\
&=[1]=(A/A_{22})_{44}.
\end{align*}
\end{example}

\subsection{Transforms (leading to alternative realizations)}\label{SecTransformsForAlternativeRealizations}

In this subsection we discuss additional transformations of matrices associated with the Schur complement and how they can be used to give alternative realization theorems for rational matrix functions, i.e., instead of the Bessmernty\u{\i} Realization Theorem (Theorem \ref{ThmBessmRealiz}) or in conjunction with it. Our main focus will be on the \textit{principal pivot transform} (PPT), see Definition \ref{DefPPTPrincipalPivotTransform} below, which can be considered as a matrix partial inverse. We begin by introducing the reader to PPT in the context of network synthesis problems. After this we will give our results.

The PPT was introduced by A. W. Tucker in \cite{60AT} (see also \cite{00MT}) as ``an attempt to study for a general field, rather than an ordered field, the linear algebraic structure underlying the `simplex method' of G. B. Dantzig, so remarkably effective in Linear Programming." 

Later, R. J. Duffin, D. Hazony, and N. Morrison in \cite{65DHM, 66DHM} studied the PPT for the purposes of solving certain network synthesis problems, although in the latter its called the \textit{gyration} and they denote it by $\Gamma$. More generally, if you compare our Definition \ref{DefPPTPrincipalPivotTransform} of $\operatorname{ppt}_1(A)$ below to the definition of the $r$-fold gyration $\Gamma_{1,\ldots, r}(A)$ in \cite[Sec. 3.2, pp. 54-55, especially (11)]{65DHM} of a $2\times 2$ block matrix $A=[A_{ij}]_{i,j=1,2}\in \mathbb{C}^{n\times n}$ with $A_{11}\in \mathbb{C}^{r\times r}$ invertible, you will see that $\operatorname{ppt}_1(A)=\Gamma_{1,\ldots, r}(A)$. 
As quoted in \cite[Sec. 1.1, p. 1]{66DHM}, if $\Gamma(A)=B$ for two matrices $A$ and $B$ that ``This relationship is sufficient to make the matrices $A$ and $B$ combinatorially equivalent,`` a term they say was coined by A. W. Tucker in \cite{60AT} and ``The impedance, admittance, chain, and hybrid matrices of network theory are all combinatorially equivalent. The work of A. W. Tucker emerged from the linear programming field and is applied here to network theory." They elaborate on this in \cite[Sec. 1.3, p. 394]{66DHM} by saying, ``We wish to show in this paper that combinatorial equivalence has application in the entirely different field of network synthesis. It is worth noting that ideas similar to combinatorial equivalence have been applied to network algebra problems by Bott and Duffin..." and they cite \cite{53BD, 59RD} (see also \cite{78DM}). They further elaborate on their synthesis procedure in \cite[Sec. 2.3, p. 402]{66DHM} saying, ``In what follows, we shall use the $\Gamma$ operator to give a new extension of the Brune synthesis to $n$-port..." and that R. J. Duffin in \cite{55RD} had showed how such a network Brune-type synthesis could be viewed as a purely algebraic process. The key point though that \cite{66DHM} makes is that ``Our extension of the Brune method differs from the above in that it is not necessary at any state to invert a matrix."  In these regard, it is not surprising that R. J. Duffin and his colleagues would be interest in such network synthesis problems that don't require certain algebraic operations given his famous result with R. Bott in \cite{49BD} on synthesis of network impedance functions of one-variable without the use of transformers (for R. Bott's perspective on this, see his interview \cite{01AJ}).

The PPT and variations of its form also appear in other contexts such as in the work of M. G. Krein and I. E. Ovcharenko \cite{94KO} on inverse problems for canonical differential equations or in the study of the analytic properties of the Dirichlet-to-Neumann (DtN) map in electromagnetism \cite{16CWM} for layered media, for instance.

The above serves to give perspective and motivate our consideration of the PPT below in connection to the realization problem of this paper using the Schur complement and the relationship with the PPT.

\subsubsection{Principal pivot transform}\label{SecPrincipalPivotTransform}
There are two forms of the ppt of a $2\times 2$ block matrix $A=[A_{ij}]_{i,j=1,2}\in\mathbb{C}^{m\times m}$ that we will discuss, denoted by $\operatorname{ppt}_1$ and $\operatorname{ppt}_2$, which can be written in terms of the two Schur complements either $A/A_{11}$ or $A/A_{22}$ of $A$ with respect to the $(1,1)$-block $A_{11}$ (if $A_{11}$ is invertible) or the $(2,2)$-block $A_{22}$ (if $A_{22}$ is invertible), where by definition
\begin{align}
A /A_{11} &= A_{22} - A_{21} A_{11}^{-1} A_{12},\\
A/A_{22} &= A_{11}-A_{12}A_{22}^{-1}A_{21}.
\end{align}

In fact, the results of this paper could have been posed in terms of the first Schur complement $A/A_{11}$ instead of second one $A/A_{22}$. The relationship between these two versions of the Schur complement is described in the next lemma. This lemma, together with Proposition \ref{PropMatrixMultOfSchurCompl}, gives a means to easily transform our results, which are stated in terms of second form of the Schur complements, into similar statements in terms of the first form (or vice versa). As an example of this, compare Proposition \ref{PropPPTIsASchurComplement} to its corollary (Corollary \ref{CorOtherPPTAsASchurCompl}) by considering the proof of the latter.


\begin{lemma}[Relationship between the two Schur complements]\label{LemRelBetwTheTwoSchuCompls}
If $A\in
\mathbb{C}
^{m\times m}$ is a $2\times2$ block matrix
\[
A=
\begin{bmatrix}
A_{11} & A_{12}\\
A_{21} & A_{22}
\end{bmatrix},
\]
such that $A_{11}\in\mathbb{C}^{k\times k}$ is invertible then
\begin{equation}
B/B_{22} = A/A_{11},\label{SchuComp1InTermsOf2InLemRelBetwTheTwoSchuCompls}
\end{equation}
where $B=[B_{ij}]_{i,j=1,2}\in
\mathbb{C}
^{m\times m}$ is the $2\times 2$ block matrix
\begin{equation}
B=\begin{bmatrix}
B_{11} & B_{12}\\
B_{21} & B_{22}
\end{bmatrix}
=
\begin{bmatrix}
A_{22} & A_{21}\\
A_{12} & A_{11}
\end{bmatrix}
=
U^TAU,\label{BMatrixInLemRelBetwTheTwoSchuCompls}
\end{equation}
such that $B_{22}=A_{11}$ is invertible and $U\in
\mathbb{C}
^{m\times m}$ is the $2\times 2$ block matrix
\begin{equation}
U=
\begin{bmatrix}
0 & I_k\\
I_{m-k} & 0
\end{bmatrix}.\label{UMatrixInLemRelBetwTheTwoSchuCompls}
\end{equation}
Moreover, if $A$ is real, symmetric, Hermitian, or real and symmetric then the matrix $B$ is real, symmetric, Hermitian, or real and symmetric, respectively.
\end{lemma}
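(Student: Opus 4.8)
The statement to prove is Lemma \ref{LemRelBetwTheTwoSchuCompls}, which relates $A/A_{11}$ to a Schur complement of the ``swapped'' block matrix $B$.

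\medskip

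\textbf{Plan of proof.} The approach is entirely computational and parallels the style of Lemma \ref{LemInvIsASchurCompl} and the various ``is a Schur complement'' results already established. The key observation is that permuting the two block-rows and two block-columns of $A$ simultaneously (which is exactly what conjugation by the block permutation matrix $U$ in \eqref{UMatrixInLemRelBetwTheTwoSchuCompls} does) interchanges the roles of the $(1,1)$-block and the $(2,2)$-block. So the first step is to verify, by straightforward block multiplication, that $U^T A U$ has the claimed block form
\[
U^T A U = \begin{bmatrix} A_{22} & A_{21}\\ A_{12} & A_{11}\end{bmatrix},
\]
using that $U^T = U$ here (since $U$ is a symmetric permutation matrix) and $U^2 = I_m$; one reads off $B_{11}=A_{22}$, $B_{12}=A_{21}$, $B_{21}=A_{12}$, $B_{22}=A_{11}$ directly. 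In particular $B_{22}=A_{11}$ is invertible by hypothesis, so $B/B_{22}$ is defined.

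\medskip

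The second step is the one-line computation
\[
B/B_{22} = B_{11} - B_{12} B_{22}^{-1} B_{21} = A_{22} - A_{21} A_{11}^{-1} A_{12} = A/A_{11},
\]
using the definition of $A/A_{11}$ recalled just above the lemma. This establishes \eqref{SchuComp1InTermsOf2InLemRelBetwTheTwoSchuCompls}. The third step is the symmetry bookkeeping: since $U$ is a real permutation matrix, $U = \overline{U}$, so $B = U^T A U$ inherits realness from $A$; and $B^T = U^T A^T U$, $B^* = U^T A^* U$ (using $U^T = U^* = U$ real), so if $A$ is symmetric (resp. Hermitian, resp. real symmetric) then $B$ is too. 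This is the same elementary pattern used repeatedly in Section \ref{SecSchurComplAlgebraAndOps}.

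\medskip

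\textbf{Main obstacle.} There is essentially no obstacle — this is a routine verification lemma. The only point requiring a moment's care is getting the block bookkeeping right: one must confirm that conjugation by $U$ (not just left or right multiplication by $U$) produces the full swap of all four blocks in the pattern $\begin{bmatrix}A_{22}&A_{21}\\A_{12}&A_{11}\end{bmatrix}$ rather than, say, only swapping the diagonal blocks or transposing something. Writing $U$ in its $2\times 2$ block form and carrying out $U^T A U$ block-by-block settles this immediately. No deep result is needed; the lemma is a packaging device so that all subsequent statements phrased via $A/A_{22}$ can be transported to statements about $A/A_{11}$ (and the principal pivot transform $\operatorname{ppt}_1$) by combining this lemma with Proposition \ref{PropMatrixMultOfSchurCompl}, exactly as the excerpt advertises.
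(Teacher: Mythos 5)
Your proof is correct and is essentially the paper's own argument: the paper simply declares the lemma ``obvious from the definitions of $A/A_{11}$, $B$, and $U$,'' and your block multiplication verifying $U^TAU=\begin{bmatrix}A_{22}&A_{21}\\A_{12}&A_{11}\end{bmatrix}$, the one-line computation $B/B_{22}=B_{11}-B_{12}B_{22}^{-1}B_{21}=A_{22}-A_{21}A_{11}^{-1}A_{12}=A/A_{11}$, and the symmetry bookkeeping are exactly what that entails. One caveat: your parenthetical claims that $U^T=U$ and $U^2=I_m$ are false unless $m=2k$ (for $m=3$, $k=1$ the matrix $U$ is a $3$-cycle, not an involution); fortunately you never actually use them, since the block computation of $U^TAU$ and the identities $\overline{B}=U^T\overline{A}U$, $B^T=U^TA^TU$, $B^*=U^TA^*U$ hold for $U$ as written (real, with $(U^T)^T=U$), so all conclusions stand. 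You should simply delete those two side remarks.
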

\begin{proof}
The proof is obvious from the definitions of $A/A_{11}$, $B$, and $U$.
\end{proof}

\begin{definition}\label{DefPPTPrincipalPivotTransform}
The principal pivot transform (PPT) of a matrix $A\in\mathbb{C}^{m\times m}$ in $2\times 2$ block matrix form
\begin{equation}
A=\begin{bmatrix}
A_{11} & A_{12}\\
A_{21} & A_{22}
\end{bmatrix},
\end{equation}
with respect to an invertible $A_{22}$ is defined to be the matrix $\operatorname{ppt}_2(A)\in\mathbb{C}^{m\times m}$ with the $2\times 2$ block  matrix form 
\begin{equation}
\operatorname{ppt}_2(A) =
\begin{bmatrix}
A/A_{22} & A_{12}A_{22}^{-1} \\ 
-A_{22}^{-1}A_{21} & A_{22}^{-1}
\end{bmatrix}.
\end{equation}
Similarly, the PPT of $A$ with respect to an invertible $A_{11}$ is defined to be the matrix $\operatorname{ppt}_1(A)\in\mathbb{C}^{m\times m}$ with the $2\times 2$ block matrix form 
\begin{equation}
\operatorname{ppt}_1(A) =
\begin{bmatrix}
A_{11}^{-1} & -A_{11}^{-1}A_{12} \\ 
A_{21}A_{11}^{-1} & A/A_{11}
\end{bmatrix}.
\end{equation}
\end{definition}

The relationship between these two versions of the PPT is described in the next lemma.
\begin{lemma}[Relationship between the two PPTs]\label{LemRelBetwTheTwoPPTs}
If $A\in\mathbb{C}^{m\times m}$ is a $2\times 2$ block matrix
\begin{equation}
A=\begin{bmatrix}
A_{11} & A_{12}\\
A_{21} & A_{22}
\end{bmatrix},
\end{equation}
such that $A_{11}\in\mathbb{C}^{k\times k}$ is invertible then
\begin{equation}
\operatorname{ppt}_1(A)=U\operatorname{ppt}_2(B)U^T,
\end{equation}
where $B,U\in\mathbb{C}^{m\times m}$ are the $2\times 2$ block matrices given in terms of $A$ by (\ref{BMatrixInLemRelBetwTheTwoSchuCompls}) and (\ref{UMatrixInLemRelBetwTheTwoSchuCompls}), respectively. 
\end{lemma}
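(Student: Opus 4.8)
The plan is to prove the identity $\operatorname{ppt}_1(A) = U\operatorname{ppt}_2(B)U^T$ by a direct computation that unwinds all three definitions --- the two PPTs (Definition \ref{DefPPTPrincipalPivotTransform}), the matrix $B$ and the permutation-type matrix $U$ (formulas (\ref{BMatrixInLemRelBetwTheTwoSchuCompls}) and (\ref{UMatrixInLemRelBetwTheTwoSchuCompls}) from Lemma \ref{LemRelBetwTheTwoSchuCompls}) --- and checks that the two $2\times 2$ block matrices agree blockwise. There is essentially no obstacle here; the only thing to be careful about is keeping track of which block sits where after conjugating by $U = \begin{bmatrix} 0 & I_k \\ I_{m-k} & 0\end{bmatrix}$, which swaps the two diagonal blocks and the two off-diagonal blocks of any conformally partitioned matrix.

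First I would record that, by the very definition (\ref{BMatrixInLemRelBetwTheTwoSchuCompls}), $B = \begin{bmatrix} B_{11} & B_{12} \\ B_{21} & B_{22}\end{bmatrix} = \begin{bmatrix} A_{22} & A_{21} \\ A_{12} & A_{11}\end{bmatrix}$, so that $B_{11} = A_{22}$, $B_{12} = A_{21}$, $B_{21} = A_{12}$, $B_{22} = A_{11}$, and that $B_{22} = A_{11}$ is invertible by hypothesis, so $\operatorname{ppt}_2(B)$ is defined. I would then also note $B/B_{22} = B_{11} - B_{12}B_{22}^{-1}B_{21} = A_{22} - A_{21}A_{11}^{-1}A_{12} = A/A_{11}$, which is exactly (\ref{SchuComp1InTermsOf2InLemRelBetwTheTwoSchuCompls}) of Lemma \ref{LemRelBetwTheTwoSchuCompls} (and can simply be cited). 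Substituting into the definition of $\operatorname{ppt}_2$ gives
\begin{equation*}
\operatorname{ppt}_2(B) = \begin{bmatrix} B/B_{22} & B_{12}B_{22}^{-1} \\ -B_{22}^{-1}B_{21} & B_{22}^{-1}\end{bmatrix} = \begin{bmatrix} A/A_{11} & A_{21}A_{11}^{-1} \\ -A_{11}^{-1}A_{12} & A_{11}^{-1}\end{bmatrix}.
\end{equation*}

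Second I would carry out the conjugation. Since $U^T = U$ and $U$ is the block-swap matrix, for any conformally partitioned $M = \begin{bmatrix} M_{11} & M_{12} \\ M_{21} & M_{22}\end{bmatrix}$ (with the block sizes compatible with $U$) one has $U M U^T = \begin{bmatrix} M_{22} & M_{21} \\ M_{12} & M_{11}\end{bmatrix}$; here I must be slightly careful that $\operatorname{ppt}_2(B)$ is partitioned with $(1,1)$-block of size $(m-k)\times(m-k)$ (the size of $B_{22}=A_{11}$'s complement... actually $B/B_{22}$ has size $(m-k)\times(m-k)$) and $(2,2)$-block of size $k\times k$, which is exactly the partition $U$ acts on, so the swap is legitimate. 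Applying it,
\begin{equation*}
U\operatorname{ppt}_2(B)U^T = \begin{bmatrix} A_{11}^{-1} & -A_{11}^{-1}A_{12} \\ A_{21}A_{11}^{-1} & A/A_{11}\end{bmatrix},
\end{equation*}
which is precisely $\operatorname{ppt}_1(A)$ by Definition \ref{DefPPTPrincipalPivotTransform}. This proves the identity, and the proof is complete; I expect the write-up to be only a few lines, with the "main obstacle" being nothing more than bookkeeping of block positions and sizes under the swap by $U$.
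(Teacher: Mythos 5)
Your argument is correct and is essentially the paper's own proof run in the opposite direction: both reduce to the identifications $B_{22}=A_{11}$ and $B/B_{22}=A/A_{11}$ [i.e., (\ref{SchuComp1InTermsOf2InLemRelBetwTheTwoSchuCompls})] together with the fact that conjugating a conformally partitioned matrix by $U$ swaps its blocks, the paper simply starting from $\operatorname{ppt}_1(A)$ and rewriting it as $U\operatorname{ppt}_2(B)U^T$ rather than expanding $\operatorname{ppt}_2(B)$ first. One harmless slip: $U^T\neq U$ in general (only $U^T=U^{-1}$ holds, with symmetry exactly when $k=m-k$), but your computation places $U$ and $U^T$ correctly, so the conclusion is unaffected.
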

\begin{proof}
By the definitions of the two PPTs (i.e., $\operatorname{ppt}_1$ and $\operatorname{ppt}_2$), the definitions of the $2\times 2$ block matrices $B,U$ in (\ref{BMatrixInLemRelBetwTheTwoSchuCompls}) and (\ref{UMatrixInLemRelBetwTheTwoSchuCompls}), respectively, in terms of $A$ and the formula (\ref{SchuComp1InTermsOf2InLemRelBetwTheTwoSchuCompls}) from Lemma \ref{LemRelBetwTheTwoSchuCompls} we have
\begin{align*}
    \operatorname{ppt}_1(A)&=\begin{bmatrix}
A_{11}^{-1} & -A_{11}^{-1}A_{12} \\ 
A_{21}A_{11}^{-1} & A/A_{11}
\end{bmatrix}\\&= \begin{bmatrix}
0 & I_k\\
I_{m-k} & 0
\end{bmatrix}\begin{bmatrix}
A/A_{11} & A_{21}A_{11}^{-1}\\
-A_{11}^{-1}A_{12} & A_{11}^{-1}
\end{bmatrix}\begin{bmatrix}
0 & I_{m-k}\\
I_k & 0
\end{bmatrix}\\&=\begin{bmatrix}
0 & I_k\\
I_{m-k} & 0
\end{bmatrix}\begin{bmatrix}
B/B_{22} & B_{12}B_{22}^{-1}\\
-B_{22}^{-1}B_{21} & B_{22}^{-1}
\end{bmatrix}\begin{bmatrix}
0 & I_{m-k}\\
I_k & 0
\end{bmatrix}\\&=U\operatorname{ppt}_2(B)U^T,
\end{align*}
which proves the lemma.
\end{proof}

\begin{remark}
There are two important remarks that need to be made regarding other realizations that are possible instead of the Bessmertny\u{\i} Realization Theorem (i.e., Theorem \ref{ThmBessmRealiz}).
\begin{itemize}
    \item[i)] From the definition of the PPT above, we have the simple relationship between the PPT and the Schur complement: If $A\in\mathbb{C}^{m\times m}$ is a $2\times 2$ block matrix
\begin{equation}
A=\begin{bmatrix}
A_{11} & A_{12}\\
A_{21} & A_{22}
\end{bmatrix},
\end{equation}
such that $A_{11}\in \mathbb{C}^{k\times k}$ is invertible and $A_{22}\in\mathbb{C}^{p\times p}$ then
\begin{align}
A/A_{22}=V^T\operatorname{ppt}_2(A)V,
\end{align}
where $V\in \mathbb{C}^{(k+p)\times k}$ is the $2\times 1$ block matrix
\begin{align}
V=\begin{bmatrix}
I_k \\ 
0
\end{bmatrix}.
\end{align}

    \item[ii)]It follows from this and the next proposition that one could have instead stated a realization theorem for rational matrix functions $f(z)$ similar to the Bessmertny\u{\i} Realization Theorem (i.e., Theorem \ref{ThmBessmRealiz}), but in terms of the principal pivot transform $\operatorname{ppt}_2(\cdot)$ instead of Schur complement $(\cdot)/(\cdot)_{22}$. And then using Lemma \ref{LemRelBetwTheTwoPPTs} or Corollary \ref{CorOtherPPTAsASchurCompl}, this could be done in terms of the other principal pivot transform $\operatorname{ppt}_1(\cdot)$ instead.
\end{itemize}
\end{remark}

\begin{proposition}[Principal pivot transform as a Schur complement]\label{PropPPTIsASchurComplement}
If $A\in\mathbb{C}^{m\times m}$ is a $2\times 2$ block matrix
\begin{equation}
A=\begin{bmatrix}
A_{11} & A_{12}\\
A_{21} & A_{22}
\end{bmatrix}
\end{equation}
such that $A_{22}\in\mathbb{C}^{p\times p}$ is invertible and $A_{11}\in \mathbb{C}^{k\times k}$ then
\begin{equation}
C/C_{22} = \operatorname{ppt}_2(A),
\end{equation}
where $C\in \mathbb{C}^{(k+p+p)\times (k+p+p)}$ is the $3\times 3$ block matrix with the following block partititioned structure $C=[C_{ij}]_{i,j=1,2}$:
\begin{align} 
C & =
\left[\begin{array}{c;{2pt/2pt} c c}
C_{11} & C_{12} \\ \hdashline[2pt/2pt]
C_{21} & C_{22}
\end{array}\right]
=
\left[\begin{array}{c c; {2pt/2pt} c}
A_{11} & 0 & A_{12} \\
0 & 0_p & I_p \\ \hdashline[2pt/2pt]
A_{21} & -I_p & A_{22}
\end{array}\right]
\end{align}
with $C_{22}=A_{22}$ invertible. Furthermore,
\begin{gather}
(JC)/(JC)_{22}=J_{11}\operatorname{ppt}_2(A)=\begin{bmatrix}
A/A_{22} & A_{12}A_{22}^{-1}\\
A_{22}^{-1}A_{21} & -A_{22}^{-1}
\end{bmatrix},\label{PropPPTIsASchurComplementJ11PPT2AMatrixFormula}\\
JC=\left[\begin{array}{c;{2pt/2pt} c c}
(JC)_{11} & (JC)_{12} \\ \hdashline[2pt/2pt]
(JC)_{21} & (JC)_{22}
\end{array}\right]=\left[\begin{array}{c;{2pt/2pt} c c}
J_{11}C_{11} & J_{11}C_{12} \\ \hdashline[2pt/2pt]
C_{21} & C_{22}
\end{array}\right]
=
\left[\begin{array}{c c; {2pt/2pt} c}
A_{11} & 0 & A_{12} \\
0 & 0_p & -I_p \\ \hdashline[2pt/2pt]
A_{21} & -I_p & A_{22}
\end{array}\right],\label{PropPPTIsASchurComplementJCMatrixFormula}
\end{gather}
where $J_{11}\in\mathbb{C}^{m\times m}$ and $J\in \mathbb{C}^{(k+p+p)\times (k+p+p)}$ are the $2\times 2$ block matrices
\begin{align}
J_{11}=\begin{bmatrix}
I_k & 0 \\
0 & -I_p
\end{bmatrix},\;\;
J=\begin{bmatrix}
J_{11} & 0\\
0 & I_p
\end{bmatrix}.
\end{align}
Moreover, if $A$ is real, symmetric, Hermitian, or real and symmetric then $J_{11}\operatorname{ppt}_2(A)$ and $JC$ are both real, symmetric, Hermitian, or real and symmetric, respectively.
\end{proposition}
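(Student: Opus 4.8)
The plan is to verify the proposed block matrix $C$ works by a direct Schur-complement computation, then obtain the $JC$ formulas by left-multiplication, and finally track the symmetry claims. First I would compute $C/C_{22}$ using the given $3\times 3$ block matrix with $C_{22}=A_{22}$, which is invertible by hypothesis. Writing $C_{11}=\left[\begin{smallmatrix}A_{11}&0\\0&0_p\end{smallmatrix}\right]$, $C_{12}=\left[\begin{smallmatrix}A_{12}\\I_p\end{smallmatrix}\right]$, $C_{21}=\left[\begin{smallmatrix}A_{21}&-I_p\end{smallmatrix}\right]$, the formula $C/C_{22}=C_{11}-C_{12}C_{22}^{-1}C_{21}$ expands to
\begin{align*}
C/C_{22}
&=\begin{bmatrix}A_{11}&0\\0&0_p\end{bmatrix}
-\begin{bmatrix}A_{12}\\I_p\end{bmatrix}A_{22}^{-1}\begin{bmatrix}A_{21}&-I_p\end{bmatrix}\\
&=\begin{bmatrix}A_{11}-A_{12}A_{22}^{-1}A_{21}&A_{12}A_{22}^{-1}\\-A_{22}^{-1}A_{21}&A_{22}^{-1}\end{bmatrix}
=\begin{bmatrix}A/A_{22}&A_{12}A_{22}^{-1}\\-A_{22}^{-1}A_{21}&A_{22}^{-1}\end{bmatrix}
=\operatorname{ppt}_2(A),
\end{align*}
which is exactly the definition of $\operatorname{ppt}_2(A)$ from Definition \ref{DefPPTPrincipalPivotTransform}. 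This establishes the first claim, and the fact that $C\in\mathbb{C}^{(k+p+p)\times(k+p+p)}$ with the stated block structure is immediate from the sizes $A_{11}\in\mathbb{C}^{k\times k}$, $A_{22}\in\mathbb{C}^{p\times p}$.

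Next I would handle the $JC$ formulas. Since $J=J_{11}\oplus I_p$ is block diagonal conformal to the $2\times 2$ partition $C=[C_{ij}]_{i,j=1,2}$ (note $J_{11}\in\mathbb{C}^{m\times m}$ acts on the first block row of size $m=k+p$, and $I_p$ on the second of size $p$), left multiplication gives $(JC)_{11}=J_{11}C_{11}$, $(JC)_{12}=J_{11}C_{12}$, $(JC)_{21}=C_{21}$, $(JC)_{22}=C_{22}=A_{22}$, which is invertible. Then by the same computation as above, or by observing $JC$ has $C_{22}$ unchanged, one gets $(JC)/(JC)_{22}=J_{11}C_{11}-J_{11}C_{12}C_{22}^{-1}C_{21}=J_{11}(C_{11}-C_{12}C_{22}^{-1}C_{21})=J_{11}\operatorname{ppt}_2(A)$; multiplying out $J_{11}=I_k\oplus(-I_p)$ against the block rows of $\operatorname{ppt}_2(A)$ negates its second block row, yielding the matrix in (\ref{PropPPTIsASchurComplementJ11PPT2AMatrixFormula}). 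The explicit entrywise form of $JC$ in (\ref{PropPPTIsASchurComplementJCMatrixFormula}) follows since $J_{11}C_{11}=\left[\begin{smallmatrix}A_{11}&0\\0&0_p\end{smallmatrix}\right]$ (the zero blocks absorb the sign) and $J_{11}C_{12}=\left[\begin{smallmatrix}A_{12}\\-I_p\end{smallmatrix}\right]$; assembling with the unchanged bottom block row gives the claimed $3\times 3$ array.

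Finally, for the symmetry assertions, I would argue directly on $J_{11}\operatorname{ppt}_2(A)$ and $JC$ rather than through general lemmas, since neither $C$ nor $\operatorname{ppt}_2(A)$ is itself symmetric in general. If $A$ is real then all entries of $\operatorname{ppt}_2(A)$ are real (inverse and products of real matrices), so $J_{11}\operatorname{ppt}_2(A)$ and $JC$ are real. If $A$ is symmetric, then $A_{21}=A_{12}^T$, $A_{22}=A_{22}^T$, hence $(A/A_{22})^T=A/A_{22}$, $(A_{22}^{-1})^T=A_{22}^{-1}$, $(A_{12}A_{22}^{-1})^T=A_{22}^{-1}A_{21}$, so the matrix $\left[\begin{smallmatrix}A/A_{22}&A_{12}A_{22}^{-1}\\A_{22}^{-1}A_{21}&-A_{22}^{-1}\end{smallmatrix}\right]=J_{11}\operatorname{ppt}_2(A)$ is symmetric; inspecting (\ref{PropPPTIsASchurComplementJCMatrixFormula}) directly shows $JC$ is symmetric since the off-diagonal pairs $(A_{12},A_{21})$, $(0,0)$, $(-I_p,-I_p)$ are transposes of each other and the diagonal blocks $A_{11}$, $0_p$, $A_{22}$ are symmetric. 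The Hermitian case is identical with transpose replaced by conjugate transpose (using $A_{21}=A_{12}^*$, $A_{22}=A_{22}^*$, and that $-A_{22}^{-1}$ is Hermitian), and the real-symmetric case is the conjunction. I expect the main obstacle to be purely bookkeeping: keeping the two different block partitions (the $3\times 3$ refinement versus the $2\times 2$ coarsening $C=[C_{ij}]_{i,j=1,2}$) and the sizes $k,p$ straight, and making sure the sign flips from $J_{11}$ land on the correct block rows — there is no conceptual difficulty, only the risk of a transcription error in the block arithmetic.
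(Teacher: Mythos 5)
Your proof is correct and follows essentially the same route as the paper: a direct block expansion showing $C_{11}-C_{12}C_{22}^{-1}C_{21}=\operatorname{ppt}_2(A)$, followed by left multiplication by the block-diagonal $J$ and an inspection of the resulting formulas for the symmetry claims. The only cosmetic difference is that you verify $(JC)/(JC)_{22}=J_{11}(C/C_{22})$ by hand, whereas the paper obtains the same identity by citing Proposition \ref{PropMatrixMultOfSchurCompl}; both are fine.
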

\begin{proof}
The proof is straightforward via block multiplication. First,
\begin{align*}\operatorname{ppt}_2(A)&=\begin{bmatrix}
A/A_{22} & A_{12}A_{22}^{-1} \\ 
-A_{22}^{-1}A_{21} & A_{22}^{-1}
\end{bmatrix}\\
&=
\begin{bmatrix}
A_{11}-A_{12}A_{22}^{-1}A_{21} & A_{12}A_{22}^{-1}\\
-A_{22}^{-1}A_{21}  & A_{22}^{-1}
\end{bmatrix}
\\
&=
\begin{bmatrix}
A_{11} & 0\\
0 & 0_p
\end{bmatrix}
-
\begin{bmatrix}
A_{12}A_{22}^{-1}A_{21} & -A_{12}A_{22}^{-1}\\
A_{22}^{-1}A_{21} & -A_{22}^{-1}
\end{bmatrix}\\
&=
\begin{bmatrix}
A_{11} & 0\\
0 & 0_p
\end{bmatrix}
-
\begin{bmatrix}
A_{12}A_{22}^{-1}\\
A_{22}^{-1}
\end{bmatrix}
\begin{bmatrix}
A_{21} & -I_p
\end{bmatrix}\\
&= 
\begin{bmatrix}
A_{11} & 0\\
0 & 0_p
\end{bmatrix}
-
\begin{bmatrix}
A_{12}\\
I_p
\end{bmatrix}
A_{22}^{-1}
\begin{bmatrix}
A_{21} & -I_p
\end{bmatrix}\\&= C_{11}-C_{12}C_{22}^{-1}C_{21}\\&=C/C_{22}
\end{align*}
and
\begin{align*}
J_{11}(C/C_{22})=J_{11}\operatorname{ppt}_2(A)=\begin{bmatrix}
A/A_{22} & A_{12}A_{22}^{-1} \\ 
A_{22}^{-1}A_{21} & -A_{22}^{-1}
\end{bmatrix}.
\end{align*}
By Proposition \ref{PropMatrixMultOfSchurCompl}, it follows that
\begin{align*}
M/M_{22}=J_{11}(C/C_{22}),
\end{align*}
where
\begin{align*} 
M =\left[\begin{array}{c;{2pt/2pt} c c}
M_{11} & M_{12} \\ \hdashline[2pt/2pt]
M_{21} & M_{22}
\end{array}\right]
=
\left[\begin{array}{c;{2pt/2pt} c c}
J_{11}C_{11} & J_{11}C_{12} \\ \hdashline[2pt/2pt]
C_{21} & C_{22}
\end{array}\right].
\end{align*}
Finally, by block multiplication, we verify that
\begin{align*} 
\left[\begin{array}{c;{2pt/2pt} c c}
J_{11}C_{11} & J_{11}C_{12} \\ \hdashline[2pt/2pt]
C_{21} & C_{22}
\end{array}\right]
=
\left[\begin{array}{c c; {2pt/2pt} c}
A_{11} & 0 & A_{12} \\
0 & 0_p & -I_p \\ \hdashline[2pt/2pt]
A_{21} & -I_p & A_{22}
\end{array}\right]
=
JC.
\end{align*}
The remaining part of the proof follows immediately now formulas (\ref{PropPPTIsASchurComplementJ11PPT2AMatrixFormula}) and (\ref{PropPPTIsASchurComplementJCMatrixFormula}). This completes the proof.
\end{proof}

\begin{corollary}[The other PPT as a Schur complement]\label{CorOtherPPTAsASchurCompl}
If $A\in\mathbb{C}^{m\times m}$ is a $2\times 2$ block matrix
\begin{equation}
A=\begin{bmatrix}
A_{11} & A_{12}\\
A_{21} & A_{22}
\end{bmatrix},
\end{equation}
such that $A_{11}\in\mathbb{C}^{k\times k}$ is invertible and $A_{22}\in \mathbb{C}^{p\times p}$ then
\begin{equation}
D/D_{22} = \operatorname{ppt}_1(A),
\end{equation}
where $D\in \mathbb{C}^{(k+p+k)\times (k+p+k)}$ is the $3\times 3$ block matrix with the following block partititioned structure $D=[D_{ij}]_{i,j=1,2}$:
\begin{align} 
D & =
\left[\begin{array}{c;{2pt/2pt} c c}
D_{11} & D_{12} \\ \hdashline[2pt/2pt]
D_{21} & D_{22}
\end{array}\right]
=
\left[\begin{array}{c c; {2pt/2pt} c}
0_k & 0 & I_k \\
0 & A_{22} & A_{21} \\ \hdashline[2pt/2pt]
-I_k & A_{12} & A_{11}
\end{array}\right]
\end{align}
with $D_{22}=A_{11}$ invertible. Furthermore,
\begin{gather}
(KD)/(KD)_{22}=K_{11}\operatorname{ppt}_1(A)=\begin{bmatrix}
-A_{11}^{-1} & A_{11}^{-1}A_{12}\\
A_{21}A_{11}^{-1} & A/A_{11}
\end{bmatrix},\\
KD=\left[\begin{array}{c;{2pt/2pt} c c}
(KD)_{11} & (KD)_{12} \\ \hdashline[2pt/2pt]
(KD)_{21} & (KD)_{22}
\end{array}\right]=\left[\begin{array}{c;{2pt/2pt} c c}
K_{11}D_{11} & K_{11}D_{12} \\ \hdashline[2pt/2pt]
D_{21} & D_{22}
\end{array}\right]
=
\left[\begin{array}{c c; {2pt/2pt} c}
0_k & 0 & -I_k \\
0 & A_{22} & A_{21} \\ \hdashline[2pt/2pt]
-I_k & A_{12} & A_{11}
\end{array}\right],
\end{gather}
where $K_{11}\in\mathbb{C}^{m\times m}$ and $K\in \mathbb{C}^{(k+p+k)\times (k+p+k)}$ are the $2\times 2$ block matrices
\begin{align}
K_{11}=\begin{bmatrix}
-I_k & 0 \\
0 & I_p
\end{bmatrix},\;\;
K=\begin{bmatrix}
K_{11} & 0\\
0 & I_k
\end{bmatrix}.
\end{align}
Moreover, if $A$ is real, symmetric, Hermitian, or real and symmetric then $K_{11}\operatorname{ppt}_1(A)$ and $KD$ are both real, symmetric, Hermitian, or real and symmetric, respectively.
\end{corollary}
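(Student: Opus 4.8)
The plan is to mimic exactly the structure of the proof of Proposition \ref{PropPPTIsASchurComplement}, but carried out for $\operatorname{ppt}_1$ instead of $\operatorname{ppt}_2$, exploiting the symmetry relation between the two PPTs recorded in Lemma \ref{LemRelBetwTheTwoPPTs}. First I would verify by direct block multiplication that $D/D_{22}=\operatorname{ppt}_1(A)$, where $D$ is the displayed $3\times 3$ block matrix: since $D_{22}=A_{11}$ is invertible, compute
\[
D/D_{22}=\begin{bmatrix}0_k & 0\\ 0 & A_{22}\end{bmatrix}-\begin{bmatrix}I_k\\ A_{21}\end{bmatrix}A_{11}^{-1}\begin{bmatrix}-I_k & A_{12}\end{bmatrix}=\begin{bmatrix}A_{11}^{-1} & -A_{11}^{-1}A_{12}\\ A_{21}A_{11}^{-1} & A_{22}-A_{21}A_{11}^{-1}A_{12}\end{bmatrix},
\]
which is precisely $\operatorname{ppt}_1(A)$ by Definition \ref{DefPPTPrincipalPivotTransform}. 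This is the analogue of the first block of the proof of Proposition \ref{PropPPTIsASchurComplement}.

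Next I would handle the $KD$ and $K_{11}\operatorname{ppt}_1(A)$ statements. Observe that $K_{11}D_{11}$, $K_{11}D_{12}$ leave the second block-row of $D$ unchanged and negate the first block-row, so block multiplication gives immediately that $KD$ equals the displayed matrix (just check $K_{11}\begin{bmatrix}0_k & 0\end{bmatrix}=\begin{bmatrix}0_k & 0\end{bmatrix}$ and $K_{11}\begin{bmatrix}I_k\end{bmatrix}$ in the $(1,3)$ position becomes $-I_k$). Then apply Proposition \ref{PropMatrixMultOfSchurCompl} with the left multiplier $K_{11}$ and right multiplier $I_m$ to conclude $M/M_{22}=K_{11}(D/D_{22})=K_{11}\operatorname{ppt}_1(A)$, where $M$ is $D$ with its first block-row multiplied by $K_{11}$; verifying $M=KD$ is then one more routine block computation, and $K_{11}\operatorname{ppt}_1(A)$ is read off as the displayed $2\times 2$ block matrix. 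For the final sentence on symmetries: $D_{11}=0_k$, $D_{22}=A_{11}$, and the off-diagonal relationship $D_{12}=\begin{bmatrix}0\\ A_{21}\end{bmatrix}\oplus\text{\ldots}$ together with $D_{21}=\begin{bmatrix}-I_k & A_{12}\end{bmatrix}$ show that real/symmetric/Hermitian/real-symmetric structure of $A$ transfers to $D$ (up to the sign in the $(1,3)$, $(3,1)$ blocks), and then multiplication by the self-adjoint, real, orthogonal matrix $K_{11}$ preserves each of those classes, giving the claim for $KD$; the claim for $K_{11}\operatorname{ppt}_1(A)$ follows the same way since $\operatorname{ppt}_1$ inherits symmetry from $A$.

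Alternatively — and this is probably the cleaner route to actually write down — I would simply invoke Lemma \ref{LemRelBetwTheTwoPPTs} together with Proposition \ref{PropPPTIsASchurComplement}. Since $\operatorname{ppt}_1(A)=U\operatorname{ppt}_2(B)U^T$ with $B=U^TAU$ (Lemma \ref{LemRelBetwTheTwoSchuCompls}), and by Proposition \ref{PropPPTIsASchurComplement} there is a $3\times 3$ block matrix $C'$ with $C'/C'_{22}=\operatorname{ppt}_2(B)$, Proposition \ref{PropMatrixMultOfSchurCompl} gives $D'/D'_{22}=U(C'/C'_{22})U^T=\operatorname{ppt}_1(A)$ for an explicit $D'$; one then checks that after an obvious permutation of block-rows and block-columns $D'$ agrees with the $D$ stated in the corollary. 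This is exactly the "example of this, compare Proposition \ref{PropPPTIsASchurComplement} to its corollary" strategy the authors advertise in the text preceding Lemma \ref{LemRelBetwTheTwoSchuCompls}.

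The main obstacle is purely bookkeeping: one must be careful that the block sizes match ($D\in\mathbb{C}^{(k+p+k)\times(k+p+k)}$ versus the $(k+p+p)$ appearing for $\operatorname{ppt}_2$), and that the conjugating permutation $U$ from Lemma \ref{LemRelBetwTheTwoSchuCompls} is tracked correctly through Proposition \ref{PropMatrixMultOfSchurCompl} so that the $(1,3)$ and $(3,1)$ blocks land with the right signs ($-I_k$, not $I_k$) and the $(2,2)$ block of the Schur complement comes out as $A/A_{11}$ rather than $A/A_{22}$. There is no genuine mathematical difficulty beyond what is already in Propositions \ref{PropPPTIsASchurComplement} and \ref{PropMatrixMultOfSchurCompl} and Lemmas \ref{LemRelBetwTheTwoSchuCompls}, \ref{LemRelBetwTheTwoPPTs}; the proof will essentially read "The proof is analogous to that of Proposition \ref{PropPPTIsASchurComplement}, using Lemma \ref{LemRelBetwTheTwoSchuCompls} and Lemma \ref{LemRelBetwTheTwoPPTs}," followed by the two verifying block computations above.
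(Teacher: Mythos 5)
Your proposal is correct, and your preferred ``cleaner route'' is in fact exactly the proof the paper gives: the authors deliberately prove the corollary by combining Lemma \ref{LemRelBetwTheTwoPPTs} (i.e., $\operatorname{ppt}_1(A)=U\operatorname{ppt}_2(B)U^T$ with $B=U^TAU$), Proposition \ref{PropPPTIsASchurComplement} applied to $B$, and Proposition \ref{PropMatrixMultOfSchurCompl} to conjugate by the block-permutation built from $U$, tracking $J$, $J_{11}$ into $K$, $K_{11}$ along the way; so on that branch there is nothing to add beyond the bookkeeping you already flag (block sizes $(k+p+k)$ versus $(k+p+p)$ and the signs of the $\pm I_k$ blocks). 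Your first, direct route --- computing $D/D_{22}=D_{11}-D_{12}A_{11}^{-1}D_{21}$ and $KD$ by block multiplication and invoking Proposition \ref{PropMatrixMultOfSchurCompl} with left factor $K_{11}$ and right factor $I_m$ --- is the shorter verification the paper explicitly acknowledges (``could be proved directly by verifying via block matrix methods'') but declines in order to showcase the transfer machinery; it is perfectly valid and arguably more economical. One caution on that direct route: your stated justification for the symmetry claims is imprecise. Neither $D$ nor $\operatorname{ppt}_1(A)$ is symmetric (or Hermitian) when $A$ is --- the $(1,3)$/$(3,1)$ blocks of $D$ are $I_k$ and $-I_k$, and the off-diagonal blocks of $\operatorname{ppt}_1(A)$ are $-A_{11}^{-1}A_{12}$ and $A_{21}A_{11}^{-1}$ --- and left multiplication by the signature matrix $K_{11}$ does not preserve symmetry in general (it would require $K_{11}$ to commute with the factor). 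The correct, and easy, argument is simply to read the symmetry of $KD$ and of $K_{11}\operatorname{ppt}_1(A)$ off their explicit block formulas, where the sign introduced by $K_{11}$ exactly repairs the mismatch; alternatively, the paper's route gets these claims for free from the ``moreover'' part of Proposition \ref{PropPPTIsASchurComplement} together with congruence by the real permutation matrices built from $U$.
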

\begin{proof}
Although the proof of this corollary could be proved directly by verifying via block matrix methods the statements, our goal here is to give a proof based on the discussion in the introduction of Section \ref{SecPrincipalPivotTransform}, namely, to prove the corollary using Proposition \ref{PropMatrixMultOfSchurCompl}, Lemma \ref{LemRelBetwTheTwoPPTs}, and Proposition \ref{PropPPTIsASchurComplement}. First, by Lemma \ref{LemRelBetwTheTwoPPTs} we have
\begin{align*}
B=U^TAU,\;\;\operatorname{ppt}_1(A) &= U\operatorname{ppt}_2(B)U^T,
\end{align*}
where $B,U\in\mathbb{C}^{m\times m}$ have the $2\times 2$ block matrix forms in (\ref{BMatrixInLemRelBetwTheTwoSchuCompls}) and (\ref{UMatrixInLemRelBetwTheTwoSchuCompls}), respectively. Next, by Proposition \ref{PropPPTIsASchurComplement} we know that
\begin{align*}
C/C_{22} = \operatorname{ppt}_2(B),
\end{align*}
where, by the definition of $B$ and the hypotheses that $A_{11}\in\mathbb{C}^{k\times k}$ and $A_{22}\in \mathbb{C}^{p\times p}$, the matrix $C\in \mathbb{C}^{(p+k+k)\times (p+k+k)}$ is the $3\times 3$ block matrix with the following block partititioned structure $C=[C_{ij}]_{i,j=1,2}$:
\begin{align*} 
C & =
\left[\begin{array}{c;{2pt/2pt} c c}
C_{11} & C_{12} \\ \hdashline[2pt/2pt]
C_{21} & C_{22}
\end{array}\right]
=
\left[\begin{array}{c c; {2pt/2pt} c}
B_{11} & 0 & B_{12} \\
0 & 0_k & I_k \\ \hdashline[2pt/2pt]
B_{21} & -I_k & B_{22}
\end{array}\right]
=
\left[\begin{array}{c c; {2pt/2pt} c}
A_{22} & 0 & A_{21} \\
0 & 0_k & I_k \\ \hdashline[2pt/2pt]
A_{12} & -I_k & A_{11}
\end{array}\right]
\end{align*}
with $C_{22}=B_{22}=A_{11}$ invertible. Thus, it follows by this and Proposition \ref{PropMatrixMultOfSchurCompl} that
\begin{align*}
D/D_{22} = UC/C_{22}U^T,
\end{align*}
where $D\in \mathbb{C}^{(k+p+k)\times (k+p+k)}$ (with $m=k+p$) is the $2\times 2$ block matrix
\begin{align*}
D&=\begin{bmatrix}
D_{11} & D_{12}\\
D_{21} & D_{22}
\end{bmatrix}
=
\left[\begin{array}{c;{2pt/2pt} c c}
UC_{11}U^T & UC_{12} \\ \hdashline[2pt/2pt]
C_{21}U^T & C_{22}
\end{array}\right]
=
\left[\begin{array}{c c; {2pt/2pt} c}
0_k & 0 & I_k \\
0 & A_{22} & A_{21} \\ \hdashline[2pt/2pt]
-I_k & A_{12} & A_{11}
\end{array}\right]\\
&=
\left[\begin{array}{c;{2pt/2pt} c c}
U & 0 \\ \hdashline[2pt/2pt]
0 & I_k
\end{array}\right]
\left[\begin{array}{c;{2pt/2pt} c c}
C_{11} & C_{12} \\ \hdashline[2pt/2pt]
C_{21} & C_{22}
\end{array}\right]
\left[\begin{array}{c;{2pt/2pt} c c}
U & 0 \\ \hdashline[2pt/2pt]
0 & I_k
\end{array}\right]^T.
\end{align*}
Furthermore, by Proposition \ref{PropPPTIsASchurComplement} and the relation of $B$ to $A$, we have
\begin{gather*}
(JC)/(JC)_{22}=J_{11}\operatorname{ppt}_2(B)=\begin{bmatrix}
B/B_{22} & B_{12}B_{22}^{-1}\\
B_{22}^{-1}B_{21} & -B_{22}^{-1}
\end{bmatrix}\\
=\begin{bmatrix}
A/A_{11} & A_{21}A_{11}^{-1}\\
A_{11}^{-1}A_{12} & -A_{11}^{-1}
\end{bmatrix},\\
JC=\left[\begin{array}{c;{2pt/2pt} c c}
(JC)_{11} & (JC)_{12} \\ \hdashline[2pt/2pt]
(JC)_{21} & (JC)_{22}
\end{array}\right]=\left[\begin{array}{c;{2pt/2pt} c c}
J_{11}C_{11} & J_{11}C_{12} \\ \hdashline[2pt/2pt]
C_{21} & C_{22}
\end{array}\right]
\\=
\left[\begin{array}{c c; {2pt/2pt} c}
B_{11} & 0 & B_{12} \\
0 & 0_k & -I_k \\ \hdashline[2pt/2pt]
B_{21} & -I_k & B_{22}
\end{array}\right]
=
\left[\begin{array}{c c; {2pt/2pt} c}
A_{22} & 0 & A_{21} \\
0 & 0_k & -I_k \\ \hdashline[2pt/2pt]
A_{12} & -I_k & A_{11}
\end{array}\right],
\end{gather*}
where $J_{11}\in\mathbb{C}^{m\times m}$ and $J\in \mathbb{C}^{(p+k+k)\times (p+k+k)}$ are the $2\times 2$ block matrices
\begin{align*}
J_{11}=\begin{bmatrix}
I_p & 0 \\
0 & -I_k
\end{bmatrix},\;\;
J=\begin{bmatrix}
J_{11} & 0\\
0 & I_k
\end{bmatrix}.
\end{align*}
Now, using block multiplication, it follows that
\begin{gather*}
UJ_{11}U^T=K_{11},\;\;
\left[\begin{array}{c;{2pt/2pt} c c}
U & 0 \\ \hdashline[2pt/2pt]
0 & I_k
\end{array}\right]
J
\left[\begin{array}{c;{2pt/2pt} c c}
U & 0 \\ \hdashline[2pt/2pt]
0 & I_k
\end{array}\right]^T=\begin{bmatrix}
K_{11} & 0\\
0 & I_k
\end{bmatrix}=K,
\end{gather*}
which implies 
\begin{gather*}
KD=\left[\begin{array}{c;{2pt/2pt} c c}
U & 0 \\ \hdashline[2pt/2pt]
0 & I_k
\end{array}\right]
J
\left[\begin{array}{c;{2pt/2pt} c c}
U & 0 \\ \hdashline[2pt/2pt]
0 & I_k
\end{array}\right]^T
\left[\begin{array}{c;{2pt/2pt} c c}
U & 0 \\ \hdashline[2pt/2pt]
0 & I_k
\end{array}\right]
\left[\begin{array}{c;{2pt/2pt} c c}
C_{11} & C_{12} \\ \hdashline[2pt/2pt]
C_{21} & C_{22}
\end{array}\right]
\left[\begin{array}{c;{2pt/2pt} c c}
U & 0 \\ \hdashline[2pt/2pt]
0 & I_k
\end{array}\right]^T\\
=
\left[\begin{array}{c;{2pt/2pt} c c}
U & 0 \\ \hdashline[2pt/2pt]
0 & I_k
\end{array}\right]
JC
\left[\begin{array}{c;{2pt/2pt} c c}
U & 0 \\ \hdashline[2pt/2pt]
0 & I_k
\end{array}\right]^T
=
\left[\begin{array}{c;{2pt/2pt} c c}
U(JC)_{11}U^T & U(JC)_{12} \\ \hdashline[2pt/2pt]
(JC)_{21}U^T & (JC)_{22}
\end{array}\right].
\end{gather*}
Also, using block multiplication and the block forms for $D$ and $K$, it follows that
\begin{gather*}
KD=\left[\begin{array}{c;{2pt/2pt} c c}
K_{11}D_{11} & K_{11}D_{12} \\ \hdashline[2pt/2pt]
D_{21} & D_{22}
\end{array}\right]
=
\left[\begin{array}{c c; {2pt/2pt} c}
0_k & 0 & -I_k \\
0 & A_{22} & A_{21} \\ \hdashline[2pt/2pt]
-I_k & A_{12} & A_{11}
\end{array}\right].
\end{gather*}
Now by Proposition \ref{PropMatrixMultOfSchurCompl} we know that
\begin{gather*}
U(JC)/(JC)_{22}U^T=(KD)/(KD)_{22},
\end{gather*}
where $KD=[(KD)_{ij}]_{i,j=1,2}$ is given the $2\times 2$ block matrix form
\begin{gather*}
KD=\left[\begin{array}{c;{2pt/2pt} c c}
(KD)_{11} & (KD)_{12} \\ \hdashline[2pt/2pt]
(KD)_{21} & (KD)_{22}
\end{array}\right]=
\left[\begin{array}{c;{2pt/2pt} c c}
U(JC)_{11}U^T & U(JC)_{12} \\ \hdashline[2pt/2pt]
(JC)_{21}U^T & (JC)_{22}
\end{array}\right].
\end{gather*}
From these facts we conclude that
\begin{gather*}
\begin{bmatrix}
-A_{11}^{-1} & A_{11}^{-1}A_{12}\\
A_{21}A_{11}^{-1} & A/A_{11}
\end{bmatrix}=K_{11}\operatorname{ppt}_1(A)=K_{11}U\operatorname{ppt}_2(B)U^T\\ =UJ_{11}\operatorname{ppt}_2(B)U^T
=U^T(JC)/(JC)_{22}U=(KD)/(KD)_{22}.
\end{gather*}
Finally, if $A$ is real, symmetric, Hermitian, or real and symmetric then $B=U^TAU$ is real, symmetric, Hermitian, or real and symmetric, respectively, so by Proposition \ref{PropPPTIsASchurComplement} it follows that $J_{11}\operatorname{ppt}_2(B)$ and $JC$ are both real, symmetric, Hermitian, or real and symmetric, respectively, which implies $K_{11}\operatorname{ppt}_1(A)=UJ_{11}\operatorname{ppt}_2(B)U^T$ and $KD=\left[\begin{array}{c;{2pt/2pt} c c}
U & 0 \\ \hdashline[2pt/2pt]
0 & I_k
\end{array}\right]
JC
\left[\begin{array}{c;{2pt/2pt} c c}
U & 0 \\ \hdashline[2pt/2pt]
0 & I_k
\end{array}\right]^T$ are both real, symmetric, Hermitian, or real and symmetric, respectively. This completes the proof.
\end{proof}

\section*{Acknowledgments}

The authors would like to thank Graeme W.\ Milton, Mihai Putinar, Joseph A. Ball, and Victor Vinnikov for all the helpful conversations that made this paper possible.\ Both authors are indebted to the reviewer for their valuable comments and suggestions on our original paper which helped improve the presentation. We are especially appreciative of the reviewer for outlining the alternative approach in Remark \ref{RemAltApproach}, commentary on relevant work, and bringing to our attention the references \cite{05BGM,08BGKR, 79BGK, 18HMS}.

\section*{Declarations}

\subsection*{Funding} Not applicable.
\subsection*{Conflicts of interest/Competing interests} Not applicable.
\subsection*{Availability of data and material} Not applicable.
\subsection*{Code availability} Not applicable.

\end{document}